\newcommand*\Abstract[1]{   \begingroup\noindent\leftskip=.7cm
   \rightskip\leftskip
\small\textbf{Abstract}\quad #1  \par\vspace*{1mm}\endgroup}
\newcommand*\Keywords[1]{  \begingroup\noindent\leftskip=.7cm
  \rightskip\leftskip
  {\hangafter=1\hangindent=2.1cm\noindent\small\textbf{Key words}\quad #1  \par\vspace*{1mm}}\endgroup}
\newcommand*\MRSubClass[1]{  \begingroup\noindent\leftskip=.7cm
  \rightskip\leftskip
  \small\textbf{2010 MR Subject Classification}\quad #1  \par\vspace*{1mm}\endgroup}
\newtheorem{theorem}{Theorem}[section]
\newtheorem{corollary}{Corollary}[theorem]
\newtheorem{lemma}[theorem]{Lemma}
\numberwithin{equation}{section}
\renewcommand{\maketitle}{\bgroup\setlength{\parindent}{0pt}
  \begin{center}
   \textbf{\@title}
  \@author
\end{center}
  \@thanks
\egroup
}
\begin{document}

\title{\fontsize{16}{6}\selectfont Unified Bessel, Modified Bessel,
Spherical Bessel and Bessel-Clifford Functions }
\author{\normalfont \\
\bigskip \fontsize{10}{6}\selectfont \textit{\ Banu Y\i lmaz YA\c{S}AR and Mehmet Ali \"{O}ZARSLAN\\
{\tiny Eastern Mediterranean University, Department of Mathematics
Gazimagusa, TRNC, Mersin 10, Turkey}} \\
\textit{{\tiny E-mail: banu.yilmaz@emu.edu.tr; \hspace{0.1mm}
mehmetali.ozarslan@emu.edu.tr}}  }
\maketitle

\Abstract{
In the present paper, unification of Bessel, modified Bessel, spherical
Bessel and Bessel-Clifford functions via the generalized Pochhammer symbol [
Srivastava HM, \c{C}etinkaya A, K{\i }ymaz O. A certain generalized
Pochhammer symbol and its applications to hypergeometric functions. Applied
Mathematics and Computation, 2014, \textbf{226} : 484-491] is defined. Several
potentially useful properties of the unified family such as generating
function, integral representation, Laplace transform and Mellin transform
are obtained. Besides, the unified Bessel, modified Bessel, spherical Bessel
and Bessel-Clifford functions are given as a series of Bessel functions.
Furthermore, the derivatives, recurrence relations and partial differential
equation of the so-called unified family are found. Moreover, the Mellin
transform of the products of the unified Bessel functions are obtained.
Besides, a three-fold integral representation is given for unified Bessel
function. Some of the results which are obtained in this paper are new and
some of them coincide with the known results in special cases. 
}

\Keywords{
Generalized Pochhammer symbol; generalized
Bessel function; generalized spherical Bessel function; generalized
Bessel-Clifford function
}

\MRSubClass{33C10; 33C05; 44A10}

\section{\protect\bigskip Introduction}

Bessel function first arises in the investigation of a physical problem in
Daniel Bernoulli's analysis of the small oscillations of a uniform heavy
flexible chain \cite{A.G.M, B.G}. It appears when finding separable
solutions to Laplace's equation and the Helmholtz equation in cylindrical or
spherical coordinates. Whereas Laplace's equation governs problems in heat
conduction, in the distribution of potential in an electrostatic field and
in hydrodynamics in the irrotational motion of an incompressible fluid \cite%
{A.G.M, J.B, J.D, B.G, H.L},~\ Helmholtz equation governs problems in
acoustic and electromagnetic wave propagation \cite{J.D, D.J, L.R}. Besides,
Bessel function and modified Bessel function play an important role in the
analysis of microwave and optical transmission in waveguides, including
coaxial and fiber \cite{N.K.B, S.G.K, J.C.S}. Also, Bessel function appears
in the inverse problem in wave propagation with applications in medicine,
astronomy and acoustic imaging \cite{D.C.K}. On the other hand, spherical
Bessel function arises in all problems in three dimensions with spherical
symmetry involving the scattering of electromagnetic radiation \cite{L.B.J,
J.D, E.K}. In quantum mechanics, it comes out in the solution of the Schr%
\"{o}dinger wave equation for a particle in a central potential \cite{A.M.S}%
. Therefore, Bessel function is crucially important for many problems of
wave propagation and static potentials. When in solving problems in
cylindrical coordinate system, one obtains Bessel function of an integer
order, in spherical problems one obtains half-integer orders such as
electromagnetic waves in a cylindrical waveguide, pressure amplitudes of
inviscid rotational flows, heat conduction in a cylindrical object, modes of
vibration of a thin circular (or annular) artificial membrane, diffusion
problems on a lattice, solutions to the radial Schr\"{o}dinger equation (in
a spherical and cylindrical coordinates) for a free particle, solving for
patterns of a acoustical radiation, frequency-dependent friction in circular
pipelines, dynamics of floating bodies, angular resolution. Also, it appears
in other problems such as signal processing , frequency modulation
synthesis, Kaiser window or Bessel filter. While \ Bessel function has wide
range of applications in mathematical physics such as acoustics, radio
physics, hydrodynamics, atomic and nuclear physics,~Bessel-Clifford function
comes out asymptotic expressions for the Dirac delta function \cite{F.N}.
Regular and irregular Coulomb wave functions are expressed in terms of
Bessel-Clifford function \cite{A, A2}. Also, it has an applications in
quantum mechanics. Because of these facts, several researchers have studied
on some extensions and generalizations of Bessel \ and Bessel-Clifford
functions ~\cite{A.K, C, D.M, H.E, F.S, L.G, L, H.A.D, M, N.V.O}. In that
point, special functions play a remarkably important role. Pochhammer symbol
or shifted factorial function is one of the most important and useful
function in the theory of special functions. Familiar definition is given by%
\begin{align}
(\alpha )_{n}& =\alpha (\alpha +1)(\alpha +2)...(\alpha +n-1) \\
(\alpha )_{0}& =1,~\alpha \neq 0.  \notag
\end{align}%
Also, factorial function $(\alpha )_{n}~$can be expressed in terms of a
ratio of gamma functions as%
\begin{equation}
(\alpha )_{n}:=\frac{\Gamma (\alpha +n)}{\Gamma (\alpha )}
\end{equation}%
and using the above definition, one can get

\begin{equation}
(\alpha )_{2n}=2^{2n}(\dfrac{\alpha }{2})_{n}(\dfrac{\alpha +1}{2})_{n}
\end{equation}%
where 
\begin{equation}
\Gamma (\alpha )=\int\limits_{0}^{\infty }t^{\alpha -1}e^{-t}dt,~\func{Re}%
(\alpha )>0
\end{equation}%
is the usual Euler's Gamma function. Another important function is the
generalized hypergeometric function which is defined by means of Pochhammer
symbols as 
\begin{equation}
_{p}F_{q}(a_{1,}a_{2,}...,a_{p};~b_{1,}b_{2},...,~b_{q};z~)=\sum%
\limits_{k=0}^{\infty }\frac{(a_{1})_{k}(a_{2})_{k}...(a_{p})_{k}}{%
(b_{1})_{k}(b_{2})_{k}...(b_{q})_{k}}\frac{z^{k}}{k!},~(p\leq q).
\end{equation}%
Note that, hypergeometric series given by (1.5)\ converges absolutely for $%
p\leq q$~\cite{N.M.S}.~Substituting $p=0$~and $q=1,~a_{2}=1+\nu ~$~and
replacing $z$~with $-\frac{z^{2}\text{~}}{4},~$then it is reduced to%
\begin{equation*}
_{0}F_{1}(-;1+\nu ;-\frac{z^{2}}{4})=\sum\limits_{k=0}^{\infty }\frac{1}{%
(1+\nu )_{k}}\frac{(-\frac{z^{2}}{4})}{k!}.
\end{equation*}%
The case $p=1$~and $q=1$, it is reduced to the confluent hypergeometric
function which is 
\begin{equation*}
_{1}F_{1}(a;b;z)=\sum\limits_{k=0}^{\infty }\frac{(a)_{k}}{(b)_{k}}\frac{%
z^{k}}{k!}.
\end{equation*}%
Chaudhry et al. \cite{C.Q.S} extended the confluent hypergeometric function
as 
\begin{align*}
F_{p}(a,b;c;z)& :=\sum\limits_{n=0}^{\infty }\frac{B_{p}(b+n,c-b)}{B(b,c-b)}%
(a)_{n}\frac{z^{n}}{n!} \\
(\left\vert z\right\vert & <1,~p\geq 0;~\func{Re}(c)>\func{Re}(b)>0)
\end{align*}%
where $(a)_{n}$ denotes the Pochhammer symbol given by (1.1) and 
\begin{align*}
B_{p}(x,y)& :=\int\limits_{0}^{1}t^{x-1}(1-t)^{y-1}e^{-\frac{p}{t(1-t)}}dt,
\\
(\func{Re}(p)& >0,~\func{Re}(x)>0,~\func{Re}(y)>0)
\end{align*}%
is the extended Euler's Beta function. In the case $p=0,$ extended Beta
function is reduced to usual Beta function which is defined by%
\begin{align*}
B(x,y)& =\int\limits_{0}^{1}t^{x-1}(1-t)^{y-1}dt, \\
(\func{Re}(x)& >0,~\func{Re}(y)>0).
\end{align*}%
An extension of the generalized hypergeometric function $_{r}F_{s}$~of $~r$%
~numerator parameters $a_{1},...,a_{r}$ and $s$ denominator parameters $%
b_{1},...,b_{s}$~was defined by Srivastava et al. in \cite{S.A.O} as 
\begin{equation*}
_{r}F_{s}[%
\begin{tabular}{l}
$(a_{1},\rho ),~a_{2},...,a_{r};$ \\ 
$\ \ \ \ \ \ \ \ \ \ \ \ b_{1},...,b_{s};$%
\end{tabular}%
\ \ \ z]:=\sum\limits_{n=0}^{\infty }\frac{(a_{1},\rho
)_{n}(a_{2})_{n}...(a_{r})_{n}}{(b_{1})_{n}(b_{2})_{n}...(b_{s})_{n}}\frac{%
z^{n}}{n!}
\end{equation*}%
where 
\begin{equation*}
a_{j}\in 
\mathbb{C}
~(j=1,...,r)~\text{and }~b_{j}\in 
\mathbb{C}
/%
\mathbb{Z}
_{0}^{-}~(j=1,...,s),~%
\mathbb{Z}
_{0}^{-}:=\{0,-1,-2,...\}.
\end{equation*}%
In particular, the corresponding extension of the confluent hypergeometric
function $\ _{1}F_{1}~$is given by 
\begin{equation*}
_{1}F_{1}[(a,\rho );c;z]:=\sum\limits_{n=0}^{\infty }\frac{(a,\rho )_{n}}{%
(c)_{n}}\frac{z^{n}}{n!}.
\end{equation*}%
Note that, in our main Theorems the above extension of the hypergeometric
function is used. Another kind of generalized and extended hypergeometric
function was introduced in \cite{R.S} as%
\begin{equation*}
_{u}F_{\nu }[%
\begin{tabular}{l}
$(a_{0};p,\{K_{l}\}_{l\in 
\mathbb{N}
_{0}}),~a_{2},...,a_{u};$ \\ 
$\ \ \ \ \ \ \ \ \ \ \ \ \ \ \ \ \ \ \ \ \ \ \ \ \ \ b_{1},...,b_{\nu };$%
\end{tabular}%
\ \ \ z]:=\sum\limits_{n=0}^{\infty }\frac{(a_{0};p,\{K_{l}\}_{l\in 
\mathbb{N}
_{0}})_{n}~(a_{2})_{n}...(a_{u})_{n}}{(b_{1})_{n}(b_{2})_{n}...(b_{\nu })_{n}%
}\frac{z^{n}}{n!}
\end{equation*}%
where 
\begin{equation*}
(\lambda ;p,\{K_{l}\}_{l\in 
\mathbb{N}
_{0}})_{\nu }:=\frac{\Gamma _{p}^{(\{K_{l}\}_{l\in 
\mathbb{N}
_{0}})}(\lambda +\nu )}{\Gamma _{p}^{(\{K_{l}\}_{l\in 
\mathbb{N}
_{0}})}(\lambda )},\lambda ,\nu \in 
\mathbb{C}%
\end{equation*}%
is the generalized and the extended Pochhammer symbol of $(\lambda )_{n}.~$%
Here, $\Gamma _{p}^{(\{K_{l}\}_{l\in 
\mathbb{N}
_{0}})}(z)$ is the extended Gamma function which is given by%
\begin{align*}
\Gamma _{p}^{(\{K_{l}\}_{l\in 
\mathbb{N}
_{0}})}(z)& :=\int\limits_{0}^{\infty }t^{z-1}\circleddash (\{K_{l}\}_{l\in 
\mathbb{N}
_{0}};-t-\frac{p}{t})dt \\
(\func{Re}(z)& >0,~\func{Re}(p)\geq 0)
\end{align*}%
where $\circleddash (\{K_{l}\}_{l\in 
\mathbb{N}
_{0}};z)~$ is given by \cite{S.P.C}%
\begin{equation*}
\circleddash (\{K_{l}\}_{l\in 
\mathbb{N}
_{0}};z):=\{%
\begin{tabular}{l}
$\sum\limits_{l=0}^{\infty }K_{l}\frac{z^{l}}{l!}~~(|z|<R;~R>0,~K_{0}:=1)$
\\ 
$M_{0}~z^{\omega }\exp (z)[1+O(\frac{1}{|z|})]~(|z|\rightarrow \infty
;~M_{0}>0;~w\in 
\mathbb{C}
).$%
\end{tabular}%
\ \ \ \ \ \ \ \ \ \ .
\end{equation*}%
Besides, multiple Gaussian hypergeometric function was studied in \cite{H.K}%
. Moreover, using the extended Pochhammer symbols, some properties of the
generalized and extended hypergeometric polynomials were obtained in \cite%
{R2, R.N}. On the other hand, generalization of Gamma, Beta and
hypergeometric functions were introduced and studied in \cite{E.M.A}%
.~Considering the generalized Beta function, generating functions for the
Gauss hypergeometric functions were introduced in \cite{S.P.J}. Extended
incomplete gamma function was obtained in \cite{C.S}. Finally, using the
generalized Pochhammer function ,which was introduced by Srivastava et al.%
\cite{S.A.O},~generalized Mittag-Leffler function was introduced and some
properties were presented in \cite{B.Y}.

The main idea of the present paper is to define unification of Bessel,
modified Bessel, spherical Bessel and Bessel-Clifford functions by means of
the generalized Pochhammer symbol \cite{S.A.O}. Before proceeding, some
facts related with the Bessel, modified Bessel, spherical Bessel and
Bessel-Clifford functions are presented. Bessel \ and modified Bessel
functions can be expressed in terms of the $_{0}F_{1}(-;1+\nu;-\frac{z^{2}}{4%
})$~and $_{1}F_{1}(a;b;z)$~functions as%
\begin{align*}
J_{\nu}(z) & =\frac{(\frac{z}{2})^{\nu}}{\Gamma(\nu+1)}\sum\limits_{k=0}^{%
\infty}\frac{(-1)^{k}}{(\nu+1)_{k}}\frac{(\frac{z^{2}}{4})^{k}}{k!}=\frac{(%
\frac{z}{2})^{\nu}}{\Gamma(\nu+1)}~_{0}F_{1}(-;1+\nu;-\frac{z^{2}}{4}), \\
I_{\nu}(z) & =\frac{(\frac{z}{2})^{\nu}}{\Gamma(\nu+1)}\sum\limits_{k=0}^{%
\infty}\frac{1}{(\nu+1)_{k}}\frac{(\frac{z^{2}}{4})^{k}}{k!}=\frac{(\frac {z%
}{2})^{\nu}}{\Gamma(\nu+1)}~_{0}F_{1}(-;1+\nu;\frac{z^{2}}{4}),
\end{align*}
and 
\begin{align*}
J_{\nu}(z) & =\frac{(\frac{z}{2})^{\nu}}{\Gamma(\nu+1)}e^{-iz}~_{1}F_{1}(\nu+%
\frac{1}{2};2\nu+1;2iz), \\
I_{\nu}(z) & =\frac{(\frac{z}{2})^{\nu}}{\Gamma(\nu+1)}e^{-z}~_{1}F_{1}(\nu+%
\frac{1}{2};2\nu+1;2z).
\end{align*}
Recurrence relations satisfied by usual Bessel and modified Bessel functions
are given by \cite{R}%
\begin{align*}
\frac{2\nu}{z}J_{\nu}(z) & =J_{\nu-1}(z)+J_{\nu+1}(z), \\
2\frac{d}{dz}J_{\nu}(z) & =J_{\nu-1}(z)-J_{\nu+1}(z), \\
\frac{2\nu}{z}I_{\nu}(z) & =I_{\nu-1}(z)-I_{\nu+1}(z), \\
2\frac{d}{dz}I_{\nu}(z) & =I_{\nu-1}(z)+I_{\nu+1}(z).
\end{align*}
Besides, spherical Bessel function of the first kind is defined by means of
the Bessel function as follows%
\begin{equation*}
j_{\nu}(z):=\sqrt{\frac{\pi}{2z}}J_{\nu+\frac{1}{2}}(z),~\func{Re}(\nu)>-%
\frac{3}{2}.
\end{equation*}
The following recurrence relations are satisfied by the spherical Bessel
function \cite{G.H}%
\begin{align*}
j_{\nu-1}(z)+j_{\nu+1}(z) & =\frac{2\nu+1}{z}j_{\nu}(z), \\
\nu j_{\nu-1}(z)-(\nu+1)~j_{\nu+1}(z) & =(2\nu+1)\frac{d}{dz}j_{\nu}(z), \\
\frac{d}{dz}[z^{\nu+1}j_{\nu}(z)] & =z^{\nu+1}j_{\nu-1}(z), \\
\frac{d}{dz}[z^{-\nu}j_{\nu}(z)] & =-z^{-\nu}j_{\nu+1}(z), \\
(\nu-1)~j_{\nu-1}(z)-(\nu+2)~j_{\nu+1}(z) & =z(\frac{d}{dz}j_{\nu -1}(z)+%
\frac{d}{dz}j_{\nu+1}(z)).
\end{align*}
Moreover, Bessel-Clifford function of the first kind is defined by means of $%
_{0}F_{1}(-;n+1;z)$ as 
\begin{equation*}
C_{\nu}(z):=\frac{1}{\Gamma(n+1)}~_{0}F_{1}(-;n+1;z)
\end{equation*}
which is a particular case of Wright function 
\begin{equation*}
\Phi(\rho,\beta;z)=\sum\limits_{k=0}^{\infty}\frac{z^{k}}{k!\Gamma(\rho
k+\beta)},~\rho>-1~\text{and~}\beta\in%
\mathbb{C}
.
\end{equation*}
The connection between Bessel-Clifford function and modified \ Bessel
function is given by \cite{D.G.M}%
\begin{equation*}
C_{\nu}(z)=z^{-\frac{\nu}{2}}I_{\nu}(2\sqrt{z}),~\func{Re}(\nu)>0.
\end{equation*}
Recurrence relations satisfied by Bessel-Clifford function are given by \cite%
{A.G}%
\begin{align*}
\frac{d}{dz}C_{\nu}(z) & =C_{\nu+1}(z), \\
zC_{\nu+2}(z)+(\nu+1)C_{\nu+1}(z) & =C_{\nu}(z), \\
z(2\nu+4)\frac{d}{dz}C_{\nu+1}(z)+2z^{2}\frac{d^{2}}{dz^{2}}C_{\nu+1}(z) &
=2z\frac{d^{2}}{dz^{2}}C_{\nu-1}(z), \\
(\nu+1)z\frac{d}{dz}C_{\nu+1}(z)+(\nu+1)^{2}C_{\nu+1}(z) & =(\nu+1)\frac {d}{%
dz}C_{\nu-1}(z).
\end{align*}

Recently, Srivastava et al. \cite{S.A.O} generalized the Pochhammer function
as 
\begin{equation*}
(\lambda ;\rho )_{\nu }:=\{%
\begin{tabular}{l}
$\dfrac{\Gamma _{\rho }(\lambda +\nu )}{\Gamma (\lambda )}~(\func{Re}(\rho
)>0;~\lambda ,~\nu \in 
\mathbb{C}
)$ \\ 
$(\lambda )_{\nu }~\ \ \ \ \ ~\ \ ~~\ (\rho =0;~\lambda ,~\nu \in 
\mathbb{C}
)$%
\end{tabular}%
\ \ \ \ \ \ \ \ \ 
\end{equation*}%
where \ $\Gamma _{\rho }$~\ is the extended Gamma function, which was
introduced by Chaudhry and Zubair \cite{C.Z} as 
\begin{equation*}
\Gamma _{\rho }(x)=\int\limits_{0}^{\infty }t^{x-1}e^{-t-\frac{\rho }{t}}dt,~%
\func{Re}(\rho )>0
\end{equation*}%
and hence $(\lambda ;\rho )_{\nu }$~\ is \ defined by 
\begin{align}
(\lambda ;\rho )_{\nu }& =\frac{1}{\Gamma (\lambda )}\int\limits_{0}^{\infty
}t^{\lambda +\nu -1}e^{-t-\dfrac{\rho }{t}}dt \\
(\func{Re}(\nu )& >0;~\func{Re}(\lambda +\nu )>0~\text{when }\rho =0). 
\notag
\end{align}%
It was proved that \cite{S.A.O} 
\begin{equation}
(\lambda ;\rho )_{\nu +\mu }=(\lambda )_{\nu }(\lambda +\nu ;\rho )_{\mu
},~\lambda ,~\mu ,~\nu \in 
\mathbb{C}
.
\end{equation}%
\ 

In the light of these definitions and generalizations, unification of four
parameter Bessel function is defined by \ 
\begin{align}
G_{\nu }^{(b,c)}(z;\rho )& :=\sum\limits_{k=0}^{\infty }\frac{%
(-b)^{k}(c;\rho )_{2k+\nu }}{\Gamma (\nu +k+1)~\Gamma (\nu +2k+1)}\frac{(%
\frac{z}{2})^{2k+\nu }}{k!}, \\
(z,c,\nu & \in 
\mathbb{C}
,~\func{Re}(\nu )>-1,~\func{Re}(\rho )>0).  \notag
\end{align}

The case $b=1$ and $b=-1,~$generalized three parameter Bessel function of
the first kind $J_{\nu}^{(c)}(z;\rho)~$and generalized modified three
parameter Bessel function of the first kind $I_{\nu}^{(c)}(z;\rho)$ are
introduced by 
\begin{align*}
J_{\nu}^{(c)}(z;\rho) & :=\sum\limits_{k=0}^{\infty}\frac{%
(-1)^{k}(c;\rho)_{2k+\nu}}{\Gamma(\nu+k+1)\Gamma(\nu+2k+1)}\frac{(\frac{z}{2}%
)^{2k+\nu}}{k!}, \\
I_{\nu}^{(c)}(z;\rho) & :=\sum\limits_{k=0}^{\infty}\frac{(c;\rho)_{2k+\nu}}{%
\Gamma(\nu+k+1)\Gamma(\nu+2k+1)}\frac{(\frac{z}{2})^{2k+\nu}}{k!}, \\
(z,c,\nu & \in%
\mathbb{C}
,~\func{Re}(\nu)>-1,~\func{Re}(\rho)>0)
\end{align*}
respectively. Letting $~c=1$~and $\rho=0,$~$J_{\nu}^{(c)}(z;\rho)$~is
reduced to usual Bessel function of the first kind $J_{\nu}(z)$~and ~$%
I_{\nu}^{(c)}(z;\rho)$~is reduced to usual modified Bessel function of the
first kind $I_{\nu}(z)$. Furthermore, generalized four parameter spherical
Bessel function is introduced by%
\begin{align*}
g_{\nu}^{(b,c)}(z;\rho) & :=\sqrt{\frac{\pi}{2z}~}~G_{\nu+\frac{1}{2}}^{(b,c-%
\frac{1}{2})}(z;\rho), \\
(\func{Re}(\nu) & >-\frac{3}{2},~\func{Re}(c)>\frac{1}{2},~\func{Re}%
(\rho)>0~).
\end{align*}
The case $b=1,$ $c=\frac{3}{2}$ and $\rho=0,$ it is reduced to usual
spherical Bessel function of the first kind $j_{\nu}(z).~$Moreover,
generalized four parameter Bessel-Clifford function of the first kind is
defined by 
\begin{align*}
C_{\nu}^{(b,\lambda)}(z;\rho) & :=z^{-\frac{\nu}{2}}G_{\nu}^{(b,\lambda )}(2%
\sqrt{z};\rho), \\
(\func{Re}(\nu) & >-1,~\func{Re}(\lambda )>0,~\func{Re}(\rho)>0).
\end{align*}
The case $b=-1,$ $\lambda=1$~and $\rho=0,$ $C_{\nu}^{(b,\lambda)}(z;\rho)$
is reduced to usual Bessel-Clifford function of the first kind $C_{\nu}(z).$

The organization of the paper is as follows: In section 2, generating
function, integral representation, Laplace transform and Mellin transform
involving the unified four parameter Bessel function are obtained. Moreover,
the expansion of the unified four parameter Bessel function in terms of a
series of usual Bessel functions is presented. In section 3, derivative
properties, recurrence relation and partial differential equation of the
unified four parameter Bessel function are found. In Section 4, the Mellin
transforms involving the products of the unified four parameter Bessel
function are obtained. In Section 5, a three-fold integral representation
formula for the unified four parameter Bessel function is given. In Section
6, generalized four parameter spherical Bessel and Bessel-Clifford functions
are introduced by means of the unified four parameter Bessel function and
some properties are obtained such as generating function, integral
representation, Laplace transform, Mellin transform, series in terms of
usual Bessel functions, recurrence relation and partial differential
equation. Some of the corresponding results of the mentioned Theorems are
new and some of them coincide with the usual cases. Finally, the special
cases of the unified four parameter Bessel function are given as a table and
\ the graphics of the generalized two parameter Bessel and the spherical
Bessel functions are drawn for some special cases.

\section{Unified Four Parameter Bessel Function}

In the following Lemma, the relation between $\ G_{-\nu}^{(b,c)}(z;\rho)$%
~and $G_{\nu}^{(b,c)}(z;\rho)$ is obtained$:$

\begin{lemma}
Let $~\nu \in 
\mathbb{Z}
.~$Then the following relation is satisfied by the unified four parameter
Bessel function%
\begin{equation}
G_{-\nu }^{(b,c)}(z;\rho )=(-b)^{\nu }G_{\nu }^{(b,c)}(z;\rho ).
\end{equation}
\end{lemma}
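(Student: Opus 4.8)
The plan is to work directly from the defining series $(1.8)$ and to exploit the poles of the Gamma function appearing in the denominator. Since $(2.1)$ holds trivially for $\nu=0$ and, for a negative integer $\nu$, is equivalent (when $b\neq 0$; the case $b=0$ being immediate from $(1.8)$) to the same statement written for $-\nu\in\mathbb{N}$, it suffices to treat $\nu\in\mathbb{N}$; here $G_{-\nu}^{(b,c)}(z;\rho)$ is to be read as the series obtained by replacing $\nu$ with $-\nu$ in $(1.8)$.

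Writing out that series,
\[
G_{-\nu}^{(b,c)}(z;\rho)=\sum_{k=0}^{\infty}\frac{(-b)^{k}\,(c;\rho)_{2k-\nu}}{\Gamma(k-\nu+1)\,\Gamma(2k-\nu+1)}\,\frac{(z/2)^{2k-\nu}}{k!},
\]
I would observe that for $k=0,1,\dots,\nu-1$ the argument $k-\nu+1$ is a non-positive integer, so $1/\Gamma(k-\nu+1)=0$ and these terms are absent; hence the sum effectively starts at $k=\nu$. The one step requiring care is that these low-order terms really vanish rather than producing an indeterminate $0/0$: because $\func{Re}(\rho)>0$ the factor $e^{-\rho/t}$ removes the singularity of $t^{x-1}e^{-t-\rho/t}$ at $t=0$, so $\Gamma_{\rho}(x)=\int_{0}^{\infty}t^{x-1}e^{-t-\rho/t}\,dt$ is finite (indeed entire) for every $x\in\mathbb{C}$; consequently $(c;\rho)_{2k-\nu}=\Gamma_{\rho}(c+2k-\nu)/\Gamma(c)$ is finite for all $k$, and the only other potentially singular factor, $1/\Gamma(2k-\nu+1)$, simply equals $0$ when $2k-\nu+1\le 0$.

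Next I would perform the index shift $k=m+\nu$, $m\ge 0$. This turns the surviving factors into $\Gamma(k-\nu+1)=\Gamma(m+1)=m!$, $\Gamma(2k-\nu+1)=\Gamma(\nu+2m+1)$, $(c;\rho)_{2k-\nu}=(c;\rho)_{2m+\nu}$, $(z/2)^{2k-\nu}=(z/2)^{2m+\nu}$, $k!=(m+\nu)!=\Gamma(\nu+m+1)$, and $(-b)^{k}=(-b)^{\nu}(-b)^{m}$. Pulling the constant $(-b)^{\nu}$ outside the sum leaves exactly $(-b)^{\nu}$ times the series $(1.8)$ that defines $G_{\nu}^{(b,c)}(z;\rho)$, which is $(2.1)$. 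Apart from this bookkeeping the argument is immediate; the only genuine obstacle is the justification, noted above, that the terms with $k<\nu$ vanish, and this rests entirely on the entirety of $\Gamma_{\rho}$ for $\func{Re}(\rho)>0$.
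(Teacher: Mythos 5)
Your proof is correct and follows essentially the same route as the paper: write out the series for $G_{-\nu}^{(b,c)}(z;\rho)$, note that the terms with $k<\nu$ drop out, and shift the index $k\mapsto k+\nu$ to pull out the factor $(-b)^{\nu}$. In fact you supply a justification the paper leaves implicit when it simply starts the sum at $k=\nu$, namely that the low-order terms genuinely vanish because $\Gamma_{\rho}$ is entire for $\func{Re}(\rho)>0$ so $(c;\rho)_{2k-\nu}$ stays finite while $1/\Gamma(k-\nu+1)=0$.
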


\begin{proof}
It is clear that by (1.8), we have 
\begin{equation*}
G_{\nu}^{(b,c)}(z;\rho)=\sum\limits_{k=0}^{\infty}\frac{(-b)^{k}(c;%
\rho)_{2k+\nu}}{\Gamma(\nu+k+1)\Gamma(\nu+2k+1)}\frac{(\frac{z}{2})^{2k+\nu}%
}{k!}.
\end{equation*}
Substituting $-\nu$ instead of $\nu$ yields%
\begin{equation*}
G_{-\nu}^{(b,c)}(z;\rho)=\sum\limits_{k=\nu}^{\infty}\frac{%
(-b)^{k}(c;\rho)_{2k-\nu}}{\Gamma(-\nu+k+1)~\Gamma(-\nu+2k+1)}\frac{(\frac{z%
}{2})^{2k-\nu}}{k!}.
\end{equation*}
Taking $~\nu+k$ ~for $\ k,$ we get%
\begin{align*}
G_{-\nu}^{(b,c)}(z;\rho) & =(-b)^{\nu}\sum\limits_{k=0}^{\infty}\frac{%
(-b)^{k}(c;\rho)_{2k+\nu}}{\Gamma(\nu+2k+1)~\Gamma(\nu+k+1)}\frac {(\frac{z}{%
2})^{2k+\nu}}{k!}, \\
& =(-b)^{\nu}G_{\nu}^{(b,c)}(z;\rho),
\end{align*}
which completes the proof.
\end{proof}

Taking $b=1$~and $b=-1~$~in Lemma 2.1, the relations of the generalized
three parameter Bessel and modified Bessel functions of the first kind \ are
obtained, respectively:

\begin{corollary}
Let $~\nu\in%
\mathbb{Z}
.~$The following relations are satisfied by the generalized three parameter
Bessel and modified Bessel functions of the first kind%
\begin{align*}
J_{-\nu}^{(c)}(z;\rho) & =(-1)^{\nu}J_{\nu}^{(c)}(z;\rho), \\
I_{-\nu}^{(c)}(z;\rho) & =I_{\nu}^{(c)}(z;\rho).
\end{align*}
\end{corollary}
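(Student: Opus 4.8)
The plan is to deduce the Corollary directly from \lemref{} (Lemma~2.1) by specializing the parameter $b$, since the generalized three-parameter Bessel and modified Bessel functions are precisely the $b=1$ and $b=-1$ instances of the unified family. First I would record, straight from the defining series in the Introduction, the identifications
\[
J_{\nu}^{(c)}(z;\rho)=G_{\nu}^{(1,c)}(z;\rho),\qquad I_{\nu}^{(c)}(z;\rho)=G_{\nu}^{(-1,c)}(z;\rho),
\]
which hold term by term because setting $b=1$ turns $(-b)^{k}$ into $(-1)^{k}$ and setting $b=-1$ turns it into $1$.

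Next I would invoke Lemma~2.1, which for every $\nu\in\mathbb{Z}$ gives $G_{-\nu}^{(b,c)}(z;\rho)=(-b)^{\nu}G_{\nu}^{(b,c)}(z;\rho)$. Putting $b=1$ yields $(-b)^{\nu}=(-1)^{\nu}$, hence $G_{-\nu}^{(1,c)}(z;\rho)=(-1)^{\nu}G_{\nu}^{(1,c)}(z;\rho)$, which is exactly $J_{-\nu}^{(c)}(z;\rho)=(-1)^{\nu}J_{\nu}^{(c)}(z;\rho)$. Putting $b=-1$ yields $(-b)^{\nu}=1^{\nu}=1$, hence $G_{-\nu}^{(-1,c)}(z;\rho)=G_{\nu}^{(-1,c)}(z;\rho)$, which is $I_{-\nu}^{(c)}(z;\rho)=I_{\nu}^{(c)}(z;\rho)$.

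There is essentially no obstacle here; the only point demanding the slightest care is the bookkeeping of the factor $(-b)^{\nu}$ in the two specializations, in particular noticing that for $b=-1$ the sign collapses to $1$ so that the modified Bessel relation carries no $(-1)^{\nu}$. I would close by remarking that this is consistent with the classical identities $J_{-\nu}(z)=(-1)^{\nu}J_{\nu}(z)$ and $I_{-\nu}(z)=I_{\nu}(z)$ for integer $\nu$, recovered in the further special case $c=1$, $\rho=0$.
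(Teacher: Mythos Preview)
Your proposal is correct and follows exactly the approach the paper takes: the corollary is stated as the specialization $b=1$ and $b=-1$ of Lemma~2.1, and you have simply written out that specialization with the requisite bookkeeping on $(-b)^{\nu}$. There is nothing to add.
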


Letting $c=1$ and $\rho =0$~in Corollary 2.1.1, the relations of the usual
Bessel functions are given as follows:

\begin{corollary}
\cite{R} \ Let $~\nu\in%
\mathbb{Z}
.~$The following relations are satisfied by the usual Bessel and modified
Bessel functions%
\begin{align*}
J_{-\nu}(z) & =(-1)^{\nu}J_{\nu}(z), \\
I_{-\nu}(z) & =I_{\nu}(z).
\end{align*}
\end{corollary}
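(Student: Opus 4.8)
The plan is to read off this statement as the special case $c=1$, $\rho=0$ of Corollary 2.1.1 (which itself is the $b=\pm 1$ instance of Lemma 2.1), so the proof reduces to justifying that specialization. First I would recall the reduction already noted in the Introduction: putting $\rho=0$ collapses the generalized Pochhammer symbol to the ordinary one, and with $c=1$ one has $(1;0)_{2k+\nu}=(1)_{2k+\nu}=\Gamma(\nu+2k+1)$. Substituting this into the defining series of $J_{\nu}^{(c)}(z;\rho)$ and $I_{\nu}^{(c)}(z;\rho)$ cancels the factor $\Gamma(\nu+2k+1)$ in the denominator, leaving exactly the classical series $J_{\nu}(z)=\sum_{k\ge 0}\frac{(-1)^{k}}{k!\,\Gamma(\nu+k+1)}(z/2)^{2k+\nu}$ and its modified analogue $I_{\nu}(z)$. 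Hence $J_{\nu}^{(1)}(z;0)=J_{\nu}(z)$ and $I_{\nu}^{(1)}(z;0)=I_{\nu}(z)$.

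Next I would simply insert these two identities into the relations of Corollary 2.1.1. On the $J$-side ($b=1$) the prefactor is already $(-1)^{\nu}$, which immediately yields $J_{-\nu}(z)=(-1)^{\nu}J_{\nu}(z)$; on the $I$-side ($b=-1$) the prefactor is $(-b)^{\nu}=1$, so $I_{-\nu}(z)=I_{\nu}(z)$. That completes the argument.

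There is essentially no genuine obstacle here; the only point meriting a word is that, since $\nu\in\mathbb{Z}$, the gamma factors in the denominators must be read through the convention $1/\Gamma(-m)=0$ for $m\in\mathbb{Z}_{\ge 0}$ --- but this is the very convention (and the same reindexing $k\mapsto k+\nu$) already used in the proof of Lemma 2.1, from which Corollary 2.1.1 and hence the present statement are inherited. As an alternative route one could bypass the corollary chain and argue directly from the classical series above: for $\nu$ a positive integer the terms with $k\le \nu-1$ vanish, and shifting the summation index by $\nu$ reproduces $(-1)^{\nu}J_{\nu}$ (respectively $I_{\nu}$), which is precisely the computation carried out in the proof of Lemma 2.1.
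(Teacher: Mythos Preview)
Your proposal is correct and matches the paper's approach exactly: the corollary is obtained by setting $c=1$ and $\rho=0$ in Corollary~2.1.1, using the reductions $J_{\nu}^{(1)}(z;0)=J_{\nu}(z)$ and $I_{\nu}^{(1)}(z;0)=I_{\nu}(z)$ noted in the Introduction. The paper gives no further argument (the result is classical, cited from Rainville), so your added justification of the specialization and the remark on the $1/\Gamma(-m)=0$ convention simply make explicit what the paper leaves implicit.
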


In the following theorem, the generating function of the unified four
parameter Bessel function is given in terms of the generalized confluent
hypergeometric function:

\begin{theorem}
For $t\neq0$ and for all finite $z,~n\in%
\mathbb{Z}
,$ we have%
\begin{equation*}
_{1}F_{1}((c;\rho),1;(t-\frac{b}{t})\frac{z}{2})=\sum\limits_{n=-\infty
}^{\infty}G_{n}^{(b,c)}(z;\rho)t^{n}.
\end{equation*}
\end{theorem}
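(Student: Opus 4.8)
The plan is to expand the right-hand side using the series definition (1.8) of $G_n^{(b,c)}(z;\rho)$, then reorganize the resulting double sum into the series for the generalized confluent hypergeometric function $_1F_1((c;\rho),1;\,\cdot\,)$. First I would split the sum over $n\in\mathbb{Z}$ into negative and nonnegative parts, and use Lemma 2.1 to convert the terms with negative index: $G_{-m}^{(b,c)}(z;\rho)=(-b)^m G_m^{(b,c)}(z;\rho)$ for $m\in\mathbb{Z}_{\geq 0}$. After substituting the defining series, each $G_n^{(b,c)}(z;\rho)t^n$ contributes a double sum over the Bessel-series index $k\geq 0$ and the power $n$; the combined object is a sum over all pairs $(k,n)$ with $k\geq 0$ and $n$ ranging suitably, of terms involving $(-b)^k(c;\rho)_{2k+n}(z/2)^{2k+n}t^n/[k!\,\Gamma(n+k+1)\Gamma(n+2k+1)]$.

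Next I would perform the change of summation variable that is standard for Bessel-type generating functions: set $\ell = n+2k$ (so that $(z/2)^{2k+n}=(z/2)^\ell$ and $(c;\rho)_{2k+n}=(c;\rho)_\ell$), and sum over $\ell\geq 0$ and $k\geq 0$ with $n=\ell-2k$. This should collapse the $t$-dependence into factors $t^{\ell-2k}=t^\ell (1/t^2)^k$, and the $b$-dependence combined with the $(-b)^k$ already present should reorganize into powers of $(t - b/t)$. Concretely, I expect to recognize the inner sum over $k$ as a binomial expansion of $\bigl(t-\tfrac{b}{t}\bigr)^{\ell}$ divided by appropriate factorials, using the identity $\binom{\ell}{k}=\ell!/(k!\,(\ell-2k)!)$ — though one must be careful here because the denominators are $\Gamma(n+k+1)\Gamma(n+2k+1)=\Gamma(\ell-k+1)\Gamma(\ell+1)$, not the naive binomial denominators, so the regrouping needs the identity $k!\,\Gamma(\ell-k+1) = \ell!/\binom{\ell}{k}$ only after checking the index shifts carefully.

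The resulting expression should read $\sum_{\ell\geq 0} \frac{(c;\rho)_\ell}{\Gamma(\ell+1)\,\ell!}\bigl(\tfrac{z}{2}\bigr)^\ell \bigl(t-\tfrac{b}{t}\bigr)^\ell$, which is exactly $\sum_{\ell\geq 0}\frac{(c;\rho)_\ell}{(1)_\ell}\frac{w^\ell}{\ell!}$ with $w=(t-\tfrac{b}{t})\tfrac{z}{2}$, using $\Gamma(\ell+1)=\ell!=(1)_\ell$. By the definition of the extended confluent hypergeometric function $_1F_1[(a,\rho);c;z]$ recalled in the introduction (here with numerator parameter $c$, denominator parameter $1$, and the generalized Pochhammer symbol $(c;\rho)_\ell$), this is precisely $_1F_1((c;\rho),1;(t-\tfrac{b}{t})\tfrac{z}{2})$.

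The main obstacle will be bookkeeping in the index manipulation: handling the negative-index terms via Lemma 2.1, verifying that after the substitution $\ell=n+2k$ the combinatorial factor assembles correctly into $\bigl(t-\tfrac{b}{t}\bigr)^\ell$ (getting the signs and the placement of $b$ right, since $(-b)^k$ from the series must combine with $t^{-2k}$ to give the $k$-th term of the binomial expansion of $(t-b/t)^\ell$), and justifying the interchange of the order of summation. For the latter I would invoke absolute convergence: the series (1.5) defining $_1F_1$ converges absolutely for all finite argument, and for $t\neq 0$ and finite $z$ the rearranged double series is dominated termwise, so Fubini/Tonelli for series applies. I would present the convergence remark briefly and spend the bulk of the proof on the explicit algebraic reorganization.
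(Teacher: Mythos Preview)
Your proposal is correct and follows essentially the same route as the paper's proof: split the bilateral sum, apply Lemma~2.1 to the negative-index part, insert the series definition, reindex via $\ell=n+2k$, and recognize the binomial expansion of $(t-\tfrac{b}{t})^{\ell}$ to obtain the ${}_1F_1$ series. The only cosmetic difference is that the paper invokes an explicit combinatorial identity from Rainville (Lemma~12) to merge the two half-sums so that the inner $k$-sum runs over the full range $0\le k\le \ell$ needed for the binomial expansion, whereas you plan to verify this merging by hand---which is exactly the ``careful index shifts'' you flagged as the main obstacle.
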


\begin{proof}
It is clear that we have 
\begin{equation*}
\sum\limits_{n=-\infty}^{\infty}G_{n}^{(b,c)}(z;\rho)t^{n}=\sum
\limits_{n=-\infty}^{-1}G_{n}^{(b,c)}(z;\rho)t^{n}+\sum\limits_{n=0}^{\infty
}G_{n}^{(b,c)}(z;\rho)t^{n}.
\end{equation*}
Taking $-n-1$~instead of $n$~in the first summation of the right hand side,
we have%
\begin{equation*}
\sum\limits_{n=-\infty}^{\infty}G_{n}^{(b,c)}(z;\rho)t^{n}=\sum%
\limits_{n=0}^{\infty}G_{-n-1}^{(b,c)}(z;\rho)t^{-n-1}+\sum\limits_{n=0}^{%
\infty}G_{n}^{(b,c)}(z;\rho)t^{n}.
\end{equation*}
Now, using Lemma 2.1, we get%
\begin{equation*}
\sum\limits_{n=-\infty}^{\infty}G_{n}^{(b,c)}(z;\rho)t^{n}=\sum%
\limits_{n=0}^{\infty}(-b)^{n+1}G_{n+1}^{(b,c)}(z;\rho)t^{-n-1}+\sum%
\limits_{n=0}^{\infty }G_{n}^{(b,c)}(z;\rho)t^{n}.
\end{equation*}
Plugging the series definitions of the unified Bessel function into right
hand side, we have%
\begin{align*}
\sum\limits_{n=-\infty}^{\infty}G_{n}^{(b,c)}(z;\rho)t^{n} & =\sum
\limits_{n=0}^{\infty}\sum\limits_{k=0}^{\infty}\frac{(-b)^{n+1}(-b)^{k}(c;%
\rho)_{2k+n+1}}{(n+k+1)!k!(n+2k+1)!}(\frac{z}{2})^{2k+n+1}t^{-n-1} \\
& +\sum\limits_{n=0}^{\infty}\sum\limits_{k=0}^{\infty}\frac{%
(-b)^{k}(c;\rho)_{2k+n}}{(n+k)!k!(n+2k)!}(\frac{z}{2})^{2k+n}t^{n}.
\end{align*}
Letting $n-2k$ instead of $n,$ we get%
\begin{align*}
\sum\limits_{n=-\infty}^{\infty}G_{n}^{(b,c)}(z;\rho)t^{n} & =\sum
\limits_{n=0}^{\infty}\sum\limits_{k=0}^{[\dfrac{n}{2}]}\frac{%
(-b)^{n-k+1}(c;\rho)_{n+1}}{(n-k+1)!k!(n+1)!}(\frac{z}{2})^{n+1}t^{-n+2k-1}
\\
& +\sum\limits_{n=0}^{\infty}\sum\limits_{k=0}^{[\dfrac{n}{2}]}\frac {%
(-b)^{k}(c;\rho)_{n}}{(n-k)!k!n!}(\frac{z}{2})^{n}t^{n-2k}.
\end{align*}
Taking $n-1$~instead of $n$ in the first summation of the right side, we have%
\begin{align*}
\sum\limits_{n=-\infty}^{\infty}G_{n}^{(b,c)}(z;\rho)t^{n} & =\sum
\limits_{n=1}^{\infty}\sum\limits_{k=0}^{[\dfrac{n-1}{2}]}\frac{%
(-b)^{n-k}(c;\rho)_{n}}{(n-k)!k!n!}(\frac{z}{2})^{n}t^{-n+2k} \\
& +(c;\rho)_{0}+\sum\limits_{n=1}^{\infty}\sum\limits_{k=0}^{[\dfrac{n}{2}]}%
\frac{(-b)^{k}(c;\rho)_{n}}{(n-k)!k!n!}(\frac{z}{2})^{n}t^{n-2k}.
\end{align*}
Taking into consideration of the following fact, which was proved in \cite{R}%
, (Lemma 12, page 112-113)%
\begin{equation*}
\sum\limits_{n=1}^{\infty}\sum\limits_{k=0}^{[\dfrac{n-1}{2}]}A(n-k,n)+\sum
\limits_{n=1}^{\infty}\sum\limits_{k=0}^{[\dfrac{n}{2}]}A(k,n)=\sum
\limits_{n=1}^{\infty}\sum\limits_{k=0}^{n}A(k,n)
\end{equation*}
we have%
\begin{equation*}
\sum\limits_{n=-\infty}^{\infty}G_{n}^{(b,c)}(z;\rho)t^{n}=(c;\rho)_{0}+\sum%
\limits_{n=1}^{\infty}\sum\limits_{k=0}^{n}\frac{(-b)^{k}(c;\rho)_{n}}{%
(n-k)!k!n!}(\frac{z}{2})^{n}t^{n-2k}.
\end{equation*}
Substituting the expansion of $(t-\frac{b}{t})^{n},$we get%
\begin{align*}
\sum\limits_{n=-\infty}^{\infty}G_{n}^{(b,c)}(z;\rho)t^{n} & =\sum
\limits_{n=0}^{\infty}\frac{(c;\rho)_{n}}{(1)_{n}}\frac{(t-\frac{b}{t})^{n}(%
\frac{z}{2})^{n}}{n!}, \\
& =~_{1}F_{1}((c;\rho),1;(t-\frac{b}{t})\frac{z}{2}).
\end{align*}
\end{proof}

Substituting $b=1~$and $b=-1$ in Theorem 2.2, the generating functions of
the generalized three parameter Bessel and modified Bessel functions of the
first kind are obtained, respectively:

\begin{corollary}
For $t\neq0$ and for all finite $z~$and $n\in%
\mathbb{Z}
,~~$the generating functions of the generalized three parameter Bessel and
modified Bessel functions of the first kind are given by%
\begin{align*}
_{1}F_{1}((c;\rho),1;(t-\frac{1}{t})\frac{z}{2}) & =\sum\limits_{n=-\infty
}^{\infty}J_{n}^{(c)}(z;\rho)t^{n}, \\
_{1}F_{1}((c;\rho),1;(t+\frac{1}{t})\frac{z}{2}) & =\sum\limits_{n=-\infty
}^{\infty}I_{n}^{(c)}(z;\rho)t^{n}.
\end{align*}
\end{corollary}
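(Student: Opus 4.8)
The plan is to derive both formulas as immediate specializations of Theorem 2.2 rather than re-running the whole series manipulation. First I would recall that Theorem 2.2 asserts
\[
{}_{1}F_{1}\!\left((c;\rho),1;\Bigl(t-\frac{b}{t}\Bigr)\frac{z}{2}\right)=\sum_{n=-\infty}^{\infty}G_{n}^{(b,c)}(z;\rho)\,t^{n},
\]
valid for $t\neq 0$, all finite $z$ and $n\in\mathbb{Z}$. The key observation is that the parameter $b$ enters the left-hand side only through the combination $t-\tfrac{b}{t}$ and enters the right-hand side only through the coefficients $G_{n}^{(b,c)}(z;\rho)$, so it is legitimate to substitute any fixed value of $b$.

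The main step is the substitution $b=1$: the left-hand side becomes ${}_{1}F_{1}\!\left((c;\rho),1;(t-\tfrac{1}{t})\tfrac{z}{2}\right)$, and on the right-hand side $G_{n}^{(1,c)}(z;\rho)$ coincides, by the definition given in the introduction (the case $b=1$ of $G_{\nu}^{(b,c)}$), with $J_{n}^{(c)}(z;\rho)$. This yields the first identity. For the second, I would substitute $b=-1$: then $t-\tfrac{b}{t}=t+\tfrac{1}{t}$, so the left-hand side becomes ${}_{1}F_{1}\!\left((c;\rho),1;(t+\tfrac{1}{t})\tfrac{z}{2}\right)$, while $G_{n}^{(-1,c)}(z;\rho)=I_{n}^{(c)}(z;\rho)$ by the corresponding definition, giving the second identity. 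Since $n$ ranges over $\mathbb{Z}$ in both series, no parity or convergence issue beyond what is already covered by Theorem 2.2 arises.

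I do not expect any genuine obstacle here: the corollary is a direct evaluation, and the only thing to be careful about is recording the sign change $t-\tfrac{b}{t}\mapsto t+\tfrac{1}{t}$ when $b=-1$ and matching the specialized $G_{n}^{(b,c)}$ with the named functions $J_{n}^{(c)}$ and $I_{n}^{(c)}$. One could optionally add a sentence noting that letting $c=1,\ \rho=0$ further recovers the classical generating functions $e^{(t-1/t)z/2}=\sum J_{n}(z)t^{n}$ and $e^{(t+1/t)z/2}=\sum I_{n}(z)t^{n}$, since ${}_{1}F_{1}(1,1;w)=e^{w}$, but this is not needed for the proof of the stated corollary.
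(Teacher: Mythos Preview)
Your proposal is correct and matches the paper's own approach exactly: the corollary is obtained by substituting $b=1$ and $b=-1$ in Theorem~2.2 and identifying $G_{n}^{(1,c)}(z;\rho)=J_{n}^{(c)}(z;\rho)$ and $G_{n}^{(-1,c)}(z;\rho)=I_{n}^{(c)}(z;\rho)$. Your remark about the further specialization $c=1$, $\rho=0$ recovering the classical generating functions is also precisely the content of the paper's next corollary.
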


Taking $c=1$ and $\rho =0$ in Corollary 2.2.1 and considering the facts $%
J_{n}^{(1)}(z;0)=J_{n}(z),~I_{n}^{(1)}(z;0)=I_{n}(z)$, the following
Corollary is obtained:

\begin{corollary}
\cite{R} \ For $t\neq0$ and for all finite $z~$and $\ n\in%
\mathbb{Z}
,~$generating functions of the usual Bessel and modified Bessel functions
are given by 
\begin{align*}
e^{\frac{z}{2}(t-\tfrac{1}{t})} &
=\sum\limits_{n=-\infty}^{\infty}J_{n}(z)t^{n}, \\
e^{\frac{z}{2}(t+\tfrac{1}{t})} &
=\sum\limits_{n=-\infty}^{\infty}I_{n}(z)t^{n}.
\end{align*}
\end{corollary}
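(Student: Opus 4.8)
The plan is to derive this as a straightforward specialization of Corollary~2.2.1 with $c=1$ and $\rho=0$; the only work is to verify that both sides collapse to the claimed classical objects.

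First I would record how the generalized Pochhammer symbol behaves when $\rho=0$: by its definition $(\lambda;0)_\nu=(\lambda)_\nu$, so in particular $(1;0)_n=(1)_n=n!=\Gamma(n+1)$ for $n\in\mathbb{Z}_{\ge0}$. Substituting this into the generalized confluent hypergeometric function of the Introduction, the numerator and denominator Pochhammer symbols cancel and one gets
\[
{}_1F_1((1;0),1;w)=\sum_{n=0}^{\infty}\frac{(1)_n}{(1)_n}\frac{w^n}{n!}=\sum_{n=0}^{\infty}\frac{w^n}{n!}=e^{w}.
\]
Taking $w=\bigl(t-\tfrac1t\bigr)\tfrac z2$ and $w=\bigl(t+\tfrac1t\bigr)\tfrac z2$ then turns the two left-hand sides of Corollary~2.2.1 into $e^{\frac z2(t-\frac1t)}$ and $e^{\frac z2(t+\frac1t)}$ respectively.

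Next I would treat the right-hand sides. Putting $c=1$, $\rho=0$ in the series defining $J_\nu^{(c)}(z;\rho)$ and $I_\nu^{(c)}(z;\rho)$ and using $(1;0)_{2k+\nu}=(1)_{2k+\nu}=\Gamma(\nu+2k+1)$, the factor $\Gamma(\nu+2k+1)$ in the denominator cancels, leaving exactly $\sum_{k\ge0}\frac{(\mp1)^k}{\Gamma(\nu+k+1)\,k!}\bigl(\tfrac z2\bigr)^{2k+\nu}$, which is the classical series for $J_\nu(z)$ (resp. $I_\nu(z)$); equivalently one simply invokes the identities $J_n^{(1)}(z;0)=J_n(z)$ and $I_n^{(1)}(z;0)=I_n(z)$ already noted above. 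Feeding these two reductions into Corollary~2.2.1 gives the two asserted generating functions, with the admissible range $t\ne0$, $z$ finite, $n\in\mathbb{Z}$ inherited verbatim.

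There is essentially no hard step: the result is a pure specialization, and the only thing requiring a moment's care is the bookkeeping that makes ${}_1F_1((1;0),1;w)$ collapse to $e^{w}$ — i.e.\ the cancellation of the identical numerator and denominator Pochhammer symbols $(1)_n$ — together with the analogous cancellation of the extra $\Gamma(\nu+2k+1)$ in the Bessel series. If one wanted a fully self-contained argument one could instead bypass Corollary~2.2.1 and rerun the computation in the proof of Theorem~2.2 with $c=1$, $\rho=0$ from the outset, replacing the inner ${}_1F_1$ by $\exp$ throughout; but routing it through Corollary~2.2.1 is cleaner.
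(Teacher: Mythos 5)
Your proposal is correct and follows exactly the paper's route: specializing Corollary 2.2.1 at $c=1$, $\rho=0$, using $(1;0)_n=(1)_n$ to collapse ${}_1F_1((1;0),1;w)$ to $e^{w}$ and $J_n^{(1)}(z;0)=J_n(z)$, $I_n^{(1)}(z;0)=I_n(z)$ on the right-hand sides. The extra bookkeeping you spell out is just a more explicit version of what the paper asserts in one line before the corollary.
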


In the following theorem, the integral representation of the unified four
parameter Bessel function is presented:

\begin{theorem}
The integral formula satisfied by the unified four parameter Bessel function
is given by 
\begin{equation}
G_{\nu }^{(b,c)}(z;\rho )=\frac{(\frac{z}{2})^{\nu }}{[\Gamma (\nu
+1)]^{2}\Gamma (c)}\int\limits_{0}^{\infty }t^{c+\nu -1}e^{-t-\frac{\rho }{t}%
}~_{0}F_{3}(-;\nu +1,\frac{\nu +1}{2},\frac{\nu +2}{2};\frac{-bz^{2}t^{2}}{16%
})dt
\end{equation}%
where $\func{Re}(c)>0$~and $\func{Re}(\nu )>-1.$
\end{theorem}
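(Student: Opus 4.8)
The plan is to start from the series definition (1.8) of $G_{\nu}^{(b,c)}(z;\rho)$ and replace the generalized Pochhammer symbol $(c;\rho)_{2k+\nu}$ by its integral representation (1.6). Concretely, since
\[
(c;\rho)_{2k+\nu}=\frac{1}{\Gamma(c)}\int_{0}^{\infty}t^{c+2k+\nu-1}e^{-t-\frac{\rho}{t}}\,dt,
\]
substituting this into (1.8) gives a double expression — a sum over $k$ of an integral over $t$. The first key step is to justify interchanging the summation and the integration (dominated convergence / absolute convergence of the resulting double integral–series, using $\operatorname{Re}(c)>0$ so that the integral converges near $t=0$ and the exponential factor controls the tail), so that
\[
G_{\nu}^{(b,c)}(z;\rho)=\frac{1}{\Gamma(c)}\int_{0}^{\infty}t^{c+\nu-1}e^{-t-\frac{\rho}{t}}\left(\sum_{k=0}^{\infty}\frac{(-b)^{k}(\tfrac{z}{2})^{2k+\nu}t^{2k}}{\Gamma(\nu+k+1)\Gamma(\nu+2k+1)\,k!}\right)dt.
\]

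The second step is to identify the inner series as a ${}_{0}F_{3}$ hypergeometric function times the prefactor $(\tfrac{z}{2})^{\nu}/[\Gamma(\nu+1)]^{2}$. Pulling out the $k=0$ normalization, the general term is
\[
\frac{(\tfrac{z}{2})^{\nu}}{\Gamma(\nu+1)^{2}}\cdot\frac{(-b)^{k}(\tfrac{z}{2})^{2k}t^{2k}}{k!}\cdot\frac{\Gamma(\nu+1)}{\Gamma(\nu+k+1)}\cdot\frac{\Gamma(\nu+1)}{\Gamma(\nu+2k+1)}
=\frac{(\tfrac{z}{2})^{\nu}}{\Gamma(\nu+1)^{2}}\cdot\frac{1}{(\nu+1)_{k}}\cdot\frac{1}{(\nu+1)_{2k}}\cdot\frac{(-b)^{k}(\tfrac{z}{2})^{2k}t^{2k}}{k!}.
\]
Here the main algebraic manipulation is to rewrite $(\nu+1)_{2k}$ using the duplication-type identity $(\alpha)_{2k}=2^{2k}(\tfrac{\alpha}{2})_{k}(\tfrac{\alpha+1}{2})_{k}$ — which is exactly formula (1.3) in the excerpt with $\alpha=\nu+1$, giving $(\nu+1)_{2k}=2^{2k}\bigl(\tfrac{\nu+1}{2}\bigr)_{k}\bigl(\tfrac{\nu+2}{2}\bigr)_{k}$. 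Substituting this, the $2^{2k}$ in the denominator combines with $(\tfrac{z}{2})^{2k}t^{2k}$ to produce $(z^{2}t^{2}/16)^{k}$, and together with the sign $(-b)^{k}$ we obtain exactly $\bigl(-bz^{2}t^{2}/16\bigr)^{k}$. Hence the inner series is
\[
\frac{(\tfrac{z}{2})^{\nu}}{\Gamma(\nu+1)^{2}}\sum_{k=0}^{\infty}\frac{1}{(\nu+1)_{k}\,\bigl(\tfrac{\nu+1}{2}\bigr)_{k}\,\bigl(\tfrac{\nu+2}{2}\bigr)_{k}}\frac{\bigl(-bz^{2}t^{2}/16\bigr)^{k}}{k!}
=\frac{(\tfrac{z}{2})^{\nu}}{\Gamma(\nu+1)^{2}}\,{}_{0}F_{3}\!\left(-;\nu+1,\tfrac{\nu+1}{2},\tfrac{\nu+2}{2};\tfrac{-bz^{2}t^{2}}{16}\right).
\]
Putting this back under the integral yields precisely (2.2).

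The step I expect to be the main obstacle is the rigorous justification of the interchange of sum and integral in the first step: one must check that $\sum_{k}\int_{0}^{\infty}\bigl|\,t^{c+2k+\nu-1}e^{-t-\rho/t}\,\bigr|\cdot\frac{b^{k}(|z|/2)^{2k+\operatorname{Re}\nu}}{|\Gamma(\nu+k+1)\Gamma(\nu+2k+1)|\,k!}<\infty$. This follows from $\int_{0}^{\infty}t^{\operatorname{Re}(c)+2k+\operatorname{Re}(\nu)-1}e^{-t-\rho/t}\,dt=\Gamma_{\rho}(\operatorname{Re}(c)+2k+\operatorname{Re}(\nu))$ being finite for each $k$ (guaranteed by $\operatorname{Re}(c)>0$, $\operatorname{Re}(\nu)>-1$, $\operatorname{Re}(\rho)>0$), combined with the fact that the ${}_{0}F_{3}$ series is entire in its argument so the per-$k$ bounds decay super-geometrically once the Gamma growth in $k$ is accounted for; this makes Fubini/Tonelli applicable. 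The rest is the routine Pochhammer bookkeeping described above, together with recognizing that the conditions $\operatorname{Re}(c)>0$ and $\operatorname{Re}(\nu)>-1$ are exactly what is needed for both the convergence of the extended Gamma integral and the non-vanishing of $\Gamma(\nu+1)$ in the denominator.
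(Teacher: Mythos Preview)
Your proposal is correct and follows essentially the same route as the paper's own proof: insert the integral representation (1.6) of $(c;\rho)_{2k+\nu}$ into the series (1.8), interchange sum and integral under the hypotheses $\operatorname{Re}(c)>0$, $\operatorname{Re}(\nu)>-1$, then use the duplication identity (1.3) with $\alpha=\nu+1$ to convert $(\nu+1)_{2k}$ into $2^{2k}\bigl(\tfrac{\nu+1}{2}\bigr)_{k}\bigl(\tfrac{\nu+2}{2}\bigr)_{k}$ and recognize the resulting series as ${}_{0}F_{3}$. If anything, your treatment of the Fubini/Tonelli step is more explicit than the paper's, which simply asserts the interchange is valid under the stated conditions.
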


\begin{proof}
Applying the generalized Pochhammer expansion into (1.8) and using the usual
Pochhammer function expansion (1.2), we have%
\begin{equation*}
G_{\nu}^{(b,c)}(z;\rho)=\frac{1}{[\Gamma(\nu+1)]^{2}}\sum\limits_{k=0}^{%
\infty}\frac{(-b)^{k}}{(\nu+1)_{k}(\nu+1)_{2k}k!}(\frac{z}{2})^{2k+\nu }\{%
\frac{1}{\Gamma(c)}\int\limits_{0}^{\infty}t^{c+2k+\nu-1}e^{-t-\frac{\rho }{t%
}}dt\}.
\end{equation*}
Interchanging the order of summation and integral under the conditions $%
\func{Re}(c)>0$~and $\func{Re}(\nu)>-1,$~we have%
\begin{equation*}
G_{\nu}^{(b,c)}(z;\rho)=\frac{(\frac{z}{2})^{\nu}}{[\Gamma(\nu+1)]^{2}%
\Gamma(c)}\int\limits_{0}^{\infty}\{\sum\limits_{k=0}^{\infty}\frac{1}{%
(\nu+1)_{k}(\nu+1)_{2k}k!}(-\frac{bz^{2}t^{2}}{4})^{k}\}t^{c+\nu -1}e^{-t-%
\frac{\rho}{t}}dt.
\end{equation*}
Letting $\nu+1$~in place of $\alpha$~and taking $k$~for $n$~in the
duplication formula (1.3), we have 
\begin{equation*}
G_{\nu}^{(b,c)}(z;\rho)=\frac{(\frac{z}{2})^{\nu}}{[\Gamma(\nu+1)]^{2}%
\Gamma(c)}\int\limits_{0}^{\infty}\{\sum\limits_{k=0}^{\infty}\frac{1}{%
(\nu+1)_{k}(\frac{\nu+1}{2})_{k}(\frac{\nu+2}{2})_{k}k!}(-\frac{bz^{2}t^{2}}{%
16})^{k}\}t^{c+\nu-1}e^{-t-\frac{\rho}{t}}dt.
\end{equation*}
Taking into consideration of (1.5), we have%
\begin{equation*}
G_{\nu}^{(b,c)}(z;\rho)=\frac{(\frac{z}{2})^{\nu}}{[\Gamma(\nu+1)]^{2}%
\Gamma(c)}\int\limits_{0}^{\infty}t^{c+\nu-1}e^{-t-\frac{\rho}{t}%
}~_{0}F_{3}(-;\nu+1,\frac{\nu+1}{2},\frac{\nu+2}{2};\frac{-bz^{2}t^{2}}{16}%
)dt.
\end{equation*}
\end{proof}

Letting $b=1$~and $b=-1$ in Theorem 2.3, the integral representations of the
generalized three parameter Bessel and modified Bessel functions of the
first kind are found, respectively:

\begin{corollary}
Integral representations satisfied by the generalized three parameter Bessel
and modified Bessel functions of the first kind are given by%
\begin{align*}
J_{\nu}^{(c)}(z;\rho) & =\frac{(\frac{z}{2})^{\nu}}{[\Gamma(\nu
+1)]^{2}\Gamma(c)}\int\limits_{0}^{\infty}t^{c+\nu-1}e^{-t-\frac{\rho}{t}%
}~_{0}F_{3}(-;\nu+1,\frac{\nu+1}{2},\frac{\nu+2}{2};\frac{-z^{2}t^{2}}{16}%
)dt, \\
I_{\nu}^{(c)}(z;\rho) & =\frac{(\frac{z}{2})^{\nu}}{[\Gamma(\nu
+1)]^{2}\Gamma(c)}\int\limits_{0}^{\infty}t^{c+\nu-1}e^{-t-\frac{\rho}{t}%
}~_{0}F_{3}(-;\nu+1,\frac{\nu+1}{2},\frac{\nu+2}{2};\frac{z^{2}t^{2}}{16})dt,
\end{align*}
where $\func{Re}(c)>0$ and $\func{Re}(\nu)>-1.$
\end{corollary}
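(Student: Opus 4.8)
The plan is to recognize that the two generalized three parameter functions in the corollary are nothing but the unified four parameter Bessel function $G_\nu^{(b,c)}(z;\rho)$ evaluated at the distinguished values $b=1$ and $b=-1$, and then simply to read off the two integral formulas from Theorem 2.3 by specializing $b$.

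First I would compare the defining series. In $(1.8)$ the general term of $G_\nu^{(b,c)}(z;\rho)$ carries the factor $(-b)^k$; putting $b=1$ turns this into $(-1)^k$, which reproduces exactly the series defining $J_\nu^{(c)}(z;\rho)$ in the introduction, while putting $b=-1$ turns it into $1$, which reproduces exactly the series defining $I_\nu^{(c)}(z;\rho)$. Hence
\[
J_\nu^{(c)}(z;\rho)=G_\nu^{(1,c)}(z;\rho),\qquad I_\nu^{(c)}(z;\rho)=G_\nu^{(-1,c)}(z;\rho).
\]
Next I would substitute these two values of $b$ into the integral formula of Theorem 2.3. There the argument of the ${}_0F_3$ is $-bz^2t^2/16$: with $b=1$ it becomes $-z^2t^2/16$, giving the asserted representation of $J_\nu^{(c)}(z;\rho)$, and with $b=-1$ it becomes $z^2t^2/16$, giving the asserted representation of $I_\nu^{(c)}(z;\rho)$. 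The admissibility conditions $\func{Re}(c)>0$ and $\func{Re}(\nu)>-1$ are precisely the ones under which Theorem 2.3 was established, and $\func{Re}(\rho)>0$ is what keeps the factor $e^{-\rho/t}$ from spoiling the convergence of the $t$-integral near $t=0$, so no further hypotheses are needed.

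There is essentially no obstacle here: the corollary is a direct specialization of Theorem 2.3. The only point meriting a moment's care is the sign bookkeeping, namely checking that $(-b)^k$ reduces to $(-1)^k$ when $b=1$ and to $1$ when $b=-1$, so that the series match term by term; once the identifications $J_\nu^{(c)}=G_\nu^{(1,c)}$ and $I_\nu^{(c)}=G_\nu^{(-1,c)}$ are in place, both displayed formulas follow by merely copying the formula of Theorem 2.3 with the respective value of $b$ inserted.
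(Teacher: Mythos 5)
Your proposal is correct and follows exactly the paper's route: the corollary is obtained by specializing Theorem 2.3 at $b=1$ and $b=-1$, after noting that the series definitions give $J_{\nu}^{(c)}(z;\rho)=G_{\nu}^{(1,c)}(z;\rho)$ and $I_{\nu}^{(c)}(z;\rho)=G_{\nu}^{(-1,c)}(z;\rho)$. Nothing further is needed.
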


Substituting $c=1$ and $\rho =0$ in Corollary 2.3.1, the following Corollary
is obtained:

\begin{corollary}
Integral formulas satisfied by the usual Bessel and modified Bessel
functions are given by 
\begin{align*}
J_{\nu}(z) & =\frac{(\frac{z}{2})^{\nu}}{[\Gamma(\nu+1)]^{2}}\int
\limits_{0}^{\infty}t^{\nu}e^{-t}~_{0}F_{3}(-;\nu+1,\frac{\nu+1}{2},\frac {%
\nu+2}{2};\frac{-z^{2}t^{2}}{16})dt, \\
I_{\nu}(z) & =\frac{(\frac{z}{2})^{\nu}}{[\Gamma(\nu+1)]^{2}}\int
\limits_{0}^{\infty}t^{\nu}e^{-t}~_{0}F_{3}(-;\nu+1,\frac{\nu+1}{2},\frac {%
\nu+2}{2};\frac{z^{2}t^{2}}{16})dt,
\end{align*}
where $\func{Re}(\nu)>-1.$
\end{corollary}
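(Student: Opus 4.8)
The plan is to derive this statement as the special case $c=1$, $\rho=0$ of Corollary 2.3.1 (equivalently, of Theorem 2.3 with $b=\pm1$). First I would record the three elementary simplifications that take place under this specialization of the integral in Theorem 2.3: the weight $e^{-t-\rho/t}$ reduces to $e^{-t}$ since $\rho=0$; the normalizing constant $\Gamma(c)$ equals $\Gamma(1)=1$; and the power $t^{c+\nu-1}$ in the integrand becomes $t^{\nu}$. Moreover, putting $b=1$ turns the argument $-bz^{2}t^{2}/16$ of the $_{0}F_{3}$ into $-z^{2}t^{2}/16$, while $b=-1$ turns it into $z^{2}t^{2}/16$; these are exactly the two right-hand sides appearing in the claim.

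Next I would verify that the left-hand sides specialize as expected, namely $J_{\nu}^{(1)}(z;0)=J_{\nu}(z)$ and $I_{\nu}^{(1)}(z;0)=I_{\nu}(z)$. This is immediate from the series definitions: with $\rho=0$ the generalized Pochhammer symbol becomes the ordinary one and $(1;0)_{2k+\nu}=(1)_{2k+\nu}=\Gamma(2k+\nu+1)$, so the factor $\Gamma(\nu+2k+1)$ in the denominator cancels and one is left with $\sum_{k=0}^{\infty}\frac{(\mp1)^{k}}{k!\,\Gamma(\nu+k+1)}(\frac{z}{2})^{2k+\nu}$, i.e. the classical series for $J_{\nu}$ (sign $-$) and $I_{\nu}$ (sign $+$). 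This reduction was already noted in the introduction. Combining the two observations gives the asserted formulas at once.

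If instead a self-contained proof is wanted, one can simply repeat the argument of Theorem 2.3 with $c=1$ and $\rho=0$ from the outset: expand the $_{0}F_{3}$ as a power series in $t^{2}$, interchange summation and integration (legitimate for $\func{Re}(\nu)>-1$, exactly as in that proof), evaluate $\int_{0}^{\infty}t^{\nu+2k}e^{-t}\,dt=\Gamma(\nu+2k+1)$, and then apply the duplication formula (1.3) with $\alpha=\nu+1$ in reverse to recombine $(\nu+1)_{k}(\frac{\nu+1}{2})_{k}(\frac{\nu+2}{2})_{k}$ back into $(\nu+1)_{k}(\nu+1)_{2k}$ and recover the Bessel series.

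I do not expect a real obstacle here, since the result is a direct specialization; the only point deserving a word of care is the justification of the term-by-term integration, and that is inherited verbatim from the proof of Theorem 2.3 under the stated hypothesis $\func{Re}(\nu)>-1$.
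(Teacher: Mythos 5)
Your proposal is correct and follows exactly the route the paper takes: the statement is obtained by substituting $c=1$ and $\rho=0$ in Corollary 2.3.1 (equivalently $b=\pm 1$, $c=1$, $\rho=0$ in Theorem 2.3), with the reduction $J_{\nu}^{(1)}(z;0)=J_{\nu}(z)$, $I_{\nu}^{(1)}(z;0)=I_{\nu}(z)$ already noted in the introduction. Your remarks on the simplification of the weight, the constant $\Gamma(1)=1$, and the sign of the $_{0}F_{3}$ argument are exactly the checks implicit in the paper's one-line derivation, so nothing further is needed.
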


Note that, some integrals involving usual Bessel functions were obtained in 
\cite{P.S.M, M.A, J.P, M.G, M.R}.

Taking $t=\frac{u}{1-u}$~in Theorem 2.3, the following integral
representation is found:

\begin{corollary}
Integral representation satisfied by the unified four parameter Bessel
function is given by 
\begin{align*}
G_{\nu}^{(b,c)}(z;\rho) & =\frac{(\frac{z}{2})^{\nu}}{[\Gamma(\nu
+1)]^{2}\Gamma(c)}\int\limits_{0}^{1}u^{c+\nu-1}(1-u)^{-c-\nu-1}e^{\dfrac{%
-u^{2}-\rho(1-u)^{2}}{u(1-u)}} \\
& \times~_{0}F_{3}(-;\nu+1,\frac{\nu+1}{2},\frac{\nu+2}{2};\frac{-bz^{2}u^{2}%
}{(1-u)^{2}16})du
\end{align*}
where $\func{Re}(\nu)>-1~$and $\func{Re}(c)>0.$
\end{corollary}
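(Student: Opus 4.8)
The plan is to derive the stated identity directly from the integral representation of $G_{\nu}^{(b,c)}(z;\rho)$ in Theorem 2.3 by carrying out the indicated change of variables $t=\frac{u}{1-u}$. First I would note that $u\mapsto\frac{u}{1-u}$ is a strictly increasing $C^{\infty}$ bijection of $(0,1)$ onto $(0,\infty)$, with $t\to 0^{+}$ as $u\to 0^{+}$ and $t\to\infty$ as $u\to 1^{-}$; since the integral in Theorem 2.3 converges absolutely under the hypotheses $\func{Re}(c)>0$ and $\func{Re}(\nu)>-1$, this substitution is legitimate and simply transplants the integral from $(0,\infty)$ to $(0,1)$.

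Next I would track each factor of the integrand under the substitution. Differentiating $t=\frac{u}{1-u}$ gives $dt=\frac{du}{(1-u)^{2}}$. The power factor becomes $t^{c+\nu-1}=u^{c+\nu-1}(1-u)^{-(c+\nu-1)}$, so that $t^{c+\nu-1}\,dt=u^{c+\nu-1}(1-u)^{-c-\nu-1}\,du$, which is exactly the weight appearing in the corollary. The exponential factor becomes $e^{-t-\rho/t}=\exp\!\left(-\frac{u}{1-u}-\frac{\rho(1-u)}{u}\right)=\exp\!\left(\frac{-u^{2}-\rho(1-u)^{2}}{u(1-u)}\right)$, matching the displayed exponent. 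Finally, in the argument of the $\,_{0}F_{3}$ one has $t^{2}=\frac{u^{2}}{(1-u)^{2}}$, so $\frac{-bz^{2}t^{2}}{16}$ becomes $\frac{-bz^{2}u^{2}}{16(1-u)^{2}}$; assembling these pieces with the unchanged prefactor $\frac{(z/2)^{\nu}}{[\Gamma(\nu+1)]^{2}\Gamma(c)}$ yields precisely the asserted representation.

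There is no genuine obstacle here: the whole argument is an elementary change of variables, and the only point deserving a word of justification is its validity, which follows at once because the transformed integrand is continuous on $(0,1)$ and the original integral is absolutely convergent under the stated conditions on $c$ and $\nu$ (the same conditions already in force in Theorem 2.3). As a sanity check one may observe that for $\rho>0$ both $\frac{u}{1-u}$ and $\frac{\rho(1-u)}{u}$ are nonnegative on $(0,1)$, so the exponential factor indeed decays as $u\to 0^{+}$ and as $u\to 1^{-}$, consistent with the decay of $e^{-t-\rho/t}$ at the endpoints of $(0,\infty)$.
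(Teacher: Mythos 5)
Your proposal is correct and is exactly the argument the paper intends: the corollary is stated as the result of the substitution $t=\frac{u}{1-u}$ in Theorem 2.3, and your computation of $t^{c+\nu-1}\,dt$, of the exponent $-t-\rho/t=\frac{-u^{2}-\rho(1-u)^{2}}{u(1-u)}$, and of the $_{0}F_{3}$ argument reproduces the stated formula. Nothing further is needed.
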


\begin{theorem}
$~$The Laplace transform of the unified \ four parameter Bessel function is
given by%
\begin{equation*}
\mathcal{L}\{G_{\nu}^{(b,c)}(t;\rho)\}(s)=\frac{1}{s}\sum\limits_{k=0}^{%
\infty}\frac{(-b)^{k}(c;\rho)_{2k+\nu}~}{k!\Gamma(\nu+k+1)}(\frac{1}{2s}%
)^{2k+\nu}
\end{equation*}
where $\func{Re}(c)>0,~\func{Re}(\nu)>-1$ and $\func{Re}(s)>0.$
\end{theorem}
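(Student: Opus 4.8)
The plan is to substitute the series definition (1.8) of $G_{\nu}^{(b,c)}(t;\rho)$ directly into the Laplace integral $\mathcal{L}\{G_{\nu}^{(b,c)}(t;\rho)\}(s)=\int_{0}^{\infty}e^{-st}G_{\nu}^{(b,c)}(t;\rho)\,dt$ and integrate term by term. The only elementary ingredient needed is the classical Laplace transform of a power, $\int_{0}^{\infty}e^{-st}t^{2k+\nu}\,dt=\Gamma(2k+\nu+1)/s^{2k+\nu+1}$, which holds for every $k\geq 0$ precisely because $\func{Re}(\nu)>-1$ (so each exponent $2k+\nu$ exceeds $-1$) and $\func{Re}(s)>0$. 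The integral representation (1.6) of the generalized Pochhammer symbol is not used directly, but it is convenient for the convergence estimate below.

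First I would isolate the $k$-th summand, namely $\dfrac{(-b)^{k}(c;\rho)_{2k+\nu}}{k!\,\Gamma(\nu+k+1)\,\Gamma(\nu+2k+1)}\,2^{-(2k+\nu)}\,t^{2k+\nu}$, multiply by $e^{-st}$, and integrate in $t$. The factor $\Gamma(2k+\nu+1)=\Gamma(\nu+2k+1)$ produced by the integral cancels exactly the same gamma in the denominator of (1.8), leaving $\dfrac{(-b)^{k}(c;\rho)_{2k+\nu}}{k!\,\Gamma(\nu+k+1)}\cdot\dfrac{1}{2^{2k+\nu}s^{2k+\nu+1}}$. Rewriting $2^{-(2k+\nu)}s^{-(2k+\nu+1)}=\dfrac{1}{s}\left(\dfrac{1}{2s}\right)^{2k+\nu}$ and summing over $k$ yields exactly the claimed expression for $\mathcal{L}\{G_{\nu}^{(b,c)}(t;\rho)\}(s)$.

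The one point requiring care — and the main, though routine, obstacle — is the interchange of $\sum_{k}$ with $\int_{0}^{\infty}$. I would justify this by absolute convergence together with Fubini--Tonelli: from (1.6) one sees that $|(c;\rho)_{2k+\nu}|$ grows no faster than $C\,\Gamma(\func{Re}(c)+2k+\func{Re}(\nu))$ with $C$ independent of $k$, whereas $1/\Gamma(\nu+2k+1)$ decays super-exponentially in $k$; hence the sum over $k$ of the Laplace integrals of the absolute values of the summands is finite whenever $\func{Re}(s)>0$. With this bound in hand the termwise integration is legitimate and the computation of the preceding paragraph goes through verbatim, completing the proof. Alternatively, one may invoke the uniform convergence of the series (1.8) on compact $t$-intervals and the exponential weight $e^{-st}$ to dominate the tail, or simply derive the formula from the integral representation of Theorem 2.3 by expanding $_{0}F_{3}$ and integrating in $t$ after an interchange justified in the same way.
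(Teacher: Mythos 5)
Your proposal matches the paper's proof essentially step for step: substitute the series definition (1.8) into the Laplace integral, interchange summation and integration, evaluate $\int_{0}^{\infty}t^{2k+\nu}e^{-st}dt=\Gamma(2k+\nu+1)/s^{2k+\nu+1}$, and cancel $\Gamma(\nu+2k+1)$ to obtain the stated series. The one caveat (which the paper glosses over too) is that after this cancellation the summands no longer decay super-exponentially, so your Fubini--Tonelli bound actually yields absolute convergence only for $|s|$ sufficiently large (exactly as the classical series for $\mathcal{L}\{J_{0}(t)\}(s)=1/\sqrt{s^{2}+1}$ converges only for $|s|>1$), the identity for the remaining $\mathrm{Re}(s)>0$ then following by analytic continuation rather than directly from the termwise estimate.
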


\begin{proof}
Laplace transform is given by%
\begin{equation*}
\mathcal{L}\{G_{\nu}^{(b,c)}(t;\rho)\}(s)=\int\limits_{0}^{\infty}G_{\nu
}^{(b,c)}(t;\rho)e^{-st}dt.
\end{equation*}
Substituting the series form of the unified four parameter Bessel function,
we have%
\begin{equation*}
\mathcal{L}\{G_{\nu}^{(b,c)}(t;\rho)\}(s)=\int\limits_{0}^{\infty}\{\sum%
\limits_{k=0}^{\infty}\frac{(-b)^{k}(c;\rho)_{2k+\nu}}{k!\Gamma
(\nu+k+1)\Gamma(\nu+2k+1)}(\frac{t}{2})^{2k+\nu}\}e^{-st}dt.
\end{equation*}
Replacing the order of summation and integral by the conditions $Re(c)>0$,~$%
Re(s)>0~~$and \ $\func{Re}(\nu)>-1,~$we get%
\begin{equation*}
\mathcal{L}\{G_{\nu}^{(b,c)}(t;\rho)\}(s)=\sum\limits_{k=0}^{\infty}\frac{%
(-b)^{k}(c;\rho)_{2k+\nu}}{k!\Gamma(\nu+k+1)\Gamma(\nu+2k+1)2^{2k+\nu}}%
\{\int\limits_{0}^{\infty}t^{2k+\nu}e^{-st}dt\}.
\end{equation*}
Making the substitution $st=u$~and using the Gamma functions for the above
integral, we get%
\begin{equation*}
\mathcal{L}\{G_{\nu}^{(b,c)}(t;\rho)\}(s)=\sum\limits_{k=0}^{\infty}\frac{%
(-b)^{k}(c;\rho)_{2k+\nu}~\Gamma(\nu+2k+1)}{k!\Gamma(\nu+k+1)\Gamma
(\nu+2k+1)2^{2k+\nu}s^{2k+\nu+1}}.
\end{equation*}
Finally, we have%
\begin{equation*}
\mathcal{L}\{G_{\nu}^{(b,c)}(t;\rho)\}(s)=\frac{1}{s}\sum\limits_{k=0}^{%
\infty}\frac{(-b)^{k}(c;\rho)_{2k+\nu}~}{k!\Gamma(\nu+k+1)}(\frac{1}{2s}%
)^{2k+\nu}.
\end{equation*}
\end{proof}

The case $b=1,$ $G_{\nu}^{(b,c)}(t;\rho)$ is reduced to $J_{\nu}^{(c)}(t;%
\rho)$~and when $b=-1,~G_{\nu}^{(b,c)}(t;\rho)$ is reduced to $I_{\nu
}^{(c)}(t;\rho).~$Therefore, the following Corollary is obtained:

\begin{corollary}
$~$The Laplace transforms of the generalized three parameter Bessel and
modified Bessel functions of the first kind are given by%
\begin{align*}
\mathcal{L}\{J_{\nu}^{(c)}(t;\rho)\}(s) & =\frac{1}{s}\sum\limits_{k=0}^{%
\infty}\frac{(-1)^{k}(c;\rho)_{2k+\nu}}{k!\Gamma(\nu+k+1)}(\frac{1}{2s}%
)^{2k+\nu}, \\
\mathcal{L}\{I_{\nu}^{(c)}(t;\rho)\}(s) & =\frac{1}{s}\sum\limits_{k=0}^{%
\infty}\frac{(c;\rho)_{2k+\nu}}{k!\Gamma(\nu+k+1)}(\frac{1}{2s})^{2k+\nu},
\end{align*}
where $\func{Re}(c)>0,~\func{Re}(\nu)>-1$ and $\func{Re}(s)>0.$
\end{corollary}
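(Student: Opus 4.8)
The plan is to obtain both identities as immediate specializations of Theorem 2.4, rather than re-evaluating the Laplace integral. The key observation lives entirely at the level of the defining series (1.8): putting $b=1$ makes the factor $(-b)^k$ equal to $(-1)^k$, so $G_{\nu}^{(1,c)}(z;\rho)$ agrees term by term with $J_{\nu}^{(c)}(z;\rho)$, while putting $b=-1$ makes $(-b)^k=1$, so $G_{\nu}^{(-1,c)}(z;\rho)$ agrees with $I_{\nu}^{(c)}(z;\rho)$. Consequently the Laplace transforms of $J_{\nu}^{(c)}$ and $I_{\nu}^{(c)}$ are nothing but the right-hand side of Theorem 2.4 evaluated at $b=1$ and $b=-1$.

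Concretely, I would first substitute $b=1$ into the conclusion $\mathcal{L}\{G_{\nu}^{(b,c)}(t;\rho)\}(s)=\frac{1}{s}\sum_{k=0}^{\infty}\frac{(-b)^{k}(c;\rho)_{2k+\nu}}{k!\,\Gamma(\nu+k+1)}\bigl(\frac{1}{2s}\bigr)^{2k+\nu}$: the sign factor becomes $(-1)^{k}$ and, using $G_{\nu}^{(1,c)}=J_{\nu}^{(c)}$, this is exactly the asserted formula for $\mathcal{L}\{J_{\nu}^{(c)}(t;\rho)\}(s)$. Then, substituting $b=-1$, the sign factor becomes $1$ and, using $G_{\nu}^{(-1,c)}=I_{\nu}^{(c)}$, one reads off the asserted formula for $\mathcal{L}\{I_{\nu}^{(c)}(t;\rho)\}(s)$. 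The hypotheses $\func{Re}(c)>0$, $\func{Re}(\nu)>-1$, $\func{Re}(s)>0$ carry over verbatim, since they are precisely the conditions under which the proof of Theorem 2.4 justifies interchanging the summation with $\int_{0}^{\infty}(\cdot)\,e^{-st}\,dt$ and guarantees $\int_{0}^{\infty}t^{2k+\nu}e^{-st}\,dt=\Gamma(2k+\nu+1)\,s^{-(2k+\nu+1)}$.

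I do not expect any real obstacle here: the only thing to check is the pair of parameter reductions $G_{\nu}^{(\pm 1,c)}=J_{\nu}^{(c)},\,I_{\nu}^{(c)}$, which is transparent from comparing series coefficients. Should a self-contained argument be preferred, the alternative is to repeat the proof of Theorem 2.4 with $b$ replaced by $+1$ (resp.\ $-1$) from the outset: insert the defining series of $J_{\nu}^{(c)}$ (resp.\ $I_{\nu}^{(c)}$) into $\int_{0}^{\infty}(\cdot)\,e^{-st}\,dt$, interchange sum and integral under the stated conditions, evaluate each term via the substitution $st=u$, and cancel the resulting $\Gamma(\nu+2k+1)$ against the one in the denominator; the bookkeeping is identical and produces the same two series. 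Invoking Theorem 2.4 is simply the shorter route.
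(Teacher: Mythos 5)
Your proposal is correct and is exactly the paper's route: the authors also obtain this Corollary by substituting $b=1$ and $b=-1$ into Theorem 2.4 and using the reductions $G_{\nu}^{(1,c)}(t;\rho)=J_{\nu}^{(c)}(t;\rho)$ and $G_{\nu}^{(-1,c)}(t;\rho)=I_{\nu}^{(c)}(t;\rho)$, with the same hypotheses carried over. Nothing further is needed.
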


Since $J_{\nu}^{(c)}(z;\rho)$ is reduced to $J_{\nu}(z)$~and $%
I_{\nu}^{(c)}(z;\rho)$~is reduced to $I_{\nu}(z)~$when $c=1$ and $\rho=0$,
the following Corollary is presented for usual Bessel and modified Bessel
functions :

\begin{corollary}
$~$The Laplace transforms of the usual Bessel and modified Bessel functions
are given by 
\begin{align*}
\mathcal{L}\{J_{\nu}(t)\}(s) & =\frac{1}{s}\sum\limits_{k=0}^{\infty}\frac{%
(-1)^{k}\Gamma(2k+\nu+1)}{k!\Gamma(\nu+k+1)}(\frac{1}{2s})^{2k+\nu}, \\
\mathcal{L}\{I_{\nu}(t)\}(s) & =\frac{1}{s}\sum\limits_{k=0}^{\infty}\frac{%
\Gamma(2k+\nu+1)}{k!\Gamma(\nu+k+1)}(\frac{1}{2s})^{2k+\nu},
\end{align*}
where $\func{Re}(s)>0$ and $\func{Re}(\nu)>-1.$ \ \ \ \ \ \ \ \ \ \ \ \ \ \
\ \ \ \ \ \ \ \ \ \ \ \ \ \ \ \ \ \ \ \ \ \ \ \ \ \ \ \ \ \ \ \ \ \ \ \ \ \
\ \ \ \ \ \ \ \ \ \ \ \ \ \ \ \ \ \ \ \ \ \ \ \ \ \ \ \ \ \ \ \ \ \ \ \ \ \
\ \ \ \ \ \ \ \ \ \ \ \ \ \ \ \ \ \ \ \ \ \ \ \ \ \ \ \ \ \ \ \ \ \ \ \ \ \
\ \ \ \ \ \ \ \ \ \ \ \ \ \ \ \ \ \ \ \ \ \ \ \ \ \ \ \ \ \ \ \ \ \ \ \ \ \
\ \ \ \ \ \ \ \ \ \ \ \ \ \ \ \ \ \ \ \ \ \ \ \ \ \ \ \ \ \ \ \ \ \ \ \ \ \
\ \ \ \ \ \ \ \ \ \ \ \ \ \ \ \ \ \ \ \ \ \ \ \ \ \ \ \ \ \ \ \ \ \ \ \ \ \
\ \ \ \ \ \ \ \ \ \ \ \ \ \ \ \ \ \ \ \ \ \ \ \ \ \ \ \ \ \ \ \ 
\end{corollary}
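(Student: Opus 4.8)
The plan is to obtain this corollary simply by specializing Corollary 2.4.1 to the parameter values $c=1$ and $\rho=0$. Recall that it was recorded in the Introduction that $J_{\nu}^{(c)}(z;\rho)$ reduces to the usual Bessel function $J_{\nu}(z)$ and $I_{\nu}^{(c)}(z;\rho)$ reduces to the usual modified Bessel function $I_{\nu}(z)$ when $c=1$ and $\rho=0$; hence the left-hand sides of the two formulas in Corollary 2.4.1 become $\mathcal{L}\{J_{\nu}(t)\}(s)$ and $\mathcal{L}\{I_{\nu}(t)\}(s)$, respectively. It therefore only remains to simplify the generalized Pochhammer symbol $(c;\rho)_{2k+\nu}$ occurring in the summands.

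For that I would invoke the definition of the generalized Pochhammer symbol: for $\rho=0$ one has $(\lambda;0)_{\mu}=(\lambda)_{\mu}$, and by the gamma-ratio expression (1.2), $(1)_{2k+\nu}=\Gamma(2k+\nu+1)/\Gamma(1)=\Gamma(2k+\nu+1)$. Substituting $(-1)^{k}(c;\rho)_{2k+\nu}\mapsto(-1)^{k}\Gamma(2k+\nu+1)$ in the first formula and $(c;\rho)_{2k+\nu}\mapsto\Gamma(2k+\nu+1)$ in the second yields exactly the two claimed identities, valid under $\func{Re}(s)>0$ and $\func{Re}(\nu)>-1$; the hypothesis $\func{Re}(c)>0$ of Corollary 2.4.1 is automatically met by $c=1$.

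Alternatively, for a self-contained derivation one may argue directly: starting from $J_{\nu}(t)=\sum_{k\ge 0}\frac{(-1)^{k}}{k!\,\Gamma(\nu+k+1)}(t/2)^{2k+\nu}$, interchange summation and integration and use $\int_{0}^{\infty}t^{2k+\nu}e^{-st}\,dt=\Gamma(2k+\nu+1)/s^{2k+\nu+1}$ (obtained via the substitution $st=u$), which reproduces the stated series; the modified Bessel case is identical after deleting the factor $(-1)^{k}$.

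I do not expect any genuine obstacle. The only technical point is the legitimacy of the termwise Laplace transform, which follows from the absolute convergence of the defining series on $(0,\infty)$ together with $\func{Re}(s)>0$ and $\func{Re}(\nu)>-1$ — precisely the interchange of sum and integral already justified in the proof of Theorem 2.4. Thus the corollary is a routine specialization requiring no new ideas.
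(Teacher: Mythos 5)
Your proposal is correct and follows exactly the paper's route: the corollary is obtained by setting $c=1$, $\rho=0$ in Corollary 2.4.1 and using $(1;0)_{2k+\nu}=(1)_{2k+\nu}=\Gamma(2k+\nu+1)$, which is precisely the specialization the paper performs. The alternative direct computation you sketch is just the proof of Theorem 2.4 repeated in the special case, so nothing new is needed.
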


Taking $\nu =0$~~in Corollary 2.4.2 and using the fact that \cite{R} 
\begin{equation*}
(1-z)^{-\alpha }=\sum\limits_{n=0}^{\infty }\frac{(\alpha )_{n}}{n!}z^{n}
\end{equation*}%
and applying 
\begin{equation*}
(2k)!=2^{2k}(\frac{1}{2})_{k}k!,
\end{equation*}%
Laplace transform of the $J_{0}(t)$~and $I_{0}(t)$ are found$.$

\begin{corollary}
\cite{M.O.T} \ The Laplace transforms of the $J_{0}(t)$~and $I_{0}(t)$~are
given by%
\begin{align*}
\mathcal{L}\{J_{0}(t)\}(s) & =\frac{1}{\sqrt{s^{2}+1}},~\func{Re}(s)>0, \\
\mathcal{L}\{I_{0}(t)\}(s) & =\frac{1}{\sqrt{s^{2}-1}},~\func{Re}(s)>0.
\end{align*}
\ 
\end{corollary}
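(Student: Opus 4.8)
The plan is to specialize Corollary 2.4.2 to $\nu=0$ and then recognize the resulting power series in $1/s$ as a binomial series. Setting $\nu=0$ in the formula for $\mathcal{L}\{J_\nu(t)\}(s)$ and using $\Gamma(2k+1)=(2k)!$, $\Gamma(k+1)=k!$ gives
\begin{equation*}
\mathcal{L}\{J_0(t)\}(s)=\frac{1}{s}\sum_{k=0}^{\infty}\frac{(-1)^{k}(2k)!}{(k!)^{2}}\,\frac{1}{4^{k}s^{2k}},
\end{equation*}
and likewise for $I_0$ with every factor $(-1)^{k}$ replaced by $1$.

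Next I would insert the quoted identity $(2k)!=2^{2k}(\tfrac12)_{k}k!$, which reduces the quotient $(2k)!/(k!)^{2}$ to $2^{2k}(\tfrac12)_{k}/k!$ and cancels the factor $4^{k}=2^{2k}$ in the denominator. This leaves
\begin{equation*}
\mathcal{L}\{J_0(t)\}(s)=\frac{1}{s}\sum_{k=0}^{\infty}\frac{(\tfrac12)_{k}}{k!}\Big(-\frac{1}{s^{2}}\Big)^{k},\qquad \mathcal{L}\{I_0(t)\}(s)=\frac{1}{s}\sum_{k=0}^{\infty}\frac{(\tfrac12)_{k}}{k!}\Big(\frac{1}{s^{2}}\Big)^{k}.
\end{equation*}
Applying the quoted binomial expansion $(1-z)^{-\alpha}=\sum_{n\ge0}\frac{(\alpha)_{n}}{n!}z^{n}$ with $\alpha=\tfrac12$ and $z=\mp 1/s^{2}$ identifies each sum with $(1\pm s^{-2})^{-1/2}$, so that $\mathcal{L}\{J_0(t)\}(s)=s^{-1}(1+s^{-2})^{-1/2}=1/\sqrt{s^{2}+1}$ and $\mathcal{L}\{I_0(t)\}(s)=s^{-1}(1-s^{-2})^{-1/2}=1/\sqrt{s^{2}-1}$, which is the assertion.

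The only genuine subtlety is the domain of validity: the binomial series converges only for $|z|<1$, i.e. for $|s|>1$, whereas the statement records the answer on $\func{Re}(s)>0$. I would close this gap by an analytic-continuation remark — $\mathcal{L}\{J_0(t)\}(s)$ is analytic on $\func{Re}(s)>0$ (since $J_0$ is bounded), the function $1/\sqrt{s^{2}+1}$ is analytic there as well, and the two agree on the ray $s>1$, hence they coincide on the whole half-plane by the identity theorem; for $I_0$ the natural region is $\func{Re}(s)>1$. Apart from this remark the argument is a short direct computation, and I do not anticipate any real obstacle.
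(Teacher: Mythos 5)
Your proposal is correct and follows essentially the same route the paper indicates: specialize Corollary 2.4.2 to $\nu=0$, apply $(2k)!=2^{2k}(\tfrac12)_{k}k!$, and sum via the binomial series $(1-z)^{-\alpha}=\sum_{n\geq 0}\frac{(\alpha)_{n}}{n!}z^{n}$. Your extra remark on extending the identity from $|s|>1$ to $\func{Re}(s)>0$ by analyticity is a small refinement the paper omits, but it does not change the argument.
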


Note that the Laplace transform of the modified Bessel functions of the
second kind was obtained in \cite{H.S}.

\begin{theorem}
$~$Mellin transform involving the unified four parameter Bessel function \
is given by 
\begin{align*}
& \mathcal{M}\{e^{-z}G_{\nu}^{(b,c)}(z;\rho);s\} \\
& =\frac{1}{2^{\nu}[\Gamma(\nu+1)]^{2}}\sum\limits_{k=0}^{\infty}\frac{%
(-b)^{k}(c;\rho)_{\nu+2k}~~\Gamma(s+\nu+2k)~}{(\nu+1)_{k}~(\frac{\nu +1}{2}%
)_{k}~(\frac{\nu+2}{2})_{k}~16^{k}k!},
\end{align*}
where $~\func{Re}(\nu)>-1$ and $s+\nu\neq\{0,-1,-2,...\}.$
\end{theorem}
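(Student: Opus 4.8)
The plan is to compute the Mellin transform directly from the series definition~(1.8), interchange summation and integration, and recognize the remaining integral as a Gamma function. First I would write
\[
\mathcal{M}\{e^{-z}G_{\nu}^{(b,c)}(z;\rho);s\}=\int\limits_{0}^{\infty}z^{s-1}e^{-z}G_{\nu}^{(b,c)}(z;\rho)\,dz,
\]
and substitute the defining series
\[
G_{\nu}^{(b,c)}(z;\rho)=\sum\limits_{k=0}^{\infty}\frac{(-b)^{k}(c;\rho)_{2k+\nu}}{\Gamma(\nu+k+1)\,\Gamma(\nu+2k+1)}\frac{(\frac{z}{2})^{2k+\nu}}{k!},
\]
so that, pulling out the factor $2^{-(2k+\nu)}$, the integrand becomes a sum of terms $z^{s+\nu+2k-1}e^{-z}$ times constants.

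Next I would justify interchanging the order of summation and integration under the hypotheses $\func{Re}(\nu)>-1$ and $\func{Re}(s+\nu)>0$ (equivalently $s+\nu\notin\{0,-1,-2,\dots\}$ together with positivity of the real part, which is what makes each $z$-integral converge at $0$; convergence at $\infty$ is guaranteed by $e^{-z}$), and then evaluate
\[
\int\limits_{0}^{\infty}z^{s+\nu+2k-1}e^{-z}\,dz=\Gamma(s+\nu+2k).
\]
This yields
\[
\mathcal{M}\{e^{-z}G_{\nu}^{(b,c)}(z;\rho);s\}=\frac{1}{2^{\nu}}\sum\limits_{k=0}^{\infty}\frac{(-b)^{k}(c;\rho)_{2k+\nu}\,\Gamma(s+\nu+2k)}{\Gamma(\nu+k+1)\,\Gamma(\nu+2k+1)\,2^{2k}\,k!}.
\]

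Finally I would rewrite the denominator in Pochhammer form to match the stated expression: using $\Gamma(\nu+k+1)=\Gamma(\nu+1)(\nu+1)_{k}$ and the duplication identity~(1.3) with $\alpha=\nu+1$, namely $\Gamma(\nu+2k+1)=\Gamma(\nu+1)(\nu+1)_{2k}=\Gamma(\nu+1)\,2^{2k}(\tfrac{\nu+1}{2})_{k}(\tfrac{\nu+2}{2})_{k}$, the factor $2^{2k}$ cancels against the $16^{k}=2^{4k}$ only partially, leaving $16^{k}$ in the denominator exactly as claimed; collecting the two $\Gamma(\nu+1)$ factors gives $[\Gamma(\nu+1)]^{2}$. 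I expect the only real care needed is the interchange of sum and integral — one can dominate the partial sums by the absolutely convergent series obtained from replacing $e^{-z}$ and all factors by their absolute values and invoking Tonelli/Fubini, using that $G_{\nu}^{(b,c)}$ is an entire-type series in $z$ of sufficiently fast decay in $k$ so that $\int_{0}^{\infty}z^{\func{Re}(s)+\func{Re}(\nu)+2k-1}e^{-z}\,dz=\Gamma(\func{Re}(s)+\func{Re}(\nu)+2k)$ summed against the coefficients still converges (ratio test in $k$, since $\Gamma(\cdot+2k)/(\Gamma(\nu+2k+1)k!)\to 0$ geometrically). The rest is the routine bookkeeping of rewriting ratios of Gamma functions as Pochhammer symbols.
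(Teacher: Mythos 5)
Your proposal is correct, and it is actually more direct than the paper's own argument. The paper proves this theorem by substituting the integral representation of Theorem 2.3 (equation (2.2)) into the Mellin integral, expanding the $_{0}F_{3}$ series, interchanging the two integrals with the sum, recovering $(c;\rho)_{\nu+2k}$ from the $t$-integral, and only then evaluating the $z$-integral as $\Gamma(s+\nu+2k)$; you instead work straight from the defining series (1.8), evaluate the $z$-integral, and convert the Gamma factors into Pochhammer symbols via (1.2) and the duplication formula (1.3). The two computations are equivalent -- the paper's route essentially undoes, inside the Mellin integral, the same Pochhammer-to-integral conversion that produced (2.2) -- but yours avoids the detour through the generalized Pochhammer integral altogether, and it also handles the hypotheses more honestly: convergence at $z=0$ really requires $\mathrm{Re}(s+\nu)>0$, of which the paper's condition $s+\nu\neq 0,-1,-2,\dots$ is only a loose statement, and you say so. One wording slip to fix: nothing ``cancels against $16^{k}$''; the factor $2^{2k}$ coming from $(\tfrac{z}{2})^{2k}$ multiplies the $2^{2k}$ produced by $(\nu+1)_{2k}=2^{2k}(\tfrac{\nu+1}{2})_{k}(\tfrac{\nu+2}{2})_{k}$ to give $2^{4k}=16^{k}$ in the denominator, while the two factors $\Gamma(\nu+1)$ combine into $[\Gamma(\nu+1)]^{2}$ and the remaining $2^{\nu}$ sits out front; your final formula is exactly right even though the sentence describing this step is garbled.
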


\begin{proof}
The Mellin transform is given by%
\begin{equation*}
\mathcal{M}\{e^{-z}G_{\nu}^{(b,c)}(z;\rho);s\}=\int\limits_{0}^{%
\infty}e^{-z}z^{s-1}G_{\nu}^{(b,c)}(z;\rho)dz.
\end{equation*}
Substituting equation (2.2) instead of $G_{\nu}^{(b,c)}(z;\rho)$, we have%
\begin{align*}
& \mathcal{M}\{e^{-z}G_{\nu}^{(b,c)}(z;\rho);s\} \\
& =\int\limits_{0}^{\infty}e^{-z}z^{s-1}[\frac{z^{\nu}}{2^{\nu}[\Gamma
(\nu+1)]^{2}\Gamma(c)}\int\limits_{0}^{\infty}t^{c+\nu-1}e^{-t-\frac{\rho}{t}%
}~_{0}F_{3}(-;\nu+1,\frac{\nu+1}{2},\frac{\nu+2}{2};\frac{-bz^{2}t^{2}}{16}%
)dt]dz.
\end{align*}
Inserting the series forms of hypergeometric function, we have 
\begin{align*}
& \mathcal{M}\{e^{-z}G_{\nu}^{(b,c)}(z;\rho);s\} \\
& =\int\limits_{0}^{\infty}\int\limits_{0}^{\infty}\frac{z^{s+\nu-1}e^{-z}}{%
2^{\nu}[\Gamma(\nu+1)]^{2}\Gamma(c)}t^{c+\nu-1}e^{-t-\frac{\rho}{t}%
}(~\sum\limits_{k=0}^{\infty}\frac{(-1)^{k}b^{k}z^{2k}t^{2k}}{(\nu+1)_{k}(%
\frac{\nu+1}{2})_{k}(\frac{\nu+2}{2})_{k}16^{k}k!})dtdz \\
& =\frac{1}{2^{\nu}[\Gamma(\nu+1)]^{2}\Gamma(c)}\int\limits_{0}^{\infty}\int%
\limits_{0}^{\infty}z^{s+\nu-1}t^{c+\nu-1}e^{-t-\frac{\rho}{t}%
-z}(\sum\limits_{k=0}^{\infty}\frac{(-1)^{k}b^{k}z^{2k}t^{2k}}{(\nu+1)_{k}(%
\frac{\nu+1}{2})_{k}(\frac{\nu+2}{2})_{k}16^{k}k!})dtdz.
\end{align*}
Interchanging the order of integral and summation which is satisfied under
the conditions of the Theorem, we get%
\begin{align*}
& \mathcal{M}\{e^{-z}G_{\nu}^{(b,c)}(z;\rho);s\} \\
& =\frac{1}{2^{\nu}[\Gamma(\nu+1)]^{2}}\int\limits_{0}^{\infty}(\sum%
\limits_{k=0}^{\infty}\frac{(-1)^{k}b^{k}z^{2k+s+\nu-1}e^{-z}}{(\nu+1)_{k}(%
\frac{\nu+1}{2})_{k}(\frac{\nu+2}{2})_{k}16^{k}k!})(\frac {1}{\Gamma(c)}%
\int\limits_{0}^{\infty}t^{c+\nu+2k-1}e^{-t-\frac{\rho}{t}}dt)dz.
\end{align*}
Taking into consideration of the generalized Pochhammer function expansion $%
(c;\rho)_{\nu+2k},$ we get%
\begin{equation*}
\mathcal{M}\{e^{-z}G_{\nu}^{(b,c)}(z;\rho);s\}=\frac{1}{2^{\nu}[\Gamma
(\nu+1)]^{2}}\int\limits_{0}^{\infty}(\sum\limits_{k=0}^{\infty}\frac {%
(-1)^{k}(c;\rho)_{\nu+2k}~b^{k}z^{2k+s+\nu-1}e^{-z}}{(\nu+1)_{k}(\frac{\nu +1%
}{2})_{k}(\frac{\nu+2}{2})_{k}16^{k}k!})dz.
\end{equation*}
Changing the order of integral and summation under the conditions $\func{Re}%
(\nu)>-1,~s+\nu\neq\{0,-1,-2,...\}~$and $\func{Re}(\rho)>0,$~we get%
\begin{equation*}
\mathcal{M}\{e^{-z}G_{\nu}^{(b,c)}(z;\rho);s\}=\frac{1}{2^{\nu}[\Gamma
(\nu+1)]^{2}}\sum\limits_{k=0}^{\infty}\frac{(-1)^{k}(c;\rho)_{\nu+2k}~b^{k}%
}{(\nu+1)_{k}(\frac{\nu+1}{2})_{k}(\frac{\nu+2}{2})_{k}16^{k}k!}%
(\int\limits_{0}^{\infty}z^{2k+s+\nu-1}e^{-z}dz).
\end{equation*}
Using the definition of the Gamma function and taking into consideration of
(1.2), we have%
\begin{align*}
& \mathcal{M}\{e^{-z}G_{\nu}^{(b,c)}(z;\rho);s\} \\
& =\frac{1}{2^{\nu}[\Gamma(\nu+1)]^{2}}\sum\limits_{k=0}^{\infty}\frac{%
(-b)^{k}(c;\rho)_{\nu+2k}~~\Gamma(s+\nu+2k)~}{(\nu+1)_{k}~(\frac{\nu +1}{2}%
)_{k}~(\frac{\nu+2}{2})_{k}~16^{k}k!}.
\end{align*}
\end{proof}

The Mellin transforms of the generalized three parameter Bessel and modified
Bessel functions of the first kind can be obtained by substituting $~b=1$%
~and $b=-1$ in Theorem 2.5, respectively. Thus, the following result is
clear:

\begin{corollary}
$~$The Mellin transforms involving the generalized three parameter Bessel
and modified Bessel functions of the first kind are given by 
\begin{align*}
\mathcal{M}\{e^{-z}J_{\nu}^{(c)}(z;\rho);s\} & =\frac{1}{2^{\nu}[\Gamma
(\nu+1)]^{2}}\sum\limits_{k=0}^{\infty}\frac{(-1)^{k}(c;\rho)_{\nu+2k}~~%
\Gamma(s+\nu+2k)~}{(\nu+1)_{k}~(\frac{\nu+1}{2})_{k}~(\frac{\nu+2}{2}%
)_{k}~16^{k}k!}, \\
\mathcal{M}\{e^{-z}I_{\nu}^{(c)}(z;\rho);s\} & =\frac{1}{2^{\nu}[\Gamma
(\nu+1)]^{2}}\sum\limits_{k=0}^{\infty}\frac{(c;\rho)_{\nu+2k}~~\Gamma
(s+\nu+2k)~}{(\nu+1)_{k}~(\frac{\nu+1}{2})_{k}~(\frac{\nu+2}{2})_{k}~16^{k}k!%
},
\end{align*}
where~$\func{Re}(\nu)>-1,$ $\func{Re}(c)>0~$and $s+\nu \neq\{0,-1,-2,...\}.$
\end{corollary}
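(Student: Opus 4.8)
The plan is to read off both formulas directly from \thmref{} (Theorem~2.5), exploiting the fact recorded in the Introduction that the generalized three parameter Bessel and modified Bessel functions of the first kind are precisely the $b=1$ and $b=-1$ members of the unified family, namely $J_{\nu}^{(c)}(z;\rho)=G_{\nu}^{(1,c)}(z;\rho)$ and $I_{\nu}^{(c)}(z;\rho)=G_{\nu}^{(-1,c)}(z;\rho)$. Thus no new integration is really needed; the corollary is a pure specialization, and the job is just to track what happens to the factor $(-b)^{k}$.

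First I would put $b=1$ in the conclusion of Theorem~2.5. Then $(-b)^{k}=(-1)^{k}$ and $e^{-z}G_{\nu}^{(1,c)}(z;\rho)=e^{-z}J_{\nu}^{(c)}(z;\rho)$, so the expression for $\mathcal{M}\{e^{-z}G_{\nu}^{(b,c)}(z;\rho);s\}$ becomes verbatim the asserted series for $\mathcal{M}\{e^{-z}J_{\nu}^{(c)}(z;\rho);s\}$, with the hypotheses $\func{Re}(\nu)>-1$, $\func{Re}(c)>0$ and $s+\nu\notin\{0,-1,-2,\dots\}$ inherited unchanged. Next I would put $b=-1$: now $(-b)^{k}=1$ for every $k$, so all the alternating signs collapse, $e^{-z}G_{\nu}^{(-1,c)}(z;\rho)=e^{-z}I_{\nu}^{(c)}(z;\rho)$, and Theorem~2.5 gives exactly the stated series for $\mathcal{M}\{e^{-z}I_{\nu}^{(c)}(z;\rho);s\}$. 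That completes both cases.

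There is essentially no obstacle here; the one point worth a line of justification is that every step used to prove Theorem~2.5 — in particular the two interchanges of summation and integration and the use of the duplication formula (1.3) and the integral representation (1.6) of the generalized Pochhammer symbol — remains valid for $|b|=1$, which is immediate since the relevant absolute-convergence bound is uniform for $b$ in any bounded set. If one preferred a self-contained argument, the same conclusion follows by inserting the series definitions of $J_{\nu}^{(c)}(z;\rho)$ and $I_{\nu}^{(c)}(z;\rho)$ into $\int_{0}^{\infty}e^{-z}z^{s-1}(\,\cdot\,)\,dz$ and repeating the computation in the proof of Theorem~2.5 line for line; the specialization route above is simply the shorter one.
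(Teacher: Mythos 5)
Your proposal is correct and coincides with the paper's own route: the paper obtains this corollary exactly by setting $b=1$ and $b=-1$ in Theorem~2.5, using $J_{\nu }^{(c)}(z;\rho )=G_{\nu }^{(1,c)}(z;\rho )$ and $I_{\nu }^{(c)}(z;\rho )=G_{\nu }^{(-1,c)}(z;\rho )$ so that $(-b)^{k}$ becomes $(-1)^{k}$ and $1$ respectively, with the hypotheses inherited unchanged. Your extra remark that the convergence arguments of Theorem~2.5 are uniform in $|b|=1$ is fine but not needed beyond what the paper already assumes.
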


Substituting $c=1$~and $\rho =0$~in Corollary 2.5.1,~ the Mellin transforms
involving the usual Bessel and modified Bessel functions are obtained. Thus,
the following Corollary is given:

\begin{corollary}
$~$Mellin transforms involving the usual Bessel and modified Bessel
functions are given by%
\begin{align*}
\mathcal{M}\{e^{-z}J_{\nu}(z);s\} & =\frac{1}{2^{\nu}}\sum\limits_{k=0}^{%
\infty}\frac{(-1)^{k}\Gamma(s+\nu+2k)}{\Gamma(\nu+k+1)2^{2k}k!}, \\
\mathcal{M}\{e^{-z}I_{\nu}(z);s\} & =\frac{1}{2^{\nu}}\sum\limits_{k=0}^{%
\infty}\frac{\Gamma(s+\nu+2k)}{\Gamma(\nu+k+1)2^{2k}k!}
\end{align*}
where$~\func{Re}(\nu)>-1$ and $s+\nu\neq\{0,-1,-2,...\}.$
\end{corollary}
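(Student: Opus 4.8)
The plan is to obtain this Corollary directly from Corollary~2.5.1 via the substitution $c=1$, $\rho=0$, using that by construction $J_{\nu}^{(1)}(z;0)=J_{\nu}(z)$ and $I_{\nu}^{(1)}(z;0)=I_{\nu}(z)$. The first step is to record that when $\rho=0$ the generalized Pochhammer symbol degenerates to the ordinary one, so by $(1.2)$ we have $(1;0)_{\nu+2k}=(1)_{\nu+2k}=\Gamma(\nu+2k+1)/\Gamma(1)=\Gamma(\nu+2k+1)$. Substituting this into the two formulas of Corollary~2.5.1 turns the $k$-th summand into $\dfrac{(-1)^{k}\,\Gamma(s+\nu+2k)\,\Gamma(\nu+2k+1)}{2^{\nu}[\Gamma(\nu+1)]^{2}(\nu+1)_{k}(\frac{\nu+1}{2})_{k}(\frac{\nu+2}{2})_{k}16^{k}k!}$ for the Bessel case (and the same without the sign for the modified case), so the whole task reduces to showing that $\dfrac{\Gamma(\nu+2k+1)}{[\Gamma(\nu+1)]^{2}(\nu+1)_{k}(\frac{\nu+1}{2})_{k}(\frac{\nu+2}{2})_{k}16^{k}}=\dfrac{1}{\Gamma(\nu+k+1)\,2^{2k}}$.

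The key (and only non-routine) step is this simplification. I would invoke the duplication formula $(1.3)$ with $\alpha=\nu+1$ and $n=k$, i.e. $(\nu+1)_{2k}=2^{2k}(\frac{\nu+1}{2})_{k}(\frac{\nu+2}{2})_{k}$, to replace the two half-integer Pochhammer symbols, giving $16^{k}(\frac{\nu+1}{2})_{k}(\frac{\nu+2}{2})_{k}=4^{k}(\nu+1)_{2k}=2^{2k}(\nu+1)_{2k}$. Hence the denominator becomes $[\Gamma(\nu+1)]^{2}(\nu+1)_{k}(\nu+1)_{2k}2^{2k}$, and writing $(\nu+1)_{k}=\Gamma(\nu+k+1)/\Gamma(\nu+1)$ and $(\nu+1)_{2k}=\Gamma(\nu+2k+1)/\Gamma(\nu+1)$ by $(1.2)$ cancels the two copies of $\Gamma(\nu+1)$ against $[\Gamma(\nu+1)]^{2}$ and cancels $\Gamma(\nu+2k+1)$ against the numerator, leaving exactly $\dfrac{1}{\Gamma(\nu+k+1)\,2^{2k}}$. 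Multiplying back by $(-1)^{k}\Gamma(s+\nu+2k)/(2^{\nu}k!)$ (respectively $\Gamma(s+\nu+2k)/(2^{\nu}k!)$) yields the two asserted series.

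The main obstacle is purely bookkeeping: tracking the powers of $2$ (that $16^{k}=2^{4k}$, that the duplication formula injects a $2^{2k}$, and that the prefactor carries an overall $2^{-\nu}$) and keeping the two factors of $\Gamma(\nu+1)$ straight; there is no conceptual difficulty. As an alternative one could bypass Corollary~2.5.1 entirely and rerun the proof of Theorem~2.5 with $c=1$, $\rho=0$ from the start, in which case $\int_{0}^{\infty}t^{2k}e^{-t}\,dt=\Gamma(2k+1)$ replaces the extended Gamma integral; but the route through Corollary~2.5.1 is shorter. Finally I would observe that the hypotheses $\func{Re}(\nu)>-1$ and $s+\nu\neq\{0,-1,-2,\dots\}$ pass over verbatim from Corollary~2.5.1, the latter ensuring $\Gamma(s+\nu+2k)$ is finite for all $k\ge 0$ and hence that the termwise interchange of sum and integral is legitimate.
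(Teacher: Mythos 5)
Your proposal is correct and follows the paper's own route: the paper likewise obtains this Corollary by setting $c=1$, $\rho=0$ in Corollary 2.5.1 (so that $(1;0)_{\nu+2k}=\Gamma(\nu+2k+1)$), leaving the simplification implicit, which you carry out explicitly and correctly via the duplication formula $(1.3)$ and the Gamma-ratio form $(1.2)$ of the Pochhammer symbols. No gaps.
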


Letting $\nu =0$~\ in Corollary 2.5.2 and using the fact that 
\begin{equation*}
(s)_{2k}=2^{2k}(\frac{s}{2})_{k}(\frac{s+1}{2})_{k},
\end{equation*}%
the following result is obtained:

\begin{corollary}
\cite{M.O.T} The Mellin transforms involving the usual Bessel function $%
e^{-z}J_{0}(z)$~and modified Bessel function $e^{-z}I_{0}(z)$~are given by 
\begin{align*}
\mathcal{M}\{e^{-z}J_{0}(z);s\} & =\Gamma(s)~_{2}F_{1}(\frac{s}{2},\frac{s+1%
}{2};1;-1), \\
\mathcal{M}\{e^{-z}I_{0}(z);s\} & =\Gamma(s)~_{2}F_{1}(\frac{s}{2},\frac{s+1%
}{2};1;1)
\end{align*}
where $\func{Re}(s)>0.$
\end{corollary}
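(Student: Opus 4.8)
The plan is to specialize Corollary 2.5.2 to $\nu=0$ and then recognize the resulting power series in $s$ as a Gauss hypergeometric function evaluated at $\mp 1$. Setting $\nu=0$ in the two formulas of Corollary 2.5.2 gives
\begin{equation*}
\mathcal{M}\{e^{-z}J_{0}(z);s\}=\sum_{k=0}^{\infty}\frac{(-1)^{k}\Gamma(s+2k)}{\Gamma(k+1)\,2^{2k}k!},\qquad \mathcal{M}\{e^{-z}I_{0}(z);s\}=\sum_{k=0}^{\infty}\frac{\Gamma(s+2k)}{\Gamma(k+1)\,2^{2k}k!},
\end{equation*}
since $\Gamma(\nu+k+1)=\Gamma(k+1)=k!$ when $\nu=0$. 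So the entire content of the corollary is the passage from these two series to the closed hypergeometric forms.

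The key step is to rewrite $\Gamma(s+2k)=\Gamma(s)\,(s)_{2k}$ using (1.2) and then apply the duplication-type identity $(s)_{2k}=2^{2k}(\tfrac{s}{2})_{k}(\tfrac{s+1}{2})_{k}$ recorded immediately before the statement (the analogue of (1.3) with $\alpha=s$). The factor $2^{2k}$ then cancels against the $2^{2k}$ in the denominator, and writing $\Gamma(k+1)\,k!=(1)_{k}\,k!$ turns the $J_{0}$ summand into $\Gamma(s)\,\frac{(s/2)_{k}(( s+1)/2)_{k}}{(1)_{k}\,k!}(-1)^{k}$ and the $I_{0}$ summand into the same expression with $(+1)^{k}$. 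Comparing with the definition (1.5) of $_{p}F_{q}$ (with $p=2$, $q=1$, numerator parameters $\tfrac{s}{2},\tfrac{s+1}{2}$, denominator parameter $1$, and argument $-1$ or $1$) immediately yields
\begin{equation*}
\mathcal{M}\{e^{-z}J_{0}(z);s\}=\Gamma(s)\,_{2}F_{1}\!\left(\tfrac{s}{2},\tfrac{s+1}{2};1;-1\right),\qquad \mathcal{M}\{e^{-z}I_{0}(z);s\}=\Gamma(s)\,_{2}F_{1}\!\left(\tfrac{s}{2},\tfrac{s+1}{2};1;1\right).
\end{equation*}

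The only delicate point is convergence and its bearing on the range of $s$. The Mellin integral on the left is finite exactly for $\func{Re}(s)>0$ (the integrand behaves like $z^{s-1}$ near the origin and is exponentially damped at infinity, $J_{0},I_{0}$ being bounded on $(0,\infty)$ after the weight $e^{-z}$), whereas the Gauss series at unit argument converges in the classical termwise sense only when $\func{Re}\big(1-\tfrac{s}{2}-\tfrac{s+1}{2}\big)>0$, i.e. for $\func{Re}(s)<\tfrac12$. Hence the cleanest route is to carry out the termwise rearrangement above on the strip $0<\func{Re}(s)<\tfrac12$, where everything converges absolutely, and then extend the identity to all $\func{Re}(s)>0$ by analytic continuation in $s$, the right-hand side being the analytic continuation of $_{2}F_{1}$ in its parameters. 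I expect this convergence/continuation bookkeeping to be the main (and essentially only) obstacle; the algebraic manipulation itself is routine given Corollary 2.5.2 and the Pochhammer duplication identity.
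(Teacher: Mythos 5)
Your derivation is precisely the paper's: the authors obtain this corollary by setting $\nu=0$ in Corollary 2.5.2, writing $\Gamma(s+2k)=\Gamma(s)(s)_{2k}$ and invoking $(s)_{2k}=2^{2k}(\tfrac{s}{2})_{k}(\tfrac{s+1}{2})_{k}$ so that the $2^{2k}$ cancels and the series is recognized as $\Gamma(s)\,_{2}F_{1}(\tfrac{s}{2},\tfrac{s+1}{2};1;\mp 1)$, exactly as you do. The only slip is in your convergence aside: $e^{-z}I_{0}(z)$ is not exponentially damped but decays like $(2\pi z)^{-1/2}$, so the Mellin integral in the $I_{0}$ case itself converges only for $0<\func{Re}(s)<\tfrac{1}{2}$; since the paper passes over this point (and states $\func{Re}(s)>0$ anyway), it does not affect the comparison of the two arguments.
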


Note that the Mellin transforms of products of Bessel and generalized
hypergeometric functions were obtained in \cite{R.M}.

\begin{corollary}
Taking $s=1$~in Theorem 2.5, the following integral formula is obtained%
\begin{equation*}
\int\limits_{0}^{\infty }e^{-z}G_{\nu }^{(b,c)}(z;\rho )dz=\frac{1}{2^{\nu
}[\Gamma (\nu +1)]^{2}}\sum\limits_{k=0}^{\infty }\frac{(-b)^{k}(c;\rho
)_{\nu +2k}~~\Gamma (\nu +2k+1)~}{(\nu +1)_{k}~(\frac{\nu +1}{2})_{k}~(\frac{%
\nu +2}{2})_{k}~16^{k}k!}
\end{equation*}%
where $\func{Re}(\nu )>-1$~and $\func{Re}(c)>0$.
\end{corollary}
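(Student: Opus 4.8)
The plan is to obtain this identity as an immediate specialization of Theorem~2.5 at $s=1$, so essentially no new computation is required. Recall that the Mellin transform is $\mathcal{M}\{f;s\}=\int_{0}^{\infty}z^{s-1}f(z)\,dz$; hence at $s=1$ the factor $z^{s-1}$ reduces to $1$, and the left-hand side $\mathcal{M}\{e^{-z}G_{\nu}^{(b,c)}(z;\rho);s\}$ collapses to exactly $\int_{0}^{\infty}e^{-z}G_{\nu}^{(b,c)}(z;\rho)\,dz$, the quantity on the left of the asserted formula.

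Next I would substitute $s=1$ into the series on the right-hand side of Theorem~2.5. The only place where $s$ occurs is the Gamma factor $\Gamma(s+\nu+2k)$, which becomes $\Gamma(\nu+2k+1)$; every other factor, namely $(-b)^{k}$, $(c;\rho)_{\nu+2k}$, $(\nu+1)_{k}$, $(\tfrac{\nu+1}{2})_{k}$, $(\tfrac{\nu+2}{2})_{k}$, $16^{k}$, $k!$, and the prefactor $\tfrac{1}{2^{\nu}[\Gamma(\nu+1)]^{2}}$, is untouched. This reproduces verbatim the right-hand side of the corollary.

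Finally I would check that the hypotheses of Theorem~2.5 survive the substitution. That theorem assumes $\func{Re}(\nu)>-1$ and $s+\nu\notin\{0,-1,-2,\dots\}$; with $s=1$ the second condition reads $\nu\notin\{-1,-2,-3,\dots\}$, which is automatic once $\func{Re}(\nu)>-1$, while $\func{Re}(c)>0$ carries over unchanged. The interchange of summation and integration, together with the convergence of the resulting series, was already justified inside the proof of Theorem~2.5, so nothing further is needed. There is no genuine obstacle here: the entire content is the elementary observation that evaluating a Mellin transform at $s=1$ is the same as integrating against $dz$, combined with the trivial identity $\Gamma(1+\nu+2k)=\Gamma(\nu+2k+1)$.
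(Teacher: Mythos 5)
Your proposal is correct and matches the paper's own treatment: the corollary is obtained precisely by setting $s=1$ in Theorem 2.5, so that the Mellin transform reduces to $\int_{0}^{\infty}e^{-z}G_{\nu }^{(b,c)}(z;\rho )dz$ and $\Gamma (s+\nu +2k)$ becomes $\Gamma (\nu +2k+1)$, with the stated conditions carrying over. No further argument is needed.
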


Taking $s=1$ in Theorem 2.5 and substituting $b=1$ and $b=-1$~respectively,
the following Corollaries are presented:

\begin{corollary}
$~$Infinite integrals involving the generalized three parameter Bessel and
modified Bessel functions of the first kind are given by%
\begin{align*}
\int\limits_{0}^{\infty}e^{-z}J_{\nu}^{(c)}(z;\rho)dz & =\frac{1}{2^{\nu
}[\Gamma(\nu+1)]^{2}}\sum\limits_{k=0}^{\infty}\frac{(-1)^{k}(c;\rho)_{\nu
+2k}~~\Gamma(\nu+2k+1)~}{(\nu+1)_{k}~(\frac{\nu+1}{2})_{k}~(\frac{\nu+2}{2}%
)_{k}~16^{k}k!}, \\
\int\limits_{0}^{\infty}e^{-z}I_{\nu}^{(c)}(z;\rho)dz & =\frac{1}{2^{\nu
}[\Gamma(\nu+1)]^{2}}\sum\limits_{k=0}^{\infty}\frac{(c;\rho)_{\nu+2k}~~%
\Gamma(\nu+2k+1)~}{(\nu+1)_{k}~(\frac{\nu+1}{2})_{k}~(\frac{\nu+2}{2}%
)_{k}~16^{k}k!},
\end{align*}
where $\func{Re}(\nu)>-1$~and~ $\func{Re}(c)>0.$
\end{corollary}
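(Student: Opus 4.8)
The plan is to obtain both formulas as the single specialization $s=1$ of the Mellin transform identity in Theorem 2.5, followed by the two choices $b=1$ and $b=-1$; equivalently, one may start from the preceding corollary (the one evaluating $\int_{0}^{\infty}e^{-z}G_{\nu}^{(b,c)}(z;\rho)\,dz$) and simply put $b=1$ and $b=-1$. First I would note that, by the definition of the Mellin transform, $\mathcal{M}\{f;s\}=\int_{0}^{\infty}z^{s-1}f(z)\,dz$, so at $s=1$ we have $\mathcal{M}\{e^{-z}G_{\nu}^{(b,c)}(z;\rho);1\}=\int_{0}^{\infty}e^{-z}G_{\nu}^{(b,c)}(z;\rho)\,dz$; this is exactly the left-hand side sought, once we recall the identifications $G_{\nu}^{(1,c)}(z;\rho)=J_{\nu}^{(c)}(z;\rho)$ and $G_{\nu}^{(-1,c)}(z;\rho)=I_{\nu}^{(c)}(z;\rho)$ from the Introduction.

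Next I would substitute $s=1$ on the right-hand side of Theorem 2.5. The only factor there depending on $s$ is $\Gamma(s+\nu+2k)$, which becomes $\Gamma(\nu+2k+1)$; every other factor is untouched. Choosing $b=1$ replaces $(-b)^{k}$ by $(-1)^{k}$ and the integrand by $e^{-z}J_{\nu}^{(c)}(z;\rho)$, which yields the first claimed identity; choosing $b=-1$ replaces $(-b)^{k}$ by $1$ and the integrand by $e^{-z}I_{\nu}^{(c)}(z;\rho)$, which yields the second.

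Finally I would check that the hypotheses transfer cleanly. Theorem 2.5 was established under $\func{Re}(\nu)>-1$, $\func{Re}(c)>0$, $\func{Re}(\rho)>0$, together with the admissibility condition $s+\nu\notin\{0,-1,-2,\dots\}$ needed for the interchange of summation and integration there; all the standing assumptions are retained, and with $s=1$ the last condition reads $1+\nu\notin\{0,-1,-2,\dots\}$, which is automatic whenever $\func{Re}(\nu)>-1$. Since no new interchange of limits or integrals is performed, there is no genuine obstacle: the whole proof is the bookkeeping of the single evaluation $s=1$ and the two sign choices $b=\pm 1$ in an identity that has already been proved.
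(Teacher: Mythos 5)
Your proposal is correct and follows exactly the route the paper takes: the paper obtains this corollary by setting $s=1$ in Theorem 2.5 (equivalently, specializing the preceding corollary for $G_{\nu}^{(b,c)}$) and then substituting $b=1$ and $b=-1$, with $\Gamma(s+\nu+2k)$ becoming $\Gamma(\nu+2k+1)$. Your additional check that $s=1$ makes the condition $s+\nu\notin\{0,-1,-2,\dots\}$ automatic under $\func{Re}(\nu)>-1$ is a correct piece of bookkeeping that the paper leaves implicit.
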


Integrals involving usual Bessel and modified Bessel functions are given in
the following Corollary:

\begin{corollary}
$~$Infinite integrals involving the usual Bessel and modified Bessel
functions are given by 
\begin{align*}
\int\limits_{0}^{\infty}e^{-z}J_{\nu}(z)dz & =\frac{1}{2^{\nu}[\Gamma
(\nu+1)]^{2}}\sum\limits_{k=0}^{\infty}\frac{(-1)^{k}~~[\Gamma(\nu+2k+1]^{2}~%
}{(\nu+1)_{k}~(\frac{\nu+1}{2})_{k}~(\frac{\nu+2}{2})_{k}~16^{k}k!}, \\
\int\limits_{0}^{\infty}e^{-z}I_{\nu}(z)dz & =\frac{1}{2^{\nu}[\Gamma
(\nu+1)]^{2}}\sum\limits_{k=0}^{\infty}\frac{~~[\Gamma(\nu+2k+1)]^{2}~}{%
(\nu+1)_{k}~(\frac{\nu+1}{2})_{k}~(\frac{\nu+2}{2})_{k}~16^{k}k!},
\end{align*}
where $\func{Re}(\nu)>-1.$
\end{corollary}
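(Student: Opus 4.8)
The plan is to read off both identities as the $c=1$, $\rho=0$ specialization of the preceding corollary (Corollary~2.5.5), which records the analogous infinite integrals for the generalized three parameter functions $J_\nu^{(c)}(z;\rho)$ and $I_\nu^{(c)}(z;\rho)$; equivalently one may take $b=1$ and $b=-1$ in Corollary~2.5.4 together with $c=1$, $\rho=0$. First I would invoke the reductions recorded in the Introduction, namely $J_\nu^{(1)}(z;0)=J_\nu(z)$ and $I_\nu^{(1)}(z;0)=I_\nu(z)$, so that the left-hand sides of the two identities in Corollary~2.5.5 become $\int_0^\infty e^{-z}J_\nu(z)\,dz$ and $\int_0^\infty e^{-z}I_\nu(z)\,dz$. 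The convergence of these integrals under $\func{Re}(\nu)>-1$ is inherited from the $(\tfrac{z}{2})^\nu$ leading behaviour of $J_\nu$ and $I_\nu$ at the origin together with the $e^{-z}$ factor at infinity, exactly as needed for the interchange of sum and integral already performed in the proof of Theorem~2.5.

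Next I would evaluate the generalized Pochhammer symbol appearing on the right-hand side at $c=1$, $\rho=0$. By the definition of $(\lambda;\rho)_\nu$, setting $\rho=0$ collapses it to the ordinary Pochhammer symbol $(\lambda)_\nu=\Gamma(\lambda+\nu)/\Gamma(\lambda)$, so $(1;0)_{\nu+2k}=(1)_{\nu+2k}=\Gamma(\nu+2k+1)/\Gamma(1)=\Gamma(\nu+2k+1)$. Substituting this into the sums of Corollary~2.5.5 therefore replaces the factor $(c;\rho)_{\nu+2k}$ by $\Gamma(\nu+2k+1)$, which multiplies the factor $\Gamma(s+\nu+2k)$ already present at $s=1$, i.e.\ $\Gamma(\nu+2k+1)$, to produce $[\Gamma(\nu+2k+1)]^2$ in the numerator; the denominator $(\nu+1)_k(\tfrac{\nu+1}{2})_k(\tfrac{\nu+2}{2})_k\,16^k\,k!$ and the prefactor $\tfrac{1}{2^\nu[\Gamma(\nu+1)]^2}$ are untouched. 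The sign $(-1)^k$, coming from $(-b)^k$ with $b=1$, survives in the $J_\nu$ series and becomes $1$ in the $I_\nu$ series, which gives precisely the two stated expansions.

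Since every interchange of summation and integration was already justified in the proof of Theorem~2.5, and the side condition $s+\nu\notin\{0,-1,-2,\dots\}$ becomes at $s=1$ the condition $\nu+1\notin\{0,-1,-2,\dots\}$, which is automatic under $\func{Re}(\nu)>-1$, no new analytic input is required. The only point demanding care — and the closest thing to an obstacle here — is the bookkeeping: correctly tracking how $(c;\rho)_{\nu+2k}$ degenerates to a single gamma factor and verifying that it merges with the gamma factor already present to yield the square $[\Gamma(\nu+2k+1)]^2$ in both numerators.
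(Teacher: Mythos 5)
Your proposal is correct and matches the paper's (implicit) proof: the corollary is obtained exactly by specializing the preceding corollary (equivalently Theorem 2.5 with $s=1$, $b=\pm 1$) at $c=1$, $\rho=0$, using $J_{\nu}^{(1)}(z;0)=J_{\nu}(z)$, $I_{\nu}^{(1)}(z;0)=I_{\nu}(z)$ and $(1;0)_{\nu+2k}=\Gamma(\nu+2k+1)$ to merge with the factor $\Gamma(\nu+2k+1)$ already present and produce $[\Gamma(\nu+2k+1)]^{2}$. Your additional remarks on convergence and the side condition $s+\nu\notin\{0,-1,-2,\dots\}$ at $s=1$ are consistent with the paper and require no new input.
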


Besides, the Mellin transforms satisfied by unified four parameter Bessel,
generalized three parameter Bessel and modified Bessel functions of the
first kind, usual and modified Bessel functions are obtained in the
following Theorem and corresponding Corollaries, as well.

\begin{theorem}
$~$For the unified four parameter Bessel function, the Mellin transform is
given by%
\begin{align*}
\mathcal{M}\{G_{\nu}^{(b,c)}(z;\rho);s\} & =\frac{\Gamma(s)\Gamma(c+\nu +s)}{%
[\Gamma(\nu+1)]^{2}\Gamma(c)}\sum\limits_{m=0}^{\infty}\frac {%
(\nu+2m)~\Gamma(v+m)}{m!}J_{\nu+2m}(z) \\
& \times~_{2}F_{3}(\frac{c+\nu+s}{2},\frac{c+\nu+s+1}{2};\nu+1,\frac{\nu +1}{%
2},\frac{\nu+2}{2};\frac{-bz^{2}}{4})
\end{align*}
where $\func{Re}(\nu)>-1,~\func{Re}(c)>0,$~$c+\nu+s,~\nu
+m\neq\{0,-1,-2,...\}.$
\end{theorem}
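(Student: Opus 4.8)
The transform in the statement must be understood as acting in the parameter $\rho$ (the right-hand side is still a function of $z$), namely $\mathcal{M}\{G_{\nu}^{(b,c)}(z;\rho);s\}=\int_{0}^{\infty}\rho^{s-1}G_{\nu}^{(b,c)}(z;\rho)\,d\rho$ with $\func{Re}(s)>0$. The plan is to work term by term on the defining series $(1.8)$, so that the whole computation reduces to the single integral $\int_{0}^{\infty}\rho^{s-1}(c;\rho)_{2k+\nu}\,d\rho$. Substituting the integral representation $(1.6)$ of the generalized Pochhammer symbol and using Tonelli/Fubini (the resulting double integral is absolutely convergent for $\func{Re}(c+\nu+s)>0$), one gets
\[
\int_{0}^{\infty}\rho^{s-1}(c;\rho)_{2k+\nu}\,d\rho=\frac{1}{\Gamma(c)}\int_{0}^{\infty}u^{c+2k+\nu-1}e^{-u}\Big(\int_{0}^{\infty}\rho^{s-1}e^{-\rho/u}\,d\rho\Big)du=\frac{\Gamma(s)\,\Gamma(c+\nu+s+2k)}{\Gamma(c)} ,
\]
the general admissible range of $s$ to be recovered afterwards by analytic continuation. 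Interchanging $\sum_{k}$ with the $\rho$-integration — legitimate since the majorant series converges absolutely, exactly as in the proof of Theorem 2.5 — yields
\[
\mathcal{M}\{G_{\nu}^{(b,c)}(z;\rho);s\}=\frac{\Gamma(s)}{\Gamma(c)}\sum_{k=0}^{\infty}\frac{(-b)^{k}\,\Gamma(c+\nu+s+2k)}{\Gamma(\nu+k+1)\,\Gamma(\nu+2k+1)\,k!}\Big(\frac{z}{2}\Big)^{2k+\nu} .
\]

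The next step is pure bookkeeping: rewrite this $k$-series as a ${}_{2}F_{3}$. Using $\Gamma(\nu+k+1)=\Gamma(\nu+1)(\nu+1)_{k}$, $\Gamma(\nu+2k+1)=\Gamma(\nu+1)(\nu+1)_{2k}$ and $\Gamma(c+\nu+s+2k)=\Gamma(c+\nu+s)(c+\nu+s)_{2k}$, then applying the duplication formula $(1.3)$ to both $(c+\nu+s)_{2k}=2^{2k}(\tfrac{c+\nu+s}{2})_{k}(\tfrac{c+\nu+s+1}{2})_{k}$ and $(\nu+1)_{2k}=2^{2k}(\tfrac{\nu+1}{2})_{k}(\tfrac{\nu+2}{2})_{k}$, the powers of $2$ recombine with $(z/2)^{2k}$ into the single factor $(-bz^{2}/4)^{k}$, and by $(1.5)$ one arrives at
\[
\mathcal{M}\{G_{\nu}^{(b,c)}(z;\rho);s\}=\frac{\Gamma(s)\Gamma(c+\nu+s)}{[\Gamma(\nu+1)]^{2}\Gamma(c)}\Big(\frac{z}{2}\Big)^{\nu}\,{}_{2}F_{3}\!\Big(\tfrac{c+\nu+s}{2},\tfrac{c+\nu+s+1}{2};\nu+1,\tfrac{\nu+1}{2},\tfrac{\nu+2}{2};\tfrac{-bz^{2}}{4}\Big) .
\]

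Finally I would replace $(z/2)^{\nu}$ by the classical Neumann-type expansion $\big(\tfrac{z}{2}\big)^{\nu}=\sum_{m=0}^{\infty}\dfrac{(\nu+2m)\Gamma(\nu+m)}{m!}J_{\nu+2m}(z)$, which holds precisely when $\nu+m\notin\{0,-1,-2,\ldots\}$ — exactly the hypothesis listed in the theorem — and pull $(z/2)^{\nu}$ inside the series; this produces the asserted formula. This Bessel expansion is the only ingredient that is not routine algebra: it is standard (and may be cited from Watson's treatise), but a self-contained check is short, since inserting the series of each $J_{\nu+2m}$ and collecting the coefficient of $(z/2)^{\nu+2n}$ reduces it to verifying that $\displaystyle\sum_{m=0}^{n}\frac{(-1)^{n-m}(\nu+2m)\,\Gamma(\nu+m)}{m!\,(n-m)!\,\Gamma(\nu+m+n+1)}$ equals $1$ for $n=0$ and vanishes for every $n\ge 1$, an elementary partial-fractions identity. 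The remaining points of rigour are the two interchanges of summation and integration, the condition $\func{Re}(s)>0$ ensuring convergence of the $\rho$-integral at the origin, and the analytic continuation removing the auxiliary $\func{Re}(c+\nu+s)>0$; convergence at $\rho=\infty$ is automatic because $(c;\rho)_{2k+\nu}$, hence $G_{\nu}^{(b,c)}(z;\rho)$, decays like $e^{-2\sqrt{\rho}}$ as $\rho\to\infty$.
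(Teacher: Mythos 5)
Your proposal is correct and follows essentially the same route as the paper: the paper substitutes its integral representation (2.2) and integrates out $\rho$ first, whereas you inline that derivation by working term-by-term on the series (1.8) with the Pochhammer integral (1.6), but the key steps — the $\rho$-integral giving $\Gamma(s)t^{s}$, the $t$-integral giving $\Gamma(c+\nu+s+2k)$, the duplication formula leading to the $_{2}F_{3}$, and Watson's expansion of $(z/2)^{\nu}$ in the series $\sum_{m}\frac{(\nu+2m)\Gamma(\nu+m)}{m!}J_{\nu+2m}(z)$ — are identical. Your added remarks on $\func{Re}(s)>0$, the Fubini justification, and the $e^{-2\sqrt{\rho}}$ decay only make explicit what the paper leaves implicit.
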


\begin{proof}
The Mellin transform is given by%
\begin{equation*}
\mathcal{M}\{G_{\nu}^{(b,c)}(z;\rho);s\}=\int\limits_{0}^{\infty}\rho
^{s-1}G_{\nu}^{(b,c)}(z;\rho)d\rho.
\end{equation*}
Substituting equation (2.2) for $G_{\nu}^{(b,c)}(z;\rho),$ we have%
\begin{align*}
& \mathcal{M}\{G_{\nu}^{(b,c)}(z;\rho);s\} \\
& =\int\limits_{0}^{\infty}\rho^{s-1}[\frac{z^{\nu}}{2^{\nu}[\Gamma
(\nu+1)]^{2}\Gamma(c)}\int\limits_{0}^{\infty}t^{c+\nu-1}e^{-t-\frac{\rho}{t}%
}~_{0}F_{3}(-;\nu+1,\frac{\nu+1}{2},\frac{\nu+2}{2};\frac{-bz^{2}t^{2}}{16}%
)dt]d\rho.
\end{align*}
Interchanging the order of integrals which is valid due to the conditions of
the Theorem, we get 
\begin{align*}
\mathcal{M}\{G_{\nu}^{(b,c)}(z;\rho);s\} & =\frac{z^{\nu}}{%
2^{\nu}[\Gamma(\nu+1)]^{2}\Gamma(c)}\int\limits_{0}^{\infty}t^{c+%
\nu-1}e^{-t}~_{0}F_{3}(-;\nu+1,\frac{\nu+1}{2},\frac{\nu+2}{2};\frac{%
-bz^{2}t^{2}}{16}) \\
& \times(\int\limits_{0}^{\infty}\rho^{s-1}e^{-\frac{\rho}{t}}d\rho)dt.
\end{align*}
Letting $\rho=ut$ and using the Gamma function, which is presented in (1.4),
in above integral we have%
\begin{align*}
& \mathcal{M}\{G_{\nu}^{(b,c)}(z;\rho);s\} \\
& =\frac{\Gamma(s)z^{\nu}}{2^{\nu}[\Gamma(\nu+1)]^{2}\Gamma(c)}%
\int\limits_{0}^{\infty}t^{c+\nu+s-1}e^{-t}~_{0}F_{3}(-;\nu+1,\frac{\nu+1}{2}%
,\frac{\nu+2}{2};\frac{-bz^{2}t^{2}}{16})dt.
\end{align*}
Now, writing the expansion of the hypergeometric series, we have%
\begin{align*}
& \mathcal{M}\{G_{\nu}^{(b,c)}(z;\rho);s\} \\
& =\frac{\Gamma(s)z^{\nu}}{2^{\nu}[\Gamma(\nu+1)]^{2}\Gamma(c)}%
\int\limits_{0}^{\infty}t^{c+\nu+s-1}e^{-t}(\sum\limits_{k=0}^{\infty}\frac{%
(-b)^{k}~z^{2k}t^{2k}}{(\nu+1)_{k}(\frac{\nu+1}{2})_{k}(\frac{\nu+2}{2}%
)_{k}2^{4k}k!})dt.
\end{align*}
Interchanging the order of integral and summation which is true for $~\func{%
Re}(c)>0,~\func{Re}(\nu)>-1,~\func{Re}(s)>0,$~we get%
\begin{align*}
& \mathcal{M}\{G_{\nu}^{(b,c)}(z;\rho);s\} \\
& =\frac{\Gamma(s)z^{\nu}}{2^{\nu}[\Gamma(\nu+1)]^{2}\Gamma(c)}%
\sum\limits_{k=0}^{\infty}(\int\limits_{0}^{\infty}t^{c+\nu+s+2k-1}e^{-t}dt)%
\frac{(-b)^{k}~z^{2k}}{(\nu+1)_{k}(\frac{\nu+1}{2})_{k}(\frac{\nu +2}{2}%
)_{k}2^{4k}k!}.
\end{align*}
Using the Gamma function, we have%
\begin{align*}
& \mathcal{M}\{G_{\nu}^{(b,c)}(z;\rho);s\} \\
& =\frac{\Gamma(s)z^{\nu}}{2^{\nu}[\Gamma(\nu+1)]^{2}\Gamma(c)}%
\sum\limits_{k=0}^{\infty}\frac{\Gamma(c+\nu+s+2k)(-b)^{k}}{(\nu+1)_{k}(%
\frac{\nu+1}{2})_{k}(\frac{\nu+2}{2})_{k}}\frac{z^{2k}}{2^{4k}k!}.
\end{align*}
Applying (1.2) for the Gamma function \ $\Gamma(c+\nu+s+2k)$, we have%
\begin{align*}
& \mathcal{M}\{G_{\nu}^{(b,c)}(z;\rho);s\} \\
& =\frac{\Gamma(s)z^{\nu}}{2^{\nu}[\Gamma(\nu+1)]^{2}\Gamma(c)}%
\sum\limits_{k=0}^{\infty}\frac{(c+\nu+s)_{2k}~\Gamma(c+\nu+s)}{(\nu +1)_{k}(%
\frac{\nu+1}{2})_{k}(\frac{\nu+2}{2})_{k}}\frac{(-b)^{k}z^{2k}}{2^{4k}k!}.
\end{align*}
By (1.3), instead of $(c+\nu+s)_{2k},$ we can write $2^{2k}(\frac{c+\nu+s}{2}%
)_{k}(\frac{c+\nu+s+1}{2})_{k}.$ Thus, we have%
\begin{align*}
\mathcal{M}\{G_{\nu}^{(b,c)}(z;\rho);s\} & =\frac{\Gamma(s)\Gamma
(c+\nu+s)z^{\nu}}{2^{\nu}[\Gamma(\nu+1)]^{2}\Gamma(c)}\sum\limits_{k=0}^{%
\infty}\frac{2^{2k}(\frac{c+\nu+s}{2})_{k}(\frac{c+\nu+s+1}{2})_{k}}{%
(\nu+1)_{k}(\frac{\nu+1}{2})_{k}(\frac{\nu+2}{2})_{k}}\frac{(-b)^{k}z^{2k}}{%
2^{4k}k!} \\
& =\frac{\Gamma(s)\Gamma(c+\nu+s)z^{\nu}}{2^{\nu}[\Gamma(\nu+1)]^{2}\Gamma(c)%
}\sum\limits_{k=0}^{\infty}\frac{(\frac{c+\nu+s}{2})_{k}(\frac {c+\nu+s+1}{2}%
)_{k}}{(\nu+1)_{k}(\frac{\nu+1}{2})_{k}(\frac{\nu+2}{2})_{k}}\frac{(-\frac{%
bz^{2}}{4})^{k}}{k!}.
\end{align*}
In the last step, using the hypergeometric expansion (1.5) and%
\begin{equation*}
(\frac{z}{2})^{\nu}=\sum\limits_{m=0}^{\infty}\frac{(\nu+2m)(\nu+m-1)!}{m!}%
J_{\nu+2m}(z)
\end{equation*}
in \cite{W}, we have%
\begin{align*}
\mathcal{M}\{G_{\nu}^{(b,c)}(z;\rho);s\} & =\frac{\Gamma(s)\Gamma(c+\nu +s)}{%
[\Gamma(\nu+1)]^{2}\Gamma(c)}\sum\limits_{m=0}^{\infty}\frac {%
(\nu+2m)~\Gamma(\nu+m)}{m!}J_{\nu+2m}(z) \\
& \times~_{2}F_{3}(\frac{c+\nu+s}{2},\frac{c+\nu+s+1}{2};\nu+1,\frac{\nu +1}{%
2},\frac{\nu+2}{2};-\frac{bz^{2}}{4}).
\end{align*}
\end{proof}

The Mellin transform of the generalized three parameter Bessel and modified
Bessel functions of the first kind can be obtained by substituting $b=1$ and 
$b=-1$ in Theorem 2.6, respectively. Thus, the following result is clear:

\begin{corollary}
$~$For the generalized three parameter Bessel and modified Bessel functions
of the first kind, the Mellin transforms are given by%
\begin{align*}
\mathcal{M}\{J_{\nu}^{(c)}(z;\rho);s\} & =\frac{\Gamma(s)\Gamma(c+\nu +s)}{%
[\Gamma(\nu+1)]^{2}\Gamma(c)}\sum\limits_{m=0}^{\infty}\frac {%
(\nu+2m)~\Gamma(\nu+m)}{m!}J_{\nu+2m}(z) \\
& \times~_{2}F_{3}(\frac{c+\nu+s}{2},\frac{c+\nu+s+1}{2};\nu+1,\frac{\nu +1}{%
2},\frac{\nu+2}{2};-\frac{z^{2}}{4}), \\
\mathcal{M}\{I_{\nu}^{(c)}(z;\rho);s\} & =\frac{\Gamma(s)\Gamma(c+\nu +s)}{%
[\Gamma(\nu+1)]^{2}\Gamma(c)}\sum\limits_{m=0}^{\infty}\frac {%
(\nu+2m)~\Gamma(\nu+m)}{m!}J_{\nu+2m}(z) \\
& \times~_{2}F_{3}(\frac{c+\nu+s}{2},\frac{c+\nu+s+1}{2};\nu+1,\frac{\nu +1}{%
2},\frac{\nu+2}{2};\frac{z^{2}}{4}),
\end{align*}
where $\func{Re}(\nu)>-1,~\func{Re}(c)>0$,$~\func{Re}(s)>0,~c+\nu+s,~\nu+m%
\neq\{0,-1,-2,...\}.$
\end{corollary}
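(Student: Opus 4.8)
The plan is to obtain both formulas at once by specializing \textbf{Theorem 2.6} to the two values $b=1$ and $b=-1$. First I would record the elementary identifications linking the unified family to its three-parameter relatives: comparing the defining series $(1.8)$ of $G_{\nu}^{(b,c)}(z;\rho)$ with the series defining $J_{\nu}^{(c)}(z;\rho)$ and $I_{\nu}^{(c)}(z;\rho)$ in Section~1, the only $b$-dependence is through the factor $(-b)^{k}$. Since $(-b)^{k}=(-1)^{k}$ when $b=1$ and $(-b)^{k}=1$ when $b=-1$, one gets $G_{\nu}^{(1,c)}(z;\rho)=J_{\nu}^{(c)}(z;\rho)$ and $G_{\nu}^{(-1,c)}(z;\rho)=I_{\nu}^{(c)}(z;\rho)$ as functions of $\rho$, hence $\mathcal{M}\{G_{\nu}^{(1,c)}(z;\rho);s\}=\mathcal{M}\{J_{\nu}^{(c)}(z;\rho);s\}$ and $\mathcal{M}\{G_{\nu}^{(-1,c)}(z;\rho);s\}=\mathcal{M}\{I_{\nu}^{(c)}(z;\rho);s\}$.

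Next I would substitute $b=1$ into the conclusion of Theorem~2.6. The prefactor $\frac{\Gamma(s)\Gamma(c+\nu+s)}{[\Gamma(\nu+1)]^{2}\Gamma(c)}$ and the Bessel-series factor $\sum_{m=0}^{\infty}\frac{(\nu+2m)\,\Gamma(\nu+m)}{m!}J_{\nu+2m}(z)$ do not involve $b$, while the argument $\frac{-bz^{2}}{4}$ of the ${}_{2}F_{3}$ becomes $\frac{-z^{2}}{4}$; combined with the identification of the left-hand side this is exactly the first asserted formula. Running the substitution again with $b=-1$ turns the ${}_{2}F_{3}$ argument into $\frac{z^{2}}{4}$ and the left-hand side into $\mathcal{M}\{I_{\nu}^{(c)}(z;\rho);s\}$, which gives the second formula.

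Finally I would verify that no new hypotheses are introduced: the restrictions $\func{Re}(\nu)>-1$, $\func{Re}(c)>0$, $\func{Re}(s)>0$ and $c+\nu+s,\ \nu+m\notin\{0,-1,-2,\dots\}$ are precisely those under which Theorem~2.6 was proved (in particular they are what legitimizes the interchange of integration and summation in its proof), so they carry over verbatim to these two special cases. There is essentially no obstacle here; the only points that need care are tracking the sign of the ${}_{2}F_{3}$ argument under $b\mapsto\pm1$ and confirming the two series identifications $G_{\nu}^{(1,c)}=J_{\nu}^{(c)}$ and $G_{\nu}^{(-1,c)}=I_{\nu}^{(c)}$, both of which are immediate from the defining series.
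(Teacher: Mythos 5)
Your proposal is correct and follows exactly the route the paper takes: the corollary is obtained by substituting $b=1$ and $b=-1$ into Theorem 2.6, using the immediate series identifications $G_{\nu}^{(1,c)}(z;\rho)=J_{\nu}^{(c)}(z;\rho)$ and $G_{\nu}^{(-1,c)}(z;\rho)=I_{\nu}^{(c)}(z;\rho)$ and tracking the sign of the $-bz^{2}/4$ argument in the ${}_{2}F_{3}$. Your additional check that the hypotheses of Theorem 2.6 carry over verbatim is consistent with the paper, which treats the result as an immediate specialization.
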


Taking $c=1$ in Corollary 2.6.1, it is reduced the following Corollary:

\begin{corollary}
$~$For the functions $J_{\nu}^{(1,1)}(z;\rho)$ and $I_{\nu}^{(-1,1)}(z;%
\rho), $ the Mellin transform is given by%
\begin{align*}
\mathcal{M}\{J_{\nu}^{(1,1)}(z;\rho);s\} & =\frac{\Gamma(s)\Gamma(1+\nu +s)}{%
[\Gamma(\nu+1)]^{2}}\sum\limits_{m=0}^{\infty}\frac{(\nu+2m)~\Gamma (\nu+m)}{%
m!}J_{\nu+2m}(z) \\
& \times~_{2}F_{3}(\frac{\nu+s+1}{2},\frac{\nu+s+2}{2};\nu+1,\frac{\nu+1}{2},%
\frac{\nu+2}{2};-\frac{z^{2}}{4}), \\
\mathcal{M}\{I_{\nu}^{(-1,1)}(z;\rho);s\} & =\frac{\Gamma(s)\Gamma(1+\nu +s)%
}{[\Gamma(\nu+1)]^{2}}\sum\limits_{m=0}^{\infty}\frac{(\nu+2m)~\Gamma (\nu+m)%
}{m!}J_{\nu+2m}(z) \\
& \times~_{2}F_{3}(\frac{\nu+s+1}{2},\frac{\nu+s+2}{2};\nu+1,\frac{\nu+1}{2},%
\frac{\nu+2}{2};\frac{z^{2}}{4}),
\end{align*}
where $\func{Re}(\nu)>-1,~\func{Re}(s)>0,~\nu+m\neq
\{0,-1,-2,...\},~\nu+s\neq\{-1,-2,...\}.$
\end{corollary}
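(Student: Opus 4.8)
The plan is simply to specialize Corollary~2.6.1 to $c=1$. First I would recall the identifications built into the notation: $J_{\nu}^{(1,1)}(z;\rho)$ is $J_{\nu}^{(c)}(z;\rho)$ with $c=1$ (equivalently $G_{\nu}^{(b,c)}(z;\rho)$ with $(b,c)=(1,1)$), and $I_{\nu}^{(-1,1)}(z;\rho)$ is $I_{\nu}^{(c)}(z;\rho)$ with $c=1$ (equivalently $G_{\nu}^{(b,c)}(z;\rho)$ with $(b,c)=(-1,1)$). With these in hand, I would substitute $c=1$ directly into the two Mellin-transform identities furnished by Corollary~2.6.1.

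Only two cosmetic simplifications are then needed. First, the factor $\Gamma(c)$ in the denominator of the prefactor becomes $\Gamma(1)=1$, so the constant collapses to $\dfrac{\Gamma(s)\,\Gamma(1+\nu+s)}{[\Gamma(\nu+1)]^{2}}$. Second, the two numerator parameters of the ${}_{2}F_{3}$, namely $\dfrac{c+\nu+s}{2}$ and $\dfrac{c+\nu+s+1}{2}$, become $\dfrac{\nu+s+1}{2}$ and $\dfrac{\nu+s+2}{2}$ respectively. Everything else—the Bessel-series factor $\sum_{m\ge 0}\dfrac{(\nu+2m)\,\Gamma(\nu+m)}{m!}J_{\nu+2m}(z)$, the denominator parameters $\nu+1,\ \dfrac{\nu+1}{2},\ \dfrac{\nu+2}{2}$, and the argument $-\dfrac{z^{2}}{4}$ (resp. $+\dfrac{z^{2}}{4}$ for the modified case)—is carried over verbatim.

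Finally I would reconcile the admissibility conditions: $\func{Re}(c)>0$ is automatic at $c=1$; the constraint $c+\nu+s\notin\{0,-1,-2,\dots\}$ becomes $1+\nu+s\notin\{0,-1,-2,\dots\}$, i.e.\ $\nu+s\notin\{-1,-2,\dots\}$; while $\func{Re}(\nu)>-1$, $\func{Re}(s)>0$ and $\nu+m\notin\{0,-1,-2,\dots\}$ pass through unchanged. This yields exactly the two asserted formulas, and since the argument is a pure substitution there is no genuine obstacle—the only "work" is the bookkeeping of these parameter identifications.
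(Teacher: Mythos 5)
Your proposal is correct and matches the paper's own derivation, which likewise obtains this Corollary by setting $c=1$ in Corollary~2.6.1 (itself the $b=\pm 1$ specialization of Theorem~2.6), with the same simplifications $\Gamma(1)=1$, numerator parameters $\frac{\nu+s+1}{2},\frac{\nu+s+2}{2}$, and the condition $c+\nu+s\notin\{0,-1,-2,\dots\}$ turning into $\nu+s\notin\{-1,-2,\dots\}$. Nothing further is needed.
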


Letting $s=1$~in Theorem 2.6, the following integral representation of the
unified four parameter Bessel function is valid:

\begin{corollary}
$~$Infinite integral of the unified four parameter Bessel function is given
by 
\begin{align*}
\int\limits_{0}^{\infty}G_{\nu}^{(b,c)}(z;\rho)d\rho & =\frac{\Gamma
(c+\nu+1)}{[\Gamma(\nu+1)]^{2}\Gamma(c)}\sum\limits_{m=0}^{\infty}\frac {%
(\nu+2m)~\Gamma(\nu+m)}{m!}J_{\nu+2m}(z) \\
& \times~_{2}F_{3}(\frac{c+\nu+1}{2},\frac{c+\nu+2}{2};\nu+1,\frac{\nu+1}{2},%
\frac{\nu+2}{2};-\frac{bz^{2}}{4}),
\end{align*}
where $\func{Re}(\nu)>-1,~\func{Re}(c)>0,~\nu+m\neq
\{0,-1,-2,...\},~c+\nu\neq\{-1,-2,...\}.$
\end{corollary}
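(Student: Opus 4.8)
The plan is to obtain this statement as a direct specialization of Theorem~2.6 at $s=1$. Recall that the Mellin transform appearing there is defined by $\mathcal{M}\{G_{\nu}^{(b,c)}(z;\rho);s\}=\int_{0}^{\infty}\rho^{s-1}G_{\nu}^{(b,c)}(z;\rho)\,d\rho$. First I would set $s=1$ on the left-hand side, so that $\rho^{s-1}=\rho^{0}=1$ and the left-hand side collapses to the infinite integral $\int_{0}^{\infty}G_{\nu}^{(b,c)}(z;\rho)\,d\rho$ that we wish to evaluate.

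Next I would substitute $s=1$ into the closed form on the right-hand side of Theorem~2.6. Here $\Gamma(s)=\Gamma(1)=1$, the factor $\Gamma(c+\nu+s)$ becomes $\Gamma(c+\nu+1)$, and the two numerator parameters of the $_{2}F_{3}$ become $\frac{c+\nu+s}{2}=\frac{c+\nu+1}{2}$ and $\frac{c+\nu+s+1}{2}=\frac{c+\nu+2}{2}$, while the three denominator parameters $\nu+1,\frac{\nu+1}{2},\frac{\nu+2}{2}$, the series $\sum_{m=0}^{\infty}\frac{(\nu+2m)\,\Gamma(\nu+m)}{m!}J_{\nu+2m}(z)$, and the argument $-\frac{bz^{2}}{4}$ are all unchanged. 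Collecting these substitutions yields exactly the asserted identity.

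Finally I would check that the hypotheses carry over correctly. The condition $\func{Re}(s)>0$ from Theorem~2.6 is automatically satisfied at $s=1$, the restriction $c+\nu+s\notin\{0,-1,-2,\dots\}$ becomes $c+\nu\notin\{-1,-2,\dots\}$, and the conditions $\func{Re}(\nu)>-1$, $\func{Re}(c)>0$, and $\nu+m\notin\{0,-1,-2,\dots\}$ are retained verbatim; this is precisely the list stated in the corollary. There is no genuine obstacle here: the only point requiring a moment's care is confirming that the interchanges of integration and summation used in the proof of Theorem~2.6 remain valid at the specific value $s=1$, which they do since $1$ lies in the strip $\func{Re}(s)>0$ on which that proof operates.
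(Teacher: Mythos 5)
Your proposal is correct and is exactly the paper's route: the corollary is obtained by putting $s=1$ in Theorem~2.6, using $\Gamma(1)=1$ and translating the parameter conditions accordingly. Your extra remark that $s=1$ lies in the region $\func{Re}(s)>0$ where the interchanges in the proof of Theorem~2.6 are justified is a sensible (if routine) check and matches the paper's implicit reasoning.
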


Substituting $b=1$~and $b=-1~$in Corollary 2.6.3$,$ the following Corollary
is found:

\begin{corollary}
$~$Infinite integrals satisfied by the generalized three parameter Bessel
and modified Bessel functions of the first kind are given by%
\begin{align*}
\int\limits_{0}^{\infty}J_{\nu}^{(c)}(z;\rho)d\rho & =\frac{\Gamma(c+\nu +1)%
}{[\Gamma(\nu+1)]^{2}\Gamma(c)}\sum\limits_{m=0}^{\infty}\frac {%
(\nu+2m)~\Gamma(\nu+m)}{m!}J_{\nu+2m}(z) \\
& \times~_{2}F_{3}(\frac{c+\nu+1}{2},\frac{c+\nu+2}{2};\nu+1,\frac{\nu+1}{2},%
\frac{\nu+2}{2};-\frac{z^{2}}{4}), \\
\int\limits_{0}^{\infty}I_{\nu}^{(c)}(z;\rho)d\rho & =\frac{\Gamma(c+\nu +1)%
}{[\Gamma(\nu+1)]^{2}\Gamma(c)}\sum\limits_{m=0}^{\infty}\frac {%
(\nu+2m)~\Gamma(\nu+m)}{m!}J_{\nu+2m}(z) \\
& \times~_{2}F_{3}(\frac{c+\nu+1}{2},\frac{c+\nu+2}{2};\nu+1,\frac{\nu+1}{2},%
\frac{\nu+2}{2};\frac{z^{2}}{4}),
\end{align*}
where $\func{Re}(\nu)>-1,~\func{Re}(c)>0,~\nu+m\neq
\{0,-1,-2,...\},~c+\nu\neq\{-1,-2,...\}.$
\end{corollary}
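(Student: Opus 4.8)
The plan is to read this corollary off from Corollary 2.6.3 by specializing the parameter $b$ to the two values $b=1$ and $b=-1$; no new computation is needed beyond identifying which classical function each specialization produces and tracking a single sign.

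First I would recall the series definition (1.8) of the unified family,
\[
G_{\nu}^{(b,c)}(z;\rho)=\sum_{k=0}^{\infty}\frac{(-b)^{k}(c;\rho)_{2k+\nu}}{\Gamma(\nu+k+1)\,\Gamma(\nu+2k+1)}\frac{(z/2)^{2k+\nu}}{k!},
\]
and note that setting $b=1$ turns $(-b)^{k}$ into $(-1)^{k}$, so that $G_{\nu}^{(1,c)}(z;\rho)=J_{\nu}^{(c)}(z;\rho)$, while setting $b=-1$ turns $(-b)^{k}$ into $1$, so that $G_{\nu}^{(-1,c)}(z;\rho)=I_{\nu}^{(c)}(z;\rho)$. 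These are precisely the defining series of the generalized three parameter Bessel and modified Bessel functions recalled in the Introduction, so the two specializations of the left-hand side are exactly the integrals appearing in the statement.

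Next I would substitute $b=1$ into the closed form of Corollary 2.6.3. Every ingredient there --- the constant $\Gamma(c+\nu+1)/([\Gamma(\nu+1)]^{2}\Gamma(c))$, the Bessel expansion $\sum_{m\ge 0}(\nu+2m)\Gamma(\nu+m)J_{\nu+2m}(z)/m!$, and the denominator parameters $\nu+1,\frac{\nu+1}{2},\frac{\nu+2}{2}$ of the $_{2}F_{3}$ --- is independent of $b$; only the final argument $-bz^{2}/4$ of the $_{2}F_{3}$ feels the substitution, becoming $-z^{2}/4$. This yields the first identity. Taking instead $b=-1$ gives $-bz^{2}/4=z^{2}/4$ together with $G_{\nu}^{(-1,c)}=I_{\nu}^{(c)}$, which is the second identity. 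The admissibility conditions $\func{Re}(\nu)>-1$, $\func{Re}(c)>0$, $\nu+m\neq\{0,-1,-2,\dots\}$ and $c+\nu\neq\{-1,-2,\dots\}$ are inherited unchanged from Corollary 2.6.3, the last of them arising from $\Gamma(c+\nu+s)$ evaluated at $s=1$.

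Since the whole argument is a specialization, there is no real obstacle; the only place to be mildly careful is the bookkeeping of the sign in $-bz^{2}/4$. If one preferred a self-contained derivation rather than quoting Corollary 2.6.3, the one point worth checking would be the legitimacy of interchanging $\int_{0}^{\infty}(\cdot)\,d\rho$ with the $k$-summation when $s=1$ in the proof scheme of Theorem 2.6; this is valid for $\func{Re}(c)>0$ because $\int_{0}^{\infty}(c;\rho)_{2k+\nu}\,d\rho=\Gamma(c+\nu+2k+1)/\Gamma(c)$ is finite and the resulting series is the absolutely convergent $_{2}F_{3}$.
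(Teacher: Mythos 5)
Your proposal is correct and coincides with the paper's own route: the paper obtains this corollary precisely by substituting $b=1$ and $b=-1$ into Corollary 2.6.3 (itself the $s=1$ case of Theorem 2.6), with the only $b$-dependence sitting in the argument $-bz^{2}/4$ of the $_{2}F_{3}$ and the conditions carried over unchanged. Your sign bookkeeping and the identifications $G_{\nu}^{(1,c)}=J_{\nu}^{(c)}$, $G_{\nu}^{(-1,c)}=I_{\nu}^{(c)}$ are exactly what is needed.
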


Letting $c=1$ in Corollary 2.6.4, the integral representations of the
functions $J_{\nu }^{(1,1)}(z;\rho )$ and $I_{\nu }^{(1,1)}(z;\rho )$ are
presented, respectively:

\begin{corollary}
$~$For the functions $J_{\nu}^{(1,1)}(z;\rho)$ and $I_{\nu}^{(-1,1)}(z;\rho
),$~the following integrals are valid%
\begin{align*}
\int\limits_{0}^{\infty}J_{\nu}^{(1,1)}(z;\rho)d\rho & =\frac{\Gamma(\nu +2)%
}{[\Gamma(\nu+1)]^{2}}\sum\limits_{m=0}^{\infty}\frac{(\nu+2m)~\Gamma (\nu+m)%
}{m!}J_{\nu+2m}(z) \\
& \times~~_{2}F_{3}(\frac{\nu+2}{2},\frac{\nu+3}{2};\nu+1,\frac{\nu+1}{2},%
\frac{\nu+2}{2};-\frac{z^{2}}{4}), \\
\int\limits_{0}^{\infty}I_{\nu}^{(-1,1)}(z;\rho)d\rho & =\frac{\Gamma(\nu +2)%
}{[\Gamma(\nu+1)]^{2}}\sum\limits_{m=0}^{\infty}\frac{(\nu+2m)~\Gamma (\nu+m)%
}{m!}J_{\nu+2m}(z) \\
& \times~_{2}F_{3}(\frac{\nu+2}{2},\frac{\nu+3}{2};\nu+1,\frac{\nu+1}{2},%
\frac{\nu+2}{2};\frac{z^{2}}{4}),
\end{align*}
where $\func{Re}(\nu)>-1~$and $\nu+m\neq\{0,-1,-2,...\}.$
\end{corollary}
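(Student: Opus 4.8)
The plan is to obtain the two identities as the $c=1$ specialization of Corollary~2.6.4, mirroring the pattern by which every ``usual'' case in Section~2 has been extracted from its generalized counterpart. First I would invoke the definitions: $J_{\nu}^{(1,1)}(z;\rho)$ is, by convention, the generalized three parameter Bessel function $J_{\nu}^{(c)}(z;\rho)$ (equivalently $G_{\nu}^{(b,c)}$ with $b=1$) at $c=1$, and $I_{\nu}^{(-1,1)}(z;\rho)$ is $I_{\nu}^{(c)}(z;\rho)$ (i.e.\ $b=-1$) at $c=1$. Hence it is enough to put $c=1$ in the two formulas of Corollary~2.6.4.

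Second, I would simplify the constants and parameters produced by $c=1$. The prefactor $\dfrac{\Gamma(c+\nu+1)}{[\Gamma(\nu+1)]^{2}\Gamma(c)}$ collapses to $\dfrac{\Gamma(\nu+2)}{[\Gamma(\nu+1)]^{2}}$ because $\Gamma(1)=1$; the numerator parameters $\dfrac{c+\nu+1}{2},\dfrac{c+\nu+2}{2}$ of the ${}_{2}F_{3}$ become $\dfrac{\nu+2}{2},\dfrac{\nu+3}{2}$; the denominator parameters $\nu+1,\dfrac{\nu+1}{2},\dfrac{\nu+2}{2}$ and the $m$-sum over $(\nu+2m)\,\Gamma(\nu+m)\,J_{\nu+2m}(z)/m!$ remain untouched. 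Reading off $b=1$ then yields the $J$-identity with argument $-\dfrac{z^{2}}{4}$, and $b=-1$ the $I$-identity with argument $\dfrac{z^{2}}{4}$.

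Finally, I would check the range of validity. Corollary~2.6.4 requires $\func{Re}(\nu)>-1$, $\func{Re}(c)>0$, $\nu+m\neq\{0,-1,-2,\dots\}$ and $c+\nu\neq\{-1,-2,\dots\}$; at $c=1$ the condition $\func{Re}(c)>0$ holds trivially and $c+\nu=1+\nu\neq\{-1,-2,\dots\}$ is automatically implied by $\func{Re}(\nu)>-1$, so precisely the two stated restrictions $\func{Re}(\nu)>-1$ and $\nu+m\neq\{0,-1,-2,\dots\}$ survive. I do not expect any genuine obstacle here: the entire content is the substitution $c=1$, and the only points needing care are the parameter bookkeeping inside the ${}_{2}F_{3}$ and confirming that the now-redundant pole restriction on $c+\nu$ is absorbed by $\func{Re}(\nu)>-1$.
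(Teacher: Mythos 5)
Your proposal is correct and follows exactly the paper's route: the paper obtains this corollary by letting $c=1$ in Corollary~2.6.4 (with $b=1$ and $b=-1$ already built in), which is precisely your substitution and parameter bookkeeping, including the observation that the condition $c+\nu\neq\{-1,-2,\dots\}$ becomes redundant when $\func{Re}(\nu)>-1$.
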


The expansion of the unified four parameter Bessel function is introduced as
a series of Bessel functions in the following Theorem:

\begin{theorem}
Unified four parameter Bessel function is given by a double series of \
Bessel functions%
\begin{align*}
& G_{\nu}^{(b,c)}(z;\rho) \\
& =\frac{\Gamma(\mu+1)}{[\Gamma(\nu+1)]^{2}}\sum\limits_{m=0}^{\infty}\sum%
\limits_{n=0}^{\infty}(\frac{z}{2})^{\nu-\mu+m+n}\frac{(-1)^{n}J_{\mu
+m+n}(z)(-m-n)_{n}(-b)^{n}(c;\rho)_{\nu+2n}(\mu+1)_{n}}{(m+n)!n!(\nu
+1)_{n}(\nu+1)_{2n}}
\end{align*}
where $\mu\neq\nu$ and $\func{Re}(\nu)>-1,~\func{Re}(\mu)>-1.$
\end{theorem}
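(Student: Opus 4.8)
The plan is to begin from the defining series (1.8), rename its summation index as $n$, and expand each monomial $\left(\frac{z}{2}\right)^{2n+\nu}$ as a series of Bessel functions whose orders are shifts of $\mu$. The single identity that does all the work is
\begin{equation*}
\left(\frac{z}{2}\right)^{2n+\mu}=(-1)^{n}\,\Gamma(\mu+n+1)\sum_{j=n}^{\infty}\frac{(-j)_{n}}{j!}\left(\frac{z}{2}\right)^{j}J_{\mu+j}(z)\qquad(n\in\mathbb{Z},\ n\geq 0),
\end{equation*}
valid for $\mathrm{Re}(\mu)>-1$. Granting it, I write $\left(\frac{z}{2}\right)^{2n+\nu}=\left(\frac{z}{2}\right)^{\nu-\mu}\left(\frac{z}{2}\right)^{2n+\mu}$ inside (1.8), substitute the identity, interchange the two summations, and reindex by $j=m+n$ so that the new sum runs over $m\geq 0$ (the would-be terms with $j<n$ are absent because $(-j)_{n}=0$ there). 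Finally I convert $\Gamma(\nu+n+1)=\Gamma(\nu+1)(\nu+1)_{n}$, $\Gamma(\nu+2n+1)=\Gamma(\nu+1)(\nu+1)_{2n}$ and $\Gamma(\mu+n+1)=\Gamma(\mu+1)(\mu+1)_{n}$ via (1.2); collecting constants produces the prefactor $\Gamma(\mu+1)/[\Gamma(\nu+1)]^{2}$ and exactly the asserted double series.

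So the proof reduces to establishing the displayed identity. First I would prove the base case $n=0$, namely the classical expansion $\left(\frac{z}{2}\right)^{\mu}=\Gamma(\mu+1)\sum_{j\geq 0}\frac{1}{j!}\left(\frac{z}{2}\right)^{j}J_{\mu+j}(z)$: inserting the ascending series $J_{\mu+j}(z)=\sum_{l\geq 0}\frac{(-1)^{l}}{l!\,\Gamma(\mu+j+l+1)}\left(\frac{z}{2}\right)^{\mu+j+2l}$ and gathering all terms carrying a common power $\left(\frac{z}{2}\right)^{\mu+2p}$ (so $p=j+l$), the coefficient of $\left(\frac{z}{2}\right)^{\mu+2p}$ equals $\frac{1}{\Gamma(\mu+p+1)\,p!}\sum_{l=0}^{p}\binom{p}{l}(-1)^{l}$, which is $\frac{1}{\Gamma(\mu+1)}$ for $p=0$ and $0$ otherwise; hence the right-hand side collapses to $\left(\frac{z}{2}\right)^{\mu}/\Gamma(\mu+1)$. (Equivalently, the $n=0$ case is the $\lambda\to 0$ limit of Bessel's multiplication theorem.) For general $n$, using $(-j)_{n}=(-1)^{n}j!/(j-n)!$ and setting $j=i+n$ turns the right-hand side into $(-1)^{n}\Gamma(\mu+n+1)\cdot(-1)^{n}\left(\frac{z}{2}\right)^{n}\sum_{i\geq 0}\frac{1}{i!}\left(\frac{z}{2}\right)^{i}J_{(\mu+n)+i}(z)$, and applying the base case with $\mu$ replaced by $\mu+n$ evaluates the inner sum to $\left(\frac{z}{2}\right)^{\mu+n}/\Gamma(\mu+n+1)$; everything cancels to leave $\left(\frac{z}{2}\right)^{2n+\mu}$, as required.

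The main obstacle is conceptual rather than computational: one must hit on the correct Bessel expansion. The even-shift formula $\left(\frac{z}{2}\right)^{\nu}=\sum_{m}\frac{(\nu+2m)\,\Gamma(\nu+m)}{m!}J_{\nu+2m}(z)$ used in the proof of Theorem 2.6 only produces Bessel functions whose order differs from $\nu$ by an even integer, so it cannot generate the orders $\mu+m+n$ of arbitrary parity, nor the Pochhammer factor $(-m-n)_{n}$, appearing in the assertion; the ``all-shift'' identity above is the right substitute. Once that is identified, the remaining work is bookkeeping: the reindexing $j=m+n$ and the Gamma-to-Pochhammer rewritings. One should still record that the interchange of summations is legitimate: for fixed $z$ the inner series over $j$ converges absolutely, since $|J_{\mu+j}(z)|$ decays super-geometrically in $j$ (the general term is dominated by a factorially small quantity), so Fubini's theorem applies and the resulting double series converges absolutely, uniformly on compact subsets of the $z$-plane; the order of the two outer summations may then be taken as in the statement.
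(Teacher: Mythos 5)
Your proposal is correct and takes essentially the same route as the paper: the paper likewise splits the power into $(\frac{z}{2})^{\nu-\mu+n}(\frac{z}{2})^{\mu+n}$ and expands via the all-shift formula $(\frac{z}{2})^{\mu+n}=\Gamma(\mu+n+1)\sum_{p\geq 0}\frac{(z/2)^{p}}{p!}J_{\mu+n+p}(z)$ (quoted from Watson), then reindexes $p=m-n$ and finally shifts $m\mapsto m+n$ to reach the stated double series. The only differences are cosmetic: you prove that expansion identity (and absorb the $(-j)_{n}$ reindexing into it) instead of citing it, and you also record the absolute-convergence justification that the paper omits.
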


\begin{proof}
Taking into consideration of the definition of the unified four parameter
Bessel function, we have%
\begin{equation*}
G_{\nu}^{(b,c)}(z;\rho)=\sum\limits_{n=0}^{\infty}\frac{(-b)^{n}(c;\rho
)_{\nu+2n}}{n!~\Gamma(n+\nu+1)~\Gamma(2n+\nu+1)}(\frac{z}{2})^{\nu-\mu +n}(%
\frac{z}{2})^{\mu+n}.
\end{equation*}
Using Gegenbauer expansion \cite{W} (page 143)%
\begin{equation*}
(\frac{z}{2})^{\mu+n}=\Gamma(\mu+n+1)\sum\limits_{p=0}^{\infty}\frac{(\frac {%
z}{2})^{p}}{p!}J_{\mu+n+p}(z),
\end{equation*}
we have%
\begin{equation*}
G_{\nu}^{(b,c)}(z;\rho)=\sum\limits_{n=0}^{\infty}\frac{(-b)^{n}(c;\rho
)_{\nu+2n}}{n!~\Gamma(n+\nu+1)~\Gamma(2n+\nu+1)}(\frac{z}{2})^{\nu-\mu
+n}~\Gamma(\mu+n+1)\sum\limits_{p=0}^{\infty}\frac{(\frac{z}{2})^{p}}{p!}%
J_{\mu+n+p}(z).
\end{equation*}
Taking $m-n$~for $p,$ we have 
\begin{align*}
G_{\nu}^{(b,c)}(z;\rho) & =\sum\limits_{m=0}^{\infty}(\frac{z}{2})^{\nu
-\mu+m}J_{\mu+m}(z)\{\sum\limits_{n=0}^{\infty}\frac{(-b)^{n}(c;\rho)_{\nu
+2n}\Gamma(\mu+n+1)}{n!(m-n)!~\Gamma(n+\nu+1)~\Gamma(2n+\nu+1)}\} \\
& =\sum\limits_{m=0}^{\infty}(\frac{z}{2})^{\nu-\mu+m}J_{\mu+m}(z)\{\sum%
\limits_{n=0}^{\infty}\frac{(-1)^{n}m!b^{n}(c;\rho)_{\nu+2n}\Gamma(\mu+n+1)}{%
(m-n)!n!\Gamma(n+\nu+1)\Gamma(2n+\nu+1)m!}.
\end{align*}
Now, using the expansion of 
\begin{equation*}
(-m)_{n}=\frac{(-1)^{n}m!}{(m-n)!},
\end{equation*}
and $(\nu+1)_{n}$~and $(\nu+1)_{2n},$we get%
\begin{equation*}
G_{\nu}^{(b,c)}(z;\rho)=\sum\limits_{m=0}^{\infty}(\frac{z}{2})^{\nu-\mu
+m~}J_{\mu+m}(z)~\frac{\Gamma(\mu+1)}{[\Gamma(\nu+1)]^{2}m!}\sum
\limits_{n=0}^{m}\frac{(-m)_{n}b^{n}(c;\rho)_{\nu+2n}(\mu+1)_{n}}{%
n!(\nu+1)_{n}(\nu+1)_{2n}}.
\end{equation*}
Applying the Cauchy product for the series~\cite{R}, we get%
\begin{equation*}
G_{\nu}^{(b,c)}(z;\rho)=\frac{\Gamma(\mu+1)}{[\Gamma(\nu+1)]^{2}}%
\sum\limits_{m=0}^{\infty}\sum\limits_{n=0}^{\infty}(\frac{z}{2})^{\nu
-\mu+m+n}\frac{(-1)^{n}J_{\mu+m+n}(z)(-m-n)_{n}(-b)^{n}(c;\rho)_{\nu+2n}(%
\mu+1)_{n}}{(m+n)!n!(\nu+1)_{n}(\nu+1)_{2n}}.
\end{equation*}
\end{proof}

The expansions of the generalized three parameter Bessel and modified Bessel
functions of the first kind are given by follows:

\begin{corollary}
The series expansions of the generalized three parameter Bessel and modified
Bessel functions of the first kind are given by 
\begin{align*}
J_{\nu}^{(c)}(z;\rho) & =\frac{\Gamma(\mu+1)}{[\Gamma(\nu+1)]^{2}}%
\sum\limits_{m=0}^{\infty}\sum\limits_{n=0}^{\infty}(\frac{z}{2})^{\nu
-\mu+m+n}\frac{J_{\mu+m+n}(z)(-m-n)_{n}(c;\rho)_{\nu+2n}(\mu+1)_{n}}{%
(m+n)!n!(\nu+1)_{n}(\nu+1)_{2n}}, \\
I_{\nu}^{(c)}(z;\rho) & =\frac{\Gamma(\mu+1)}{[\Gamma(\nu+1)]^{2}}%
\sum\limits_{m=0}^{\infty}\sum\limits_{n=0}^{\infty}(\frac{z}{2})^{\nu
-\mu+m+n}\frac{(-1)^{n}J_{\mu+m+n}(z)(-m-n)_{n}(c;\rho)_{\nu+2n}(\mu+1)_{n}}{%
(m+n)!n!(\nu+1)_{n}(\nu+1)_{2n}},
\end{align*}
where $\mu\neq\nu$~and $\func{Re}(\nu)>-1,~\func{Re}(\mu)>-1.$
\end{corollary}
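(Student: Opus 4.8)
The plan is to derive both identities as immediate specializations of \thmref{} (the double-series expansion of $G_{\nu}^{(b,c)}(z;\rho)$ proved just above), using the defining relations from Section~1 that identify $J_{\nu}^{(c)}(z;\rho)=G_{\nu}^{(1,c)}(z;\rho)$ and $I_{\nu}^{(c)}(z;\rho)=G_{\nu}^{(-1,c)}(z;\rho)$. So the first step is simply to set $b=1$ in the conclusion of the preceding Theorem, and the second step is to set $b=-1$; no new computation or convergence argument is required.

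For the Bessel case, substituting $b=1$ turns the numerator factor $(-1)^{n}(-b)^{n}$ appearing in the Theorem into $(-1)^{n}(-1)^{n}=(-1)^{2n}=1$, while every other factor, including $(c;\rho)_{\nu+2n}$, $(-m-n)_{n}$, $(\mu+1)_{n}$, $(\nu+1)_{n}$ and $(\nu+1)_{2n}$, is left unchanged; since $G_{\nu}^{(1,c)}=J_{\nu}^{(c)}$ this yields exactly the stated expansion for $J_{\nu}^{(c)}(z;\rho)$. For the modified Bessel case, substituting $b=-1$ turns $(-1)^{n}(-b)^{n}$ into $(-1)^{n}\cdot 1=(-1)^{n}$, so a single surviving factor $(-1)^{n}$ remains in the numerator, producing the stated expansion for $I_{\nu}^{(c)}(z;\rho)$. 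The hypotheses $\mu\neq\nu$, $\func{Re}(\nu)>-1$ and $\func{Re}(\mu)>-1$ carry over verbatim from the Theorem, since none of them depends on $b$.

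The only point that needs a little care is the bookkeeping of the composite sign $(-1)^{n}(-b)^{n}$ under the two choices $b=\pm1$; everything else is a verbatim copy of the Theorem's conclusion. In particular, the double series in the Theorem (obtained there via the Gegenbauer expansion $(\tfrac{z}{2})^{\mu+n}=\Gamma(\mu+n+1)\sum_{p\ge 0}(\tfrac{z}{2})^{p}J_{\mu+n+p}(z)/p!$ followed by a Cauchy product) is already absolutely convergent under the stated conditions, and replacing $b$ by $\pm1$ does not affect this. Hence the Corollary follows at once, and one could also remark that taking $c=1$, $\rho=0$ recovers the classical Gegenbauer-type expansions of $J_{\nu}(z)$ and $I_{\nu}(z)$ in series of Bessel functions.
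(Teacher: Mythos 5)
Your proposal is correct and follows the same route as the paper: the Corollary is obtained simply by specializing Theorem 2.7 to $b=1$ and $b=-1$, and your bookkeeping of the composite factor $(-1)^{n}(-b)^{n}$ (equal to $1$ for $b=1$, giving $J_{\nu}^{(c)}$, and to $(-1)^{n}$ for $b=-1$, giving $I_{\nu}^{(c)}$) matches the stated formulas exactly. Nothing further is needed, since the hypotheses are inherited unchanged from the Theorem.
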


Substituting $c=1$ and \ $\rho =0$ in Corollary 2.7.1 and then using
Chu-Vandermode Theorem~\cite{G.A.R} 
\begin{equation*}
\sum\limits_{n=0}^{m}\frac{(-m)_{n}(\mu +1)_{n}}{(\nu +1)_{n}n!}=\frac{(\nu
-\mu )_{m}}{(\nu +1)_{m}},
\end{equation*}%
the expansions of the usual Bessel and modified Bessel functions can be
obtained.

\begin{corollary}
\cite{W} \ Series expansions satisfied by the usual and modified Bessel
functions are given by%
\begin{align*}
J_{\nu}(z) & =\frac{\Gamma(\mu+1)}{\Gamma(\nu-\mu)}\sum\limits_{m=0}^{%
\infty}(\frac{z}{2})^{\nu-\mu+m}\frac{\Gamma(\nu-\mu+m)}{\Gamma(\nu +m+1)m!}%
J_{\mu+m}(z),~\mu\neq\nu,~\func{Re}(\mu)>-1,~\nu-\mu \neq\{0,-1,-2,...\}, \\
I_{\nu}(z) & =\frac{\Gamma(\mu+1)}{[\Gamma(\nu+1)]}\sum\limits_{m=0}^{%
\infty}\sum\limits_{n=0}^{\infty}(\frac{z}{2})^{\nu-\mu+m+n}\frac {%
(-1)^{n}J_{\mu+m+n}(z)(-m-n)_{n}(\mu+1)_{n}}{\Gamma(m+n+1)n!(\nu+1)_{n}}%
,~\mu\neq\nu,~\func{Re}(\nu)>-1,~\func{Re}(\mu)>-1.
\end{align*}
\end{corollary}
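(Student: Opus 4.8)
The plan is to read Corollary 2.7.2 off Corollary 2.7.1 by specializing $c=1$, $\rho=0$ and then recognizing a Chu--Vandermonde sum. The first step is to simplify the $(c,\rho)$-dependent factor: since $(\lambda;0)_{\nu}=(\lambda)_{\nu}$, one has $(1;0)_{\nu+2n}=(1)_{\nu+2n}=\Gamma(\nu+2n+1)$ by (1.2), and therefore, again by (1.2),
\[
\frac{(1;0)_{\nu+2n}}{(\nu+1)_{2n}}=\frac{\Gamma(\nu+2n+1)}{\Gamma(\nu+2n+1)/\Gamma(\nu+1)}=\Gamma(\nu+1).
\]
Feeding this into the double series of Corollary 2.7.1 cancels one power of $\Gamma(\nu+1)$ from the denominator, so the general term becomes $\frac{\Gamma(\mu+1)}{\Gamma(\nu+1)}(\frac{z}{2})^{\nu-\mu+m+n}\frac{(-1)^{n}(-b)^{n}(-m-n)_{n}(\mu+1)_{n}}{(m+n)!\,n!\,(\nu+1)_{n}}\,J_{\mu+m+n}(z)$.

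For the modified Bessel function I would then set $b=-1$, so that $(-b)^{n}=1$ and the remaining sign $(-1)^{n}$ is exactly the one in the asserted expansion of $I_{\nu}(z)$ (writing $(m+n)!=\Gamma(m+n+1)$); no further collapse is possible, because with the sign $(-1)^{n}$ present the inner $n$-sum is a Gauss sum of argument $-1$ rather than $+1$, which has no elementary closed form — this is what forces the $I_{\nu}$ formula to stay a double series. For the Bessel function I would set $b=1$, so that $(-1)^{n}(-b)^{n}=(-1)^{2n}=1$ and the sign disappears. Collecting the terms by the value of $M:=m+n$ and using $(-m-n)_{n}=(-M)_{n}$, $(m+n)!=M!$, the double series becomes the single series
\[
J_{\nu}(z)=\frac{\Gamma(\mu+1)}{\Gamma(\nu+1)}\sum_{M=0}^{\infty}(\frac{z}{2})^{\nu-\mu+M}\frac{J_{\mu+M}(z)}{M!}\sum_{n=0}^{M}\frac{(-M)_{n}(\mu+1)_{n}}{(\nu+1)_{n}\,n!}.
\]

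By the Chu--Vandermonde identity quoted just before the corollary, the inner sum equals $(\nu-\mu)_{M}/(\nu+1)_{M}$. Rewriting this ratio through gamma functions by (1.2) as $\frac{\Gamma(\nu-\mu+M)\,\Gamma(\nu+1)}{\Gamma(\nu-\mu)\,\Gamma(\nu+M+1)}$ and absorbing the leftover factor $1/\Gamma(\nu+1)$, the prefactor $\Gamma(\mu+1)/\Gamma(\nu+1)$ turns into $\Gamma(\mu+1)/\Gamma(\nu-\mu)$, and what is left is precisely the stated single-series expansion of $J_{\nu}(z)$. The hypotheses $\func{Re}(\nu)>-1$, $\func{Re}(\mu)>-1$ and $\nu-\mu\notin\{0,-1,-2,\dots\}$ are exactly what is needed for the Pochhammer denominators, for the Chu--Vandermonde step, and for the final gamma ratio to make sense.

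There is no serious obstacle once Corollary 2.7.1 is available. The two points that deserve a word of care are the absolute convergence that legitimizes the repackaging $m+n\mapsto M$ (this is the reverse of the Cauchy-product step used to prove Theorem 2.7, valid on the stated parameter ranges), and the observation — made inevitable by the asymmetry between the two displayed identities — that it is the single factor $(-b)^{n}$ that decides whether the inner sum telescopes via Chu--Vandermonde, so that the Bessel expansion collapses to a single sum while the modified-Bessel expansion genuinely does not.
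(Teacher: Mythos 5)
Your derivation is correct and follows exactly the route the paper indicates: specialize $c=1$, $\rho=0$ in Corollary 2.7.1 (so that $(1;0)_{\nu+2n}/(\nu+1)_{2n}=\Gamma(\nu+1)$ cancels one factor of $\Gamma(\nu+1)$), keep the double series for $I_{\nu}$, and for $J_{\nu}$ regroup by $M=m+n$ and collapse the inner sum with the quoted Chu--Vandermonde identity, rewriting $(\nu-\mu)_{M}/(\nu+1)_{M}$ via (1.2). Your added remarks on convergence of the regrouping and on why the $(-1)^{n}$ sign blocks a similar collapse for $I_{\nu}$ are sound and consistent with the paper.
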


\section{Derivative Properties, Recurrence Relation and Partial Differential
Equation of the Unified Four Parameter Bessel Function}

Derivative properties and recurrence relations of the unified four parameter
Bessel function are obtained in the following Theorems.

\begin{theorem}
A differential recurrence formula satisfied by $G_{\nu }^{(b,c)}(z;\rho )$
is given by 
\begin{equation}
\frac{\partial }{\partial z}[z^{-\nu +1}\frac{\partial }{\partial z}(z^{\nu
}G_{\nu }^{(b,c-1)}(z;\rho ))]=(c-1)G_{\nu -1}^{(b,c)}(z;\rho ).
\end{equation}
\end{theorem}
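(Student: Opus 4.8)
The plan is to work directly from the series definition (1.8) of $G_\nu^{(b,c)}(z;\rho)$ and compute the two successive derivative operations on the left-hand side term-by-term, then recognise the resulting series as a multiple of $G_{\nu-1}^{(b,c)}(z;\rho)$. First I would write
\begin{equation*}
z^{\nu}G_{\nu}^{(b,c-1)}(z;\rho)=\sum_{k=0}^{\infty}\frac{(-b)^{k}(c-1;\rho)_{2k+\nu}}{\Gamma(\nu+k+1)\Gamma(\nu+2k+1)\,k!}\,\frac{z^{2k+2\nu}}{2^{2k+\nu}},
\end{equation*}
so that the power of $z$ is $2k+2\nu$. Differentiating once in $z$ brings down a factor $2k+2\nu=2(k+\nu)$; multiplying by $z^{-\nu+1}$ shifts the exponent down to $2k+\nu+1$; differentiating again brings down $2k+\nu+1$. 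The key arithmetic point is that the product $(2k+2\nu)(2k+\nu+1)$ combines with the Gamma factors to drop the index $\nu$ by one: using $\Gamma(\nu+2k+1)=(\nu+2k)\Gamma(\nu+2k)$ and $\Gamma(\nu+k+1)=(\nu+k)\Gamma(\nu+k)$, the factor $2(k+\nu)=2(\nu+k)$ cancels against $(\nu+k)$ in $\Gamma(\nu+k+1)$, and $(2k+\nu+1)$... more precisely I expect $(2k+2\nu)$ to cancel one unit from $\Gamma(\nu+2k+1)$ after re-indexing. I would carry the bookkeeping carefully so that, after the two differentiations, the surviving series has exactly the shape of (1.8) with $\nu$ replaced by $\nu-1$.

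The second key ingredient is the Pochhammer identity relating $(c-1;\rho)_{2k+\nu}$ to $(c;\rho)_{2k+\nu}$. Since shifting $c$ to $c-1$ and the subscript simultaneously is governed by the extended-Pochhammer machinery, I would use the integral representation (1.6) or the functional relation (1.7) to write $(c-1;\rho)_{2k+\nu}$ in terms of $(c;\rho)$-symbols; specifically I expect a relation of the form $(c-1)(c-1;\rho)_{m}=$ (something) $(c;\rho)_{m-1}$ type, or more cleanly that the factor $(c-1)$ emerges naturally when re-indexing so that the subscript on the right-hand side becomes $2k+(\nu-1)$. This is where the constant $(c-1)$ on the right-hand side of (3.1) comes from, so getting this step exactly right is essential.

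The main obstacle will be the index-shifting bookkeeping: after the two $z$-derivatives I will have a series in $k$ whose general term carries the factors $2(k+\nu)$, $(2k+\nu+1)$, the denominators $\Gamma(\nu+k+1)\Gamma(\nu+2k+1)k!$, and the Pochhammer symbol $(c-1;\rho)_{2k+\nu}$; I must verify that all of this collapses precisely to $(c-1)$ times $\frac{(-b)^{k}(c;\rho)_{2k+\nu-1}}{\Gamma(\nu+k)\Gamma(\nu+2k)k!}\big(\tfrac z2\big)^{2k+\nu-1}$, with no stray factors of $2$ or shifted indices. I would double-check the computation by specialising to $b=1$, $c=2$, $\rho=0$, where (3.1) should reduce to a classical identity for $J_{\nu}$, namely $\frac{d}{dz}[z^{-\nu+1}\frac{d}{dz}(z^{\nu}J_{\nu}(z))]=J_{\nu-1}(z)$, which is a standard consequence of the Bessel recurrences $\frac{d}{dz}[z^{\nu}J_{\nu}]=z^{\nu}J_{\nu-1}$ applied twice. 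Once the term-by-term identity is established, differentiation under the summation sign is justified by the absolute and uniform convergence of (1.8) on compact subsets, and the proof is complete.
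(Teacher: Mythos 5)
Your plan is essentially the paper's own proof: differentiate the series (1.8) term by term, where the first derivative brings down $2(k+\nu)$ and lowers $\Gamma(\nu+k+1)$ to $\Gamma(\nu+k)$, and after multiplying by $z^{-\nu+1}$ the exponent is $2k+\nu$ (not $2k+\nu+1$ as you wrote), so the second derivative brings down $2k+\nu$ and lowers $\Gamma(\nu+2k+1)$ to $\Gamma(\nu+2k)$. The Pochhammer step you anticipate is exactly identity (1.7) applied with unit shift: $(c-1;\rho)_{2k+\nu}=(c-1)_{1}\,(c;\rho)_{2k+\nu-1}=(c-1)\,(c;\rho)_{2k+\nu-1}$, after which the remaining series is precisely $G_{\nu-1}^{(b,c)}(z;\rho)$; the paper reaches the same conclusion slightly more circuitously, writing $(c-1;\rho)_{2k+\nu}=(c-1)_{\nu}(c+\nu-1;\rho)_{2k}$ and using $(c-1)_{\nu}/(c)_{\nu-1}=c-1$. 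So the main argument goes through as you describe.

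One concrete warning: your proposed consistency check rests on a false identity, and running it would produce a spurious contradiction. At $b=1$, $c=2$, $\rho=0$ the left-hand side is $\frac{d}{dz}\left[z^{-\nu+1}\frac{d}{dz}\left(z^{\nu}J_{\nu}(z)\right)\right]=\frac{d}{dz}\left[zJ_{\nu-1}(z)\right]=J_{\nu-1}(z)+zJ_{\nu-1}'(z)$, not $J_{\nu-1}(z)$; the relation $\frac{d}{dz}[z^{\nu}J_{\nu}(z)]=z^{\nu}J_{\nu-1}(z)$ cannot be applied a second time because after multiplying by $z^{-\nu+1}$ you hold $zJ_{\nu-1}(z)$, not $z^{\nu-1}J_{\nu-1}(z)$. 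Correspondingly, the right-hand side $(c-1)G_{\nu-1}^{(b,c)}(z;\rho)$ at these parameter values is $G_{\nu-1}^{(1,2)}(z;0)=\sum_{k=0}^{\infty}\frac{(-1)^{k}(2k+\nu)}{k!\,\Gamma(\nu+k)}\left(\frac{z}{2}\right)^{2k+\nu-1}$, which equals $\frac{d}{dz}[zJ_{\nu-1}(z)]$ and is not a plain Bessel function (that would require $c=1$ on the right, not $c=2$). With that check corrected, and the exponent bookkeeping fixed as above, your proof is complete and coincides with the paper's.
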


\begin{proof}
Substituting $c-1~$in (1.8) and multiplying with $z^{\nu}$ yields%
\begin{equation*}
\sum\limits_{k=0}^{\infty}\frac{(-b)^{k}(c-1;\rho)_{2k+\nu}}{\Gamma
(\nu+k+1)~\Gamma(\nu+2k+1)}\frac{z^{2k+2\nu}}{2^{2k+\nu}~k!}.
\end{equation*}
Taking derivative with respect to $z$ of $z^{\nu}G_{\nu}^{(b,c-1)}(z;\rho),$
we get%
\begin{equation*}
\frac{\partial}{\partial z}[z^{\nu}G_{\nu}^{(b,c-1)}(z;\rho)]=\sum
\limits_{k=0}^{\infty}\frac{(-b)^{k}(c-1;\rho)_{2k+\nu}}{\Gamma(\nu
+k)~\Gamma(\nu+2k+1)}\frac{z^{2k+2\nu-1}}{2^{2k+\nu-1}k!}.
\end{equation*}
Then, multiplying the last series with $z^{-\nu+1}$~and differentiating with
respect to $z$~gives%
\begin{equation*}
\frac{\partial}{\partial z}[z^{-\nu+1}\frac{\partial}{\partial z}[z^{\nu
}G_{\nu}^{(b,c-1)}(z;\rho)]]=\sum\limits_{k=0}^{\infty}\frac{%
(-b)^{k}(c-1;\rho)_{2k+\nu}~z^{2k+\nu-1}}{\Gamma(\nu+k)~%
\Gamma(v+2k)~2^{2k+v-1}k!}.
\end{equation*}
Using (1.7),~we get%
\begin{equation*}
\frac{\partial}{\partial z}[z^{-\nu+1}\frac{\partial}{\partial z}[z^{\nu
}G_{\nu}^{(b,c-1)}(z;\rho)]]=\sum\limits_{k=0}^{\infty}\frac{%
(-b)^{k}(c-1)_{\nu}(c+\nu-1;\rho)_{2k}~z^{2k+\nu-1}}{\Gamma(\nu+k)~\Gamma
(v+2k)~2^{2k+v-1}k!}.
\end{equation*}
Multiplying and dividing with the term $(c)_{\nu-1}$~of the right hand side,
we have%
\begin{equation*}
\frac{\partial}{\partial z}[z^{-\nu+1}\frac{\partial}{\partial z}[z^{\nu
}G_{\nu}^{(b,c-1)}(z;\rho)]]=\frac{(c-1)_{\nu}}{(c)_{\nu-1}}\sum
\limits_{k=0}^{\infty}\frac{(-b)^{k}(c)_{\nu-1}(c+\nu-1;\rho)_{2k}~z^{2k+%
\nu-1}}{\Gamma(\nu+k)~\Gamma(v+2k)~2^{2k+v-1}k!}.
\end{equation*}
Taking into consideration of the unified Bessel function and equation (1.7),
yields%
\begin{equation*}
\frac{\partial}{\partial z}[z^{-\nu+1}\frac{\partial}{\partial z}[z^{\nu
}G_{\nu}^{(b,c-1)}(z;\rho)]]=\frac{(c-1)_{\nu}}{(c)_{\nu-1}}%
G_{\nu-1}^{(b,c)}(z;\rho).
\end{equation*}
By (1.2), we get the result%
\begin{equation*}
\frac{\partial}{\partial z}[z^{-\nu+1}\frac{\partial}{\partial z}[z^{\nu
}G_{\nu}^{(b,c-1)}(z;\rho)]]=(c-1)G_{\nu-1}^{(b,c)}(z;\rho).
\end{equation*}
\end{proof}

The following Corollary is presented for the functions $J_{\nu}^{(c)}(z;%
\rho) $ and $I_{\nu}^{(c)}(z;\rho).$

\begin{corollary}
Recurrence relations satisfied by the generalized three parameter Bessel and
modified Bessel functions of the first kind are given by 
\begin{align*}
\frac{\partial}{\partial z}[z^{-\nu+1}\frac{\partial}{\partial z}(z^{\nu
}J_{\nu}^{(c-1)}(z;\rho))] & =(c-1)J_{\nu-1}^{(c)}(z;\rho), \\
\frac{\partial}{\partial z}[z^{-\nu+1}\frac{\partial}{\partial z}(z^{\nu
}I_{\nu}^{(c-1)}(z;\rho))] & =(c-1)I_{\nu-1}^{(c)}(z;\rho).
\end{align*}
\end{corollary}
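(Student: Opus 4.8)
The plan is to obtain this as an immediate specialization of \thmref{} --- more precisely, of the differential recurrence (3.1) --- to the values $b=1$ and $b=-1$. Recall from the Introduction that the generalized three parameter Bessel function of the first kind is exactly the $b=1$ instance of the unified family, $J_{\nu}^{(c)}(z;\rho)=G_{\nu}^{(1,c)}(z;\rho)$, while the generalized modified Bessel function of the first kind is the $b=-1$ instance, $I_{\nu}^{(c)}(z;\rho)=G_{\nu}^{(-1,c)}(z;\rho)$. This identification is visible term-by-term from the defining series in (1.8), since $(-b)^{k}$ equals $(-1)^{k}$ when $b=1$ and equals $1$ when $b=-1$, matching precisely the two series $J_{\nu}^{(c)}$ and $I_{\nu}^{(c)}$ recorded in the Introduction; the parameter ranges $\func{Re}(\nu)>-1$, $\func{Re}(c)>0$, $\func{Re}(\rho)>0$ are inherited from those of \thmref{}.

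First I would set $b=1$ in (3.1), i.e.\ in $\frac{\partial}{\partial z}[z^{-\nu+1}\frac{\partial}{\partial z}(z^{\nu}G_{\nu}^{(b,c-1)}(z;\rho))]=(c-1)G_{\nu-1}^{(b,c)}(z;\rho)$. Replacing $G_{\nu}^{(1,c-1)}(z;\rho)$ by $J_{\nu}^{(c-1)}(z;\rho)$ on the left-hand side and $G_{\nu-1}^{(1,c)}(z;\rho)$ by $J_{\nu-1}^{(c)}(z;\rho)$ on the right-hand side yields the first claimed identity. Then I would repeat the substitution with $b=-1$, using $G_{\nu}^{(-1,c-1)}(z;\rho)=I_{\nu}^{(c-1)}(z;\rho)$ and $G_{\nu-1}^{(-1,c)}(z;\rho)=I_{\nu-1}^{(c)}(z;\rho)$, which produces the second identity.

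Since every analytic step --- the term-by-term differentiation of $z^{\nu}G_{\nu}^{(b,c-1)}$, the application of the functional equation (1.7) for the generalized Pochhammer symbol, and the reduction of the constant $(c-1)_{\nu}/(c)_{\nu-1}$ to $(c-1)$ via (1.2) --- was already performed inside the proof of \thmref{} with $b$ kept as a free symbolic parameter, there is no genuine obstacle here. The only point worth a sentence of verification is the sign bookkeeping: one must confirm that substituting $b=\pm 1$ into $(-b)^{k}$ reproduces exactly the series that \emph{define} $J_{\nu}^{(c)}$ and $I_{\nu}^{(c)}$, so that the reduction of \thmref{} to these two special functions is legitimate; as noted above this is immediate.
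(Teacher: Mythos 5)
Your proposal is correct and matches the paper's (implicit) argument exactly: the corollary is obtained by specializing the unified recurrence (3.1) of Theorem 3.1 to $b=1$ and $b=-1$, using the identifications $J_{\nu}^{(c)}(z;\rho)=G_{\nu}^{(1,c)}(z;\rho)$ and $I_{\nu}^{(c)}(z;\rho)=G_{\nu}^{(-1,c)}(z;\rho)$ from the defining series. Your extra sentence checking the sign convention $(-b)^{k}$ against the series definitions is a sensible verification but introduces nothing beyond what the paper already relies on.
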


\begin{theorem}
Recurrence relation satisfied by the unified four parameter Bessel function
is given by 
\begin{equation}
\frac{\partial }{\partial z}[z^{\nu +1}\frac{\partial }{\partial z}(z^{-\nu
}G_{\nu }^{(b,c-1)}(z;\rho ))]=-b(c-1)G_{\nu +1}^{(b,c)}(z;\rho ).
\end{equation}
\end{theorem}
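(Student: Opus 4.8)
\textit{Proof proposal.} The plan is to proceed exactly as in the proof of Theorem 3.1, by differentiating the defining series \textup{(1.8)} term by term. First I would write $z^{-\nu}G_{\nu}^{(b,c-1)}(z;\rho)$ as a power series in $z$: since the prefactor $z^{\nu}$ in \textup{(1.8)} cancels, one gets
\begin{equation*}
z^{-\nu}G_{\nu}^{(b,c-1)}(z;\rho)=\sum_{k=0}^{\infty}\frac{(-b)^{k}(c-1;\rho)_{2k+\nu}}{\Gamma(\nu+k+1)\,\Gamma(\nu+2k+1)}\frac{z^{2k}}{2^{2k+\nu}k!}.
\end{equation*}
Differentiating once in $z$ kills the $k=0$ term, produces a factor $2k/k!=2/(k-1)!$, and after re-indexing $k\mapsto k+1$ leaves a clean series in odd powers of $z$. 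Multiplying by $z^{\nu+1}$ and differentiating a second time, the essential simplification is the identity $\Gamma(\nu+2k+3)=(\nu+2k+2)\Gamma(\nu+2k+2)$, which absorbs the linear factor $\nu+2k+2$ thrown off by the second derivative.

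After these two steps one arrives at
\begin{equation*}
\frac{\partial}{\partial z}\Big[z^{\nu+1}\frac{\partial}{\partial z}\big(z^{-\nu}G_{\nu}^{(b,c-1)}(z;\rho)\big)\Big]=-b\sum_{k=0}^{\infty}\frac{(-b)^{k}(c-1;\rho)_{2k+\nu+2}}{\Gamma(\nu+k+2)\Gamma(\nu+2k+2)}\frac{(z/2)^{2k+\nu+1}}{k!}.
\end{equation*}
The only discrepancy between the series on the right and the series defining $G_{\nu+1}^{(b,c)}(z;\rho)$ is the generalized Pochhammer factor: one has $(c-1;\rho)_{2k+\nu+2}$ where one wants $(c;\rho)_{2k+\nu+1}$. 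I would close this gap with the splitting rule \textup{(1.7)}, applied with $\lambda=c-1$, first subscript $1$, second subscript $2k+\nu+1$, which gives $(c-1;\rho)_{2k+\nu+2}=(c-1)_{1}(c;\rho)_{2k+\nu+1}=(c-1)(c;\rho)_{2k+\nu+1}$. Pulling the scalar $c-1$ out front then identifies the remaining sum verbatim with $G_{\nu+1}^{(b,c)}(z;\rho)$, which is \textup{(3.2)}.

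There is no genuine obstacle here: the proof is the ``$z^{-\nu}$'' companion of Theorem 3.1, and the same termwise differentiation is legitimate for $\func{Re}(\nu)>-1$ because the series for $G_{\nu}^{(b,c-1)}$ converges locally uniformly. The only points demanding care are bookkeeping ones: tracking the lower index after the first differentiation so that it returns to $k=0$, and keeping the power of $2$ in the denominator correct through both derivatives. As in Theorem 3.1, specializing $b=1$ and $b=-1$ afterwards will record the corresponding recurrences for $J_{\nu}^{(c)}(z;\rho)$ and $I_{\nu}^{(c)}(z;\rho)$.
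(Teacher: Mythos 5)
Your proposal is correct and follows essentially the same route as the paper, whose own proof of this theorem simply says to repeat the termwise series differentiation used for Theorem 3.1; your computation of the two derivatives and the resulting series checks out. Your use of (1.7) in the form $(c-1;\rho)_{2k+\nu+2}=(c-1)_{1}(c;\rho)_{2k+\nu+1}=(c-1)(c;\rho)_{2k+\nu+1}$ is in fact a slightly cleaner single-step version of the paper's Pochhammer bookkeeping (which, in Theorem 3.1, multiplies and divides by $(c)_{\nu-1}$ and applies (1.7) twice), but it is the same underlying idea.
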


\begin{proof}
Applying the similar process that is used in the previous theorem, one can
get the result.
\end{proof}

Substituting $b=1$ and $b=-1~$respectively in Theorem 3.2, the following
recurrences are obtained:

\begin{corollary}
Recurrence formulas satisfied by the generalized three parameter Bessel and
modified Bessel functions of the first kind are given by%
\begin{align*}
\frac{\partial}{\partial z}[z^{\nu+1}\frac{\partial}{\partial z}(z^{-\nu
}J_{\nu}^{(c-1)}(z;\rho))] & =-(c-1)J_{\nu+1}^{(c)}(z;\rho), \\
\frac{\partial}{\partial z}[z^{\nu+1}\frac{\partial}{\partial z}(z^{-\nu
}I_{\nu}^{(c-1)}(z;\rho))] & =(c-1)I_{\nu+1}^{(c)}(z;\rho).
\end{align*}
\end{corollary}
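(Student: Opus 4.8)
The plan is to follow the same term-by-term differentiation argument used in the proof of Theorem 3.1, tracking the effect of the two derivatives on the series (1.8). First I would replace $c$ by $c-1$ in (1.8) and multiply by $z^{-\nu}$, which removes the factor $(\frac{z}{2})^{\nu}$ and leaves only even powers of $z$:
\begin{equation*}
z^{-\nu}G_{\nu}^{(b,c-1)}(z;\rho)=\sum_{k=0}^{\infty}\frac{(-b)^{k}(c-1;\rho)_{2k+\nu}}{\Gamma(\nu+k+1)\,\Gamma(\nu+2k+1)}\,\frac{z^{2k}}{2^{2k+\nu}k!}.
\end{equation*}
Differentiating with respect to $z$ annihilates the $k=0$ term and brings down a factor $2k$; using $2k/k!=2/(k-1)!$, then multiplying by $z^{\nu+1}$ and differentiating once more produces a factor $2k+\nu$. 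The key simplification is $(2k+\nu)/\Gamma(\nu+2k+1)=1/\Gamma(\nu+2k)$, after which the numerical powers of $2$ collapse with the $2^{2k+\nu-1}$ in the denominator to give $(\frac{z}{2})^{2k+\nu-1}$.

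At this stage the left-hand side of (3.2) has become $\sum_{k\ge 1}\frac{(-b)^{k}(c-1;\rho)_{2k+\nu}}{\Gamma(\nu+k+1)\Gamma(\nu+2k)}\frac{(\frac{z}{2})^{2k+\nu-1}}{(k-1)!}$. I would then re-index with $j=k-1$ so the sum starts at $j=0$; the general term becomes one in $(-b)^{j+1}$, $(c-1;\rho)_{2j+\nu+2}$, $\Gamma(\nu+j+2)\Gamma(\nu+2j+2)$ and $(\frac{z}{2})^{2j+\nu+1}/j!$, which is already the skeleton of $G_{\nu+1}^{(b,c)}(z;\rho)$ up to two scalar factors. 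The remaining ingredient is the factorization (1.7) of the generalized Pochhammer symbol: splitting $2j+\nu+2=1+(2j+\nu+1)$ gives $(c-1;\rho)_{2j+\nu+2}=(c-1)_{1}(c;\rho)_{2j+\nu+1}=(c-1)(c;\rho)_{2j+\nu+1}$. Pulling the constants $-b$ and $c-1$ outside the sum leaves precisely the defining series of $G_{\nu+1}^{(b,c)}(z;\rho)$, which is the claimed identity (3.2).

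The computation is entirely routine apart from two points that deserve care, and these are where the main attention goes. First, one must note that the $k=0$ term genuinely vanishes after the first differentiation, so the shift $j=k-1$ is valid and no boundary term is lost. Second, in applying (1.7) one has to peel off exactly one unit from the base parameter $c-1$ (not from the $\nu$-part of the subscript), so that the shifted symbol is $(c;\rho)_{2j+\nu+1}$ and matches the definition of $G_{\nu+1}^{(b,c)}$; an incorrect split would leave a stray Pochhammer factor. All interchanges of differentiation and summation are legitimate because the series defining $G_{\nu}^{(b,c)}$ converges for every finite $z$, exactly as in the proof of Theorem 3.1.
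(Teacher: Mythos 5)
Your series computation is correct and is essentially the route the paper intends: Theorem 3.2, whose proof the paper leaves as ``the similar process used in the previous theorem,'' is exactly the term-by-term differentiation you carry out, and your application of (1.7) to peel one unit off the base, $(c-1;\rho)_{2j+\nu+2}=(c-1)_{1}(c;\rho)_{2j+\nu+1}=(c-1)(c;\rho)_{2j+\nu+1}$, is if anything cleaner than the multiply-and-divide by $(c)_{\nu-1}$ used in the paper's proof of Theorem 3.1. The only step you have not written down is the one the paper's own proof of this Corollary consists of: specialize your identity (3.2) by setting $b=1$, where $G_{\nu+1}^{(1,c)}(z;\rho)=J_{\nu+1}^{(c)}(z;\rho)$ and $-b(c-1)=-(c-1)$, and $b=-1$, where $G_{\nu+1}^{(-1,c)}(z;\rho)=I_{\nu+1}^{(c)}(z;\rho)$ and $-b(c-1)=c-1$, which yields the two displayed formulas.
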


\begin{theorem}
For $c\neq 1,$~we have%
\begin{equation}
\frac{\partial }{\partial \rho }[G_{\nu }^{(b,c)}(z;\rho )]=-\frac{1}{c-1}%
G_{\nu }^{(b,c-1)}(z;\rho ).
\end{equation}
\end{theorem}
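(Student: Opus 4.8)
The plan is to differentiate the defining series (1.8) of $G_{\nu}^{(b,c)}(z;\rho)$ term by term with respect to $\rho$. Since $z$, $b$, $\nu$ and the Gamma factors carry no $\rho$-dependence, the whole problem reduces to computing the $\rho$-derivative of the generalized Pochhammer symbol $(c;\rho)_{2k+\nu}$, i.e. to establishing the single identity
\[
\frac{\partial}{\partial\rho}(c;\rho)_{2k+\nu}=-\frac{1}{c-1}\,(c-1;\rho)_{2k+\nu}.
\]

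To prove this identity I would start from the integral representation (1.6), which here reads $(c;\rho)_{2k+\nu}=\frac{1}{\Gamma(c)}\int_{0}^{\infty}t^{c+2k+\nu-1}e^{-t-\frac{\rho}{t}}\,dt$; note that the factor $e^{-\rho/t}$ makes this integral, and all the manipulations below, perfectly convergent at $t=0$ for every $k$, since $\func{Re}(\rho)>0$. Differentiating under the integral sign — legitimate because on any compact subset of $\{\func{Re}(\rho)>0\}$ the integrand and its $\rho$-derivative are dominated by a fixed integrable function — produces a factor $-1/t$, so
\[
\frac{\partial}{\partial\rho}(c;\rho)_{2k+\nu}
=-\frac{1}{\Gamma(c)}\int_{0}^{\infty}t^{(c-1)+2k+\nu-1}e^{-t-\frac{\rho}{t}}\,dt
=-\frac{\Gamma(c-1)}{\Gamma(c)}\,(c-1;\rho)_{2k+\nu},
\]
where the last equality is again just (1.6) with $c$ replaced by $c-1$. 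Since $c\neq1$, the factor $\Gamma(c-1)$ is well defined and $\Gamma(c)=(c-1)\Gamma(c-1)$, which turns the prefactor into $-1/(c-1)$ and gives the claimed identity.

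Substituting this back into (1.8) and pulling the constant $-1/(c-1)$ out of the sum yields exactly the series defining $G_{\nu}^{(b,c-1)}(z;\rho)$, which is the assertion. The only remaining point is the legitimacy of differentiating the series termwise; this is standard because the differentiated series has the same shape as (1.8) with $(c;\rho)_{2k+\nu}$ replaced by $(c-1;\rho)_{2k+\nu}$, and the latter symbols are bounded on compact $\rho$-subsets of $\func{Re}(\rho)>0$, so the series converges locally uniformly in $(z,\rho)$ just as (1.8) does.

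I expect the main, and only slightly delicate, obstacle to be the two interchanges of limiting operations — differentiation under the integral sign and differentiation under the series — rather than the algebra; once the integral representation (1.6) is available, the identity for $\partial_{\rho}(c;\rho)_{2k+\nu}$ is a one-line computation.
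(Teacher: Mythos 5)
Your proposal is correct and is in substance the same argument as the paper's one-line proof: the paper differentiates the integral representation (2.2) under the integral sign, where the factor $-1/t$ coming from $e^{-\rho/t}$ turns $t^{c+\nu-1}$ into $t^{(c-1)+\nu-1}$ and the relation $\Gamma(c)=(c-1)\Gamma(c-1)$ yields the prefactor $-1/(c-1)$. You perform the identical computation one level down, differentiating the series (1.8) termwise and using (1.6) to obtain $\frac{\partial}{\partial\rho}(c;\rho)_{2k+\nu}=-\frac{1}{c-1}(c-1;\rho)_{2k+\nu}$, so the only real difference is that you justify termwise differentiation of the series instead of differentiation under the integral in (2.2), which you also handle correctly.
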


\begin{proof}
Taking derivative with respect to $\rho$~in (2.2), the desired result is
obtained.
\end{proof}

Taking $b=1$ and $b=-1~$in Theorem 3.3,$~$the derivatives of the generalized
three parameter Bessel and modified Bessel functions of the first kind are
calculated, respectively:

\begin{corollary}
For $c\neq1,$~we have%
\begin{align*}
\frac{\partial}{\partial\rho}[J_{\nu}^{(c)}(z;\rho)] & =-\frac{1}{c-1}%
J_{\nu}^{(c-1)}(z;\rho), \\
\frac{\partial}{\partial\rho}[I_{\nu}^{(c)}(z;\rho)] & =-\frac{1}{c-1}%
I_{\nu}^{(c-1)}(z;\rho).
\end{align*}
\end{corollary}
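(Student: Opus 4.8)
The plan is to work directly from the integral representation~(2.2) of $G_{\nu}^{(b,c)}(z;\rho)$ and to differentiate under the integral sign with respect to $\rho$. The variable $\rho$ enters the integrand of~(2.2) only through the factor $e^{-t-\rho/t}$, and since $\frac{\partial}{\partial\rho}e^{-t-\rho/t}=-\frac{1}{t}e^{-t-\rho/t}$, this differentiation merely inserts an extra factor $-\frac{1}{t}$, i.e.\ it lowers the power $t^{c+\nu-1}$ to $t^{(c-1)+\nu-1}$ while leaving the ${}_{0}F_{3}$ factor untouched. Comparing the outcome with~(2.2) written with $c-1$ in place of $c$, and using $\Gamma(c)=(c-1)\,\Gamma(c-1)$, the asserted identity will follow at once.

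Concretely, I would first write
\[
\frac{\partial}{\partial\rho}G_{\nu}^{(b,c)}(z;\rho)=\frac{(\tfrac{z}{2})^{\nu}}{[\Gamma(\nu+1)]^{2}\Gamma(c)}\int\limits_{0}^{\infty}\Bigl(-\tfrac{1}{t}\Bigr)t^{c+\nu-1}e^{-t-\frac{\rho}{t}}\;{}_{0}F_{3}\Bigl(-;\nu+1,\tfrac{\nu+1}{2},\tfrac{\nu+2}{2};\tfrac{-bz^{2}t^{2}}{16}\Bigr)dt,
\]
and then, rewriting $-\tfrac{1}{t}\,t^{c+\nu-1}=-\,t^{(c-1)+\nu-1}$, I obtain
\[
\frac{\partial}{\partial\rho}G_{\nu}^{(b,c)}(z;\rho)=-\frac{(\tfrac{z}{2})^{\nu}}{[\Gamma(\nu+1)]^{2}\Gamma(c)}\int\limits_{0}^{\infty}t^{(c-1)+\nu-1}e^{-t-\frac{\rho}{t}}\;{}_{0}F_{3}\Bigl(-;\nu+1,\tfrac{\nu+1}{2},\tfrac{\nu+2}{2};\tfrac{-bz^{2}t^{2}}{16}\Bigr)dt.
\]
By~(2.2) with $c$ replaced by $c-1$, the last integral equals $[\Gamma(\nu+1)]^{2}\,\Gamma(c-1)\,(\tfrac{z}{2})^{-\nu}\,G_{\nu}^{(b,c-1)}(z;\rho)$; inserting this and simplifying the gamma factors via $\Gamma(c)=(c-1)\Gamma(c-1)$ gives exactly $\frac{\partial}{\partial\rho}G_{\nu}^{(b,c)}(z;\rho)=-\frac{1}{c-1}G_{\nu}^{(b,c-1)}(z;\rho)$.

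The one point that genuinely needs care is the legitimacy of differentiating under the integral sign, which is also where the hypothesis $c\neq1$ really comes in. For $\func{Re}(\rho)$ ranging over a fixed compact set bounded away from $0$, the factors $e^{-\rho/t}$ and $\tfrac{1}{t}e^{-\rho/t}$ stay bounded near $t=0$, the factor $e^{-t}$ controls the behaviour as $t\to\infty$, and the ${}_{0}F_{3}$ factor has only polynomial-type growth in $t$, so the classical Leibniz rule applies as soon as $\func{Re}(c-1)>0$ makes the differentiated integrand absolutely integrable; a routine analytic-continuation argument in $c$ then removes this artificial restriction, leaving only $c\neq1$. If one prefers to avoid this analytic step altogether, an equivalent route is to differentiate the defining series~(1.8) termwise: from the integral form~(1.6) of the generalized Pochhammer symbol one computes $\frac{\partial}{\partial\rho}(c;\rho)_{2k+\nu}=-\frac{\Gamma(c-1)}{\Gamma(c)}(c-1;\rho)_{2k+\nu}=-\frac{1}{c-1}(c-1;\rho)_{2k+\nu}$ for each $k$, and since~(1.8) converges locally uniformly in $\rho$ the term-by-term derivative reassembles into precisely $-\frac{1}{c-1}G_{\nu}^{(b,c-1)}(z;\rho)$. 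I expect the only mildly delicate part to be this bookkeeping (fixing the right sign, tracking the shift $c\mapsto c-1$, and the convergence justification); there is no structural obstacle.
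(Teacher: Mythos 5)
Your proposal is correct and follows essentially the same route as the paper: the paper proves Theorem 3.3 by differentiating the integral representation (2.2) with respect to $\rho$ (which pulls out the factor $-\frac{1}{t}$ and shifts $c\mapsto c-1$, with $\Gamma(c)=(c-1)\Gamma(c-1)$ giving the constant), and then obtains the stated corollary by setting $b=1$ and $b=-1$. Your extra care about differentiating under the integral sign and the alternative termwise-series argument are sound additions but do not change the substance of the paper's argument.
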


\begin{theorem}
The recurrence relation satisfied by the unified four parameter Bessel
function is 
\begin{align}
& (2-\nu )\frac{\partial }{\partial z}G_{\nu -1}^{(b,c-1)}(z;\rho )+z(\frac{%
\partial ^{2}}{\partial z^{2}}G_{\nu -1}^{(b,c-1)}(z;\rho )+b\frac{\partial
^{2}}{\partial z^{2}}G_{\nu +1}^{(b,c-1)}(z;\rho )) \\
& =-b(\nu +2)\frac{\partial }{\partial z}G_{\nu +1}^{(b,c-1)}(z;\rho ). 
\notag
\end{align}
\end{theorem}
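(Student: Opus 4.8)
The plan is to derive the identity by eliminating the function $G_{\nu}^{(b,c)}(z;\rho)$ between the two differential recurrences of Theorems 3.1 and 3.2, after shifting their order parameter by $\pm 1$.

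First I would unfold the nested operators appearing in those two theorems. Writing $G_{\mu}:=G_{\mu}^{(b,c-1)}(z;\rho)$, a direct differentiation gives
\[
\frac{\partial}{\partial z}\Bigl[z^{-\mu+1}\frac{\partial}{\partial z}\bigl(z^{\mu}G_{\mu}\bigr)\Bigr]=z\,\frac{\partial^{2}}{\partial z^{2}}G_{\mu}+(\mu+1)\frac{\partial}{\partial z}G_{\mu},
\]
\[
\frac{\partial}{\partial z}\Bigl[z^{\mu+1}\frac{\partial}{\partial z}\bigl(z^{-\mu}G_{\mu}\bigr)\Bigr]=z\,\frac{\partial^{2}}{\partial z^{2}}G_{\mu}+(1-\mu)\frac{\partial}{\partial z}G_{\mu},
\]
so that Theorem 3.1 reads $z\,\partial_{z}^{2}G_{\mu}+(\mu+1)\,\partial_{z}G_{\mu}=(c-1)G_{\mu-1}^{(b,c)}(z;\rho)$ and Theorem 3.2 reads $z\,\partial_{z}^{2}G_{\mu}+(1-\mu)\,\partial_{z}G_{\mu}=-b(c-1)G_{\mu+1}^{(b,c)}(z;\rho)$.

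Next I would specialise these: put $\mu=\nu+1$ in the first and $\mu=\nu-1$ in the second, obtaining
\[
z\,\frac{\partial^{2}}{\partial z^{2}}G_{\nu+1}^{(b,c-1)}(z;\rho)+(\nu+2)\frac{\partial}{\partial z}G_{\nu+1}^{(b,c-1)}(z;\rho)=(c-1)\,G_{\nu}^{(b,c)}(z;\rho),
\]
\[
z\,\frac{\partial^{2}}{\partial z^{2}}G_{\nu-1}^{(b,c-1)}(z;\rho)+(2-\nu)\frac{\partial}{\partial z}G_{\nu-1}^{(b,c-1)}(z;\rho)=-b(c-1)\,G_{\nu}^{(b,c)}(z;\rho).
\]
The decisive observation is that both right-hand sides are multiples of the one function $G_{\nu}^{(b,c)}(z;\rho)$, one being $-b$ times the other.

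Finally, multiplying the first of these two relations by $b$, adding the result to the second, and moving the term $b(\nu+2)\,\partial_{z}G_{\nu+1}^{(b,c-1)}$ to the right makes $(c-1)G_{\nu}^{(b,c)}(z;\rho)$ cancel and produces exactly the claimed formula. There is no real obstacle in this argument: the only care needed is in unfolding $\partial_{z}\bigl[z^{a}\,\partial_{z}(z^{b}\,\cdot\,)\bigr]$ correctly, and in noting that the substitution $\mu=\nu-1$ in Theorem 3.2 tacitly requires $\func{Re}(\nu)>0$ so that the series (1.8) defining the object is still legitimate. An alternative, somewhat longer route would be to work directly with the series: insert (1.8) with $c$ replaced by $c-1$ into both sides, differentiate termwise, re-index the sums, and check that the two coefficient sequences agree — an identity that collapses at once using $\Gamma(\nu+k+2)=(\nu+k+1)\Gamma(\nu+k+1)$; but the elimination above is the natural route given the two theorems that immediately precede the statement.
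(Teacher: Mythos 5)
Your proposal is correct and follows essentially the paper's own route: the paper likewise substitutes $\nu+1$ for $\nu$ in Theorem 3.1 and $\nu-1$ for $\nu$ in Theorem 3.2 and then "compares" the two relations, which is exactly your elimination of the common term $(c-1)G_{\nu}^{(b,c)}(z;\rho)$. You merely make explicit the unfolding of the operators $\partial_{z}[z^{\mp\nu+1}\partial_{z}(z^{\pm\nu}\,\cdot\,)]$ and the cancellation, which the paper leaves to the reader.
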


\begin{proof}
Taking $\nu+1$ in place of $\nu~$in (3.1) and letting $\nu-1$~instead of $\nu$
in (3.2), then comparing these relations, one can get (3.4).
\end{proof}

$G_{\nu-1}^{(b,c-1)}(z;\rho)$ is reduced to $J_{\nu-1}^{(c-1)}(z;\rho)~$and $%
G_{\nu+1}^{(b,c-1)}(z;\rho)$~is reduced to $J_{\nu+1}^{(c-1)}(z;\rho)~$when $%
b=1.~$Besides, $G_{\nu+1}^{(b,c-1)}(z;\rho)$~is reduced to $%
I_{\nu+1}^{(c-1)}(z;\rho)$ and $G_{\nu-1}^{(b,c-1)}(z;\rho)$ is reduced to $%
I_{\nu -1}^{(c-1)}(z;\rho)$ when $b=-1.~$And so, the following Corollary is
found:

\begin{corollary}
Recurrence relations satisfied by the generalized three parameter Bessel and
modified Bessel functions of the first kind are given by 
\begin{align*}
& (2-\nu)\frac{\partial}{\partial z}J_{\nu-1}^{(c-1)}(z;\rho)+z(\frac {%
\partial^{2}}{\partial z^{2}}J_{\nu-1}^{(c-1)}(z;\rho)+\frac{\partial^{2}}{%
\partial z^{2}}J_{\nu+1}^{(c-1)}(z;\rho)) \\
& =-(\nu+2)\frac{\partial}{\partial z}J_{\nu+1}^{(c-1)}(z;\rho), \\
& (2-\nu)\frac{\partial}{\partial z}I_{\nu-1}^{(c-1)}(z;\rho)+z(\frac {%
\partial^{2}}{\partial z^{2}}I_{\nu-1}^{(c-1)}(z;\rho)-\frac{\partial^{2}}{%
\partial z^{2}}I_{\nu+1}^{(c-1)}(z;\rho)) \\
& =(\nu+2)\frac{\partial}{\partial z}I_{\nu+1}^{(c-1)}(z;\rho).
\end{align*}
\end{corollary}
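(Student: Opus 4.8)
The plan is to read this Corollary directly off equation (3.4) of Theorem 3.4 by specializing the parameter $b$. Only two facts are needed, both already recorded in the Introduction: comparing the defining series (1.8) of $G_{\nu}^{(b,c)}$ term by term with the series of $J_{\nu}^{(c)}$ and of $I_{\nu}^{(c)}$ shows that $G_{\nu}^{(1,c)}(z;\rho)=J_{\nu}^{(c)}(z;\rho)$, since $(-b)^{k}=(-1)^{k}$ when $b=1$, and $G_{\nu}^{(-1,c)}(z;\rho)=I_{\nu}^{(c)}(z;\rho)$, since $(-b)^{k}=1$ when $b=-1$. These identities hold for every admissible value of the second parameter, in particular for $c-1$ in place of $c$, so they apply to each of $G_{\nu-1}^{(b,c-1)}$ and $G_{\nu+1}^{(b,c-1)}$ occurring in (3.4).

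First I would set $b=1$ in (3.4): the coefficient $b$ in front of $\partial^{2}_{z}G_{\nu+1}^{(b,c-1)}$ becomes $1$, the factor $-b(\nu+2)$ on the right becomes $-(\nu+2)$, and replacing each $G^{(1,c-1)}$ by the matching $J^{(c-1)}$ gives the first displayed recurrence. Next I would set $b=-1$: now $b=-1$ converts the bracketed sum $\partial^{2}_{z}G_{\nu-1}^{(b,c-1)}+b\,\partial^{2}_{z}G_{\nu+1}^{(b,c-1)}$ into the difference $\partial^{2}_{z}I_{\nu-1}^{(c-1)}-\partial^{2}_{z}I_{\nu+1}^{(c-1)}$ while $-b(\nu+2)=\nu+2$, and replacing the $G^{(-1,c-1)}$'s by the corresponding $I^{(c-1)}$'s gives the second recurrence. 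The working hypotheses are exactly those underlying (3.4), namely $\func{Re}(\nu)>-1$ together with $\func{Re}(c)>1$ so that the generalized Pochhammer symbols $(c-1;\rho)_{\,\cdot\,}$ are well defined.

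An equivalent self-contained derivation, paralleling the proof of Theorem 3.4 and bypassing (3.4), is to take $\nu\mapsto\nu+1$ in the first relation of Corollary 3.1.1 and $\nu\mapsto\nu-1$ in the first relation of Corollary 3.2.1, both of whose right-hand sides are then proportional to $J_{\nu}^{(c)}(z;\rho)$; since these proportionality constants are $+(c-1)$ and $-(c-1)$, adding the two relations makes the right-hand sides cancel, and expanding the outer operators via $\partial_{z}[z^{-\nu}\partial_{z}(z^{\nu+1}f)]=z f''+(\nu+2)f'$ and $\partial_{z}[z^{\nu}\partial_{z}(z^{-\nu+1}f)]=z f''+(2-\nu)f'$ yields the $J$-identity. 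Running the same computation with the $I$-versions of Corollaries 3.1.1 and 3.2.1, whose constants are now both $+(c-1)$, forces one to subtract rather than add, and it is exactly that sign which turns the internal $+$ into a $-$ and the right-hand $-(\nu+2)$ into $+(\nu+2)$. There is no real obstacle here: the whole content is the bookkeeping of $b$, which enters (3.4) in the two places just mentioned, so the only thing to watch is that the substitution $b=-1$ flips both signs simultaneously.
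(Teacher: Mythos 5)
Your proposal is correct and follows the paper's own route: the paper obtains this Corollary precisely by substituting $b=1$ and $b=-1$ into equation (3.4) of Theorem 3.4, using $G_{\nu}^{(1,c-1)}=J_{\nu}^{(c-1)}$ and $G_{\nu}^{(-1,c-1)}=I_{\nu}^{(c-1)}$, exactly as in your first paragraph. Your alternative self-contained derivation merely replays the paper's proof of Theorem 3.4 (combining Theorems 3.1 and 3.2 with shifted indices) at the level of $J$ and $I$, so it is the same argument in substance.
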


To obtain the recurrence relations satisfied by the usual Bessel and
modified Bessel functions, one can substitute $c=2$ and $\rho =0$ in
Corollary 3.4.1. In that case, $J_{\nu -1}^{(c-1)}(z;\rho )$ is reduced to $%
J_{\nu -1}(z)$~and $J_{\nu +1}^{(c-1)}(z;\rho )$ is reduced to $J_{\nu
+1}(z).$ Under the similar substitutions, $I_{\nu -1}^{(c-1)}(z;\rho )$~is
reduced to $I_{\nu -1}(z)$ and $I_{\nu +1}^{(c-1)}(z;\rho )$ is reduced to $%
I_{\nu +1}(z).~$Besides, considering the equations (1.1) and (1.8) in \cite%
{R}~(page 111), the recurrence relation of the usual Bessel function is
obtained. The similar process can be applied to find out the recurrence
relation of the modified Bessel function.

\begin{corollary}
\ Recurrence relations satisfied by the usual Bessel and modified Bessel
functions are given by 
\begin{align*}
& (2-\nu)\frac{d}{dz}J_{\nu-1}(z)+z(\frac{d^{2}}{dz^{2}}J_{\nu-1}(z)+\frac{%
d^{2}}{dz^{2}}J_{\nu+1}(z)) \\
& =-(\nu+2)\frac{d}{dz}J_{\nu+1}(z), \\
& (2-\nu)\frac{d}{dz}I_{\nu-1}(z)+z(\frac{d^{2}}{dz^{2}}I_{\nu-1}(z)-\frac{%
d^{2}}{dz^{2}}I_{\nu+1}(z)) \\
& =(\nu+2)\frac{d}{dz}I_{\nu+1}(z).
\end{align*}
\end{corollary}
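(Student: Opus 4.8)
The plan is to obtain the two identities by specialising the already-proved Corollary~3.4.1 --- the $J$- and $I$-versions of Theorem~3.4 --- at the parameter values $c=2$ and $\rho=0$. Recall that Corollary~3.4.1 asserts
\[
(2-\nu)\frac{\partial}{\partial z}J_{\nu-1}^{(c-1)}(z;\rho)+z\Bigl(\frac{\partial^{2}}{\partial z^{2}}J_{\nu-1}^{(c-1)}(z;\rho)+\frac{\partial^{2}}{\partial z^{2}}J_{\nu+1}^{(c-1)}(z;\rho)\Bigr)=-(\nu+2)\frac{\partial}{\partial z}J_{\nu+1}^{(c-1)}(z;\rho),
\]
together with the companion identity for $I_{\nu}^{(c-1)}$, in which the middle term carries a minus sign. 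When $c=2$ the inner parameter $c-1$ equals $1$, and when in addition $\rho=0$ one may invoke the reductions recorded in \S1, namely $J_{\mu}^{(1)}(z;0)=J_{\mu}(z)$ and $I_{\mu}^{(1)}(z;0)=I_{\mu}(z)$ for every index $\mu$; since $\rho$ is then held fixed, each $\partial/\partial z$ becomes an ordinary derivative $d/dz$. First I would check that $(c,\rho)=(2,0)$ lies in the admissibility range of Corollary~3.4.1 --- only $\operatorname{Re}(c)>0$ and $\operatorname{Re}(\nu)>-1$ are required, and both hold --- and then substitute; the $J$-identity of the statement is then immediate from the displayed relation, and the $I$-identity comes out in the same way from the $b=-1$ branch.

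As an independent check --- and this is presumably what the parenthetical reference to the classical recurrences alludes to --- one could instead derive the statement directly from the three-term relations recalled in \S1, namely $\tfrac{2\nu}{z}J_{\nu}=J_{\nu-1}+J_{\nu+1}$, $2J_{\nu}'=J_{\nu-1}-J_{\nu+1}$, and their modified-Bessel counterparts $\tfrac{2\nu}{z}I_{\nu}=I_{\nu-1}-I_{\nu+1}$, $2I_{\nu}'=I_{\nu-1}+I_{\nu+1}$. The recipe is to use Bessel's equation (resp.\ the modified Bessel equation) to replace $z\,J_{\nu\mp1}''$ by a combination of $J_{\nu\mp1}'$ and $J_{\nu\mp1}$, then eliminate the remaining first derivatives via $J_{\mu}'=-J_{\mu+1}+\tfrac{\mu}{z}J_{\mu}=J_{\mu-1}-\tfrac{\mu}{z}J_{\mu}$; after cancellation the difference of the two sides collapses to $2\nu J_{\nu}-z(J_{\nu-1}+J_{\nu+1})$, which vanishes by the first recurrence, and similarly in the modified case. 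This route is entirely mechanical but noticeably longer, which is why I would present the specialisation argument as the main proof.

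I do not anticipate a genuine obstacle. The only points needing care are (i) making sure the index shifts $\nu-1,\nu,\nu+1$ are all preserved when the reductions $c-1\mapsto1$ and $\rho\mapsto0$ are applied slot by slot in the three-term identity, and (ii) observing that differentiation in $z$ commutes with fixing $\rho=0$, so that $\partial/\partial z$ and $d/dz$ may be used interchangeably; beyond that it is pure substitution.
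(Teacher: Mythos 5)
Your proposal is correct, but your main argument is not the one the paper actually writes out. The paper does remark, in the text preceding the corollary, that one can substitute $c=2$ and $\rho=0$ in Corollary~3.4.1 (exactly your specialisation, using $J_{\mu}^{(1)}(z;0)=J_{\mu}(z)$ and $I_{\mu}^{(1)}(z;0)=I_{\mu}(z)$), but the proof it then gives is the elementary one: differentiate the three-term relation $2\nu J_{\nu}(z)=z\,[J_{\nu-1}(z)+J_{\nu+1}(z)]$, insert $\tfrac{d}{dz}J_{\nu}(z)=\tfrac12[J_{\nu-1}(z)-J_{\nu+1}(z)]$ to get $(\nu-1)J_{\nu-1}-(\nu+1)J_{\nu+1}=z(J_{\nu-1}'+J_{\nu+1}')$, and differentiate once more; the modified case is analogous. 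Your backup derivation is morally this route, though you run it through Bessel's differential equation and the mixed recurrences $J_{\mu}'=J_{\mu-1}-\tfrac{\mu}{z}J_{\mu}=-J_{\mu+1}+\tfrac{\mu}{z}J_{\mu}$ rather than by repeated differentiation; both reduce the difference of the two sides to $2\nu J_{\nu}-z(J_{\nu-1}+J_{\nu+1})=0$, so that check is sound. What the two approaches buy: your specialisation is shorter and stays inside the unified framework, but it inherits whatever validity conditions Theorem~3.4/Corollary~3.4.1 carry — and since those involve the shifted objects $J_{\nu\pm1}^{(c-1)}$, the natural inherited restrictions are $\operatorname{Re}(c)>1$ and $\operatorname{Re}(\nu)>0$ rather than the $\operatorname{Re}(c)>0$, $\operatorname{Re}(\nu)>-1$ you quote (harmless at $c=2$, but worth stating precisely); the paper's classical derivation is self-contained, independent of the generalized family and of any restriction on $\nu$, which is presumably why it is the proof actually displayed.
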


\begin{proof}
Taking derivative with respect to $z$ in the following recurrence formula 
\cite{R}%
\begin{equation*}
2\nu~J_{\nu}(z)=z[J_{\nu-1}(z)+J_{\nu+1}(z)]
\end{equation*}
it is directly obtained that 
\begin{equation*}
2\nu\frac{d}{dz}J_{\nu}(z)=J_{\nu-1}(z)+J_{\nu+1}(z)+z\frac{d}{dz}J_{\nu
-1}(z)+z\frac{d}{dz}J_{\nu+1}(z).
\end{equation*}
Now, inserting the derivative of $\ J_{\nu}(z),$ 
\begin{equation*}
\frac{d}{dz}J_{\nu}(z)=\frac{1}{2}[J_{\nu-1}(z)-J_{\nu+1}(z)]
\end{equation*}
in the above recurrence formula and differentiating with respect to $z$,~
result is obtained as 
\begin{equation*}
(2-\nu)\frac{d}{dz}J_{\nu-1}(z)+z(\frac{d^{2}}{dz^{2}}J_{\nu-1}(z)+\frac {%
d^{2}}{dz^{2}}J_{\nu+1}(z))=-(\nu+2)\frac{d}{dz}J_{\nu+1}(z).
\end{equation*}
Proof of the recurrence formula satisfied by modified Bessel function can be
obtained as a similar way by means of the recurrence formulas given in
Section 1.
\end{proof}

In the following Theorem, the partial differential equation satisfied by the
unified four parameter Bessel function is obtained.

\begin{theorem}
Partial differential equation satisfied by the unified four parameter Bessel
function is given by 
\begin{align}
& \frac{\partial ^{6}}{\partial z^{4}\partial \rho ^{2}}G_{\nu
}^{(b,c)}(z;\rho )+\frac{5}{z}\frac{\partial ^{5}}{\partial z^{3}\partial
\rho ^{2}}G_{\nu }^{(b,c)}(z;\rho )+\frac{(\nu -2)^{2}}{z^{2}}\frac{\partial
^{4}}{\partial z^{2}\partial \rho ^{2}}G_{\nu }^{(b,c)}(z;\rho ) \\
& =-\frac{b}{z^{2}}G_{\nu }^{(b,c)}(z;\rho ).  \notag
\end{align}
\end{theorem}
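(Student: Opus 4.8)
The plan is to peel off the two $\rho$-derivatives first and then recognize the remaining fourth-order operator in $z$ as a composition of the ladder operators of Theorems 3.1 and 3.2. Since $5/z$ and $(\nu-2)^2/z^2$ do not involve $\rho$, the differential operator on the left of the asserted identity commutes with $\partial^2/\partial\rho^2$; and applying (3.3) twice (assuming first $c\neq 1,2$, the general case then following by continuity in $c$) gives
$$\frac{\partial^2}{\partial\rho^2}G_\nu^{(b,c)}(z;\rho)=\frac{1}{(c-1)(c-2)}\,G_\nu^{(b,c-2)}(z;\rho).$$
Hence, multiplying the claimed PDE through by $z^2$, it becomes equivalent to
$$z^2\frac{\partial^4}{\partial z^4}G_\nu^{(b,c-2)}+5z\frac{\partial^3}{\partial z^3}G_\nu^{(b,c-2)}+(\nu-2)^2\frac{\partial^2}{\partial z^2}G_\nu^{(b,c-2)}=-b(c-1)(c-2)\,G_\nu^{(b,c)}.$$

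Next I would produce the right-hand side by chaining (3.1) and (3.2) along the $c$-ladder. Introduce the operators $A_\mu f:=\partial_z[z^{1-\mu}\partial_z(z^{\mu}f)]=z f''+(\mu+1)f'$ and $B_\mu f:=\partial_z[z^{\mu+1}\partial_z(z^{-\mu}f)]=z f''+(1-\mu)f'$, so that (3.1) reads $A_\mu G_\mu^{(b,c-1)}=(c-1)G_{\mu-1}^{(b,c)}$ and (3.2) reads $B_\mu G_\mu^{(b,c-1)}=-b(c-1)G_{\mu+1}^{(b,c)}$. Taking the first with $c\mapsto c-1$ and $\mu=\nu$ yields $A_\nu G_\nu^{(b,c-2)}=(c-2)G_{\nu-1}^{(b,c-1)}$, and the second with $\mu=\nu-1$ yields $B_{\nu-1}G_{\nu-1}^{(b,c-1)}=-b(c-1)G_\nu^{(b,c)}$. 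Composing these,
$$B_{\nu-1}A_\nu\,G_\nu^{(b,c-2)}=-b(c-1)(c-2)\,G_\nu^{(b,c)},$$
which is precisely the right-hand side sought above. It remains only to expand the composition: a direct calculation gives $(A_\nu f)'=zf'''+(\nu+2)f''$, $(A_\nu f)''=zf^{(4)}+(\nu+3)f'''$, and therefore
$$B_{\nu-1}A_\nu f=z(A_\nu f)''+(2-\nu)(A_\nu f)'=z^2 f^{(4)}+5z f'''+(2-\nu)(\nu+2)f''.$$

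Matching this with the fourth-order operator on the left of the displayed equivalent form is the one delicate point, and the step I would double-check most carefully. The coefficients of $f^{(4)}$ and $f'''$ come out as $z^2$ and $5z$ automatically, but the coefficient of $f''$ is $(2-\nu)(\nu+2)=4-\nu^2$, so the matching forces one to read that term as $(4-\nu^2)/z^2$ (equivalently $-(\nu^2-4)/z^2$) rather than the stated $(\nu-2)^2/z^2$. With the $z$-operator correctly identified as $B_{\nu-1}\circ A_\nu$, substituting $B_{\nu-1}A_\nu G_\nu^{(b,c-2)}=-b(c-1)(c-2)G_\nu^{(b,c)}$, using $\partial_\rho^2 G_\nu^{(b,c)}=\frac{1}{(c-1)(c-2)}G_\nu^{(b,c-2)}$, and dividing by $z^2$ produces exactly $-\frac{b}{z^2}G_\nu^{(b,c)}$ on the right, completing the argument. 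As an independent check one can verify the identity termwise from the series (1.8), using $\partial_\rho^2(c;\rho)_\mu=\frac{1}{(c-1)(c-2)}(c-2;\rho)_\mu$, the relation $(c-2;\rho)_{2k+\nu}=(c-2)(c-1)(c;\rho)_{2k+\nu-2}$ coming from (1.7), and $\Gamma(\nu+2k+1)=(\nu+2k)(\nu+2k-1)\Gamma(\nu+2k-1)$ to telescope the differentiated coefficients.
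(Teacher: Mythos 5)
You follow exactly the paper's route, just with the ladder operators made explicit: the paper's (3.6) is your $B_{\nu-1}G_{\nu-1}^{(b,c-1)}=-b(c-1)G_{\nu}^{(b,c)}$ (that is, (3.2) at $\nu-1$), its (3.7) is your $A_{\nu}G_{\nu}^{(b,c-2)}=(c-2)G_{\nu-1}^{(b,c-1)}$ (that is, (3.1) at $c-1$), and its (3.9) is your double application of (3.3); so your identity $B_{\nu-1}A_{\nu}G_{\nu}^{(b,c-2)}=-b(c-1)(c-2)G_{\nu}^{(b,c)}$ is precisely the content of the paper's (3.10)--(3.11). Your expansion of the composed operator is correct, and the discrepancy you flag is real: expanding the paper's own (3.11) gives $z^{2}f^{(4)}+5zf'''+(4-\nu^{2})f''$ with $f=G_{\nu}^{(b,c-2)}$, so the middle coefficient of the asserted PDE should read $(4-\nu^{2})/z^{2}=-(\nu-2)(\nu+2)/z^{2}$ rather than $(\nu-2)^{2}/z^{2}$; the two agree only when $\nu=0$ or $\nu=2$. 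Your series cross-check settles the matter: since $\partial_{\rho}^{2}(c;\rho)_{2k+\nu}=(c;\rho)_{2k+\nu-2}$, applying $z^{2}\partial_{z}^{4}+5z\partial_{z}^{3}+A\,\partial_{z}^{2}$ to the $k$-th term of $\partial_{\rho}^{2}G_{\nu}^{(b,c)}$ produces the factor $(2k+\nu)(2k+\nu-1)\bigl[(2k+\nu)^{2}-4+A\bigr]$, and only $A=4-\nu^{2}$ annihilates the $k=0$ term (whose power $z^{\nu-4}$ has no counterpart in $-\tfrac{b}{z^{2}}G_{\nu}^{(b,c)}$) while making the $k\geq 1$ terms match the right-hand side exactly. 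So your argument is sound and is essentially the paper's proof; what it establishes is the corrected statement with $(4-\nu^{2})/z^{2}$, and the $(\nu-2)^{2}$ in the theorem (and in the final display of the paper's proof) is an error of the paper, not a gap in your reasoning. Your temporary restriction $c\neq 1,2$, removed by continuity in $c$, is implicitly assumed in the paper as well, since its proof divides by $c-1$ and $c-2$.
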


\begin{proof}
Taking $\nu-1$~in place of $\nu$~in (3.2) yields%
\begin{equation}
\frac{\partial}{\partial z}[z^{\nu}\frac{\partial}{\partial z}(z^{-\nu
+1}G_{\nu-1}^{(b,c-1)}(z;\rho))]=-b(c-1)G_{\nu}^{(b,c)}(z;\rho)  \tag{3.6}
\end{equation}
and letting $c-1$ in place of $c$ in (3.1),~we get%
\begin{equation}
\frac{1}{(c-2)}\frac{\partial}{\partial z}[z^{-\nu+1}\frac{\partial}{%
\partial z}(z^{\nu}G_{\nu}^{(b,c-2)}(z;\rho))]=G_{\nu-1}^{(b,c-1)}(z;\rho). 
\tag{3.7}
\end{equation}
In equation (3.3), substituting $c-1$ in place of $c,$~we get%
\begin{equation}
G_{\nu}^{(b,c-2)}(z;\rho)=-(c-2)\frac{\partial}{\partial\rho}%
G_{\nu}^{(b,c-1)}(z;\rho).  \tag{3.8}
\end{equation}
Using (3.3) for $G_{\nu}^{(b,c-1)}(z;\rho),$~we get 
\begin{equation}
G_{\nu}^{(b,c-2)}(z;\rho)  =-(c-2)\frac{\partial}{\partial\rho}[-(c-1)\frac{%
\partial}{\partial\rho}G_{\nu}^{(b,c)}(z;\rho)]  \notag \\
 =(c-1)(c-2)\frac{\partial^{2}}{\partial\rho^{2}}G_{\nu}^{(b,c)}(z;\rho). 
\tag{3.9}
\end{equation}
Obtaining $G_{\nu-1}^{(b,c-1)}(z;\rho)$ from (3.1) and inserting that into
(3.6), we get%
\begin{equation}
 \frac{\partial}{\partial z}[z^{\nu}\frac{\partial}{\partial z}(z^{-\nu +1}%
\frac{1}{c-2}\frac{\partial}{\partial z}[z^{-\nu+1}\frac{\partial}{\partial z%
}(z^{\nu}G_{\nu}^{(b,c-2)}(z;\rho))])]  \tag{3.10} \\
 =-b(c-1)G_{\nu}^{(b,c)}(z;\rho).  \notag
\end{equation}
Plugging (3.9) into (3.10), we have%
\begin{align}
& \frac{\partial}{\partial z}[z^{\nu}\frac{\partial}{\partial z}(z^{-\nu +1}%
\frac{1}{c-2}\frac{\partial}{\partial z}[z^{-\nu+1}\frac{\partial}{\partial z%
}(z^{\nu}(c-1)(c-2)\frac{\partial^{2}}{\partial\rho^{2}}G_{\nu}^{(b,c)}(z;%
\rho))])]  \tag{3.11} \\
& =-b(c-1)G_{\nu}^{(b,c)}(z;\rho).  \notag
\end{align}
Applying the derivatives, we get%
\begin{equation*}
\frac{\partial^{6}}{\partial z^{4}\partial\rho^{2}}G_{\nu}^{(b,c)}(z;\rho)+%
\frac{5}{z}\frac{\partial^{5}}{\partial z^{3}\partial\rho^{2}}G_{\nu
}^{(b,c)}(z;\rho)+\frac{(\nu-2)^{2}}{z^{2}}\frac{\partial^{4}}{\partial
z^{2}\partial\rho^{2}}G_{\nu}^{(b,c)}(z;\rho)+\frac{b}{z^{2}}%
G_{\nu}^{(b,c)}(z;\rho)=0.
\end{equation*}
\end{proof}

Taking $b=1,$ $b=-1$ and $b=1,~c=1$ and \ $b=-1$~and $c=1~$in Theorem 3.5,
the following Corollary can be presented:

\begin{corollary}
Partial differential equations satisfied by the generalized three parameter
Bessel function and modified Bessel function of the first kind and the
functions $J_{\nu}^{(1,1)}(z;\rho)$ and $I_{\nu}^{(-1,1)}(z;\rho)$ are given
by%
\begin{align*}
\frac{\partial^{6}}{\partial z^{4}\partial\rho^{2}}J_{\nu}^{(c)}(z;\rho )+%
\frac{5}{z}\frac{\partial^{5}}{\partial z^{3}\partial\rho^{2}}%
J_{\nu}^{(c)}(z;\rho)+\frac{(\nu-2)^{2}}{z^{2}}\frac{\partial^{4}}{\partial
z^{2}\partial\rho^{2}}J_{\nu}^{(c)}(z;\rho)+\frac{1}{z^{2}}%
J_{\nu}^{(c)}(z;\rho) & =0, \\
\frac{\partial^{6}}{\partial z^{4}\partial\rho^{2}}I_{\nu}^{(c)}(z;\rho )+%
\frac{5}{z}\frac{\partial^{5}}{\partial z^{3}\partial\rho^{2}}%
I_{\nu}^{(c)}(z;\rho)+\frac{(\nu-2)^{2}}{z^{2}}\frac{\partial^{4}}{\partial
z^{2}\partial\rho^{2}}I_{\nu}^{(c)}(z;\rho)-\frac{1}{z^{2}}%
I_{\nu}^{(c)}(z;\rho) & =0, \\
\frac{\partial^{6}}{\partial z^{4}\partial\rho^{2}}J_{\nu}^{(1,1)}(z;\rho)+%
\frac{5}{z}\frac{\partial^{5}}{\partial z^{3}\partial\rho^{2}}J_{\nu
}^{(1,1)}(z;\rho)+\frac{(\nu-2)^{2}}{z^{2}}\frac{\partial^{4}}{\partial
z^{2}\partial\rho^{2}}J_{\nu}^{(1,1)}(z;\rho)+\frac{1}{z^{2}}%
J_{\nu}^{(1,1)}(z;\rho) & =0, \\
\frac{\partial^{6}}{\partial z^{4}\partial\rho^{2}}I_{\nu}^{(-1,1)}(z;\rho)+%
\frac{5}{z}\frac{\partial^{5}}{\partial z^{3}\partial\rho^{2}}I_{\nu
}^{(-1,1)}(z;\rho)+\frac{(\nu-2)^{2}}{z^{2}}\frac{\partial^{4}}{\partial
z^{2}\partial\rho^{2}}I_{\nu}^{(-1,1)}(z;\rho)-\frac{1}{z^{2}}%
I_{\nu}^{(-1,1)}(z;\rho) & =0,
\end{align*}
respectively.
\end{corollary}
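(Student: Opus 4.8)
The plan is to eliminate every auxiliary index by chaining the three first-order identities already proved — the index-lowering recurrence (3.1), the index-raising recurrence (3.2), and the $\rho$-derivative formula (3.3). First I would reindex (3.2), replacing $\nu$ by $\nu-1$, to obtain
\begin{equation*}
\frac{\partial }{\partial z}\Big[z^{\nu }\frac{\partial }{\partial z}\Big(z^{-\nu +1}G_{\nu -1}^{(b,c-1)}(z;\rho )\Big)\Big]=-b(c-1)\,G_{\nu }^{(b,c)}(z;\rho ),
\end{equation*}
which writes $G_{\nu }^{(b,c)}$ as a factored second-order $z$-operator applied to $G_{\nu -1}^{(b,c-1)}$. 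Next I would reindex (3.1), replacing $c$ by $c-1$, to solve for $G_{\nu -1}^{(b,c-1)}$ as a second factored second-order $z$-operator applied to $G_{\nu }^{(b,c-2)}$; substituting this into the previous display expresses $G_{\nu }^{(b,c)}$ as a composition of two such operators acting on $G_{\nu }^{(b,c-2)}$.

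The next step trades the index shift $c\mapsto c-2$ for two $\rho$-derivatives: applying (3.3) with $c$ replaced by $c-1$, and then (3.3) once more, yields $G_{\nu }^{(b,c-2)}=(c-1)(c-2)\,\partial _{\rho }^{2}G_{\nu }^{(b,c)}$. Inserting this, the constant $(c-1)(c-2)$ cancels the factor $1/(c-2)$ introduced above and, after dividing through by the $(c-1)$ left over from (3.2), the whole problem collapses to the single operator identity
\begin{equation*}
\frac{\partial }{\partial z}\Big[z^{\nu }\frac{\partial }{\partial z}\Big(z^{-\nu +1}\frac{\partial }{\partial z}\Big[z^{-\nu +1}\frac{\partial }{\partial z}\Big(z^{\nu }\,\frac{\partial ^{2}}{\partial \rho ^{2}}G_{\nu }^{(b,c)}(z;\rho )\Big)\Big]\Big)\Big]=-b\,G_{\nu }^{(b,c)}(z;\rho ).
\end{equation*}
The two applications of (3.3) require $c\neq 1,2$, but the resulting identity persists for general $c$ by continuity.

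The only genuinely computational part, and the step I expect to be the main obstacle, is the expansion of this four-fold nested operator. I would do it in two halves: first verify by the product rule that $\frac{\partial }{\partial z}[z^{-\nu +1}\frac{\partial }{\partial z}(z^{\nu }\psi )]=z\psi _{zz}+(\nu +1)\psi _{z}$ and $\frac{\partial }{\partial z}[z^{\nu }\frac{\partial }{\partial z}(z^{-\nu +1}\chi )]=z\chi _{zz}+(2-\nu )\chi _{z}$, and then compose these two operators on $\psi =\partial _{\rho }^{2}G_{\nu }^{(b,c)}$. In the composition the mixed powers $z^{\pm \nu }$ cancel in pairs and the isolated first-derivative term drops out, leaving a pure fourth-order operator of the shape $z^{2}\partial _{z}^{4}+5z\,\partial _{z}^{3}+(\text{quadratic in }\nu )\,\partial _{z}^{2}$ applied to $\partial _{\rho }^{2}G_{\nu }^{(b,c)}$; dividing the resulting equation by $z^{2}$ gives the asserted sixth-order partial differential equation. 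I would lay this expansion out as a short list of the intermediate functions at each of the four stages rather than as one long display, since keeping track of which power of $z$ multiplies which derivative is exactly where coefficient errors tend to slip in.
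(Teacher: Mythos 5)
Your route coincides with the paper's: the corollary is obtained there by merely setting $b=1$, $b=-1$ (and $c=1$) in Theorem 3.5, and Theorem 3.5 is proved by exactly the chain you outline — (3.2) with $\nu\mapsto\nu-1$, (3.1) with $c\mapsto c-1$, (3.3) applied twice to trade $c\mapsto c-2$ for $\partial_{\rho}^{2}$, then expansion of the four-fold nested operator. (You never actually perform the specializations $b=\pm1$, $c=1$ that yield the four displayed equations, but that step is immediate, and your continuity remark for $c=1,2$ is a reasonable extra care.)

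The genuine gap is in the final step, which you leave as ``(quadratic in $\nu$)'' and then assert matches the statement. Your own auxiliary identities give $A\psi=z\psi_{zz}+(\nu+1)\psi_{z}$ for the inner operator and $B\chi=z\chi_{zz}+(2-\nu)\chi_{z}$ for the outer one, and composing them in the order dictated by your nested display gives
\[
B(A\psi)=z^{2}\psi_{zzzz}+5z\,\psi_{zzz}+(2-\nu)(\nu+2)\,\psi_{zz},
\]
so the coefficient of $\frac{\partial^{4}}{\partial z^{2}\partial\rho^{2}}$ in the resulting equation is $\frac{4-\nu^{2}}{z^{2}}$, not $\frac{(\nu-2)^{2}}{z^{2}}$; these agree only for $\nu=0$ or $\nu=2$. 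A direct series check confirms that $4-\nu^{2}$ is the only coefficient your chain can produce: applying $z^{2}\partial_{z}^{4}+5z\partial_{z}^{3}+\lambda\partial_{z}^{2}$ to $z^{m}$ gives $m(m-1)(m^{2}-4+\lambda)z^{m-2}$, and the $z^{\nu-2}$ contribution coming from the $k=0$ summand of the series (where $m=\nu$) has nothing to cancel against in $\frac{b}{z^{2}}G_{\nu}^{(b,c)}$, which forces $\lambda=4-\nu^{2}$ for generic $\nu$. Hence, carried out faithfully, your argument proves the partial differential equation with $\frac{4-\nu^{2}}{z^{2}}$ and does not deliver the corollary as printed; the sentence ``gives the asserted sixth-order partial differential equation'' is precisely where the proof breaks. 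You would need either to exhibit an extra contribution turning $(2-\nu)(\nu+2)$ into $(\nu-2)^{2}$ (none arises from your operators), or to record that the stated coefficient is not reachable by this chain — which also means the discrepancy traces back to the coefficient printed in Theorem 3.5 itself, on which the paper's one-line proof of this corollary relies.
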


\begin{theorem}
Let $\alpha \in 
\mathbb{R}
$ and $u=G_{\nu }^{(b,c)}(\alpha z;\rho ).$ Then the partial differential
equation satisfied by $u$ is given by 
\begin{equation}
z^{4}\frac{\partial ^{6}}{\partial z^{4}\partial \rho ^{2}}u+5z^{3}\frac{%
\partial ^{5}}{\partial z^{3}\partial \rho ^{2}}u+(\nu -2)^{2}z^{2}\frac{%
\partial ^{4}}{\partial z^{2}\partial \rho ^{2}}u+b\alpha ^{2}z^{2}u=0. 
\tag{3.12}
\end{equation}
\end{theorem}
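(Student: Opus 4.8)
The plan is to derive (3.12) directly from the partial differential equation of Theorem 3.5 by a rescaling of the variable $z$; apart from the chain rule, no new computation is required. Assume first $\alpha\neq 0$ (if $\alpha=0$ then $u$ does not depend on $z$, all the $z$-derivatives in (3.12) vanish, and the identity is trivial). Set $w=\alpha z$, so that $u(z,\rho)=G_\nu^{(b,c)}(w;\rho)$. Since $z\mapsto w=\alpha z$ affects only the first argument of $G_\nu^{(b,c)}$, it commutes with every $\rho$-differentiation, and the chain rule gives
\[
\frac{\partial^{j+2}}{\partial z^{j}\partial\rho^{2}}\,u(z,\rho)
=\alpha^{j}\left(\frac{\partial^{j+2}}{\partial w^{j}\partial\rho^{2}}G_\nu^{(b,c)}\right)(\alpha z;\rho),\qquad j=2,3,4 .
\]

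Next I would take the equation of Theorem 3.5 satisfied by $G_\nu^{(b,c)}$, write it at the point $w=\alpha z$, and substitute the chain-rule relations above in the equivalent form $\big(\partial_w^{j}\partial_\rho^{2}G_\nu^{(b,c)}\big)(\alpha z;\rho)=\alpha^{-j}\,\partial_z^{j}\partial_\rho^{2}u$, for $j=2,3,4$, together with $w=\alpha z$ and $G_\nu^{(b,c)}(\alpha z;\rho)=u$. This converts Theorem 3.5 into
\[
\alpha^{-4}\,\frac{\partial^{6}}{\partial z^{4}\partial\rho^{2}}u
+\frac{5}{\alpha^{4}z}\,\frac{\partial^{5}}{\partial z^{3}\partial\rho^{2}}u
+\frac{(\nu-2)^{2}}{\alpha^{4}z^{2}}\,\frac{\partial^{4}}{\partial z^{2}\partial\rho^{2}}u
+\frac{b}{\alpha^{2}z^{2}}\,u=0 .
\]

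The final step is to clear denominators by multiplying through by $\alpha^{4}z^{4}$, which produces precisely (3.12). There is no genuine obstacle in this argument; the only care needed is in tracking the powers of $\alpha$ generated by the $z$-derivatives, together with the observation that removing every denominator forces multiplication by $\alpha^{4}z^{4}$, which is what fixes the exact shape of (3.12). Alternatively, one could verify (3.12) from scratch by inserting the series (1.8) for $G_\nu^{(b,c)}(\alpha z;\rho)$ into the left-hand side and differentiating termwise; this simply replays the computation already carried out in the proof of Theorem 3.5 up to the scaling constant, so the reduction above is the more economical route.
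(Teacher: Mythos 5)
Your proposal is correct and follows essentially the same route as the paper: both derive (3.12) by applying the chain rule for the scaling $z\mapsto\alpha z$ to the equation of Theorem 3.5 evaluated at the point $\alpha z$, and then clearing the powers of $\alpha$ and $z$ from the denominators. The bookkeeping of the powers of $\alpha$ in your intermediate equation is accurate, so multiplying by $\alpha^{4}z^{4}$ indeed yields (3.12) exactly as in the paper's proof.
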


\begin{proof}
Taking into consideration of the following partial derivatives 
\begin{equation*}
\frac{\partial u}{\partial z}=\alpha\frac{\partial G_{\nu}^{(b,c)}(\alpha
z;\rho)}{\partial z},~\frac{\partial^{2}u}{\partial u^{2}}=\alpha^{2}\frac{%
\partial^{2}G_{\nu}^{(b,c)}(\alpha z;\rho)}{\partial z^{2}},~\frac{%
\partial^{3}u}{\partial z^{3}}=\alpha^{3}\frac{\partial^{3}G_{\nu
}^{(b,c)}(\alpha z;\rho)}{\partial z^{3}},~\frac{\partial^{4}u}{\partial
z^{4}}=\alpha^{4}\frac{\partial^{4}G_{\nu}^{(b,c)}(\alpha z;\rho)}{\partial
z^{4}},
\end{equation*}
in the partial differential equation (3.5), we get 
\begin{align*}
& (\alpha z)^{4}\frac{\partial^{6}}{\partial z^{4}\partial\rho^{2}}G_{\nu
}^{(b,c)}(\alpha z;\rho)+5(\alpha z)^{3}\frac{\partial^{5}}{\partial
z^{3}\partial\rho^{2}}G_{\nu}^{(b,c)}(\alpha z;\rho)+(\nu-2)^{2}(\alpha
z)^{2}\frac{\partial^{4}}{\partial z^{2}\partial\rho^{2}}G_{\nu}^{(b,c)}(%
\alpha z;\rho) \\
& =-b(\alpha z)^{2}G_{\nu}^{(b,c)}(\alpha z;\rho).
\end{align*}
Therefore, $u=G_{\nu}^{(b,c)}(\alpha z;\rho)$ is a solution of (3.12).
\end{proof}

\section{ Mellin Transform of Products of Two Unified Four Parameter Bessel
Functions}

In this Section, the Mellin transform involving products of two unified four
parameter Bessel functions are obtained. To obtain this, integral
representation of the product of the unified four parameter Bessel functions
should be calculated.

\begin{theorem}
$~$Let $\alpha,\beta\in%
\mathbb{R}
.~$Double integral representation \ satisfied by the multiplication of the
unified four parameter Bessel functions is given by 
\begin{align*}
G_{\nu}^{(b,c)}(\alpha z;\rho)G_{w}^{(b,c)}(\beta z;\rho) & =\frac{(\alpha
z)^{\nu}(\beta z)^{w}}{2^{\nu+w}[\Gamma(c)]^{2}~[\Gamma(\nu+1)]^{2}~[%
\Gamma(w+1)]^{2}}\int\limits_{0}^{\infty}\int\limits_{0}^{\infty}t^{c+\nu
-1}u^{c+w-1}e^{-t-u-\frac{\rho}{t}-\frac{\rho}{u}} \\
& \times~_{0}F_{3}(-;\nu+1,\frac{\nu+1}{2},\frac{\nu+2}{2};\frac{-b\alpha
^{2}z^{2}t^{2}}{16}) \\
& ~\times~_{0}F_{3}(-;w+1,\frac{w+1}{2},\frac{w+2}{2};\frac{%
-b\beta^{2}z^{2}u^{2}}{16})dtdu,
\end{align*}
where $\func{Re}(c)>0,~\func{Re}(\nu)>-1$ and $\func{Re}(w)>-1.$
\end{theorem}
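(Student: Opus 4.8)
The plan is to apply the one-dimensional integral representation of Theorem~2.3 (formula (2.2)) to each of the two factors separately and then fuse the two resulting single integrals into one double integral by Fubini's theorem. First I would write, invoking (2.2) with $z$ replaced by $\alpha z$,
\begin{equation*}
G_{\nu}^{(b,c)}(\alpha z;\rho)=\frac{(\frac{\alpha z}{2})^{\nu}}{[\Gamma(\nu+1)]^{2}\Gamma(c)}\int\limits_{0}^{\infty}t^{c+\nu-1}e^{-t-\frac{\rho}{t}}\ {}_{0}F_{3}\left(-;\nu+1,\tfrac{\nu+1}{2},\tfrac{\nu+2}{2};\tfrac{-b\alpha^{2}z^{2}t^{2}}{16}\right)dt,
\end{equation*}
and, independently, (2.2) with $z$ replaced by $\beta z$ and $\nu$ replaced by $w$, which gives the analogous representation of $G_{w}^{(b,c)}(\beta z;\rho)$ with a fresh integration variable $u$.

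Next I would multiply the two identities. The scalar prefactors combine, via $(\frac{\alpha z}{2})^{\nu}(\frac{\beta z}{2})^{w}=2^{-\nu-w}(\alpha z)^{\nu}(\beta z)^{w}$, into the constant $\dfrac{(\alpha z)^{\nu}(\beta z)^{w}}{2^{\nu+w}[\Gamma(c)]^{2}[\Gamma(\nu+1)]^{2}[\Gamma(w+1)]^{2}}$ appearing in the statement, while the product of the two single integrals, which run over the mutually independent variables $t$ and $u$, is rewritten as one iterated integral over $(t,u)\in(0,\infty)^{2}$; collecting the $t$-dependent and $u$-dependent pieces as $t^{c+\nu-1}u^{c+w-1}e^{-t-u-\frac{\rho}{t}-\frac{\rho}{u}}$ multiplied by the product of the two ${}_{0}F_{3}$ factors then reproduces exactly the claimed formula.

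The only step that genuinely needs justification --- and the one I expect to be the main obstacle --- is the passage from a product of two one-dimensional integrals to a single two-dimensional integral, i.e. the applicability of Fubini's theorem. For this I would verify absolute convergence of the double integral under the standing hypotheses $\func{Re}(c)>0$, $\func{Re}(\nu)>-1$, $\func{Re}(w)>-1$ and $\func{Re}(\rho)>0$: near the origin the factors $t^{c+\nu-1}$ and $u^{c+w-1}$ are integrable since $\func{Re}(c+\nu)>0$ and $\func{Re}(c+w)>0$, and $e^{-\rho/t-\rho/u}$ stays bounded there; at infinity the Gaussian-type decay $e^{-t-u}$ overwhelms both the polynomial powers of $t$ and $u$ and the growth of the two entire functions ${}_{0}F_{3}(-;\nu+1,\tfrac{\nu+1}{2},\tfrac{\nu+2}{2};-b\alpha^{2}z^{2}t^{2}/16)$ and ${}_{0}F_{3}(-;w+1,\tfrac{w+1}{2},\tfrac{w+2}{2};-b\beta^{2}z^{2}u^{2}/16)$, whose orders of growth in $t$ and $u$ are only $\exp(O(\sqrt{t}))$ and $\exp(O(\sqrt{u}))$ respectively. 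Once absolute convergence is secured, Fubini's theorem legitimises the rearrangement and what remains is purely algebraic bookkeeping of the constants, which finishes the proof.
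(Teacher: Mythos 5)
Your proof is correct, but it takes a genuinely different and shorter route than the paper. The paper does not simply multiply two copies of Theorem 2.3: it goes back to the series definition of each factor, forms the double power series for the product, inserts the Euler-type integral (1.6) for the extended Pochhammer symbols $(c;\rho)_{2k+\nu}$ and $(c;\rho)_{2l+w}$, and then justifies interchanging the double summation with the double integral by invoking the Srivastava--Daoust absolute-convergence criterion for double hypergeometric series, before resumming the inner sums into the two $_{0}F_{3}$ factors. You instead treat Theorem 2.3 (applied once with $z\mapsto\alpha z$ and once with $z\mapsto\beta z$, $\nu\mapsto w$, whence the hypothesis $\operatorname{Re}(w)>-1$) as already established, so the only analytic point remaining is that the product of two absolutely convergent integrals over independent variables equals the corresponding double integral; your convergence check (integrability of $t^{c+\nu-1}$ and $u^{c+w-1}$ near $0$ since $\operatorname{Re}(c+\nu)>0$ and $\operatorname{Re}(c+w)>0$, boundedness of $e^{-\rho/t-\rho/u}$ there for $\operatorname{Re}(\rho)>0$, and the $\exp(O(\sqrt{t}))$ growth of $_{0}F_{3}$ in $t$ --- order $1/4$ in an argument proportional to $t^{2}$ --- being crushed by $e^{-t}$) is exactly what Tonelli--Fubini requires, and the constants combine to the stated prefactor. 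What the paper's longer computation buys is a self-contained rederivation that exhibits the double series and the convergence criterion it rests on (which is reused in spirit for the Mellin transform of the product in Theorem 4.2); what your argument buys is economy, since the interchange of sum and integral was already paid for once in the proof of Theorem 2.3. The only cosmetic slip is calling $e^{-t-u}$ ``Gaussian-type'' decay; it is ordinary exponential decay, which is all you need.
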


\begin{proof}
Considering series definition of the unified four parameter Bessel function,
it can be written as 
\begin{equation*}
G_{\nu}^{(b,c)}(\alpha z;\rho)G_{w}^{(b,c)}(\beta
z;\rho)=\sum\limits_{k,l=0}^{\infty}\frac{(-b)^{k+l}(c;\rho)_{2k+\nu}(c;%
\rho)_{2l+w}(\alpha z)^{2k+\nu }(\beta z)^{2l+w}}{k!l!~\Gamma(k+\nu+1)~%
\Gamma(l+w+1)~\Gamma(2k+\nu +1)~\Gamma(2l+w+1)~2^{2k+\nu}2^{2l+w}}.
\end{equation*}
Using the generalized Pochhammer function for $(c;\rho)_{2k+\nu}$ and $%
(c;\rho)_{2l+w}$, it is clear that%
\begin{align*}
G_{\nu}^{(b,c)}(\alpha z;\rho)G_{w}^{(b,c)}(\beta z;\rho) & =\sum
\limits_{k,l=0}^{\infty}\frac{(-b)^{k+l}(\alpha z)^{2k+\nu}(\beta z)^{2l+w}}{%
k!l!~\Gamma(k+\nu+1)~\Gamma(l+w+1)~\Gamma(2k+\nu+1)~\Gamma(2l+w+1)~2^{2k+\nu
}2^{2l+w}} \\
& \times~\frac{1}{[\Gamma(c)]^{2}}~\int\limits_{0}^{\infty}\int
\limits_{0}^{\infty}t^{c+2k+\nu-1}u^{c+2l+w-1}e^{-t-u-\frac{\rho}{t}-\frac{%
\rho}{u}}dtdu.
\end{align*}
Now, using (1.2) yields%
\begin{align*}
G_{\nu}^{(b,c)}(\alpha z;\rho)G_{w}^{(b,c)}(\beta z;\rho) & =\frac {\Gamma(%
\frac{\nu+1}{2})\Gamma(\frac{\nu+2}{2})\Gamma(\frac{w+1}{2})\Gamma(\frac{w+2%
}{2})(\frac{\alpha z}{2})^{\nu}(\frac{\beta z}{2})^{w}}{\Gamma(\nu+1)%
\Gamma(w+1)} \\
& \times\sum\limits_{k,l=0}^{\infty}\frac{(\frac{-bz^{2}\alpha^{2}}{16})^{k}(%
\frac{-bz^{2}\beta^{2}}{16})^{l}}{\Gamma(k+\nu+1)\Gamma(l+w+1)\Gamma (\frac{%
\nu+1}{2}+k)\Gamma(\frac{\nu+2}{2}+k)\Gamma(\frac{w+1}{2}+l)\Gamma(\frac{w+2%
}{2}+l)k!l!} \\
& \times\frac{1}{[\Gamma(c)]^{2}}~\int\limits_{0}^{\infty}\int\limits_{0}^{%
\infty}t^{c+2k+\nu-1}u^{c+2l+w-1}e^{-t-u-\frac{\rho}{t}-\frac{\rho}{u}}dtdu.
\end{align*}
Recalling the fact that given double power series%
\begin{equation*}
\sum\limits_{m,n=0}^{\infty}\frac{\prod\limits_{j=1}^{A}\Gamma(a_{j}+m%
\vartheta_{j}+n\varphi_{j})\prod\limits_{j=1}^{B}\Gamma(b_{j}+m\psi
_{j})\prod\limits_{j=1}^{B^{^{\prime}}}\Gamma(b_{j}^{^{\prime}}+n\psi
_{j}^{^{\prime}})}{\prod\limits_{j=1}^{C}\Gamma(c_{j}+m\delta_{j}+n%
\varepsilon_{j})\prod\limits_{j=1}^{D}\Gamma(d_{j}+m\eta_{j})\prod
\limits_{j=1}^{D^{\prime}}\Gamma(d_{j}^{^{\prime}}+n\eta_{j}^{^{\prime}})}%
\frac{x^{m}y^{n}}{m!n!}
\end{equation*}
converges absolutely for all complex $x$ and $y$ when 
\begin{align*}
1+\sum\limits_{j=1}^{C}\delta_{j}+\sum\limits_{j=1}^{D}\eta_{j}-\sum
\limits_{j=1}^{A}\vartheta_{j}-\sum\limits_{j=1}^{B}\psi_{j} & >0, \\
1+\sum\limits_{j=1}^{C}\varepsilon_{j}+\sum\limits_{j=1}^{D^{\prime}}\eta
_{j}^{\prime}-\sum\limits_{j=1}^{A}\varphi_{j}-\sum\limits_{j=1}^{B^{%
\prime}}\psi_{j}^{^{\prime}} & >0,
\end{align*}
(see \cite{H.S.D} in (3.10)). Comparing the given power series with the
double series of the right hand side of $G_{\nu}^{(b,c)}(\alpha
z;\rho)G_{w}^{(b,c)}(\beta z;\rho)$ and replacing the order of the integrals
and summations where $\delta_{j}=0,$ $\sum\limits_{j=1}^{3}\eta_{j}=3,~%
\vartheta _{j}=0,~\psi_{j}~=0~$and $\varepsilon_{j}=0,~\sum\limits_{j=1}^{3}%
\eta _{j}^{^{\prime}}=3,~\varphi_{j}=0,~\psi_{j}^{^{\prime}}=0,~x=\frac {%
-bz^{2}\alpha^{2}}{16},~y=\frac{-bz^{2}\beta^{2}}{16},~$the following is
obtained 
\begin{align*}
G_{\nu}^{(b,c)}(\alpha z;\rho)G_{w}^{(b,c)}(\beta z;\rho) & =\frac {\Gamma(%
\frac{\nu+1}{2})\Gamma(\frac{\nu+2}{2})\Gamma(\frac{w+1}{2})\Gamma(\frac{w+2%
}{2})(\frac{\alpha z}{2})^{\nu}(\frac{\beta z}{2})^{w}}{[\Gamma(c)]^{2}%
\Gamma(\nu+1)\Gamma(w+1)} \\
&
\times\int\limits_{0}^{\infty}\int\limits_{0}^{\infty}\{\sum%
\limits_{k,l=0}^{\infty}\frac{(\frac{-bz^{2}\alpha^{2}}{16})^{k}(\frac{%
-bz^{2}\beta^{2}}{16})^{l}t^{2k}u^{2l}}{\Gamma(k+\nu+1)\Gamma(l+w+1)\Gamma(%
\frac{\nu+1}{2}+k)\Gamma(\frac{\nu+2}{2}+k)\Gamma(\frac{w+1}{2}+l)\Gamma(%
\frac{w+2}{2}+l)k!l!}\} \\
& \times~t^{c+\nu-1}u^{c+w-1}e^{-t-u-\frac{\rho}{t}-\frac{\rho}{u}}dtdu.
\end{align*}
After some calculations in series, summations can be written as%
\begin{align*}
& \frac{\Gamma(\frac{\nu+1}{2})\Gamma(\frac{\nu+2}{2})\Gamma(\frac{w+1}{2}%
)\Gamma(\frac{w+2}{2})(\frac{\alpha z}{2})^{\nu}(\frac{\beta z}{2})^{w}}{%
\Gamma(\nu+1)\Gamma(w+1)} \\
\times & \sum\limits_{k,l=0}^{\infty}\frac{(\frac{-bz^{2}\alpha^{2}}{16}%
)^{k}(\frac{-bz^{2}\beta^{2}}{16})^{l}t^{2k}u^{2l}}{\Gamma(k+\nu
+1)\Gamma(l+w+1)\Gamma(\frac{\nu+1}{2}+k)\Gamma(\frac{\nu+2}{2}+k)\Gamma (%
\frac{w+1}{2}+l)\Gamma(\frac{w+2}{2}+l)k!l!} \\
& =\frac{(\alpha z)^{\nu}(\beta z)^{w}}{2^{\nu+w}[\Gamma(\nu+1)]^{2}[%
\Gamma(w+1)]^{2}}~_{0}F_{3}(-;\nu+1,\frac{\nu+1}{2},\frac{\nu+2}{2};\frac{%
-b\alpha^{2}z^{2}t^{2}}{16}) \\
& \times~_{0}F_{3}(-;w+1,\frac{w+1}{2},\frac{w+2}{2};\frac{%
-b\beta^{2}z^{2}u^{2}}{16}).
\end{align*}
Substituting the last relation inside of the integrals and using usual
Pochhammer function expansions, the desired result is obtained as%
\begin{align*}
G_{\nu}^{(b,c)}(\alpha z;\rho)G_{w}^{(b,c)}(\beta z;\rho) & =\frac{(\alpha
z)^{\nu}(\beta z)^{w}}{2^{\nu+w}[\Gamma(c)]^{2}[\Gamma(\nu+1)]^{2}[%
\Gamma(w+1)]^{2}}\int\limits_{0}^{\infty}\int\limits_{0}^{\infty}t^{c+\nu
-1}u^{c+w-1}e^{-t-u-\frac{\rho}{t}-\frac{\rho}{u}} \\
& \times~_{0}F_{3}(-;\nu+1,\frac{\nu+1}{2},\frac{\nu+2}{2};\frac{-b\alpha
^{2}z^{2}t^{2}}{16}) \\
& \times~_{0}F_{3}(-;w+1,\frac{w+1}{2},\frac{w+2}{2};\frac{%
-b\beta^{2}z^{2}u^{2}}{16})dtdu.
\end{align*}
\end{proof}

Letting $b=1,$ $b=-1,~b=1,~c=1,~\rho =0~$\ and $b=-1,~c=1,~\rho =0$ in
Theorem 4.1, the following Corollaries are obtained:

\begin{corollary}
$~$The integral representations satisfied by the functions~ $%
J_{\nu}^{(c)}(\alpha z;\rho)$,~$J_{w}^{(c)}(\beta z;\rho)$ are given by 
\begin{align*}
J_{\nu}^{(c)}(\alpha z;\rho)J_{w}^{(c)}(\beta z;\rho) & =\frac{(\alpha
z)^{\nu}(\beta z)^{w}}{2^{\nu+w}[\Gamma(c)]^{2}~[\Gamma(\nu+1)]^{2}[%
\Gamma(w+1)]^{2}}\int\limits_{0}^{\infty}\int\limits_{0}^{\infty}t^{c+\nu
-1}u^{c+w-1}e^{-t-u-\frac{\rho}{t}-\frac{\rho}{u}} \\
& \times~_{0}F_{3}(-;\nu+1,\frac{\nu+1}{2},\frac{\nu+2}{2};\frac{-\alpha
^{2}z^{2}t^{2}}{16}) \\
& \times~_{0}F_{3}(-;w+1,\frac{w+1}{2},\frac{w+2}{2};\frac{%
-\beta^{2}z^{2}u^{2}}{16})dtdu, \\
I_{\nu}^{(c)}(\alpha z;\rho)I_{w}^{(c)}(\beta z;\rho) & =\frac{(\alpha
z)^{\nu}(\beta z)^{w}}{2^{\nu+w}[\Gamma(c)]^{2}[\Gamma(\nu+1)]^{2}[%
\Gamma(w+1)]^{2}}\int\limits_{0}^{\infty}\int\limits_{0}^{\infty}t^{c+\nu
-1}u^{c+w-1}e^{-t-u-\frac{\rho}{t}-\frac{\rho}{u}} \\
& \times~_{0}F_{3}(-;\nu+1,\frac{\nu+1}{2},\frac{\nu+2}{2};\frac{\alpha
^{2}z^{2}t^{2}}{16}) \\
& \times~_{0}F_{3}(-;w+1,\frac{w+1}{2},\frac{w+2}{2};\frac{%
\beta^{2}z^{2}u^{2}}{16})dtdu,
\end{align*}
where \ $\func{Re}(c)>0,~\func{Re}(\nu)>-1$ and $\func{Re}(w)>-1.$
\end{corollary}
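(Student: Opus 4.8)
The plan is to obtain both identities as immediate specializations of Theorem~4.1, using only the reduction rules for $G_{\nu}^{(b,c)}$ recorded in Section~1. Recall that putting $b=1$ in the series definition (1.8) turns the coefficient $(-b)^{k}$ into $(-1)^{k}$, so that $G_{\nu}^{(1,c)}(z;\rho)=J_{\nu}^{(c)}(z;\rho)$, while putting $b=-1$ gives $(-b)^{k}=1$ and hence $G_{\nu}^{(-1,c)}(z;\rho)=I_{\nu}^{(c)}(z;\rho)$. These two substitutions are all that is needed; the hypotheses $\func{Re}(c)>0$, $\func{Re}(\nu)>-1$, $\func{Re}(w)>-1$ under which Theorem~4.1 holds coincide with those asserted in the Corollary, so no additional convergence discussion is required.

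First I would set $b=1$ in the conclusion of Theorem~4.1. On the left-hand side the product $G_{\nu}^{(1,c)}(\alpha z;\rho)G_{w}^{(1,c)}(\beta z;\rho)$ becomes $J_{\nu}^{(c)}(\alpha z;\rho)J_{w}^{(c)}(\beta z;\rho)$. On the right-hand side the prefactor $(\alpha z)^{\nu}(\beta z)^{w}\big/\bigl(2^{\nu+w}[\Gamma(c)]^{2}[\Gamma(\nu+1)]^{2}[\Gamma(w+1)]^{2}\bigr)$ and the exponential-weighted double integral $\int_{0}^{\infty}\int_{0}^{\infty}t^{c+\nu-1}u^{c+w-1}e^{-t-u-\rho/t-\rho/u}\,dt\,du$ are untouched, while the two hypergeometric factors turn into ${}_{0}F_{3}(-;\nu+1,\tfrac{\nu+1}{2},\tfrac{\nu+2}{2};\tfrac{-\alpha^{2}z^{2}t^{2}}{16})$ and ${}_{0}F_{3}(-;w+1,\tfrac{w+1}{2},\tfrac{w+2}{2};\tfrac{-\beta^{2}z^{2}u^{2}}{16})$, since the argument $\tfrac{-b\alpha^{2}z^{2}t^{2}}{16}$ reduces to $\tfrac{-\alpha^{2}z^{2}t^{2}}{16}$ and similarly in the $u$-variable. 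This is the first displayed formula verbatim.

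Next I would repeat the computation with $b=-1$. Now the left-hand side $G_{\nu}^{(-1,c)}(\alpha z;\rho)G_{w}^{(-1,c)}(\beta z;\rho)$ equals $I_{\nu}^{(c)}(\alpha z;\rho)I_{w}^{(c)}(\beta z;\rho)$, and each ${}_{0}F_{3}$ argument $\tfrac{-b\alpha^{2}z^{2}t^{2}}{16}$ becomes $\tfrac{+\alpha^{2}z^{2}t^{2}}{16}$ (respectively $\tfrac{+\beta^{2}z^{2}u^{2}}{16}$), with everything else in the integral unchanged; this yields the second displayed formula.

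Both steps are pure substitution, so there is essentially no obstacle. The only point deserving a moment's care is the sign bookkeeping: one should verify that the single factor $(-b)^{k+l}$ occurring in the product series inside the proof of Theorem~4.1 specializes correctly — to $(-1)^{k+l}$ when $b=1$, matching the defining series of $J_{\nu}^{(c)}$ and $J_{w}^{(c)}$, and to $1$ when $b=-1$, matching those of $I_{\nu}^{(c)}$ and $I_{w}^{(c)}$ — so that the left-hand sides are precisely the stated products with no spurious signs.
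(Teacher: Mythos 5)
Your proposal is correct and is exactly the route the paper takes: the Corollary is obtained by specializing Theorem~4.1 at $b=1$ and $b=-1$, using $G_{\nu}^{(1,c)}=J_{\nu}^{(c)}$ and $G_{\nu}^{(-1,c)}=I_{\nu}^{(c)}$ together with the sign change in the ${}_{0}F_{3}$ arguments. Your sign bookkeeping for $(-b)^{k}$ and the unchanged hypotheses match the paper's (implicit) argument, so nothing further is needed.
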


\begin{corollary}
$~$Integral representations satisfied by the usual Bessel and modified
Bessel functions are given by%
\begin{align*}
J_{\nu}(\alpha z)J_{w}(\beta z) & =\frac{(\alpha z)^{\nu}(\beta z)^{w}}{%
2^{\nu+w}[\Gamma(\nu+1)]^{2}[\Gamma(w+1)]^{2}}\int\limits_{0}^{\infty}\int%
\limits_{0}^{\infty}t^{\nu}u^{w}e^{-t-u} \\
& \times~_{0}F_{3}(-;\nu+1,\frac{\nu+1}{2},\frac{\nu+2}{2};\frac{-\alpha
^{2}z^{2}t^{2}}{16}) \\
& \times~_{0}F_{3}(-;w+1,\frac{w+1}{2},\frac{w+2}{2};\frac{%
-\beta^{2}z^{2}u^{2}}{16})dtdu, \\
I_{\nu}(\alpha z)I_{w}(\beta z) & =\frac{(\alpha z)^{\nu}(\beta z)^{w}}{%
2^{\nu+w}[\Gamma(\nu+1)]^{2}[\Gamma(w+1)]^{2}}\int\limits_{0}^{\infty}\int%
\limits_{0}^{\infty}t^{\nu}u^{w}e^{-t-u} \\
& \times~_{0}F_{3}(-;\nu+1,\frac{\nu+1}{2},\frac{\nu+2}{2};\frac{\alpha
^{2}z^{2}t^{2}}{16}) \\
& \times~_{0}F_{3}(-;w+1,\frac{w+1}{2},\frac{w+2}{2};\frac{%
\beta^{2}z^{2}u^{2}}{16})dtdu,
\end{align*}
where \ $\func{Re}(\nu)>-1$ and $\func{Re}(w)>-1.$
\end{corollary}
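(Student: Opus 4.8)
The plan is to obtain both identities as direct specializations of Theorem 4.1. Recall that, by the definitions in Section 1, $G_{\nu}^{(1,c)}(z;\rho)=J_{\nu}^{(c)}(z;\rho)$ and $G_{\nu}^{(-1,c)}(z;\rho)=I_{\nu}^{(c)}(z;\rho)$, and that $J_{\nu}^{(c)}(z;\rho)$ reduces to the usual Bessel function $J_{\nu}(z)$ while $I_{\nu}^{(c)}(z;\rho)$ reduces to the usual modified Bessel function $I_{\nu}(z)$ when $c=1$ and $\rho=0$. Hence $G_{\nu}^{(1,1)}(\alpha z;0)=J_{\nu}(\alpha z)$ and $G_{\nu}^{(-1,1)}(\alpha z;0)=I_{\nu}(\alpha z)$, with the analogous statements for the factor carrying the index $w$ and the scaling $\beta$.

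First I would put $b=1$, $c=1$ and $\rho=0$ in the double integral representation of Theorem 4.1. Under this substitution the prefactor $[\Gamma(c)]^{2}$ becomes $[\Gamma(1)]^{2}=1$; the exponential weight $e^{-t-u-\frac{\rho}{t}-\frac{\rho}{u}}$ collapses to $e^{-t-u}$; the powers $t^{c+\nu-1}$ and $u^{c+w-1}$ become $t^{\nu}$ and $u^{w}$; and the arguments $\frac{-b\alpha^{2}z^{2}t^{2}}{16}$ and $\frac{-b\beta^{2}z^{2}u^{2}}{16}$ of the two $_{0}F_{3}$ factors become $\frac{-\alpha^{2}z^{2}t^{2}}{16}$ and $\frac{-\beta^{2}z^{2}u^{2}}{16}$. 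Combining these simplifications with $G_{\nu}^{(1,1)}(\alpha z;0)\,G_{w}^{(1,1)}(\beta z;0)=J_{\nu}(\alpha z)J_{w}(\beta z)$ yields the first identity, valid for $\func{Re}(\nu)>-1$ and $\func{Re}(w)>-1$ (the hypothesis $\func{Re}(c)>0$ of Theorem 4.1 is satisfied since $c=1$).

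Then I would repeat the same substitution with $b=-1$, $c=1$, $\rho=0$. The only change relative to the previous case is the sign inside the two hypergeometric arguments: $\frac{-b\alpha^{2}z^{2}t^{2}}{16}=\frac{\alpha^{2}z^{2}t^{2}}{16}$ and likewise for the $u$-argument. Together with $G_{\nu}^{(-1,1)}(\alpha z;0)\,G_{w}^{(-1,1)}(\beta z;0)=I_{\nu}(\alpha z)I_{w}(\beta z)$, this gives the second identity.

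There is no genuine obstacle here: the analytic content of the corollary is entirely carried by Theorem 4.1, and the argument is a bookkeeping specialization. The only point that deserves a line of justification is that the convergence and interchange-of-summation-and-integration conditions invoked in the proof of Theorem 4.1 — in particular $\func{Re}(c)>0$ together with $\func{Re}(\nu)>-1$ and $\func{Re}(w)>-1$ — are not violated by the choice $c=1$, so passing to $\rho\to 0$ is legitimate and the displayed formulas hold.
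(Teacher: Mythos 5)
Your proposal is correct and matches the paper's own route exactly: the paper obtains this corollary by the very substitutions $b=1,\ c=1,\ \rho =0$ and $b=-1,\ c=1,\ \rho =0$ in Theorem 4.1, using $J_{\nu }^{(1)}(z;0)=J_{\nu }(z)$ and $I_{\nu }^{(1)}(z;0)=I_{\nu }(z)$. Your added remark that the hypotheses of Theorem 4.1 (in particular $\func{Re}(c)>0$ with $c=1$) remain satisfied is the only justification needed, so nothing is missing.
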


In the following Theorem, the Mellin transform of the $zG_{\nu}^{(b,c)}(%
\alpha z;\rho)G_{w}^{(b,c)}(\beta z;\rho)$ is calculated$.$

\begin{theorem}
$~$The Mellin transform of $ze^{-z}G_{\nu}^{(b,c)}(\alpha
z;\rho)G_{w}^{(b,c)}(\beta z;\rho)~$ is given by%
\begin{align*}
& \mathcal{M}\{ze^{-z}G_{\nu}^{(b,c)}(\alpha z;\rho)G_{w}^{(b,c)}(\beta
z;\rho);s\} \\
& =\frac{\alpha^{\nu}\beta^{w}}{2^{\nu+w}[\Gamma(\nu+1)]^{2}[\Gamma
(w+1)]^{2}}\sum\limits_{k,l=0}^{\infty}\frac{(-b)^{k+l}\alpha^{2k}\beta
^{2l}(c;\rho)_{\nu+2k}~(c;\rho)_{w+2l}~\Gamma(s+\nu+w+2k+2l+1)}{(\nu
+1)_{k}~(w+1)_{l}~(\frac{\nu+1}{2})_{k}~(\frac{w+1}{2})_{l}~(\frac{\nu+2}{2}%
)_{k}~(\frac{w+2}{2})_{l}~k!l!2^{4k+4l}},
\end{align*}
where \ $\func{Re}(\nu)>-1$ and $\func{Re}(w)>-1.$
\end{theorem}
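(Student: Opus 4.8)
The plan is to start from the definition of the Mellin transform in the variable $z$, namely $\mathcal{M}\{g(z);s\}=\int_{0}^{\infty}z^{s-1}g(z)\,dz$ with $g(z)=ze^{-z}G_{\nu}^{(b,c)}(\alpha z;\rho)G_{w}^{(b,c)}(\beta z;\rho)$, and to substitute into it the double-integral representation of $G_{\nu}^{(b,c)}(\alpha z;\rho)G_{w}^{(b,c)}(\beta z;\rho)$ furnished by Theorem~4.1. After substitution the integrand is $z^{s+\nu+w}e^{-z}$ times a constant times the double $(t,u)$-integral of $t^{c+\nu-1}u^{c+w-1}e^{-t-u-\rho/t-\rho/u}$ against the product of the two ${}_{0}F_{3}$ factors, in which $z$ enters only through $z^{2}t^{2}$ and $z^{2}u^{2}$ inside the hypergeometric arguments. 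First I would interchange the $z$-integration with the $(t,u)$-integration, which is permissible under $\func{Re}(c)>0$, $\func{Re}(\nu)>-1$, $\func{Re}(w)>-1$ (the same conditions under which Theorem~4.1 was established).

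Next I would expand both ${}_{0}F_{3}$'s as power series, say with summation indices $k$ and $l$, so that the $z$-dependence reduces to the monomial $z^{2k+2l}$, and then carry out the three now-decoupled elementary integrals. The $z$-integral gives
\[
\int_{0}^{\infty}z^{s+\nu+w+2k+2l}e^{-z}\,dz=\Gamma(s+\nu+w+2k+2l+1),
\]
while the $t$- and $u$-integrals are recognised, via the integral definition $(1.6)$ of the generalized Pochhammer symbol, as $\int_{0}^{\infty}t^{c+\nu+2k-1}e^{-t-\rho/t}\,dt=\Gamma(c)\,(c;\rho)_{\nu+2k}$ and $\int_{0}^{\infty}u^{c+w+2l-1}e^{-u-\rho/u}\,du=\Gamma(c)\,(c;\rho)_{w+2l}$; the factor $[\Gamma(c)]^{2}$ so produced cancels against the $[\Gamma(c)]^{2}$ in the denominator of the prefactor coming from Theorem~4.1. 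Collecting the surviving Pochhammer symbols $(\nu+1)_{k}$, $(\tfrac{\nu+1}{2})_{k}$, $(\tfrac{\nu+2}{2})_{k}$ and their $w$-analogues, the numerical factors $(-b)^{k+l}$, the powers of $2$ (the $2^{4k+4l}$ arising from the $16^{k}16^{l}$ in the two hypergeometric series), the $z^{\nu+w}$ that joins $(\alpha z)^{\nu}(\beta z)^{w}$ into $\alpha^{\nu}\beta^{w}z^{\nu+w}$, and the leftover $\alpha^{2k}\beta^{2l}$, then pulling $\alpha^{\nu}\beta^{w}/(2^{\nu+w}[\Gamma(\nu+1)]^{2}[\Gamma(w+1)]^{2})$ out of the double sum, yields exactly the asserted formula.

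An equivalent route, which bypasses Theorem~4.1, is to insert directly the series $(1.8)$ for each of $G_{\nu}^{(b,c)}(\alpha z;\rho)$ and $G_{w}^{(b,c)}(\beta z;\rho)$, multiply them out into a double series in $k,l$, integrate $z^{s-1}\cdot z\,e^{-z}$ against $z^{2k+2l+\nu+w}$ to obtain the same $\Gamma(s+\nu+w+2k+2l+1)$, and then use $\Gamma(\nu+k+1)=\Gamma(\nu+1)(\nu+1)_{k}$ together with the duplication formula $(1.3)$ in the form $\Gamma(\nu+2k+1)=\Gamma(\nu+1)2^{2k}(\tfrac{\nu+1}{2})_{k}(\tfrac{\nu+2}{2})_{k}$ (and the $w$-analogues) to rearrange the remaining $\Gamma$'s into the stated Pochhammer denominators. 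Either way, the only step requiring genuine care is the justification of the interchanges — Fubini for the iterated $z$- and $(t,u)$-integrals and term-by-term integration of the two absolutely convergent ${}_{0}F_{3}$ series — which can be handled exactly as in the proof of Theorem~4.1, where the relevant double-series convergence criterion (of Srivastava--Daoust type) was already verified; everything else is the routine Gamma- and Pochhammer-identity bookkeeping used repeatedly in Sections~2 and~4.
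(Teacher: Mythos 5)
Your proposal is correct and follows essentially the same route as the paper: insert the double-integral representation of the product from Theorem~4.1 into the Mellin integral, expand the two $_{0}F_{3}$ series, interchange the orders, and evaluate the $z$-, $t$- and $u$-integrals as $\Gamma(s+\nu+w+2k+2l+1)$, $\Gamma(c)\,(c;\rho)_{\nu+2k}$ and $\Gamma(c)\,(c;\rho)_{w+2l}$, cancelling the $[\Gamma(c)]^{2}$. The alternative direct-series route you sketch is equivalent bookkeeping, and your attention to justifying the interchanges matches (indeed slightly sharpens) the paper's argument.
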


\begin{proof}
Mellin transform can be written%
\begin{equation*}
\mathcal{M}\{ze^{-z}G_{\nu}^{(b,c)}(\alpha z;\rho)G_{w}^{(b,c)}(\beta
z;\rho);s\}=\int\limits_{0}^{\infty}z^{s-1}ze^{-z}G_{\nu}^{(b,c)}(\alpha
z;\rho)G_{w}^{(b,c)}(\beta z;\rho)dz.
\end{equation*}
Inserting the integral representation of the $G_{\nu}^{(b,c)}(\alpha
z;\rho)G_{w}^{(b,c)}(\beta z;\rho)$~and expanding the hypergeometric series
and using the expansion of the generalized Pochhammer functions$,$ we have%
\begin{align*}
& \mathcal{M}\{ze^{-z}G_{\nu}^{(b,c)}(\alpha z;\rho)G_{w}^{(b,c)}(\beta
z;\rho);s\} \\
& =\frac{\alpha^{\nu}\beta^{w}}{2^{\nu+w}[\Gamma(\nu+1)]^{2}[\Gamma
(w+1)]^{2}[\Gamma(c)]^{2}}\int\limits_{0}^{\infty}\int\limits_{0}^{\infty}%
\int\limits_{0}^{\infty}e^{-z}z^{s+\nu+w}t^{c+\nu-1}u^{c+w-1}e^{-t-u-\frac {%
\rho}{t}-\frac{\rho}{u}} \\
& \times~\sum\limits_{k,l=0}^{\infty}\frac{(-b)^{k+l}\alpha^{2k}\beta
^{2l}t^{2k}u^{2l}}{(\nu+1)_{k}~(w+1)_{l}~(\frac{\nu+1}{2})_{k}~(\frac{w+1}{2}%
)_{l}~(\frac{\nu+2}{2})_{k}~(\frac{w+2}{2})_{l}~k!l!}\frac{z^{2k+2l}}{%
16^{k+l}}dtdudz.
\end{align*}
Replacing the order of integrals and summations, we get%
\begin{align*}
& \mathcal{M}\{ze^{-z}G_{\nu}^{(b,c)}(\alpha z;\rho)G_{w}^{(b,c)}(\beta
z;\rho);s\} \\
& =\frac{\alpha^{\nu}\beta^{w}}{2^{\nu+w}[\Gamma(\nu+1)]^{2}[\Gamma
(w+1)]^{2}[\Gamma(c)]^{2}}\sum\limits_{k,l=0}^{\infty}\{\int\limits_{0}^{%
\infty}\int\limits_{0}^{\infty}\int\limits_{0}^{\infty}t^{c+\nu +2k-1}e^{-t-%
\frac{\rho}{t}}u^{2l+c+w-1}e^{-u-\frac{\rho}{u}}z^{s+\nu
+w+2k+2l}e^{-z}dtdudz\} \\
& \times\frac{(-b)^{k+l}\alpha^{2k}\beta^{2l}}{(\nu+1)_{k}~(w+1)_{l}~(\frac{%
\nu+1}{2})_{k}~(\frac{w+1}{2})_{l}~(\frac{\nu+2}{2})_{k}~(\frac {w+2}{2}%
)_{l}~k!l!(16)^{k+l}}.
\end{align*}
Now, computing the integrals in terms of the generalized Pochhammer
functions and Gamma function, we have%
\begin{align*}
& \mathcal{M}\{ze^{-z}G_{\nu}^{(b,c)}(\alpha z;\rho)G_{w}^{(b,c)}(\beta
z;\rho);s\} \\
& =\frac{\alpha^{\nu}\beta^{w}}{2^{\nu+w}[\Gamma(\nu+1)]^{2}[\Gamma
(w+1)]^{2}}\sum\limits_{k,l=0}^{\infty}\frac{(-b)^{k+l}\alpha^{2k}\beta
^{2l}~(c;\rho)_{\nu+2k}~(c;\rho)_{w+2l}~\Gamma(s+\nu+w+2k+2l+1)}{(\nu
+1)_{k}~(w+1)_{l}~(\frac{\nu+1}{2})_{k}~(\frac{w+1}{2})_{l}~(\frac{\nu+2}{2}%
)_{k}~(\frac{w+2}{2})_{l}~k!l!2^{4k+4l}}.
\end{align*}
\end{proof}

Taking $s=1$~in Theorem 4.2, the following integral representation formula
is obtained.

\begin{corollary}
$~$Infinite integral involving of the unified four parameter Bessel
functions is given by 
\begin{align*}
& \int\limits_{0}^{\infty}ze^{-z}G_{\nu}^{(b,c)}(\alpha
z;\rho)G_{w}^{(b,c)}(\beta z;\rho)dz \\
& =\frac{\alpha^{\nu}\beta^{w}}{2^{\nu+w}[\Gamma(\nu+1)]^{2}[\Gamma
(w+1)]^{2}}\sum\limits_{k,l=0}^{\infty}\frac{(-b)^{k+l}\alpha^{2k}\beta
^{2l}~(c;\rho)_{\nu+2k}~(c;\rho)_{w+2l}~\Gamma(\nu+w+2k+2l+2)}{(\nu
+1)_{k}~(w+1)_{l}~(\frac{\nu+1}{2})_{k}~(\frac{w+1}{2})_{l}~(\frac{\nu+2}{2}%
)_{k}~(\frac{w+2}{2})_{l}~k!l!2^{4k+4l}},
\end{align*}
where $\func{Re}(\nu)>-1$ and $\func{Re}(w)>-1.$
\end{corollary}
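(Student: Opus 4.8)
The plan is to derive this corollary directly from Theorem 4.2 by specializing the Mellin parameter to $s=1$. Recall that the Mellin transform is $\mathcal{M}\{f(z);s\}=\int_0^\infty z^{s-1}f(z)\,dz$, so at $s=1$ it collapses to the ordinary infinite integral $\int_0^\infty f(z)\,dz$. Taking $f(z)=ze^{-z}G_\nu^{(b,c)}(\alpha z;\rho)G_w^{(b,c)}(\beta z;\rho)$, one has
\[
\int_0^\infty ze^{-z}G_\nu^{(b,c)}(\alpha z;\rho)G_w^{(b,c)}(\beta z;\rho)\,dz=\mathcal{M}\{ze^{-z}G_\nu^{(b,c)}(\alpha z;\rho)G_w^{(b,c)}(\beta z;\rho);s\}\Big|_{s=1}.
\]

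First I would quote the closed form furnished by Theorem 4.2 for the right-hand side and then put $s=1$. The only $s$-dependent ingredient there is the Gamma factor $\Gamma(s+\nu+w+2k+2l+1)$, which becomes $\Gamma(\nu+w+2k+2l+2)$; the prefactor $\alpha^\nu\beta^w/\big(2^{\nu+w}[\Gamma(\nu+1)]^2[\Gamma(w+1)]^2\big)$ and the general term $(-b)^{k+l}\alpha^{2k}\beta^{2l}(c;\rho)_{\nu+2k}(c;\rho)_{w+2l}$ over the product of the six Pochhammer symbols, $k!\,l!$ and $2^{4k+4l}$ are untouched. Reading off what remains gives precisely the asserted identity.

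The only thing to check is that $s=1$ is an admissible value in Theorem 4.2, which is immediate: the condition $\func{Re}(s)>0$ used in that proof to evaluate $\int_0^\infty z^{s+\nu+w+2k+2l}e^{-z}\,dz$ holds at $s=1$, and the double series converges absolutely already under $\func{Re}(\nu)>-1$ and $\func{Re}(w)>-1$ (with $\func{Re}(c)>0$, $\func{Re}(\rho)>0$ inherited from the integral representation of the product in Theorem 4.1). Consequently no fresh estimate or interchange argument is needed, and there is essentially no obstacle: the corollary is a one-line specialization once Theorem 4.2 is in hand.
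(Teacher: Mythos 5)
Your proposal is correct and coincides with the paper's own argument: the corollary is stated there as the specialization $s=1$ of Theorem 4.2, the Mellin transform with $z^{s-1}$ collapsing to the plain integral and $\Gamma(s+\nu+w+2k+2l+1)$ becoming $\Gamma(\nu+w+2k+2l+2)$. Your added remark that $s=1$ satisfies $\func{Re}(s)>0$ and that no new interchange argument is needed is a harmless bit of extra care beyond what the paper records.
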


Taking $b=1,$ $b=-1,$ $b=1,~c=1,~\rho =0$ and $b=-1,~c=1,~\rho =0$ in
Corollary 4.2.1, the following integral representations are obtained:

\begin{corollary}
$~$The integral representations involving the multiplication of the
generalized three parameter Bessel and modified Bessel functions of the
first kind are given by%
\begin{align*}
& \int\limits_{0}^{\infty}ze^{-z}J_{\nu}^{(c)}(\alpha
z;\rho)J_{w}^{(c)}(\beta z;\rho)dz \\
& =\frac{\alpha^{\nu}\beta^{w}}{2^{\nu+w}[\Gamma(\nu+1)]^{2}[\Gamma
(w+1)]^{2}}\sum\limits_{k,l=0}^{\infty}\frac{(-1)^{k+l}\alpha^{2k}\beta
^{2l}~(c;\rho)_{\nu+2k}~(c;\rho)_{w+2l}~\Gamma(\nu+w+2k+2l+2)}{(\nu
+1)_{k}~(w+1)_{l}~(\frac{\nu+1}{2})_{k}~(\frac{w+1}{2})_{l}~(\frac{\nu+2}{2}%
)_{k}~(\frac{w+2}{2})_{l}~k!l!2^{4k+4l}}, \\
& \int\limits_{0}^{\infty}ze^{-z}I_{\nu}^{(c)}(\alpha
z;\rho)I_{w}^{(c)}(\beta z;\rho)dz \\
& =\frac{\alpha^{\nu}\beta^{w}}{2^{\nu+w}[\Gamma(\nu+1)]^{2}[\Gamma
(w+1)]^{2}}\sum\limits_{k,l=0}^{\infty}\frac{\alpha^{2k}\beta^{2l}~(c;%
\rho)_{\nu+2k}~(c;\rho)_{w+2l}~\Gamma(\nu+w+2k+2l+2)}{(\nu+1)_{k}~(w+1)_{l}~(%
\frac{\nu+1}{2})_{k}~(\frac{w+1}{2})_{l}~(\frac{\nu+2}{2})_{k}~(\frac{w+2}{2}%
)_{l}~k!l!2^{4k+4l}},
\end{align*}
where $\func{Re}(\nu)>-1$ and $\func{Re}(w)>-1.$
\end{corollary}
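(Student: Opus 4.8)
The plan is to derive each of the two displayed identities by a direct specialization of Corollary 4.2.1, using only the definitions recorded in Section~1. Recall from the series definition (1.8) that setting $b=1$ turns $G_\nu^{(b,c)}(z;\rho)$ into the generalized three parameter Bessel function $J_\nu^{(c)}(z;\rho)$, while setting $b=-1$ turns it into the generalized modified Bessel function $I_\nu^{(c)}(z;\rho)$. Both reductions are immediate, since the only $b$-dependence in (1.8) sits in the factor $(-b)^{k}$.

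First I would put $b=1$ in Corollary 4.2.1. On the left-hand side the product $G_\nu^{(1,c)}(\alpha z;\rho)\,G_w^{(1,c)}(\beta z;\rho)$ becomes $J_\nu^{(c)}(\alpha z;\rho)\,J_w^{(c)}(\beta z;\rho)$, and on the right-hand side the factor $(-b)^{k+l}$ becomes $(-1)^{k+l}$ while every remaining Pochhammer symbol, Gamma value, and power is untouched; this produces the first formula. Next I would put $b=-1$: the left-hand side becomes $I_\nu^{(c)}(\alpha z;\rho)\,I_w^{(c)}(\beta z;\rho)$, and on the right $(-b)^{k+l}=1^{k+l}=1$, so the alternating sign disappears and the summand is left with only nonnegative terms; this produces the second formula. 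The hypotheses $\func{Re}(\nu)>-1$ and $\func{Re}(w)>-1$ are inherited verbatim from Corollary 4.2.1, and the conditions $\func{Re}(c)>0$, $\func{Re}(\rho)>0$ needed there play no role in the specialization itself, so no new restriction is required.

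There is essentially no obstacle here: the argument is pure substitution. The only two points deserving an explicit sentence are (i) the identification $G_\nu^{(\pm 1,c)}=J_\nu^{(c)}$, $I_\nu^{(c)}$, which is definitional, and (ii) the collapse $(-b)^{k+l}\mapsto(-1)^{k+l}$ for $b=1$ and $(-b)^{k+l}\mapsto1$ for $b=-1$, which is elementary. If desired, one may push the specialization one step further by also taking $c=1$ and $\rho=0$, whereupon $(c;\rho)_{\nu+2k}=(1)_{\nu+2k}=\Gamma(\nu+2k+1)$ (and similarly for the index $w$), and the two identities collapse to the corresponding integral representations for the classical Bessel functions $J_\nu,J_w$ and modified Bessel functions $I_\nu,I_w$; this is precisely the route indicated in the remark preceding the corollary.
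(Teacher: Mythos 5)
Your proposal is correct and matches the paper's own derivation: the corollary is obtained exactly by setting $b=1$ and $b=-1$ in Corollary 4.2.1, using that the only $b$-dependence in the series (1.8) is the factor $(-b)^{k}$, so $G_{\nu}^{(1,c)}=J_{\nu}^{(c)}$ and $G_{\nu}^{(-1,c)}=I_{\nu}^{(c)}$ while $(-b)^{k+l}$ becomes $(-1)^{k+l}$ or $1$ accordingly. The conditions $\func{Re}(\nu)>-1$, $\func{Re}(w)>-1$ carry over unchanged, just as in the paper.
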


\begin{corollary}
Integrals involving $J_{\nu}^{(1,1)}(\alpha z;\rho)$, $J_{w}^{(1,1)}(\beta
z;\rho),~I_{\nu}^{(-1,1)}(\alpha z;\rho)~$and $I_{w}^{(-1,1)}(\beta z;\rho)$
are given by%
\begin{align*}
& \int\limits_{0}^{\infty}ze^{-z}J_{\nu}^{(1,1)}(\alpha
z;\rho)J_{w}^{(1,1)}(\beta z;\rho)dz \\
& =\frac{\alpha^{\nu}\beta^{w}}{2^{\nu+w}[\Gamma(\nu+1)]^{2}[\Gamma
(w+1)]^{2}}\sum\limits_{k,l=0}^{\infty}\frac{(-1)^{k+l}\alpha^{2k}\beta
^{2l}~(1;\rho)_{\nu+2k}~(1;\rho)_{w+2l}~\Gamma(\nu+w+2k+2l+2)}{(\nu
+1)_{k}~(w+1)_{l}~(\frac{\nu+1}{2})_{k}~(\frac{w+1}{2})_{l}~(\frac{\nu+2}{2}%
)_{k}~(\frac{w+2}{2})_{l}~k!l!2^{4k+4l}}, \\
& \int\limits_{0}^{\infty}ze^{-z}I_{\nu}^{(-1,1)}(\alpha z;\rho
)I_{w}^{(-1,1)}(\beta z;\rho)dz \\
& =\frac{\alpha^{\nu}\beta^{w}}{2^{\nu+w}[\Gamma(\nu+1)]^{2}[\Gamma
(w+1)]^{2}}\sum\limits_{k,l=0}^{\infty}\frac{\alpha^{2k}\beta^{2l}~(1;%
\rho)_{\nu+2k}~(1;\rho)_{w+2l}~\Gamma(\nu+w+2k+2l+2)}{(\nu+1)_{k}~(w+1)_{l}~(%
\frac{\nu+1}{2})_{k}~(\frac{w+1}{2})_{l}~(\frac{\nu+2}{2})_{k}~(\frac{w+2}{2}%
)_{l}~k!l!2^{4k+4l}},
\end{align*}
where $\func{Re}(\nu)>-1$ and $\func{Re}(w)>-1.$
\end{corollary}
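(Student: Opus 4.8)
The plan is to obtain this statement as an immediate specialization of the results already established in this section. Recall from the definitions in Section~1 that the superscript $(1,1)$ records the parameter choices $b=1,\ c=1$ and $(-1,1)$ records $b=-1,\ c=1$; in particular $J_{\nu}^{(1,1)}(\alpha z;\rho)=G_{\nu}^{(1,1)}(\alpha z;\rho)$ and $I_{\nu}^{(-1,1)}(\alpha z;\rho)=G_{\nu}^{(-1,1)}(\alpha z;\rho)$, and likewise for the $w$-indexed functions. Thus the two desired identities are nothing but the $c=1$ instances of the two formulas in Corollary~4.2.2 (equivalently, the cases $b=1,\ c=1$ and $b=-1,\ c=1$ of Corollary~4.2.1, which is itself the $s=1$ case of Theorem~4.2).

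First I would set $b=1$ and $c=1$ in Corollary~4.2.1. On the left-hand side the product $G_{\nu}^{(b,c)}(\alpha z;\rho)G_{w}^{(b,c)}(\beta z;\rho)$ becomes $J_{\nu}^{(1,1)}(\alpha z;\rho)J_{w}^{(1,1)}(\beta z;\rho)$. On the right-hand side the prefactor $\alpha^{\nu}\beta^{w}/\bigl(2^{\nu+w}[\Gamma(\nu+1)]^{2}[\Gamma(w+1)]^{2}\bigr)$ is unchanged, the factor $(-b)^{k+l}$ becomes $(-1)^{k+l}$, each generalized Pochhammer symbol $(c;\rho)_{\nu+2k}$ and $(c;\rho)_{w+2l}$ collapses to $(1;\rho)_{\nu+2k}$ and $(1;\rho)_{w+2l}$ by the integral definition (1.6) with $\lambda=1$, and the remaining factors $\Gamma(\nu+w+2k+2l+2)$, $(\nu+1)_{k}$, $(\tfrac{\nu+1}{2})_{k}$, $(\tfrac{\nu+2}{2})_{k}$, $(w+1)_{l}$, $(\tfrac{w+1}{2})_{l}$, $(\tfrac{w+2}{2})_{l}$, $k!l!$ and $2^{4k+4l}$ are untouched. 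This reproduces the first displayed formula verbatim. Repeating the computation with $b=-1$ and $c=1$, the left-hand side is $I_{\nu}^{(-1,1)}(\alpha z;\rho)I_{w}^{(-1,1)}(\beta z;\rho)$, the factor $(-b)^{k+l}=1^{k+l}=1$ drops out of the summand, the generalized Pochhammer symbols again reduce to $(1;\rho)$, and everything else is as above, giving the second displayed formula.

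Finally I would check that the hypotheses transfer cleanly: the conditions $\func{Re}(\nu)>-1$ and $\func{Re}(w)>-1$ of Corollary~4.2.1 are simply carried over, the requirement $\func{Re}(c)>0$ used there is automatic since $c=1$, and the condition $s+\nu+w\neq\{0,-1,-2,\dots\}$ that underlies the interchange of summation and integration in Theorem~4.2 holds because $s=1$ together with $\func{Re}(\nu)>-1$ and $\func{Re}(w)>-1$ forces $\func{Re}(1+\nu+w)>-1$. Since the argument is a pure substitution, there is no genuine obstacle; the only point warranting a moment's care is the bookkeeping of $(-b)^{k+l}$ specializing to $(-1)^{k+l}$ versus $1$, and confirming that the $c=1$ value of $(c;\rho)_{\nu+2k}$ is legitimately $(1;\rho)_{\nu+2k}$, which is immediate from (1.6).
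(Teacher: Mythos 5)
Your proposal is correct and follows exactly the paper's route: the corollary is obtained by direct substitution of $b=1,\ c=1$ and $b=-1,\ c=1$ into Corollary~4.2.1 (itself the $s=1$ case of Theorem~4.2), with $(c;\rho)_{\nu+2k}$ reducing to $(1;\rho)_{\nu+2k}$ and $(-b)^{k+l}$ becoming $(-1)^{k+l}$ or $1$. Your bookkeeping of the superscript notation $J_{\nu}^{(1,1)}$, $I_{\nu}^{(-1,1)}$ and of the hypotheses matches the displayed statement, so nothing is missing.
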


For the products of usual Bessel and modified Bessel functions, the
following Corollary can be written.

\begin{corollary}
$~$Integral representations satisfied by the usual Bessel and modified
Bessel functions $J_{\nu}(\alpha z)$, $J_{w}(\beta z)$ and $I_{\nu}(\alpha
z),~I_{w}(\beta z)$ are given by 
\begin{align*}
& \int\limits_{0}^{\infty}ze^{-z}J_{\nu}(\alpha z)J_{w}(\beta z)dz \\
& =\frac{\alpha^{\nu}\beta^{w}}{2^{\nu+w}[\Gamma(\nu+1)]^{2}[\Gamma
(w+1)]^{2}}\sum\limits_{k,l=0}^{\infty}\frac{(-1)^{k+l}\alpha^{2k}\beta
^{2l}~\Gamma(\nu+2k+1)~\Gamma(w+2l+1)~\Gamma(\nu+w+2k+2l+2)}{(\nu
+1)_{k}~(w+1)_{l}~(\frac{\nu+1}{2})_{k}~(\frac{w+1}{2})_{l}~(\frac{\nu+2}{2}%
)_{k}~(\frac{w+2}{2})_{l}~k!l!2^{4k+4l}}, \\
& \int\limits_{0}^{\infty}ze^{-z}I_{\nu}(\alpha z)I_{w}(\beta z)dz \\
& =\frac{\alpha^{\nu}\beta^{w}}{2^{\nu+w}[\Gamma(\nu+1)]^{2}[\Gamma
(w+1)]^{2}}\sum\limits_{k,l=0}^{\infty}\frac{\alpha^{2k}\beta^{2l}~\Gamma
(\nu+2k+1)~\Gamma(w+2l+1)~\Gamma(\nu+w+2k+2l+2)}{(\nu+1)_{k}~(w+1)_{l}~(%
\frac{\nu+1}{2})_{k}~(\frac{w+1}{2})_{l}~(\frac{\nu+2}{2})_{k}~(\frac {w+2}{2%
})_{l}~k!l!2^{4k+4l}},
\end{align*}
where $\func{Re}(\nu)>-1$ and $\func{Re}(w)>-1.$
\end{corollary}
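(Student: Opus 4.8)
The plan is to obtain this statement simply as the degenerate case $c=1$, $\rho=0$ of Corollary 4.2.2 (the integral representations for the products $J_{\nu}^{(c)}(\alpha z;\rho)J_{w}^{(c)}(\beta z;\rho)$ and $I_{\nu}^{(c)}(\alpha z;\rho)I_{w}^{(c)}(\beta z;\rho)$), which has already been established. First I would invoke the two reductions recorded in the Introduction: when $c=1$ and $\rho=0$ the generalized three parameter Bessel function $J_{\nu}^{(c)}(z;\rho)$ collapses to the usual Bessel function $J_{\nu}(z)$, and $I_{\nu}^{(c)}(z;\rho)$ collapses to $I_{\nu}(z)$. Applied with the scaled arguments $\alpha z$ and $\beta z$, this turns the left-hand sides of Corollary 4.2.2 into $\int_{0}^{\infty}ze^{-z}J_{\nu}(\alpha z)J_{w}(\beta z)\,dz$ and $\int_{0}^{\infty}ze^{-z}I_{\nu}(\alpha z)I_{w}(\beta z)\,dz$, i.e. exactly the integrals appearing in the desired statement.

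It then remains only to simplify the right-hand sides. Setting $\rho=0$, the generalized Pochhammer symbol reduces to the ordinary one, $(c;\rho)_{\nu+2k}=(1;0)_{\nu+2k}=(1)_{\nu+2k}$, and formula (1.2), $(\alpha)_{n}=\Gamma(\alpha+n)/\Gamma(\alpha)$, then gives $(1)_{\nu+2k}=\Gamma(\nu+2k+1)/\Gamma(1)=\Gamma(\nu+2k+1)$; similarly $(c;\rho)_{w+2l}=\Gamma(w+2l+1)$. Substituting these two evaluations into the two series of Corollary 4.2.2 produces precisely the two formulas claimed, with the sign factor $(-1)^{k+l}$ present in the $J$-case (it comes from $b=1$) and absent in the $I$-case ($b=-1$). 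The hypotheses $\func{Re}(\nu)>-1$ and $\func{Re}(w)>-1$ carry over unchanged.

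The argument presents essentially no obstacle: it is a routine specialization, and all convergence and interchange-of-summation-and-integration issues were already dispatched in the proofs of Theorems 4.1 and 4.2. If a self-contained derivation were preferred, one could instead insert $c=1$ and $\rho=0$ at the very start of the proof of Theorem 4.2, so that the triple integral over $t,u,z$ factors through the elementary Gamma integrals $\int_{0}^{\infty}t^{\nu+2k}e^{-t}\,dt=\Gamma(\nu+2k+1)$, $\int_{0}^{\infty}u^{w+2l}e^{-u}\,du=\Gamma(w+2l+1)$ and $\int_{0}^{\infty}z^{\nu+w+2k+2l+1}e^{-z}\,dz=\Gamma(\nu+w+2k+2l+2)$; but invoking Corollary 4.2.2 is the shortest route.
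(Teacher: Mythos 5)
Your proposal is correct and follows essentially the same route as the paper, which likewise obtains this corollary by specializing $b=\pm 1$, $c=1$, $\rho=0$ in the preceding corollaries to Theorem 4.2, using that $J_{\nu}^{(1)}(z;0)=J_{\nu}(z)$, $I_{\nu}^{(1)}(z;0)=I_{\nu}(z)$ and $(1;0)_{\nu+2k}=(1)_{\nu+2k}=\Gamma(\nu+2k+1)$. Nothing further is needed.
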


Note that, the Mellin transform of the products of Bessel functions was
obtained in \cite{A.R.M}. Recalling that the integrals 
\begin{equation*}
\int\limits_{0}^{\infty }f(x)J_{\nu }(\alpha x)J_{w}(\beta x)dx
\end{equation*}%
arises in problems involving particle motion in an unbounded rotating fluid 
\cite{A.D, J.H}~in magnetohydrodynamic flow, crack problems in elasticity 
\cite{M.T} and distortions of nearly circular lipid domains where $v,w\in 
\mathbb{Z}
^{+}$ and $\alpha ,\beta \in 
\mathbb{R}
^{+}$ \cite{S.M}. Taking $~f(x)=xe^{-x}$~the result of the corresponding
Theorem coincides with this integral.

\section{A Three-Fold Integral Formula for the Unified Four Parameter Bessel
Function}

In the following Theorem, a three-fold integral formula for the unified four
parameter Bessel function is given:

\begin{theorem}
A three-fold \ integral formula satisfied by the unified four parameter
Bessel function is given by 
\begin{align*}
G_{\nu}^{(b,c)}(z;\rho) & =\frac{(\frac{z}{2})^{\nu}}{\Gamma(c)\pi
\lbrack\Gamma(\nu+\frac{1}{2})]^{2}}\int\limits_{0}^{1}\int\limits_{0}^{1}%
\int\limits_{0}^{1}t^{-\frac{1}{2}}u^{-\frac{1}{2}}s^{c+\nu-1}(1-t)^{\nu -%
\frac{1}{2}}(1-u)^{\nu-\frac{1}{2}}(1-s)^{-c-\nu-1} \\
& \times~e^{\frac{-s^{2}-\rho(1-s)^{2}}{s(1-s)}}~_{0}F_{3}(-;\frac{1}{4},%
\frac{3}{4},\frac{1}{2};\frac{-bz^{2}s^{2}u^{2}t}{16(1-s)^{2}})dsdtdu,
\end{align*}
where \ $\func{Re}(c)>0,~\func{Re}(\nu)>-\frac{1}{2}$ and $\func{Re}(s)>0.$
\end{theorem}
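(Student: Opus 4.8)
The plan is to obtain the three-fold formula from the single $[0,1]$-integral representation already established in Corollary 2.3.3,
\[
G_{\nu}^{(b,c)}(z;\rho)=\frac{(\frac{z}{2})^{\nu}}{[\Gamma(\nu+1)]^{2}\Gamma(c)}\int_{0}^{1}s^{c+\nu-1}(1-s)^{-c-\nu-1}e^{\frac{-s^{2}-\rho(1-s)^{2}}{s(1-s)}}\,{}_{0}F_{3}\!\left(-;\nu+1,\frac{\nu+1}{2},\frac{\nu+2}{2};\frac{-bz^{2}s^{2}}{16(1-s)^{2}}\right)ds,
\]
by replacing the factor ${}_{0}F_{3}(-;\nu+1,\frac{\nu+1}{2},\frac{\nu+2}{2};\cdot)$ by a double beta-type integral of a ${}_{0}F_{3}(-;\frac14,\frac34,\frac12;\cdot)$. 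Concretely, the heart of the proof is the auxiliary identity
\[
{}_{0}F_{3}\!\left(-;\nu+1,\frac{\nu+1}{2},\frac{\nu+2}{2};x\right)=\frac{[\Gamma(\nu+1)]^{2}}{\pi[\Gamma(\nu+\frac12)]^{2}}\int_{0}^{1}\!\!\int_{0}^{1}t^{-\frac12}(1-t)^{\nu-\frac12}u^{-\frac12}(1-u)^{\nu-\frac12}\,{}_{0}F_{3}\!\left(-;\frac14,\frac34,\frac12;xu^{2}t\right)dt\,du,
\]
valid for $\func{Re}(\nu)>-\frac12$; applying it with $x=\frac{-bz^{2}s^{2}}{16(1-s)^{2}}$ makes $xu^{2}t=\frac{-bz^{2}s^{2}u^{2}t}{16(1-s)^{2}}$, which is precisely the hypergeometric argument appearing in the assertion.

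To prove the auxiliary identity I would expand the inner ${}_{0}F_{3}$ as its defining power series, interchange summation with the $(t,u)$-integration (legitimate since ${}_{0}F_{3}$ is entire and, for $\func{Re}(\nu)>-\frac12$, each of the beta integrals below converges, so the series of absolute values is finite), and evaluate
\[
\int_{0}^{1}t^{k-\frac12}(1-t)^{\nu-\frac12}\,dt=\frac{\Gamma(k+\frac12)\Gamma(\nu+\frac12)}{\Gamma(k+\nu+1)},\qquad\int_{0}^{1}u^{2k-\frac12}(1-u)^{\nu-\frac12}\,du=\frac{\Gamma(2k+\frac12)\Gamma(\nu+\frac12)}{\Gamma(2k+\nu+1)}.
\]
The right-hand side then becomes a power series in $x$ whose $k$-th coefficient contains $\Gamma(k+\frac12),\Gamma(2k+\frac12),\Gamma(k+\nu+1),\Gamma(2k+\nu+1)$; I would split $\Gamma(2k+\frac12)$ and $\Gamma(2k+\nu+1)$ by the Legendre duplication formula $\Gamma(2z)=\frac{2^{2z-1}}{\sqrt\pi}\Gamma(z)\Gamma(z+\frac12)$, which produces $\Gamma(k+\frac14)\Gamma(k+\frac34)$ (hence the lower parameters $\frac14,\frac34$) and $\Gamma(k+\frac{\nu+1}{2})\Gamma(k+\frac{\nu+2}{2})$ (hence $\frac{\nu+1}{2},\frac{\nu+2}{2}$), respectively. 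Using in addition the standard values $\Gamma(\frac14)\Gamma(\frac34)=\pi\sqrt2$ and $\Gamma(\frac{\nu+1}{2})\Gamma(\frac{\nu+2}{2})=\sqrt\pi\,\Gamma(\nu+1)/2^{\nu}$, all powers of $2$ cancel, the symbols $(\frac14)_{k},(\frac34)_{k},(\frac12)_{k}$ cancel against those in the inner series, the factor $[\Gamma(\nu+\frac12)]^{2}$ cancels the denominator of the prefactor, and what survives is exactly $\sum_{k\ge0}x^{k}/\big(k!\,(\nu+1)_{k}(\frac{\nu+1}{2})_{k}(\frac{\nu+2}{2})_{k}\big)={}_{0}F_{3}(-;\nu+1,\frac{\nu+1}{2},\frac{\nu+2}{2};x)$, as required.

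With the auxiliary identity in hand, I would insert it into the Corollary 2.3.3 representation and interchange the order of the three integrations, which is justified for $\func{Re}(c)>0$ and $\func{Re}(\nu)>-\frac12$ by absolute integrability of the integrand---near $s=1$ the factor $(1-s)^{-c-\nu-1}$ is dominated by the exponential $e^{-s^{2}/[s(1-s)]}$, and near $s=0$ any power singularity is dominated by $e^{-\rho(1-s)^{2}/[s(1-s)]}$, exactly as in Corollary 2.3.3. Collecting the two constants, $\frac{1}{[\Gamma(\nu+1)]^{2}\Gamma(c)}\cdot\frac{[\Gamma(\nu+1)]^{2}}{\pi[\Gamma(\nu+\frac12)]^{2}}=\frac{1}{\Gamma(c)\pi[\Gamma(\nu+\frac12)]^{2}}$, reproduces the stated formula verbatim, and the appearance of the beta integrals is exactly what sharpens the hypothesis to $\func{Re}(\nu)>-\frac12$. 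The main obstacle is the constant-bookkeeping inside the auxiliary identity: the duplication formula has to be applied in two different places and one must verify carefully that the resulting products of Gamma-factors and powers of $2$ collapse to precisely $[\Gamma(\nu+1)]^{2}/(\pi[\Gamma(\nu+\frac12)]^{2})$, so that the ${}_{0}F_{3}$ with the "small" parameters $\frac14,\frac34,\frac12$ emerges with the right normalization.
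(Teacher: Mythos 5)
Your proposal is correct: the auxiliary identity you state does hold (expanding the inner ${}_{0}F_{3}$, integrating termwise, and writing $\Gamma(k+\tfrac12)=\sqrt{\pi}\,(\tfrac12)_{k}$, $\Gamma(2k+\tfrac12)=\sqrt{\pi}\,2^{2k}(\tfrac14)_{k}(\tfrac34)_{k}$, $\Gamma(2k+\nu+1)=\Gamma(\nu+1)2^{2k}(\tfrac{\nu+1}{2})_{k}(\tfrac{\nu+2}{2})_{k}$ makes every factor cancel exactly as you predict), and inserting it into the Corollary following Theorem 2.3 (the $t=\frac{u}{1-u}$ substitution) reproduces the stated triple integral with the right constant $\frac{1}{\Gamma(c)\pi[\Gamma(\nu+\frac12)]^{2}}$ and the sharpened condition $\func{Re}(\nu)>-\frac12$. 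Your route differs from the paper's mainly in organization rather than in substance: the paper works monolithically with the series for $G_{\nu}^{(b,c)}(z;\rho)$, artificially inserting the two Beta integrals $B(k+\tfrac12,\nu+\tfrac12)$ and $B(2k+\tfrac12,\nu+\tfrac12)$, re-deriving the $s$-integral by writing $(c;\rho)_{2k+\nu}$ as the extended-Gamma integral and substituting $y=\frac{s}{1-s}$, and then using Legendre duplication on $\Gamma(4k+1)$ to assemble the parameters $\tfrac14,\tfrac34,\tfrac12$; you instead reuse the already-proved one-fold $[0,1]$ representation for the $s$-variable and isolate the $(t,u)$ part as a standalone, reusable ${}_{0}F_{3}$ reduction lemma. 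The underlying identities (the same two Beta integrals and the same duplication formulas) coincide, but your modular decomposition avoids repeating the generalized-Pochhammer manipulation and makes the constant bookkeeping easier to audit; the paper's direct computation, on the other hand, keeps everything self-contained in a single chain of series rearrangements. One cosmetic remark: the hypothesis $\func{Re}(s)>0$ in the theorem is vacuous (s is an integration variable), so your condition set $\func{Re}(c)>0$, $\func{Re}(\nu)>-\frac12$ (with $\func{Re}(\rho)>0$ implicit) is the operative one.
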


\begin{proof}
Considering series expansion of the unified four parameter Bessel function as%
\begin{equation*}
G_{\nu}^{(b,c)}(z;\rho)=\sum\limits_{k=0}^{\infty}\frac{2^{4k}(-b)^{k}(c;%
\rho)_{2k+\nu}(\frac{z}{2})^{\nu}z^{2k}\Gamma(k+\frac{1}{2})\Gamma (\nu+%
\frac{1}{2})\Gamma(2k+\frac{1}{2})\Gamma(\nu+\frac{1}{2})}{%
2^{4k}2^{2k}k!\Gamma(k+\frac{1}{2})\Gamma(k+\nu+1)\Gamma(\nu+\frac{1}{2}%
)\Gamma(2k+\nu+1)\Gamma(2k+\frac{1}{2})\Gamma(\nu+\frac{1}{2})},
\end{equation*}
then using Beta function 
\begin{equation*}
B(x;y)=\frac{\Gamma(x)\Gamma(y)}{\Gamma(x+y)}
\end{equation*}
and applying duplication formula for the Gamma function \cite{L.G,H.K} 
\begin{equation*}
\sqrt{\pi}\Gamma(2k+1)=2^{2k}k!\Gamma(k+\frac{1}{2}),
\end{equation*}
we have 
\begin{equation*}
G_{\nu}^{(b,c)}(z;\rho)=\sum\limits_{k=0}^{\infty}\frac{2^{4k}(-b)^{k}(c;%
\rho)_{2k+\nu}(\frac{z}{2})^{\nu}z^{2k}B(k+\frac{1}{2};\nu+\frac{1}{2})B(2k+%
\frac{1}{2};\nu+\frac{1}{2})}{2^{4k}\sqrt{\pi}\Gamma(2k+1)\Gamma (\nu+\frac{1%
}{2})\Gamma(2k+\frac{1}{2})\Gamma(\nu+\frac{1}{2})}.
\end{equation*}
Again, using duplication formula, we get%
\begin{equation*}
G_{\nu}^{(b,c)}(z;\rho)=\sum\limits_{k=0}^{\infty}\frac{2^{4k}(-b)^{k}(c;%
\rho)_{2k+\nu}(\frac{z}{2})^{\nu}z^{2k}B(k+\frac{1}{2};\nu+\frac{1}{2})B(2k+%
\frac{1}{2};\nu+\frac{1}{2})}{\sqrt{\pi}\Gamma(4k+1)\sqrt{\pi}[\Gamma(\nu+%
\frac{1}{2})]^{2}}.
\end{equation*}
Now, inserting the Beta functions, 
\begin{align*}
B(k+\frac{1}{2};\nu+\frac{1}{2}) & =\int\limits_{0}^{1}t^{k-\frac{1}{2}%
}(1-t)^{\nu-\frac{1}{2}}dt, \\
B(2k+\frac{1}{2};\nu+\frac{1}{2}) & =\int\limits_{0}^{1}u^{2k-\frac{1}{2}%
}(1-u)^{\nu-\frac{1}{2}}du
\end{align*}
we have%
\begin{align*}
G_{\nu}^{(b,c)}(z;\rho) & =\sum\limits_{k=0}^{\infty}\frac{%
2^{4k}(-b)^{k}(c;\rho)_{2k+\nu}(\frac{z}{2})^{\nu}z^{2k}}{%
\pi\Gamma(4k+1)[\Gamma (\nu+\frac{1}{2})]^{2}}\int\limits_{0}^{1}t^{k-\frac{1%
}{2}}(1-t)^{\nu-\frac {1}{2}}dt\int\limits_{0}^{1}u^{2k-\frac{1}{2}%
}(1-u)^{\nu-\frac{1}{2}}du, \\
& =\sum\limits_{k=0}^{\infty}\frac{(-b)^{k}(c;\rho)_{2k+\nu~}z^{2k}2^{4k}(%
\frac{z}{2})^{\nu}}{\pi\Gamma(4k+1)[\Gamma(\nu+\frac{1}{2})]^{2}}%
\int\limits_{0}^{1}\int\limits_{0}^{1}t^{k-\frac{1}{2}}u^{2k-\frac{1}{2}%
}(1-t)^{\nu-\frac{1}{2}}(1-u)^{\nu-\frac{1}{2}}dtdu, \\
& =\frac{(\frac{z}{2})^{\nu}}{\pi\lbrack\Gamma(\nu+\frac{1}{2})]^{2}}%
\sum\limits_{k=0}^{\infty}\frac{(-b)^{k}(c;\rho)_{2k+\nu~}z^{2k}2^{4k}}{%
\Gamma(4k+1)}\int\limits_{0}^{1}\int\limits_{0}^{1}t^{k-\frac{1}{2}}u^{2k-%
\frac{1}{2}}(1-t)^{\nu-\frac{1}{2}}(1-u)^{\nu-\frac{1}{2}}dtdu.
\end{align*}
Inserting the generalized Pochhammer function%
\begin{equation*}
(c;\rho)_{2k+\nu~}=\frac{1}{\Gamma(c)}\int\limits_{0}^{\infty}y^{c+2k+\nu
-1}e^{-y-\frac{\rho}{y}}dy,
\end{equation*}
we have%
\begin{align*}
G_{\nu}^{(b,c)}(z;\rho) & =\frac{(\frac{z}{2})^{\nu}}{\pi\lbrack\Gamma (\nu+%
\frac{1}{2})]^{2}}\sum\limits_{k=0}^{\infty}\frac{(-b)^{k}z^{2k}2^{4k}}{%
\Gamma(4k+1)}\frac{1}{\Gamma(c)}(\int\limits_{0}^{\infty}y^{c+2k+\nu
-1}e^{-y-\frac{\rho}{y}}dy) \\
& \times\int\limits_{0}^{1}\int\limits_{0}^{1}t^{k-\frac{1}{2}}u^{2k-\frac {1%
}{2}}(1-t)^{\nu-\frac{1}{2}}(1-u)^{\nu-\frac{1}{2}}dtdu.
\end{align*}
Making substitution $y=\frac{s}{1-s}$ in the first integral on the right
side, we have%
\begin{align*}
G_{\nu}^{(b,c)}(z;\rho) & =\frac{(\frac{z}{2})^{\nu}}{\pi\Gamma
(c)[\Gamma(\nu+\frac{1}{2})]^{2}}\sum\limits_{k=0}^{\infty}\frac {%
(-b)^{k}z^{2k}2^{4k}}{\Gamma(4k+1)} \\
& \times\int\limits_{0}^{1}\int\limits_{0}^{1}\int\limits_{0}^{1}t^{k-\frac{1%
}{2}}u^{2k-\frac{1}{2}}s^{c+2k+\nu-1}(1-t)^{\nu-\frac{1}{2}}(1-u)^{\nu-\frac{%
1}{2}}(1-s)^{-c-2k-\nu-1} \\
& \times e^{\dfrac{-s^{2}-\rho(1-s)^{2}}{s(1-s)}}dsdtdu.
\end{align*}
Taking into consideration of the Legendre's duplication formula for the
function $\Gamma(4k+1)$%
\begin{equation*}
\Gamma(4k+1)=\frac{1}{\sqrt{\pi}}\Gamma(2k+\frac{1}{2})2^{4k}\Gamma(2k+1)
\end{equation*}
$~$and applying (1.2) and (1.3), we have%
\begin{equation*}
\Gamma(4k+1)=2^{4k}2^{2k}~(\frac{1}{4})_{k}~(\frac{3}{4})_{k}~2^{2k}(\frac {1%
}{2})_{k}(1)_{k}.
\end{equation*}
Now, replacing the order of summation and integrals, we get%
\begin{align*}
G_{\nu}^{(b,c)}(z;\rho) & =\frac{(\frac{z}{2})^{\nu}}{\pi\Gamma
(c)[\Gamma(\nu+\frac{1}{2})]^{2}}\int\limits_{0}^{1}\int\limits_{0}^{1}\int%
\limits_{0}^{1}\{\sum\limits_{k=0}^{\infty}\frac{1}{(\frac{1}{4})_{k}(\frac{3%
}{4})_{k}(\frac{1}{2})_{k}~k!}(\frac{-bz^{2}s^{2}u^{2}t}{16(1-s)^{2}})^{k}\}
\\
& \times~t^{-\frac{1}{2}}u^{-\frac{1}{2}}s^{c+\nu-1}(1-t)^{\nu-\frac{1}{2}%
}(1-u)^{\nu-\frac{1}{2}}(1-s)^{-c-\nu-1}~e^{\dfrac{-s^{2}-\rho(1-s)^{2}}{%
s(1-s)}}dsdtdu.
\end{align*}
Using hypergeometric series expansion, the result is obtained%
\begin{align*}
G_{\nu}^{(b,c)}(z;\rho) & =\frac{(\frac{z}{2})^{\nu}}{\Gamma(c)\pi
\lbrack\Gamma(\nu+\frac{1}{2})]^{2}}\int\limits_{0}^{1}\int\limits_{0}^{1}%
\int\limits_{0}^{1}t^{-\frac{1}{2}}u^{-\frac{1}{2}}s^{c+\nu-1}(1-t)^{\nu -%
\frac{1}{2}}(1-u)^{\nu-\frac{1}{2}}(1-s)^{-c-\nu-1} \\
& \times~e^{\frac{-s^{2}-\rho(1-s)^{2}}{s(1-s)}}~_{0}F_{3}(-;\frac{1}{4},%
\frac{3}{4},\frac{1}{2};\frac{-bz^{2}s^{2}u^{2}t}{16(1-s)^{2}})dsdtdu.
\end{align*}
\end{proof}

Letting $b=1$ and $b=-1$ in Theorem 5.1, the following triple integral
representations for the generalized three parameter Bessel and modified
Bessel functions of the first kind are obtained:

\begin{corollary}
For the generalized three parameter Bessel and modified Bessel functions of
the first kind, we have%
\begin{align*}
J_{\nu}^{(c)}(z;\rho) & =\frac{(\frac{z}{2})^{\nu}}{\Gamma(c)\pi
\lbrack\Gamma(\nu+\frac{1}{2})]^{2}}\int\limits_{0}^{1}\int\limits_{0}^{1}%
\int\limits_{0}^{1}t^{-\frac{1}{2}}u^{-\frac{1}{2}}s^{c+\nu-1}(1-t)^{\nu -%
\frac{1}{2}}(1-u)^{\nu-\frac{1}{2}}(1-s)^{-c-\nu-1} \\
& \times e^{\frac{-s^{2}-\rho(1-s)^{2}}{s(1-s)}}~_{0}F_{3}(-;\frac{1}{4},%
\frac{3}{4},\frac{1}{2};\frac{-z^{2}s^{2}u^{2}t}{16(1-s)^{2}})dsdtdu, \\
I_{\nu}^{(c)}(z;\rho) & =\frac{(\frac{z}{2})^{\nu}}{\Gamma(c)\pi
\lbrack\Gamma(\nu+\frac{1}{2})]^{2}}\int\limits_{0}^{1}\int\limits_{0}^{1}%
\int\limits_{0}^{1}t^{-\frac{1}{2}}u^{-\frac{1}{2}}s^{c+\nu-1}(1-t)^{\nu -%
\frac{1}{2}}(1-u)^{\nu-\frac{1}{2}}(1-s)^{-c-\nu-1} \\
& \times~e^{\frac{-s^{2}-\rho(1-s)^{2}}{s(1-s)}}~_{0}F_{3}(-;\frac{1}{4},%
\frac{3}{4},\frac{1}{2};\frac{z^{2}s^{2}u^{2}t}{16(1-s)^{2}})dsdtdu,
\end{align*}
where $\func{Re}(c)>0$ and $\func{Re}(\nu)>-\frac{1}{2}.$
\end{corollary}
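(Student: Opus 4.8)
The plan is to obtain this corollary directly by specialization of \thmref{} (Theorem~5.1), exploiting the fact that the generalized three parameter Bessel and modified Bessel functions of the first kind are, by their very definitions, the cases $b=1$ and $b=-1$ of the unified four parameter Bessel function; that is, $J_{\nu}^{(c)}(z;\rho)=G_{\nu}^{(1,c)}(z;\rho)$ and $I_{\nu}^{(c)}(z;\rho)=G_{\nu}^{(-1,c)}(z;\rho)$. Since the three-fold integral formula of Theorem~5.1 holds for every admissible value of $b$ under the hypotheses $\func{Re}(c)>0$ and $\func{Re}(\nu)>-\tfrac12$, and since these constraints do not involve $b$, they are inherited verbatim by both special cases.

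Concretely, I would first set $b=1$ in the displayed formula of Theorem~5.1. The only place $b$ appears is inside the argument of the hypergeometric factor $_{0}F_{3}\!\left(-;\tfrac14,\tfrac34,\tfrac12;\tfrac{-bz^{2}s^{2}u^{2}t}{16(1-s)^{2}}\right)$, which becomes $_{0}F_{3}\!\left(-;\tfrac14,\tfrac34,\tfrac12;\tfrac{-z^{2}s^{2}u^{2}t}{16(1-s)^{2}}\right)$; the exponential factor $e^{(-s^{2}-\rho(1-s)^{2})/(s(1-s))}$, the algebraic weights $t^{-1/2}u^{-1/2}s^{c+\nu-1}(1-t)^{\nu-1/2}(1-u)^{\nu-1/2}(1-s)^{-c-\nu-1}$, and the prefactor $\tfrac{(z/2)^{\nu}}{\Gamma(c)\pi[\Gamma(\nu+1/2)]^{2}}$ are untouched. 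This yields the first asserted identity for $J_{\nu}^{(c)}(z;\rho)$. Then I would set $b=-1$, so that $-b=1$ and the hypergeometric argument turns into $\tfrac{+z^{2}s^{2}u^{2}t}{16(1-s)^{2}}$, giving the second identity for $I_{\nu}^{(c)}(z;\rho)$. No interchange of sums and integrals or analytic continuation beyond what Theorem~5.1 already supplies is needed, so there is no substantive obstacle; the only point deserving a remark is the sign flip in the $_{0}F_{3}$ argument between the two cases, exactly mirroring the sign relationship between the series defining $J_{\nu}^{(c)}$ and $I_{\nu}^{(c)}$.
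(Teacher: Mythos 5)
Your proposal is correct and coincides with the paper's own argument: the corollary is obtained by the direct substitutions $b=1$ and $b=-1$ in Theorem~5.1, using the definitions $J_{\nu}^{(c)}(z;\rho)=G_{\nu}^{(1,c)}(z;\rho)$ and $I_{\nu}^{(c)}(z;\rho)=G_{\nu}^{(-1,c)}(z;\rho)$, with the sign of the $_{0}F_{3}$ argument flipping accordingly. Your observation that the hypotheses $\func{Re}(c)>0$ and $\func{Re}(\nu)>-\tfrac{1}{2}$ are inherited unchanged since they do not involve $b$ is exactly what the paper implicitly uses.
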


Taking $b=1,~c=1~$and $\rho =0$ and $b=-1,~c=1~$and $\rho =0$ in Corollary
5.1.1, the integral representations of the usual Bessel and modified Bessel
functions are given in the following Corollary.

\begin{corollary}
Triple integral formulas satisfied by the usual Bessel and modified Bessel
functions are given by%
\begin{align*}
J_{\nu}(z) & =\frac{(\frac{z}{2})^{\nu}}{\pi\lbrack\Gamma(\nu+\frac{1}{2}%
)]^{2}}\int\limits_{0}^{1}\int\limits_{0}^{1}\int\limits_{0}^{1}t^{-\frac{1}{%
2}}u^{-\frac{1}{2}}s^{\nu}(1-t)^{\nu-\frac{1}{2}}(1-u)^{\nu -\frac{1}{2}%
}(1-s)^{-\nu-2} \\
& \times e^{\frac{-s}{(1-s)}}~_{0}F_{3}(-;\frac{1}{4},\frac{3}{4},\frac{1}{2}%
;\frac{-z^{2}s^{2}u^{2}t}{16(1-s)^{2}})dsdtdu, \\
I_{\nu}(z) & =\frac{(\frac{z}{2})^{\nu}}{\pi\lbrack\Gamma(\nu+\frac{1}{2}%
)]^{2}}\int\limits_{0}^{1}\int\limits_{0}^{1}\int\limits_{0}^{1}t^{-\frac{1}{%
2}}u^{-\frac{1}{2}}s^{\nu}(1-t)^{\nu-\frac{1}{2}}(1-u)^{\nu -\frac{1}{2}%
}(1-s)^{-\nu-2} \\
& \times~e^{\frac{-s}{(1-s)}}~_{0}F_{3}(-;\frac{1}{4},\frac{3}{4},\frac{1}{2}%
;\frac{z^{2}s^{2}u^{2}t}{16(1-s)^{2}})dsdtdu,
\end{align*}
where $\func{Re}(\nu)>-\frac{1}{2}.$
\end{corollary}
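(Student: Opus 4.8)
The plan is to derive this Corollary by straightforward specialization of Corollary 5.1.1. By the definitions in Section 1 one has $G_{\nu}^{(1,c)}(z;\rho)=J_{\nu}^{(c)}(z;\rho)$ and $G_{\nu}^{(-1,c)}(z;\rho)=I_{\nu}^{(c)}(z;\rho)$, and furthermore $J_{\nu}^{(c)}(z;\rho)$ reduces to the classical $J_{\nu}(z)$ and $I_{\nu}^{(c)}(z;\rho)$ to $I_{\nu}(z)$ once $c=1$ and $\rho=0$ (because $(1;0)_{2k+\nu}=(1)_{2k+\nu}=\Gamma(2k+\nu+1)$ cancels the superfluous Gamma factor in the defining series (1.8)). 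Hence it suffices to set $b=1,\ c=1,\ \rho=0$ in the first identity of Corollary 5.1.1 and $b=-1,\ c=1,\ \rho=0$ in the second.

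The remaining work is to track the elementary factors. With $c=1$ we get $\Gamma(c)=\Gamma(1)=1$, so the prefactor becomes $(\tfrac{z}{2})^{\nu}/(\pi[\Gamma(\nu+\tfrac12)]^{2})$; the factor $s^{c+\nu-1}$ collapses to $s^{\nu}$ and $(1-s)^{-c-\nu-1}$ collapses to $(1-s)^{-\nu-2}$. Setting $\rho=0$ in the exponent $\tfrac{-s^{2}-\rho(1-s)^{2}}{s(1-s)}$ leaves $\tfrac{-s^{2}}{s(1-s)}=\tfrac{-s}{1-s}$, so the exponential becomes $e^{-s/(1-s)}$. The factors $t^{-1/2}u^{-1/2}(1-t)^{\nu-1/2}(1-u)^{\nu-1/2}$ are untouched, and the hypergeometric factor becomes $_{0}F_{3}(-;\tfrac14,\tfrac34,\tfrac12;-z^{2}s^{2}u^{2}t/(16(1-s)^{2}))$ for $b=1$ (yielding $J_{\nu}$) and $_{0}F_{3}(-;\tfrac14,\tfrac34,\tfrac12;z^{2}s^{2}u^{2}t/(16(1-s)^{2}))$ for $b=-1$ (yielding $I_{\nu}$), matching the sign of $-b$ in the argument in Theorem 5.1. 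Assembling these pieces reproduces exactly the two displayed triple integrals, and the hypothesis $\func{Re}(\nu)>-\tfrac12$ is inherited from Corollary 5.1.1 while $\func{Re}(c)>0$ holds trivially for $c=1$.

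There is essentially no obstacle; the one point deserving a remark is the legitimacy of taking $\rho=0$, since the proof of Theorem 5.1 (hence of Corollary 5.1.1) used the integral form $(c;\rho)_{2k+\nu}=\tfrac{1}{\Gamma(c)}\int_{0}^{\infty}y^{c+2k+\nu-1}e^{-y-\rho/y}\,dy$, which is introduced for $\func{Re}(\rho)>0$. At $\rho=0$ the substitution $y=s/(1-s)$ turns the inner integral into $\int_{0}^{1}s^{c+2k+\nu-1}(1-s)^{-c-2k-\nu-1}e^{-s/(1-s)}\,ds=\Gamma(c+2k+\nu)=(c)_{2k+\nu}\Gamma(c)$, so every step of that proof goes through verbatim with $\rho=0$; alternatively, one may simply invoke continuity in $\rho$ of both sides on the closed half-plane $\func{Re}(\rho)\ge0$. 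With that observation the Corollary follows immediately.
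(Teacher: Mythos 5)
Your proposal is correct and matches the paper's own route: the paper obtains this Corollary precisely by setting $b=1,\,c=1,\,\rho=0$ and $b=-1,\,c=1,\,\rho=0$ in Corollary 5.1.1 and simplifying the elementary factors exactly as you do. Your added remark justifying the $\rho=0$ specialization (via $(1;0)_{2k+\nu}=(1)_{2k+\nu}=\Gamma(2k+\nu+1)$ and the substitution $y=s/(1-s)$) is a harmless extra care step that the paper leaves implicit.
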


\section{Generalized Four Parameter Spherical Bessel and Bessel-Clifford
Functions}

In this Section, generalized four parameter spherical Bessel and
Bessel-Clifford functions are defined by 
\begin{align*}
g_{\nu }^{(b,c)}(z;\rho )& :=\sqrt{\frac{\pi }{2z}}~G_{\nu +\frac{1}{2}%
}^{(b,c-\frac{1}{2})}(z;\rho ) \\
(\func{Re}(\nu )& >-\frac{3}{2},~\func{Re}(c)>\frac{1}{2},~\func{Re}(\rho
)>0), \\
C_{\nu }^{(b,\lambda )}(z;\rho )& :=z^{-\frac{\nu }{2}}G_{\nu }^{(b,\lambda
)}(2\sqrt{z};\rho ) \\
(\func{Re}(\nu )& >-1,~\func{Re}(\lambda )>0,~\func{Re}(\rho )>0),
\end{align*}%
respectively. Taking into consideration of the definition of the unified
four parameter Bessel function, the mentioned generalized functions can be
written as%
\begin{align*}
g_{\nu }^{(b,c)}(z;\rho )& =\frac{\sqrt{\pi }}{2}~\sum\limits_{k=0}^{\infty }%
\frac{(-b)^{k}(c-\frac{1}{2};\rho )_{2k+\nu +\frac{1}{2}}~}{k!~\Gamma (\nu
+k+\frac{3}{2})~\Gamma (\nu +2k+\frac{3}{2})}(\frac{z}{2})^{2k+\nu }, \\
(\func{Re}(\nu )& >-\frac{3}{2},~\func{Re}(c)>\frac{1}{2},~\func{Re}(\rho
)>0), \\
C_{\nu }^{(b,\lambda )}(z;\rho )& =\sum\limits_{k=0}^{\infty }\frac{%
(-b)^{k}(\lambda ;\rho )_{2k+\nu }~}{k!~\Gamma (\nu +k+1)~\Gamma (\nu +2k+1)}%
(\sqrt{z})^{2k}, \\
(\func{Re}(\nu )& >-1,~\func{Re}(\lambda )>0,~\func{Re}(\rho )>0).
\end{align*}%
Taking $b=1,c=\frac{3}{2}~$and $\rho =0,$ $g_{\nu }^{(b,c)}(z;\rho )$ is
reduced to usual spherical Bessel function of the first kind $j_{\nu }(z).~$%
Letting~$b=-1,~\lambda =1$~and $\rho =0,$~$C_{\nu }^{(b,\lambda )}(z;\rho )$%
~is reduced to usual Bessel-Clifford function of the first kind $C_{\nu
}(z).~$Since the proofs of the corresponding Theorems for the generalized
four parameter spherical Bessel and Bessel-Clifford functions are similar
with the unified four parameter Bessel function, details are omitted. Note
that using the monomiality principle, multiplicative and derivative
operators of the usual Bessel-Clifford function were introduced and studied
in \cite{I.P, G.D, H.M}.

\begin{lemma}
Let $\nu\in%
\mathbb{Z}
.~$The relationship between $C_{-\nu}^{(b,\lambda)}(z;\rho)$ and $C_{\nu
}^{(b,\lambda)}(z;\rho)$ is given by 
\begin{equation*}
C_{-\nu}^{(b,\lambda)}(z;\rho)=(-b)^{\nu}z^{\nu}C_{\nu}^{(b,\lambda)}(z;%
\rho).
\end{equation*}
\end{lemma}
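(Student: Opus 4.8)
The plan is to reduce this directly to \lemref{lem} (Lemma~2.1, the relation $G_{-\nu}^{(b,c)}(z;\rho)=(-b)^{\nu}G_{\nu}^{(b,c)}(z;\rho)$ for integer $\nu$), since the generalized four parameter Bessel--Clifford function is, by definition, nothing but the unified four parameter Bessel function after the substitution $z\mapsto 2\sqrt{z}$ together with the prefactor $z^{-\nu/2}$. So the only real content is tracking how that prefactor transforms when $\nu$ is replaced by $-\nu$.

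First I would write down the defining relation
\[
C_{\nu}^{(b,\lambda)}(z;\rho):=z^{-\frac{\nu}{2}}\,G_{\nu}^{(b,\lambda)}(2\sqrt{z};\rho),
\]
and replace $\nu$ by $-\nu$ to obtain
\[
C_{-\nu}^{(b,\lambda)}(z;\rho)=z^{\frac{\nu}{2}}\,G_{-\nu}^{(b,\lambda)}(2\sqrt{z};\rho).
\]
Next I would invoke \lemref{lem} with $c=\lambda$ and with the argument $2\sqrt{z}$ in place of $z$, which is legitimate because $\lemref{lem}$ holds for all finite arguments and here $\nu\in\mathbb{Z}$; this gives $G_{-\nu}^{(b,\lambda)}(2\sqrt{z};\rho)=(-b)^{\nu}G_{\nu}^{(b,\lambda)}(2\sqrt{z};\rho)$.

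Finally I would substitute this into the previous display and re-balance the powers of $z$, writing $z^{\nu/2}=z^{\nu}\cdot z^{-\nu/2}$, so that
\[
C_{-\nu}^{(b,\lambda)}(z;\rho)=(-b)^{\nu}z^{\nu}\Bigl(z^{-\frac{\nu}{2}}G_{\nu}^{(b,\lambda)}(2\sqrt{z};\rho)\Bigr)=(-b)^{\nu}z^{\nu}C_{\nu}^{(b,\lambda)}(z;\rho),
\]
which is the claimed identity. I expect no genuine obstacle here: the argument is a one-line consequence of \lemref{lem}, and the only thing to be careful about is the exact exponent of $z$ produced by sending $\nu\to-\nu$ in the factor $z^{-\nu/2}$ — getting $z^{+\nu/2}$ rather than, say, $z^{-\nu/2}$ — which accounts for the extra $z^{\nu}$ (as opposed to the plain $(-b)^{\nu}$ appearing in $\lemref{lem}$ and in the spherical Bessel analogue).
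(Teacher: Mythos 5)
Your proposal is correct and follows exactly the route the paper intends: the paper's proof of this lemma is the single sentence ``It can be directly seen from Lemma 2.1 in Section 2,'' and you have simply written out that reduction explicitly, with the power-of-$z$ bookkeeping ($z^{\nu/2}=z^{\nu}\cdot z^{-\nu/2}$) handled correctly so that the extra factor $z^{\nu}$ appears as claimed.
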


\begin{proof}
It can be directly seen from Lemma 2.1 in Section 2.
\end{proof}

\begin{corollary}
Let $\nu\in%
\mathbb{Z}
.~$The relation between $C_{-\nu}(z)$ and $C_{\nu}(z)$ is given by 
\begin{equation*}
C_{-\nu}(z)=z^{\nu}C_{\nu}(z).
\end{equation*}
\end{corollary}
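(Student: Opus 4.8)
The plan is to obtain this as the classical specialization of Lemma~6.1, exactly as Corollaries~2.1.1 and~2.1.2 were obtained from Lemma~2.1. First I would recall Lemma~6.1, which asserts that for $\nu\in\mathbb{Z}$ one has
\[
C_{-\nu}^{(b,\lambda)}(z;\rho)=(-b)^{\nu}z^{\nu}C_{\nu}^{(b,\lambda)}(z;\rho).
\]
Since the statement to be proved concerns $\nu\in\mathbb{Z}$, this hypothesis is met and the identity is available verbatim.

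Next I would substitute the parameter values $b=-1$, $\lambda=1$, $\rho=0$. As noted in the text immediately preceding Lemma~6.1, under precisely these choices the generalized four parameter Bessel--Clifford function $C_{\nu}^{(b,\lambda)}(z;\rho)$ reduces to the usual Bessel--Clifford function $C_{\nu}(z)$ of the first kind; the same reduction applies with $\nu$ replaced by $-\nu$, giving $C_{-\nu}^{(-1,1)}(z;0)=C_{-\nu}(z)$. It remains only to evaluate the prefactor: with $b=-1$ we have $(-b)^{\nu}=(1)^{\nu}=1$ for every integer $\nu$, so the factor $(-b)^{\nu}$ disappears and Lemma~6.1 collapses to $C_{-\nu}(z)=z^{\nu}C_{\nu}(z)$, which is the claim.

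There is essentially no obstacle here: the only points requiring care are verifying that the constraint $\nu\in\mathbb{Z}$ is exactly the hypothesis under which Lemma~6.1 was established, and confirming that the three specializations $b=-1$, $\lambda=1$, $\rho=0$ do simultaneously collapse both sides of the generalized identity to their classical counterparts (which follows from the series definition of $C_{\nu}^{(b,\lambda)}(z;\rho)$ together with $(1;0)_{m}=(1)_{m}=m!$ and the known series for $C_{\nu}$). Once these routine checks are in place, the corollary is immediate, so the write-up can be a single short paragraph that cites Lemma~6.1 and performs the substitution.
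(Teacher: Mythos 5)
Your proposal is correct and follows exactly the route the paper intends: the corollary is the specialization $b=-1$, $\lambda=1$, $\rho=0$ of Lemma~6.1, with $(-b)^{\nu}=1^{\nu}=1$ for integer $\nu$, and your check that these values reduce $C_{\nu}^{(b,\lambda)}(z;\rho)$ to the classical $C_{\nu}(z)$ is the only verification needed. Nothing is missing.
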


\begin{theorem}
Let $n\in%
\mathbb{Z}
.~$For $t\neq0~$and for all finite $z,~$generating functions of the
generalized four parameter spherical Bessel and Bessel-Clifford functions
are given by%
\begin{align*}
\sum\limits_{n=-\infty}^{\infty}g_{n-\frac{1}{2}}^{(b,c+\frac{1}{2}%
)}(z;\rho)t^{n} & =\sqrt{\frac{\pi}{2z}}~_{1}F_{1}((c;\rho),1;(t-\frac{b}{t})%
\frac{z}{2}), \\
\sum\limits_{n=-\infty}^{\infty}C_{n}^{(b,\lambda)}(z;\rho)t^{n} &
=~_{1}F_{1}((\lambda,\rho),1;t-\frac{bz}{t}).
\end{align*}
\end{theorem}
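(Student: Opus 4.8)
The plan is to deduce both identities directly from Theorem 2.2 — the generating function for $G_\nu^{(b,c)}(z;\rho)$ — by simply unwinding the definitions of the spherical Bessel and Bessel-Clifford families. No new analytic input is required, since the interchange of summations is already built into Theorem 2.2; the whole argument is a change of variables.

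For the first identity, recall $g_\nu^{(b,c)}(z;\rho)=\sqrt{\pi/(2z)}\,G_{\nu+1/2}^{(b,c-1/2)}(z;\rho)$. Replacing $\nu$ by $n-\tfrac12$ and $c$ by $c+\tfrac12$, the two half-integer shifts cancel and one obtains $g_{n-1/2}^{(b,c+1/2)}(z;\rho)=\sqrt{\pi/(2z)}\,G_n^{(b,c)}(z;\rho)$. Since the prefactor $\sqrt{\pi/(2z)}$ is independent of $n$, I would pull it out of the sum $\sum_{n\in\mathbb Z}g_{n-1/2}^{(b,c+1/2)}(z;\rho)t^n$ and apply Theorem 2.2 to the remaining series $\sum_{n\in\mathbb Z}G_n^{(b,c)}(z;\rho)t^n$, which is exactly ${}_1F_1((c;\rho),1;(t-\tfrac bt)\tfrac z2)$. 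This yields the first formula.

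For the second identity, use $C_\nu^{(b,\lambda)}(z;\rho)=z^{-\nu/2}\,G_\nu^{(b,\lambda)}(2\sqrt z;\rho)$, so that $\sum_{n\in\mathbb Z}C_n^{(b,\lambda)}(z;\rho)t^n=\sum_{n\in\mathbb Z}G_n^{(b,\lambda)}(2\sqrt z;\rho)\bigl(tz^{-1/2}\bigr)^n$. Now apply Theorem 2.2 with the simultaneous substitutions $z\mapsto 2\sqrt z$ and $t\mapsto tz^{-1/2}$: the right-hand side becomes ${}_1F_1\bigl((\lambda;\rho),1;(tz^{-1/2}-bz^{1/2}/t)\sqrt z\bigr)$, and the argument collapses via $(tz^{-1/2}-bz^{1/2}/t)\sqrt z=t-bz/t$ to give ${}_1F_1((\lambda;\rho),1;t-bz/t)$, which is the second formula.

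The only step requiring genuine care is the pair of simultaneous substitutions in the Bessel-Clifford case — both in the spatial variable $z$ and in the formal parameter $t$ — together with the elementary simplification of the ${}_1F_1$ argument; everything else is a verbatim appeal to Theorem 2.2. One should also record that the hypotheses $t\neq 0$ and $z$ finite (with $\sqrt z$ and $z^{-1/2}$ taken on a fixed branch in the Bessel-Clifford statement) are inherited unchanged from Theorem 2.2, and that the substitution $t\mapsto tz^{-1/2}$ preserves the condition $t\neq 0$.
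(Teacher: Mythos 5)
Your argument is correct. The first identity is proved exactly as in the paper: substitute $\nu=n-\tfrac12$, $c\mapsto c+\tfrac12$ in the definition of $g$, note the half-integer shifts cancel so that $g_{n-\frac12}^{(b,c+\frac12)}(z;\rho)=\sqrt{\pi/(2z)}\,G_n^{(b,c)}(z;\rho)$, pull the prefactor out of the bilateral sum and invoke Theorem 2.2. For the second identity the paper does not actually carry out the computation; it merely remarks that the Bessel--Clifford case ``can be shown as a similar process'' to Theorem 2.2, i.e.\ by repeating the series decomposition, reindexing and Rainville's double-sum lemma for $C_\nu^{(b,\lambda)}$. Your route is different and tidier: you absorb $z^{-n/2}$ into the formal variable, writing $\sum_n C_n^{(b,\lambda)}(z;\rho)t^n=\sum_n G_n^{(b,\lambda)}(2\sqrt z;\rho)(tz^{-1/2})^n$, and then apply Theorem 2.2 with $z\mapsto 2\sqrt z$, $t\mapsto tz^{-1/2}$, the argument of the ${}_1F_1$ collapsing to $t-bz/t$. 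This buys you the result with no new series manipulation, at the small price of the side conditions you already note (a fixed branch of $\sqrt z$ and $z\neq 0$ so that $tz^{-1/2}$ is a legitimate nonzero parameter; the excluded value $z=0$ is trivially checked directly). One should also observe, as you implicitly do, that the relation $C_n^{(b,\lambda)}(z;\rho)=z^{-n/2}G_n^{(b,\lambda)}(2\sqrt z;\rho)$ persists for negative integers $n$, which is consistent by Lemma 2.1 together with Lemma 6.1, so the bilateral sums match term by term.
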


\begin{proof}
To prove the generating function for the generalized four parameter
spherical Bessel function, we take into consideration of the following
relation 
\begin{equation*}
g_{n}^{(b,c)}(z;\rho)=\sqrt{\frac{\pi}{2z}}~G_{n+\frac{1}{2}}^{(b,c-\frac {1%
}{2})}(z;\rho).
\end{equation*}
Substituting $n-\frac{1}{2}$ for $n$ and $c+\frac{1}{2}$ for $c,$ we have%
\begin{equation*}
g_{n-\frac{1}{2}}^{(b,c+\frac{1}{2})}(z;\rho)=\sqrt{\frac{\pi}{2z}}%
G_{n}^{(b,c)}(z;\rho).
\end{equation*}
Taking summations on both sides of the equality yields%
\begin{equation*}
\sum\limits_{n=-\infty}^{\infty}g_{n-\frac{1}{2}}^{(b,c+\frac{1}{2}%
)}(z;\rho)t^{n}=\sqrt{\frac{\pi}{2z}}\sum\limits_{n=-\infty}^{%
\infty}G_{n}^{(b,c)}(z;\rho)t^{n}.
\end{equation*}
By Theorem 2.2 
\begin{equation*}
\sum\limits_{n=-\infty}^{\infty}g_{n-\frac{1}{2}}^{(b,c+\frac{1}{2}%
)}(z;\rho)t^{n}=\sqrt{\frac{\pi}{2z}}~_{1}F_{1}((c;\rho),1;(t-\frac{b}{t})%
\frac{z}{2}),
\end{equation*}
whence the result. Generating function for the generalized four parameter
Bessel-Clifford function can be shown as a similar process that is applied
in Theorem 2.2 in Section 2.
\end{proof}

\begin{corollary}
\cite{I.P, A.G} Let $n\in 
\mathbb{Z}
.~$For $t\neq 0~$and for all finite $z,~$generating functions of the usual
spherical Bessel and Bessel-Clifford functions are given by%
\begin{align*}
\sum\limits_{n=-\infty }^{\infty }j_{n-\frac{1}{2}}(z)t^{n}& =\sqrt{\frac{%
\pi }{2z}}~e^{(t-\frac{1}{t})\frac{z}{2}}, \\
\sum\limits_{n=-\infty }^{\infty }C_{n}(z)t^{n}& =e^{t+\frac{z}{t}}.
\end{align*}
\end{corollary}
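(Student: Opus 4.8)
The plan is to deduce both identities directly from Theorem~6.1 by specializing the parameters $b$, $c$ (resp.\ $\lambda$) and $\rho$, exactly as Corollary~2.2.2 was obtained from Corollary~2.2.1. The two ingredients are the parameter reductions recorded in Section~6, namely $g_{\nu}^{(1,3/2)}(z;0)=j_{\nu}(z)$ and $C_{\nu}^{(-1,1)}(z;0)=C_{\nu}(z)$, together with the elementary fact that the generalized confluent hypergeometric function collapses to an exponential when its numerator and denominator parameters coincide and $\rho=0$: since $(1;0)_{m}=(1)_{m}$, one has
\[
{}_1F_1\bigl((1;0),1;x\bigr)=\sum_{m=0}^{\infty}\frac{(1)_{m}}{(1)_{m}}\frac{x^{m}}{m!}=e^{x}.
\]

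For the spherical Bessel function I would take the first identity of Theorem~6.1,
\[
\sum_{n=-\infty}^{\infty} g_{n-\frac12}^{(b,c+\frac12)}(z;\rho)\,t^{n}=\sqrt{\frac{\pi}{2z}}\;{}_1F_1\!\left((c;\rho),1;\left(t-\frac{b}{t}\right)\frac{z}{2}\right),
\]
and set $b=1$, $\rho=0$, $c=1$, so that the Bessel index parameter $c+\frac12$ becomes $\frac32$. The summand on the left then equals $g_{n-\frac12}^{(1,3/2)}(z;0)=j_{n-\frac12}(z)$, and the right-hand side becomes $\sqrt{\pi/(2z)}\,{}_1F_1((1;0),1;(t-\frac1t)\frac z2)=\sqrt{\pi/(2z)}\,e^{(t-1/t)z/2}$ by the collapse above with $x=(t-\frac1t)\frac z2$. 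This gives the first formula.

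For the Bessel--Clifford function I would use the second identity of Theorem~6.1,
\[
\sum_{n=-\infty}^{\infty} C_{n}^{(b,\lambda)}(z;\rho)\,t^{n}={}_1F_1\!\left((\lambda,\rho),1;t-\frac{bz}{t}\right),
\]
and put $b=-1$, $\lambda=1$, $\rho=0$. Then $C_{n}^{(-1,1)}(z;0)=C_{n}(z)$ and, again by the collapse with $x=t+\frac zt$, the right-hand side is $e^{t+z/t}$, which is the second formula. There is essentially no obstacle in this argument; the one small point that needs attention is the half-integer bookkeeping in the spherical case — the parameter enters Theorem~6.1 as $c+\frac12$ on $g$ but as $c$ inside ${}_1F_1$, which forces the choice $c=1$ (not $c=\frac32$) in order to simultaneously hit the spherical reduction and trigger the exponential collapse.
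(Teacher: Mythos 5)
Your proposal is correct and follows exactly the route the paper intends: the corollary is the specialization $b=1,\ c=1,\ \rho=0$ (resp.\ $b=-1,\ \lambda=1,\ \rho=0$) of the generating functions in the theorem for $g_{n-\frac12}^{(b,c+\frac12)}$ and $C_{n}^{(b,\lambda)}$, combined with the collapse $_{1}F_{1}((1;0),1;x)=e^{x}$ since $(1;0)_{m}=(1)_{m}$. Your remark about choosing $c=1$ rather than $c=\tfrac32$ because the theorem carries $c+\tfrac12$ on $g$ but $c$ inside the $_{1}F_{1}$ is precisely the right bookkeeping.
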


\begin{theorem}
Integral representations satisfied by the generalized four parameter
spherical Bessel and Bessel-Clifford functions are given by 
\begin{align*}
g_{\nu}^{(b,c)}(z;\rho) & =\frac{\sqrt{\pi}(\frac{z}{2})^{\nu}}{2[\Gamma
(\nu+\frac{3}{2})]^{2}\Gamma(c-\frac{1}{2})}\int\limits_{0}^{\infty}t^{c+%
\nu-1}e^{-t-\frac{\rho}{t}}~_{0}F_{3}(-;\frac{2\nu+3}{2},\frac{2\nu+3}{4},%
\frac{2\nu+5}{4};\frac{-bz^{2}t^{2}}{16})dt,~ \\
(\func{Re}(\nu) & >-\frac{3}{2},~\func{Re}(c)>\frac{1}{2}) \\
C_{\nu}^{(b,\lambda)}(z;\rho) & =\frac{1}{[\Gamma(\nu+1)]^{2}\Gamma (\lambda)%
}\int\limits_{0}^{\infty}t^{\lambda+\nu-1}e^{-t-\frac{\rho}{t}%
}~_{0}F_{3}(-;\nu+1,\frac{\nu+1}{2},\frac{\nu+2}{2};\frac{-bzt^{2}}{4})dt, \\
(\func{Re}(\nu) & >-1,~\func{Re}(\lambda)>0)
\end{align*}
respectively$.$
\end{theorem}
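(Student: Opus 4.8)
The plan is to obtain both identities as immediate specializations of the integral representation for the unified four parameter Bessel function in Theorem 2.3, since by definition each of $g_{\nu}^{(b,c)}$ and $C_{\nu}^{(b,\lambda)}$ is a constant multiple of $G$ with shifted parameters (respectively $\nu\mapsto\nu+\tfrac12$, $c\mapsto c-\tfrac12$ for $g$, and $z\mapsto 2\sqrt z$, $c\mapsto\lambda$ for $C$).

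For the spherical Bessel function I would begin from Theorem 2.3 with $\nu$ replaced by $\nu+\tfrac12$ and $c$ replaced by $c-\tfrac12$, which gives an integral representation for $G_{\nu+1/2}^{(b,c-1/2)}(z;\rho)$. The exponent of $t$ in the integrand is then $(c-\tfrac12)+(\nu+\tfrac12)-1=c+\nu-1$, and the three lower parameters of the ${}_0F_3$ become $\nu+\tfrac32=\tfrac{2\nu+3}{2}$, $\tfrac{\nu+3/2}{2}=\tfrac{2\nu+3}{4}$ and $\tfrac{\nu+5/2}{2}=\tfrac{2\nu+5}{4}$, while the argument $\tfrac{-bz^2t^2}{16}$ is unchanged. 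Multiplying through by $\sqrt{\pi/(2z)}$ and using the simplification $\sqrt{\pi/(2z)}\,(z/2)^{\nu+1/2}=\tfrac{\sqrt\pi}{2}(z/2)^{\nu}$, together with the denominator $[\Gamma(\nu+\tfrac32)]^{2}\,\Gamma(c-\tfrac12)$ inherited from Theorem 2.3, produces exactly the stated formula for $g_{\nu}^{(b,c)}(z;\rho)$.

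For the Bessel-Clifford function I would instead apply Theorem 2.3 with $z$ replaced by $2\sqrt z$, $c$ replaced by $\lambda$, and $\nu$ left unchanged. Then $(z/2)^{\nu}$ becomes $(\sqrt z)^{\nu}=z^{\nu/2}$, the exponent of $t$ is $\lambda+\nu-1$, and the argument of the ${}_0F_3$ becomes $\tfrac{-b(2\sqrt z)^{2}t^{2}}{16}=\tfrac{-bzt^{2}}{4}$, the lower parameters remaining $\nu+1,\tfrac{\nu+1}{2},\tfrac{\nu+2}{2}$. Multiplying by $z^{-\nu/2}$ cancels the factor $z^{\nu/2}$ and yields the claimed representation for $C_{\nu}^{(b,\lambda)}(z;\rho)$.

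The only points requiring care are the bookkeeping of the prefactors and of the ${}_0F_3$ parameters under these substitutions, and checking that the hypotheses $\func{Re}(c)>0$, $\func{Re}(\nu)>-1$ of Theorem 2.3 translate into exactly the stated ranges: in the spherical case $\func{Re}(\nu+\tfrac12)>-1$ and $\func{Re}(c-\tfrac12)>0$ give $\func{Re}(\nu)>-\tfrac32$ and $\func{Re}(c)>\tfrac12$, while in the Bessel-Clifford case the conditions $\func{Re}(\nu)>-1$ and $\func{Re}(\lambda)>0$ carry over unchanged (the substitution $z\mapsto 2\sqrt z$ does not affect convergence, which is governed by the $t$-integral). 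There is no genuine analytic obstacle here; as the paper notes, the argument parallels that of Theorem 2.3, so the proof is a short verification rather than a new computation.
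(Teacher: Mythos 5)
Your proposal is correct and follows exactly the route the paper intends: the paper's proof is simply "clear from Theorem 2.3," and your substitutions ($\nu\mapsto\nu+\tfrac12$, $c\mapsto c-\tfrac12$ with the prefactor $\sqrt{\pi/(2z)}$ for $g_{\nu}^{(b,c)}$, and $z\mapsto 2\sqrt z$, $c\mapsto\lambda$ with the prefactor $z^{-\nu/2}$ for $C_{\nu}^{(b,\lambda)}$) carry out precisely that specialization, with the prefactor simplifications, the ${}_0F_3$ parameters, and the translated parameter ranges all checked correctly.
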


\begin{proof}
Proofs are clear from Theorem 2.3 in Section 2.
\end{proof}

\begin{corollary}
Integral representations satisfied by spherical Bessel and Bessel-Clifford
functions are given by 
\begin{align*}
j_{\nu}(z) & =\frac{\sqrt{\pi}(\frac{z}{2})^{\nu}}{2[\Gamma(\nu+\frac{3}{2}%
)]^{2}}\int\limits_{0}^{\infty}t^{\nu+\frac{1}{2}}e^{-t}~_{0}F_{3}(-;\frac{%
2\nu+3}{2},\frac{2\nu+3}{4},\frac{2\nu+5}{4};\frac{-z^{2}t^{2}}{16})dt,~%
\func{Re}(\nu)>-\frac{3}{2} \\
C_{\nu}(z) & =\frac{1}{[\Gamma(\nu+1)]^{2}}\int\limits_{0}^{\infty}t^{\nu
}e^{-t}~_{0}F_{3}(-;\nu+1,\frac{\nu+1}{2},\frac{\nu+2}{2};\frac{zt^{2}}{4}%
)dt,~\func{Re}(\nu)>-1.
\end{align*}
\end{corollary}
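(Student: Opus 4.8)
The plan is to read off both identities as immediate specializations of the integral representations established in Theorem~6.3 for the generalized four parameter spherical Bessel and Bessel--Clifford functions, using the reductions to the classical functions recorded at the start of Section~6. Concretely, I would recall first that $j_{\nu}(z)=g_{\nu}^{(1,3/2)}(z;0)$, i.e. the case $b=1$, $c=\frac{3}{2}$, $\rho=0$ of $g_{\nu}^{(b,c)}(z;\rho)$. Substituting $b=1$, $c=\frac{3}{2}$, $\rho=0$ into the integral formula for $g_{\nu}^{(b,c)}$ in Theorem~6.3, the prefactor $\Gamma(c-\frac{1}{2})$ becomes $\Gamma(1)=1$, the exponent $c+\nu-1$ collapses to $\nu+\frac{1}{2}$, the weight $e^{-t-\rho/t}$ becomes $e^{-t}$, and the argument $-\frac{bz^{2}t^{2}}{16}$ of the $_{0}F_{3}$ becomes $-\frac{z^{2}t^{2}}{16}$, while the three lower parameters $\frac{2\nu+3}{2},\frac{2\nu+3}{4},\frac{2\nu+5}{4}$ are unaffected. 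This produces exactly the asserted formula for $j_{\nu}(z)$, valid for $\func{Re}(\nu)>-\frac{3}{2}$, the hypothesis $\func{Re}(c)>\frac{1}{2}$ of Theorem~6.3 being satisfied by $c=\frac{3}{2}$.

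Next I would treat the Bessel--Clifford case in the same way, starting from $C_{\nu}(z)=C_{\nu}^{(-1,1)}(z;0)$, the case $b=-1$, $\lambda=1$, $\rho=0$ of $C_{\nu}^{(b,\lambda)}(z;\rho)$. Plugging $b=-1$, $\lambda=1$, $\rho=0$ into the integral formula for $C_{\nu}^{(b,\lambda)}$ in Theorem~6.3, the factor $\Gamma(\lambda)$ becomes $\Gamma(1)=1$, the exponent $\lambda+\nu-1$ collapses to $\nu$, the weight $e^{-t-\rho/t}$ becomes $e^{-t}$, and --- this is the one place to watch the sign --- the argument $-\frac{bzt^{2}}{4}$ of the $_{0}F_{3}$ becomes $+\frac{zt^{2}}{4}$ since $b=-1$; the lower parameters $\nu+1,\frac{\nu+1}{2},\frac{\nu+2}{2}$ are unchanged. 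This yields the asserted formula for $C_{\nu}(z)$, valid for $\func{Re}(\nu)>-1$.

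I do not expect any real obstacle here: everything is bookkeeping of the two parameter substitutions, the only subtlety being the sign flip in the hypergeometric argument forced by $b=-1$ in the Bessel--Clifford case, together with a quick check that the chosen parameter values $c=\frac{3}{2}$ and $\lambda=1$ meet the convergence hypotheses $\func{Re}(c)>\frac{1}{2}$ and $\func{Re}(\lambda)>0$ of Theorem~6.3. Since Theorem~6.3 is proved by the same argument as Theorem~2.3, no additional justification for interchanging the $k$-summation with the integral is required.
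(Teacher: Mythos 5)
Your proposal is correct and is essentially the paper's own route: the corollary is just the specialization of Theorem~6.3 at $b=1$, $c=\tfrac{3}{2}$, $\rho=0$ (giving $g_{\nu}^{(1,3/2)}(z;0)=j_{\nu}(z)$) and at $b=-1$, $\lambda=1$, $\rho=0$ (giving $C_{\nu}^{(-1,1)}(z;0)=C_{\nu}(z)$), with exactly the bookkeeping and sign flip you describe. Nothing further is needed.
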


\begin{theorem}
The Laplace transforms of the generalized four parameter spherical Bessel
and Bessel-Clifford functions are given by%
\begin{align*}
\mathcal{L}\{g_{\nu}^{(b,c)}(t;\rho)\}(s) & =\frac{\sqrt{\pi}}{2s}%
\sum\limits_{k=0}^{\infty}\frac{(-b)^{k}(c-\frac{1}{2};\rho)_{2k+\nu+\frac {1%
}{2}}~\Gamma(\nu+2k+1)}{k!\Gamma(k+\nu+\frac{3}{2})\Gamma(2k+\nu+\frac{3}{2})%
}(\frac{1}{2s})^{2k+\nu}, \\
(\func{Re}(\nu) & >-\frac{3}{2},~\func{Re}(c)>\frac{1}{2}) \\
\mathcal{L}\{C_{\nu}^{(b,\lambda)}(t;\rho)\}(s) & =\sum\limits_{k=0}^{\infty}%
\frac{(-b)^{k}(\lambda,\rho)_{2k+\nu}}{\Gamma(\nu+k+1)~\Gamma
(\nu+2k+1)~s^{k+1}}, \\
(\func{Re}(\nu) & >-1,~\func{Re}(\lambda)>0).
\end{align*}
\end{theorem}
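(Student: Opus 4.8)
The plan is to work directly from the explicit series representations of $g_{\nu }^{(b,c)}(z;\rho )$ and $C_{\nu }^{(b,\lambda )}(z;\rho )$ displayed immediately before the statement, and to apply the Laplace transform termwise, exactly as in the proof of Theorem 2.4. Writing the Laplace transform of $C_{\nu }^{(b,\lambda )}$ and of $g_{\nu }^{(b,c)}$ as an integral against $e^{-st}$ of the respective series, the only step needing justification is the interchange of summation and integration, which is legitimate for $\func{Re}(s)>0$ (together with $\func{Re}(\nu )>-\tfrac{3}{2}$ or $\func{Re}(\nu )>-1$, and $\func{Re}(\rho )>0$) by the same absolute/uniform convergence argument already invoked for Theorems 2.4 and 2.5. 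After that, each case reduces to a single elementary Gamma integral.

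For the spherical Bessel case, after interchanging I would evaluate
\[
\int_{0}^{\infty }\Bigl(\tfrac{t}{2}\Bigr)^{2k+\nu }e^{-st}\,dt=\frac{1}{2^{2k+\nu }}\int_{0}^{\infty }t^{2k+\nu }e^{-st}\,dt=\frac{\Gamma (\nu +2k+1)}{2^{2k+\nu }\,s^{2k+\nu +1}}
\]
by the substitution $st=u$, exactly as in Theorem 2.4. Pulling the prefactor $\tfrac{\sqrt{\pi }}{2}$ through, and extracting one power of $s$ to form $(\tfrac{1}{2s})^{2k+\nu }$, leaves precisely the asserted expression with the coefficient $(-b)^{k}(c-\tfrac12;\rho )_{2k+\nu +\frac12}\,\Gamma (\nu +2k+1)\big/[k!\,\Gamma (k+\nu +\tfrac32)\,\Gamma (2k+\nu +\tfrac32)]$.

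For the Bessel--Clifford case, the key simplification is that $(\sqrt{z})^{2k}=z^{k}$, so the defining series is in fact an ordinary power series in $z$, namely $C_{\nu }^{(b,\lambda )}(z;\rho )=\sum_{k\geq 0}\frac{(-b)^{k}(\lambda ;\rho )_{2k+\nu }}{k!\,\Gamma (\nu +k+1)\,\Gamma (\nu +2k+1)}z^{k}$. Applying $\mathcal{L}$ termwise and using $\int_{0}^{\infty }t^{k}e^{-st}\,dt=k!/s^{k+1}$, the $k!$ in the numerator cancels the $k!$ in the denominator, which yields $\sum_{k\geq 0}(-b)^{k}(\lambda ;\rho )_{2k+\nu }\big/[\Gamma (\nu +k+1)\,\Gamma (\nu +2k+1)\,s^{k+1}]$, the stated formula.

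I do not expect a genuine obstacle: the content is essentially that of Theorem 2.4 transported through the definitions $g_{\nu }^{(b,c)}(z;\rho )=\sqrt{\pi /(2z)}\,G_{\nu +\frac12}^{(b,c-\frac12)}(z;\rho )$ and $C_{\nu }^{(b,\lambda )}(z;\rho )=z^{-\nu /2}G_{\nu }^{(b,\lambda )}(2\sqrt{z};\rho )$, and the interchange of sum and integral is the only delicate point, handled as before. As a consistency check one can specialize $b=1,\ c=\tfrac32,\ \rho =0$ in the first identity and $b=-1,\ \lambda =1,\ \rho =0$ in the second to recover the Laplace transforms of the classical spherical Bessel function $j_{\nu }$ and of the classical Bessel--Clifford function $C_{\nu }$; these, and the intermediate three-parameter cases, would be stated as corollaries in parallel with \S 2.
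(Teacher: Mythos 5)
Your proposal is correct and is exactly the route the paper intends: the paper's proof consists of the single remark that the transforms "can be calculated in a similar way as in Theorem 2.4," and you carry out precisely that termwise Laplace transform of the explicit series for $g_{\nu}^{(b,c)}$ and $C_{\nu}^{(b,\lambda)}$, with the gamma integrals $\int_{0}^{\infty}t^{2k+\nu}e^{-st}\,dt=\Gamma(\nu+2k+1)/s^{2k+\nu+1}$ and $\int_{0}^{\infty}t^{k}e^{-st}\,dt=k!/s^{k+1}$ giving the stated coefficients. Nothing further is needed.
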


\begin{proof}
Laplace transforms of the generalized four parameter spherical Bessel and
Bessel-Clifford functions can be calculated in a similar way as in Theorem
2.4 in Section 2.
\end{proof}

\begin{corollary}
$~$The Laplace transforms of the spherical Bessel and Bessel-Clifford
functions \ are given by%
\begin{align*}
\mathcal{L}\{j_{\nu}(t)\}(s) & =\frac{\sqrt{\pi}}{2s}\sum\limits_{k=0}^{%
\infty}\frac{(-1)^{k}~\Gamma(\nu+2k+1)}{k!\Gamma(k+\nu+\frac{3}{2})}(\frac{1%
}{2s})^{2k+\nu},\func{Re}(\nu)>-\frac{3}{2}~ \\
\mathcal{L}\{C_{\nu}(t)\}(s) & =\sum\limits_{k=0}^{\infty}\frac{1}{%
\Gamma(\nu+k+1)~s^{k+1}},~\func{Re}(\nu)>-1.
\end{align*}
\end{corollary}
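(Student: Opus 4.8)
The plan is to read off this statement as the immediate specialization of the preceding Theorem, which records the Laplace transforms $\mathcal{L}\{g_{\nu}^{(b,c)}(t;\rho)\}(s)$ and $\mathcal{L}\{C_{\nu}^{(b,\lambda)}(t;\rho)\}(s)$ of the generalized four parameter spherical Bessel and Bessel--Clifford functions, since $j_{\nu}(z)=g_{\nu}^{(1,3/2)}(z;0)$ and $C_{\nu}(z)=C_{\nu}^{(-1,1)}(z;0)$ by the definitions given at the beginning of this section. The single ingredient needed throughout is the reduction of the generalized Pochhammer symbol when $\rho=0$, namely $(\lambda;0)_{\mu}=(\lambda)_{\mu}=\Gamma(\lambda+\mu)/\Gamma(\lambda)$, which here is used with $\lambda=1$, so $(1;0)_{\mu}=\Gamma(\mu+1)$.

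First I would handle the spherical Bessel function. Putting $b=1$, $c=\tfrac32$, $\rho=0$ in the formula for $\mathcal{L}\{g_{\nu}^{(b,c)}(t;\rho)\}(s)$ turns the factor $(c-\tfrac12;\rho)_{2k+\nu+\frac12}$ into $(1;0)_{2k+\nu+\frac12}=\Gamma(2k+\nu+\tfrac32)$, which cancels the $\Gamma(2k+\nu+\tfrac32)$ standing in the denominator; what remains is $\mathcal{L}\{j_{\nu}(t)\}(s)=\frac{\sqrt{\pi}}{2s}\sum_{k=0}^{\infty}\frac{(-1)^{k}\,\Gamma(\nu+2k+1)}{k!\,\Gamma(k+\nu+\frac32)}\bigl(\tfrac{1}{2s}\bigr)^{2k+\nu}$, with the range $\func{Re}(\nu)>-\tfrac32$ (and $\func{Re}(s)>0$) inherited from the theorem. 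For the Bessel--Clifford function I would set $b=-1$, $\lambda=1$, $\rho=0$ in the formula for $\mathcal{L}\{C_{\nu}^{(b,\lambda)}(t;\rho)\}(s)$: then $(-b)^{k}=1$ and $(1;0)_{2k+\nu}=\Gamma(2k+\nu+1)$ cancels the $\Gamma(\nu+2k+1)$ in the denominator, leaving $\mathcal{L}\{C_{\nu}(t)\}(s)=\sum_{k=0}^{\infty}\frac{1}{\Gamma(\nu+k+1)\,s^{k+1}}$ for $\func{Re}(\nu)>-1$, which is the asserted identity.

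There is essentially no obstacle here beyond careful bookkeeping; the only points that must be checked are that the Gamma-factor cancellations go through and that the hypotheses ($\func{Re}(s)>0$, together with $\func{Re}(\nu)>-\tfrac32$ for $j_{\nu}$ and $\func{Re}(\nu)>-1$ for $C_{\nu}$) specialize correctly. As an independent verification I would also derive the two formulas from scratch, exactly as in the proof of Theorem~2.4: substitute the series expansions $j_{\nu}(t)=\frac{\sqrt{\pi}}{2}\sum_{k}\frac{(-1)^{k}}{k!\,\Gamma(\nu+k+\frac32)}\bigl(\tfrac{t}{2}\bigr)^{2k+\nu}$ and $C_{\nu}(t)=\sum_{k}\frac{t^{k}}{k!\,\Gamma(\nu+k+1)}$, interchange summation and integration under the stated conditions, and use $\int_{0}^{\infty}t^{m}e^{-st}\,dt=\Gamma(m+1)/s^{m+1}$; for $j_{\nu}$ this produces the factor $\Gamma(2k+\nu+1)$ and for $C_{\nu}$ the factor $\Gamma(k+1)=k!$, which cancels the $k!$ in the denominator, reproducing both claimed series.
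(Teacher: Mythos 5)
Your proposal is correct and is exactly the route the paper takes: the corollary is the immediate specialization of the preceding theorem, with $b=1$, $c=\tfrac{3}{2}$, $\rho=0$ giving $j_{\nu}$ and $b=-1$, $\lambda=1$, $\rho=0$ giving $C_{\nu}$, where $(1;0)_{\mu}=\Gamma(\mu+1)$ cancels the corresponding Gamma factor in each denominator. Your supplementary direct check via the series and $\int_{0}^{\infty}t^{m}e^{-st}\,dt=\Gamma(m+1)/s^{m+1}$ simply reproduces the method of Theorem 2.4 that the paper itself invokes, so nothing is missing.
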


\begin{theorem}
The Mellin transforms of the generalized four parameter spherical Bessel and
Bessel-Clifford functions are given by%
\begin{align*}
\mathcal{M}\{e^{-z}g_{\nu}^{(b,c)}(z;\rho);s\} & =\frac{\sqrt{\pi}\Gamma (c)%
}{2^{\nu+1}[\Gamma(\nu+\frac{3}{2})]^{2}\Gamma(c-\frac{1}{2})}%
\sum\limits_{k=0}^{\infty}\frac{(-1)^{k}b^{k}(c;\rho)_{\nu+2k}~\Gamma
(s+\nu+2k)}{(\frac{2\nu+3}{2})_{k}(\frac{2\nu+3}{4})_{k}(\frac{2\nu+5}{4}%
)_{k}16^{k}k!}, \\
& \func{Re}(\nu)>\frac{-3}{2},~\func{Re}(c)>\frac{1}{2},~s+\nu\neq%
\{0,-1,-2,...\} \\
\mathcal{M}\{e^{-z}C_{\nu}^{(b,\lambda)}(z;\rho);s\} & =\frac{\Gamma (s)}{%
[\Gamma(\nu+1)]^{2}}\sum\limits_{k=0}^{\infty}\frac{(-b)^{k}(\lambda
;\rho)_{\nu+2k}~(s)_{k}}{(\nu+1)_{k}(\frac{\nu+1}{2})_{k}(\frac{\nu+2}{2}%
)_{k}2^{2k}k!}, \\
\func{Re}(\nu) & >-1,~\func{Re}(s)>0,~\func{Re}(\lambda)>0.
\end{align*}
\end{theorem}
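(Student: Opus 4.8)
The plan is to reduce both identities to Theorem~2.5 (and the computation carried out in its proof) together with the series representations of $g_\nu^{(b,c)}$ and $C_\nu^{(b,\lambda)}$ recorded at the beginning of Section~6; no new ideas are needed beyond bookkeeping of half-integer shifts and the duplication formula (1.3).

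\textbf{Spherical Bessel part.} Starting from $g_\nu^{(b,c)}(z;\rho)=\sqrt{\pi/(2z)}\,G_{\nu+1/2}^{(b,c-1/2)}(z;\rho)$, the factor $z^{-1/2}$ merely shifts the Mellin variable, so that
\begin{equation*}
\mathcal{M}\{e^{-z}g_\nu^{(b,c)}(z;\rho);s\}=\sqrt{\tfrac{\pi}{2}}\;\mathcal{M}\{e^{-z}G_{\nu+1/2}^{(b,c-1/2)}(z;\rho);s-\tfrac12\}.
\end{equation*}
I would then invoke Theorem~2.5 with $\nu\mapsto\nu+\tfrac12$, $c\mapsto c-\tfrac12$, $s\mapsto s-\tfrac12$; the resulting series carries the factor $(c-\tfrac12;\rho)_{\nu+1/2+2k}$, the Gamma factor $\Gamma(s+\nu+2k)$ (the two half-integers cancel), and denominator Pochhammer symbols $(\nu+\tfrac32)_k(\tfrac{2\nu+3}{4})_k(\tfrac{2\nu+5}{4})_k$. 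Using the integral formula (1.6) one records $(c-\tfrac12;\rho)_{\nu+1/2+2k}=\frac{\Gamma(c)}{\Gamma(c-1/2)}\,(c;\rho)_{\nu+2k}$, and collecting the powers of $2$ via $\sqrt{\pi/2}/2^{\nu+1/2}=\sqrt{\pi}/2^{\nu+1}$ yields the stated expression verbatim (note $(-1)^k b^k=(-b)^k$ and $(\tfrac{2\nu+3}{2})_k=(\nu+\tfrac32)_k$).

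\textbf{Bessel-Clifford part.} Here I would work directly from $C_\nu^{(b,\lambda)}(z;\rho)=z^{-\nu/2}G_\nu^{(b,\lambda)}(2\sqrt z;\rho)$; since $(\tfrac{2\sqrt z}{2})^{2k+\nu}=z^{k+\nu/2}$ this collapses to the entire power series
\begin{equation*}
C_\nu^{(b,\lambda)}(z;\rho)=\sum_{k=0}^{\infty}\frac{(-b)^{k}(\lambda;\rho)_{2k+\nu}}{k!\,\Gamma(\nu+k+1)\,\Gamma(\nu+2k+1)}\,z^{k}.
\end{equation*}
Multiplying by $e^{-z}z^{s-1}$ and integrating term by term replaces $z^{k}$ by $\Gamma(s+k)=(s)_k\Gamma(s)$; then writing $\Gamma(\nu+k+1)=\Gamma(\nu+1)(\nu+1)_k$ and, by the duplication formula (1.3), $\Gamma(\nu+2k+1)=\Gamma(\nu+1)(\nu+1)_{2k}=\Gamma(\nu+1)\,2^{2k}(\tfrac{\nu+1}{2})_k(\tfrac{\nu+2}{2})_k$, the sum rearranges exactly into the claimed $[\Gamma(\nu+1)]^{-2}$-prefactored series.

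\textbf{Main obstacle.} The only delicate point is the justification of the term-by-term integration. In the spherical case it is inherited from the proof of Theorem~2.5 under the shifted hypotheses $\func{Re}(\nu)>-\tfrac32$, $\func{Re}(c)>\tfrac12$ and $s+\nu\notin\{0,-1,-2,\dots\}$; in the Bessel-Clifford case the defining series converges locally uniformly (it is an entire function of $z$, a Wright-type series), so interchange with $\int_0^\infty e^{-z}z^{s-1}\,(\cdot)\,dz$ is legitimate for $\func{Re}(s)>0$ and $\func{Re}(\nu)>-1$. Everything else is the routine bookkeeping of half-integer shifts and of identity (1.3), which I would present compactly.
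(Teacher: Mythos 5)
Your proposal is correct and follows essentially the route the paper intends: the paper's own proof simply invokes Theorem~2.5, and your parameter shifts $\nu\mapsto\nu+\tfrac12$, $c\mapsto c-\tfrac12$, $s\mapsto s-\tfrac12$ together with $(c-\tfrac12;\rho)_{\nu+\frac12+2k}=\frac{\Gamma(c)}{\Gamma(c-\frac12)}(c;\rho)_{\nu+2k}$ reproduce the stated spherical formula exactly, while your direct term-by-term Mellin integration of the Bessel--Clifford series (with (1.2) and the duplication formula (1.3)) is the same computation that underlies Theorem~2.5's proof. The only loose point is that local uniform convergence alone does not justify the interchange over $(0,\infty)$, but absolute convergence of $\sum_k |a_k|\,\Gamma(\operatorname{Re}(s)+k)$ is immediate here, so the argument stands.
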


\begin{proof}
Proofs of the corresponding Mellin transforms can be done by Theorem 2.5 in
Section 2.
\end{proof}

\begin{corollary}
The Mellin transforms of the spherical Bessel and Bessel-Clifford functions
are given by%
\begin{align*}
\mathcal{M}\{e^{-z}j_{\nu}(z);s\} & =\frac{\sqrt{\pi}\Gamma(\nu+s)}{%
2^{\nu+1}\Gamma(\nu+\frac{3}{2})}~_{2}F_{1}(\frac{\nu+s}{2},\frac{\nu+s+1}{2}%
;\frac{2\nu+3}{2};-1) \\
\func{Re}(\nu) & >\frac{-3}{2},~s+\nu\neq0,-1,-2,... \\
\mathcal{M}\{C_{\nu}(z);s\} & =\frac{\Gamma(s)}{\Gamma(\nu+1)}%
~_{1}F_{1}(s;\nu+1;1), \\
\func{Re}(\nu) & >-1,~\func{Re}(s)>0.
\end{align*}
\end{corollary}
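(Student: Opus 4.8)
The plan is to obtain both identities as specializations of Theorem~6.5, which already records the Mellin transforms of $g_\nu^{(b,c)}(z;\rho)$ and $C_\nu^{(b,\lambda)}(z;\rho)$, followed by an elementary collapse of the two resulting series onto a ${}_2F_1$ and a ${}_1F_1$, respectively.

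For the spherical Bessel identity I would set $b=1$, $c=\tfrac{3}{2}$, $\rho=0$ in the first formula of Theorem~6.5, using $g_\nu^{(1,3/2)}(z;0)=j_\nu(z)$. Then $\Gamma(c-\tfrac{1}{2})=\Gamma(1)=1$, and since $\rho=0$ the generalized Pochhammer symbol degenerates, by~(1.2), to $(c;\rho)_{\nu+2k}=(\tfrac{3}{2})_{\nu+2k}=\Gamma(\nu+\tfrac{3}{2}+2k)/\Gamma(\tfrac{3}{2})$, so the $\Gamma(\tfrac{3}{2})$ here cancels the $\Gamma(c)$ appearing in the prefactor. The remainder is bookkeeping: write $\Gamma(\nu+\tfrac{3}{2}+2k)=\Gamma(\nu+\tfrac{3}{2})\,(\nu+\tfrac{3}{2})_{2k}$ and $\Gamma(s+\nu+2k)=\Gamma(s+\nu)\,(s+\nu)_{2k}$, and invoke the duplication formula~(1.3) in the shapes $(\nu+\tfrac{3}{2})_{2k}=2^{2k}(\tfrac{2\nu+3}{4})_k(\tfrac{2\nu+5}{4})_k$ and $(s+\nu)_{2k}=2^{2k}(\tfrac{s+\nu}{2})_k(\tfrac{s+\nu+1}{2})_k$. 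The product $(\tfrac{2\nu+3}{4})_k(\tfrac{2\nu+5}{4})_k$ then kills two of the three denominator Pochhammer symbols, the two factors $2^{2k}$ combine to $16^k$ and cancel the $16^k$ in the denominator, and one copy of $\Gamma(\nu+\tfrac{3}{2})$ cancels against $[\Gamma(\nu+\tfrac{3}{2})]^2$. What survives is $\frac{\sqrt{\pi}\,\Gamma(\nu+s)}{2^{\nu+1}\Gamma(\nu+\frac{3}{2})}\sum_{k\ge 0}\frac{(\frac{\nu+s}{2})_k(\frac{\nu+s+1}{2})_k}{(\frac{2\nu+3}{2})_k\,k!}(-1)^k$, which by the series definition~(1.5) is precisely the asserted ${}_2F_1(\tfrac{\nu+s}{2},\tfrac{\nu+s+1}{2};\tfrac{2\nu+3}{2};-1)$.

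For the Bessel-Clifford identity I would set $b=-1$, $\lambda=1$, $\rho=0$ in the second formula of Theorem~6.5, using $C_\nu^{(-1,1)}(z;0)=C_\nu(z)$; then $(-b)^k=1$ and, again by~(1.2) with $\rho=0$, $(\lambda;\rho)_{\nu+2k}=(1)_{\nu+2k}=\Gamma(\nu+2k+1)=\Gamma(\nu+1)\,(\nu+1)_{2k}$. One further application of~(1.3), namely $(\nu+1)_{2k}=2^{2k}(\tfrac{\nu+1}{2})_k(\tfrac{\nu+2}{2})_k$, cancels the two denominator factors $(\tfrac{\nu+1}{2})_k(\tfrac{\nu+2}{2})_k$ together with the $2^{2k}$, while one $\Gamma(\nu+1)$ cancels against $[\Gamma(\nu+1)]^2$; what remains is $\frac{\Gamma(s)}{\Gamma(\nu+1)}\sum_{k\ge 0}\frac{(s)_k}{(\nu+1)_k\,k!}$, that is, $\frac{\Gamma(s)}{\Gamma(\nu+1)}\,{}_1F_1(s;\nu+1;1)$.

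Since Theorem~6.5 is already established, there is no genuine analytic difficulty here; the only step needing care — and the only spot where an error could creep in — is selecting the arguments in the duplication formula~(1.3) so that every extraneous Pochhammer symbol and every power of $2$ cancels cleanly, and carrying along the parameter restrictions ($\func{Re}(\nu)>-\tfrac{3}{2}$ and $s+\nu\notin\{0,-1,-2,\dots\}$ for the first identity, $\func{Re}(\nu)>-1$ and $\func{Re}(s)>0$ for the second) that are inherited verbatim from Theorem~6.5.
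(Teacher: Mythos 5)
Your proposal is correct and takes essentially the paper's intended route: the corollary is precisely the specialization $b=1$, $c=\tfrac{3}{2}$, $\rho=0$ (respectively $b=-1$, $\lambda=1$, $\rho=0$) of the preceding theorem on $\mathcal{M}\{e^{-z}g_{\nu}^{(b,c)}(z;\rho);s\}$ and $\mathcal{M}\{e^{-z}C_{\nu}^{(b,\lambda)}(z;\rho);s\}$, followed by exactly the duplication-formula cancellations you carry out, and the paper offers no additional argument. The only remark worth adding is that your computation yields $\mathcal{M}\{e^{-z}C_{\nu}(z);s\}=\frac{\Gamma(s)}{\Gamma(\nu+1)}\,{}_{1}F_{1}(s;\nu+1;1)$, which is what the stated formula must mean — the factor $e^{-z}$ is evidently dropped by oversight in the corollary, since without it the Mellin integral of $C_{\nu}(z)$ diverges.
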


The following Mellin transforms are also satisfied by the generalized four
parameter spherical Bessel and Bessel-Clifford functions, respectively:

\begin{theorem}
The Mellin transforms of the generalized four parameter spherical Bessel and
Bessel-Clifford functions are given by%
\begin{align*}
\mathcal{M}\{g_{\nu}^{(b,c)}(z;\rho);s\} & =\frac{\sqrt{\pi}\Gamma
(s)~\Gamma(c+\nu+s)}{2[\Gamma(\nu+\frac{3}{2})]^{2}~\Gamma(c-\frac{1}{2})}%
~\sum\limits_{m=0}^{\infty}\frac{(\nu+2m)\Gamma(\nu+m)}{m!}J_{\nu+2m}(z) \\
& \times~_{2}F_{3}(\frac{c+\nu+s}{2},~\frac{c+\nu+s+1}{2};\frac{2\nu+3}{2},%
\frac{2\nu+3}{4},~\frac{2\nu+5}{4};\frac{-bz^{2}}{4}), \\
(\func{Re}(\nu) & >-\frac{3}{2},~\func{Re}(c)>\frac{1}{2},~\func{Re}(s)>0) \\
\mathcal{M}\{C_{\nu}^{(b,\lambda)}(z;\rho);s\} & =\frac{\Gamma
(s)~\Gamma(\lambda+\nu+s)}{[\Gamma(\nu+1)]^{2}~\Gamma(\lambda)}~_{2}F_{3}(%
\frac{\lambda+\nu+s}{2},~\frac{\lambda+\nu+s+1}{2};\nu+1,~\frac{\nu +1}{2},~%
\frac{\nu+2}{2};-bz), \\
(\func{Re}(\nu) & >-1,~\func{Re}(\lambda )>0,~\func{Re}(s)>0).
\end{align*}
\end{theorem}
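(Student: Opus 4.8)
The plan is to repeat, essentially line for line, the computation carried out for Theorem~2.6, but starting from the single integral representations of $g_{\nu}^{(b,c)}(z;\rho)$ and $C_{\nu}^{(b,\lambda)}(z;\rho)$ provided by Theorem~6.3 instead of from (2.2). In both cases the Mellin transform is taken with respect to $\rho$, so one writes $\mathcal{M}\{g_{\nu}^{(b,c)}(z;\rho);s\}=\int_{0}^{\infty}\rho^{s-1}g_{\nu}^{(b,c)}(z;\rho)\,d\rho$ (and analogously for $C_{\nu}^{(b,\lambda)}$), inserts the appropriate Theorem~6.3 formula, and interchanges the two integrations. That interchange, together with a later interchange of a power series with an integral, is legitimate under the stated hypotheses $\func{Re}(\nu)>-\tfrac{3}{2}$, $\func{Re}(c)>\tfrac{1}{2}$, $\func{Re}(s)>0$ (respectively $\func{Re}(\nu)>-1$, $\func{Re}(\lambda)>0$, $\func{Re}(s)>0$), which are exactly what guarantees absolute convergence of the relevant double integrals locally uniformly in $z$.

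For $g_{\nu}^{(b,c)}$, after the interchange the inner integral $\int_{0}^{\infty}\rho^{s-1}e^{-\rho/t}\,d\rho$ becomes $\Gamma(s)\,t^{s}$ via $\rho=ut$, leaving $\frac{\sqrt{\pi}(\frac{z}{2})^{\nu}\Gamma(s)}{2[\Gamma(\nu+\frac{3}{2})]^{2}\Gamma(c-\frac{1}{2})}\int_{0}^{\infty}t^{c+\nu+s-1}e^{-t}\,{}_{0}F_{3}(-;\tfrac{2\nu+3}{2},\tfrac{2\nu+3}{4},\tfrac{2\nu+5}{4};\tfrac{-bz^{2}t^{2}}{16})\,dt$. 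I would then expand the ${}_{0}F_{3}$ as a series in $t^{2}$, swap the sum with the integral, and evaluate $\int_{0}^{\infty}t^{c+\nu+s+2k-1}e^{-t}\,dt=\Gamma(c+\nu+s+2k)$ by (1.4). Writing $\Gamma(c+\nu+s+2k)=(c+\nu+s)_{2k}\Gamma(c+\nu+s)$ by (1.2) and then $(c+\nu+s)_{2k}=2^{2k}(\tfrac{c+\nu+s}{2})_{k}(\tfrac{c+\nu+s+1}{2})_{k}$ by the duplication formula (1.3), the factor $2^{2k}$ combines with $(-bz^{2}/16)^{k}$ into $(-bz^{2}/4)^{k}$, so the surviving $k$-series is precisely the ${}_{2}F_{3}$ in the statement. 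Finally, replacing the leftover $(\frac{z}{2})^{\nu}$ by the Gegenbauer expansion $(\frac{z}{2})^{\nu}=\sum_{m=0}^{\infty}\frac{(\nu+2m)\Gamma(\nu+m)}{m!}J_{\nu+2m}(z)$ (the same one used in the proof of Theorem~2.6) yields the asserted series of ordinary Bessel functions. It is worth stressing that it is the factor $\sqrt{\pi/(2z)}$ built into $g_{\nu}^{(b,c)}$ that collapses the naive power $(\frac{z}{2})^{\nu+\frac{1}{2}}$ down to $(\frac{z}{2})^{\nu}$, so the Gegenbauer series is applied with exactly the index $\nu$ that appears on the left-hand side.

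The Bessel--Clifford case is shorter, because the Theorem~6.3 representation of $C_{\nu}^{(b,\lambda)}(z;\rho)$ carries no power of $z$ in front: the prefactor $z^{-\nu/2}$ exactly cancels the factor $(\sqrt{z}\,)^{\nu}$ coming from the $(w/2)^{\nu}$ in front of $G_{\nu}^{(b,\lambda)}(w;\rho)$ at $w=2\sqrt{z}$, and the substitution $w=2\sqrt{z}$ turns the hypergeometric argument into $\tfrac{-bzt^{2}}{4}$. Running the identical steps — interchange the two integrals, use $\rho=ut$ to get $\Gamma(s)\,t^{s}$, expand the ${}_{0}F_{3}$, swap sum and integral, evaluate $\int_{0}^{\infty}t^{\lambda+\nu+s+2k-1}e^{-t}\,dt=\Gamma(\lambda+\nu+s+2k)$, and apply (1.2) and (1.3) to write this as $2^{2k}(\tfrac{\lambda+\nu+s}{2})_{k}(\tfrac{\lambda+\nu+s+1}{2})_{k}\Gamma(\lambda+\nu+s)$ — the $2^{2k}$ now combines with $(-bz/4)^{k}$ into $(-bz)^{k}$, and the $k$-series is exactly ${}_{2}F_{3}(\tfrac{\lambda+\nu+s}{2},\tfrac{\lambda+\nu+s+1}{2};\nu+1,\tfrac{\nu+1}{2},\tfrac{\nu+2}{2};-bz)$. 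Here no Gegenbauer step intervenes, which is why the closed form for $C_{\nu}^{(b,\lambda)}$ is a single ${}_{2}F_{3}$ rather than a Bessel-function series.

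The only step that demands real care — just as in Theorem~2.6 — is the justification of the two interchanges (integral with integral, then series with integral). The conditions $\func{Re}(c)>\tfrac{1}{2}$ (resp. $\func{Re}(\lambda)>0$) handle the behaviour of the integrand as $t\to0^{+}$, $\func{Re}(\nu)>-\tfrac{3}{2}$ (resp. $>-1$) together with $\func{Re}(s)>0$ make the $\rho$-integral converge, and after the factor $e^{-t}$ the ${}_{0}F_{3}$ still decays fast enough to control the $t\to\infty$ tail, so Fubini--Tonelli and dominated convergence apply on compact sets in $z$. Everything else is the bookkeeping already performed in Section~2; in keeping with the practice adopted throughout Section~6, the full write-up can therefore be abbreviated with a reference to the proof of Theorem~2.6.
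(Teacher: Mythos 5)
Your proposal is correct and follows exactly the route the paper intends: the paper's own "proof" is just the remark that the argument of Theorem 2.6 carries over, and your write-up is precisely that computation executed from the Theorem 6.3 integral representations (the $\rho$-integral giving $\Gamma(s)t^{s}$, the term-by-term $t$-integration with (1.2)--(1.3) producing the $_{2}F_{3}$, and the Gegenbauer expansion of $(\tfrac{z}{2})^{\nu}$ in the spherical case). Your observations about how the prefactors $\sqrt{\pi/(2z)}$ and $z^{-\nu/2}$ normalize the powers of $z$, and why no Bessel-series step is needed for $C_{\nu}^{(b,\lambda)}$, are accurate and consistent with the stated formulas.
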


\begin{proof}
Proofs can be done in a similar way as Theorem 2.6 in Section 2.
\end{proof}

\begin{theorem}
Series expansions satisfied by the generalized four parameter spherical
Bessel and Bessel-Clifford functions are given by 
\begin{align*}
g_{\nu}^{(b,c)}(z;\rho) & =\frac{\sqrt{\pi}}{2}\frac{\Gamma(\mu+1)}{%
[\Gamma(\nu+\frac{3}{2})]^{2}}\sum\limits_{m=0}^{\infty}\sum\limits_{n=0}^{%
\infty}(\frac{z}{2})^{\nu-\mu+m+n}~J_{\mu+m+n}(z)\frac{(-m-n)_{n}b^{n}(c-%
\frac{1}{2};\rho)_{2n+\nu+\frac{1}{2}}(\mu+1)_{n}}{n!(\nu+\frac{3}{2}%
)_{n}(\nu+\frac{3}{2})_{2n}(m+n)!}, \\
~(\func{Re}(\mu) & >-1,~\func{Re}(\nu)>-\frac{3}{2},~\func{Re}(c)>\frac{1}{2}%
) \\
C_{\nu}^{(b,\lambda)}(z;\rho) & =\frac{1}{[\Gamma(\nu+1)]^{2}}%
\sum\limits_{m=0}^{\infty}\sum\limits_{n=0}^{\infty}\frac{%
z^{m}2^{n-m}J_{m+n}(z)(-m-n)_{n}(-b)^{n}(\lambda;\rho)_{\nu+2n}}{%
(m+n)!(-1)^{n}(\nu +1)_{n}(\nu+1)_{2n}}, \\
(\func{Re}(\nu) & >-1,~\func{Re}(\lambda)>0).
\end{align*}
\end{theorem}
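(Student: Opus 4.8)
The plan is to obtain both expansions from Theorem~2.7, using the definitions $g_\nu^{(b,c)}(z;\rho)=\sqrt{\pi/(2z)}\,G_{\nu+1/2}^{(b,c-1/2)}(z;\rho)$ and $C_\nu^{(b,\lambda)}(z;\rho)=z^{-\nu/2}G_\nu^{(b,\lambda)}(2\sqrt z;\rho)$, so that in each case the work reduces to transplanting the already-proved double series for $G_\nu^{(b,c)}(z;\rho)$.

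For the spherical Bessel function I would simply substitute $\nu\mapsto\nu+\tfrac12$ and $c\mapsto c-\tfrac12$ in the expansion of $G_\nu^{(b,c)}(z;\rho)$ furnished by Theorem~2.7 (the hypotheses $\func{Re}(\nu)>-1$, $\func{Re}(c)>0$ transform into $\func{Re}(\nu)>-\tfrac32$, $\func{Re}(c)>\tfrac12$), and then multiply by $\sqrt{\pi/(2z)}$. The only computation is absorbing the half-integer shift in the power of $z$: writing $(\tfrac z2)^{\nu+1/2-\mu+m+n}=(\tfrac z2)^{1/2}(\tfrac z2)^{\nu-\mu+m+n}$ and using $\sqrt{\pi/(2z)}\,(\tfrac z2)^{1/2}=\sqrt{\pi}/2$ together with $(-1)^n(-b)^n=b^n$, the series for $G_{\nu+1/2}^{(b,c-1/2)}$ turns verbatim into the asserted series for $g_\nu^{(b,c)}(z;\rho)$.

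For the Bessel--Clifford function a direct substitution into Theorem~2.7 is not the convenient route, since $C_\nu^{(b,\lambda)}(z;\rho)=z^{-\nu/2}G_\nu^{(b,\lambda)}(2\sqrt z;\rho)$ collapses to the integer power series $\sum_{k\ge0}\frac{(-b)^k(\lambda;\rho)_{2k+\nu}}{k!\,\Gamma(\nu+k+1)\,\Gamma(\nu+2k+1)}\,z^k$, and the target involves Bessel functions of argument $z$ rather than $2\sqrt z$. Instead I would mimic the proof of Theorem~2.7 on this series: write $z^k=2^k(\tfrac z2)^k$ and apply the Gegenbauer expansion $(\tfrac z2)^k=k!\sum_{p\ge0}\frac{(z/2)^p}{p!}J_{k+p}(z)$ from \cite{W}. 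Relabelling $n=k$, $m=p$, replacing $\Gamma(\nu+k+1)=\Gamma(\nu+1)(\nu+1)_k$ and $\Gamma(\nu+2k+1)=\Gamma(\nu+1)(\nu+1)_{2k}$, rewriting $2^n(z/2)^m=z^m2^{n-m}$, and using $\tfrac1{m!}=\frac{(-m-n)_n}{(-1)^n(m+n)!}$ (which is just $(-m-n)_n=(-1)^n(m+n)!/m!$), one arrives at exactly the claimed double series.

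The individual steps are almost entirely clerical — it is the Pochhammer/Gamma bookkeeping and reindexing that produce the somewhat elaborate form of the statement. The one genuine point requiring a word is the interchange of the two summations: in the spherical case it is inherited from Theorem~2.7, and in the Bessel--Clifford case it is justified by the absolute convergence of the $G$-series (equivalently, the $_{0}F_{3}$ series appearing in Theorem~2.3) combined with the locally uniform convergence of Gegenbauer's expansion, so that Fubini for double series applies. I expect the main obstacle to be nothing deeper than keeping the powers of $2$ and the signs $(-1)^n$ straight across the substitutions, and in particular verifying that the half-integer index shift in the spherical case produces the prefactor $\tfrac{\sqrt\pi}{2}$ and not some other power of $2$.
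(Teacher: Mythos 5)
Your proposal is correct and follows essentially the same route as the paper, which simply derives both expansions from Theorem~2.7 (the paper omits the details you supply). Your observation that the Bessel--Clifford case cannot be obtained by naive substitution (since the argument $2\sqrt{z}$ would produce $J_{\mu+m+n}(2\sqrt{z})$ rather than $J_{m+n}(z)$) and must instead repeat the Gegenbauer-expansion argument on the series in $z^{k}$, together with the identity $(-m-n)_{n}=(-1)^{n}(m+n)!/m!$, is exactly the bookkeeping needed, and your sign and power-of-two checks in both cases are accurate.
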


\begin{proof}
Formulas can be calculated from Theorem 2.7 in Section 2.
\end{proof}

\begin{corollary}
For spherical Bessel and Bessel-Clifford functions, it can be seen that%
\begin{align*}
j_{\nu}(z) & =\frac{\sqrt{\pi}}{2}\frac{\Gamma(\mu+1)}{\Gamma(\nu+\frac {3}{2%
})}\sum\limits_{m=0}^{\infty}\sum\limits_{n=0}^{\infty}(\frac{z}{2}%
)^{\nu-\mu+m+n}~J_{\mu+m+n}(z)\frac{(-m-n)_{n}~(\mu+1)_{n}}{n!(\nu+\frac{3}{2%
})_{n}~(m+n)!},~\func{Re}(\nu)>-\frac{3}{2}, \\
C_{\nu}(z) & =\frac{1}{\Gamma(\nu+1)}\sum\limits_{m=0}^{\infty}\sum%
\limits_{n=0}^{\infty}\frac{z^{m}2^{n-m}J_{m+n}(z)(-m-n)_{n}}{%
(-1)^{n}(m+n)!(\nu+1)_{n}},~\func{Re}(\nu)>-1.
\end{align*}
\end{corollary}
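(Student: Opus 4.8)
The strategy is to read off both expansions directly from \thmref{...}, i.e.\ from the series-of-Bessel-functions formula for $G_\nu^{(b,c)}$, in the same way that $g_\nu^{(b,c)}$ and $C_\nu^{(b,\lambda)}$ are built from $G_\nu^{(b,c)}$.

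For the generalized four parameter spherical Bessel function I would begin with $g_{\nu}^{(b,c)}(z;\rho)=\sqrt{\pi/(2z)}\,G_{\nu+1/2}^{(b,c-1/2)}(z;\rho)$ and apply the expansion of Theorem 2.7 after replacing $\nu$ by $\nu+\tfrac12$ and $c$ by $c-\tfrac12$; this is permitted since $\operatorname{Re}(\nu)>-\tfrac32$ forces $\operatorname{Re}(\nu+\tfrac12)>-1$, and the hypothesis $\mu\neq\nu+\tfrac12$ is kept. The one computation that requires attention is absorbing the prefactor into the power of $z/2$ appearing in the resulting double series: using $(z/2)^{\nu+1/2-\mu+m+n}\sqrt{\pi/(2z)}=\tfrac{\sqrt{\pi}}{2}\,(z/2)^{\nu-\mu+m+n}$ together with $(-1)^n(-b)^n=b^n$, the series of Theorem 2.7 becomes, term by term, the asserted formula for $g_\nu^{(b,c)}(z;\rho)$, with $(c;\rho)_{\nu+2n}$ turning into $(c-\tfrac12;\rho)_{2n+\nu+1/2}$ and $(\nu+1)_n,\,(\nu+1)_{2n},\,[\Gamma(\nu+1)]^2$ turning into $(\nu+\tfrac32)_n,\,(\nu+\tfrac32)_{2n},\,[\Gamma(\nu+\tfrac32)]^2$.

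For the generalized four parameter Bessel-Clifford function it is cleaner to rerun the proof of Theorem 2.7 from scratch, since the relevant power here is $z^k$ rather than $(z/2)^{2k+\nu}$. Using $C_\nu^{(b,\lambda)}(z;\rho)=z^{-\nu/2}G_\nu^{(b,\lambda)}(2\sqrt{z};\rho)$ and $(1.8)$ I would first write $C_\nu^{(b,\lambda)}(z;\rho)=\sum_{k\ge0}\frac{(-b)^k(\lambda;\rho)_{2k+\nu}}{k!\,\Gamma(\nu+k+1)\,\Gamma(\nu+2k+1)}z^k$, then put $z^k=2^k(z/2)^k$ and invoke the Gegenbauer expansion $(z/2)^k=\Gamma(k+1)\sum_{p\ge0}\frac{(z/2)^p}{p!}J_{k+p}(z)$ (the $\mu=0$ instance of the expansion used in Theorem 2.7). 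Interchanging the two summations, which is justified by absolute convergence, and relabelling $k\mapsto n$, $p\mapsto m$ gives $\frac{1}{[\Gamma(\nu+1)]^2}\sum_{m,n\ge0}\frac{2^{n-m}z^m(-b)^n(\lambda;\rho)_{\nu+2n}}{m!\,(\nu+1)_n(\nu+1)_{2n}}J_{m+n}(z)$; finally I would rewrite $1/m!=(-m-n)_n/\big((-1)^n(m+n)!\big)$ to bring it into the displayed shape.

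The calculations are routine; the only delicate points are the half-integer bookkeeping and the handling of the $\sqrt{\pi/(2z)}$ prefactor in the spherical case, and, in both cases, justifying the termwise rearrangements, i.e.\ the absolute convergence of the double Bessel series for $\operatorname{Re}(\mu)>-1$ and $\operatorname{Re}(\nu)>-1$ (resp.\ $\operatorname{Re}(\nu)>-\tfrac32$), which is inherited from Theorem 2.7. The stated corollaries for $j_\nu(z)$ and $C_\nu(z)$ then follow by setting $b=1,\,c=\tfrac32,\,\rho=0$ and $b=-1,\,\lambda=1,\,\rho=0$ respectively; in the spherical case one uses $(1;0)_{2n+\nu+1/2}=\Gamma(2n+\nu+\tfrac32)$ to cancel one copy of $(\nu+\tfrac32)_{2n}$ together with a $\Gamma(\nu+\tfrac32)$, and likewise $(1;0)_{\nu+2n}=\Gamma(\nu+2n+1)$ in the Bessel-Clifford case.
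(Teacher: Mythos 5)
Your proposal is correct and follows essentially the route the paper intends: it fills in the omitted details of Theorem 6.6 (obtained from Theorem 2.7 by the parameter shift $\nu\mapsto\nu+\tfrac12$, $c\mapsto c-\tfrac12$ with the $\sqrt{\pi/(2z)}$ prefactor in the spherical case, and by rerunning the Gegenbauer-expansion argument on the series of $C_{\nu}^{(b,\lambda)}$ in the Bessel--Clifford case) and then specializes $b=1,\,c=\tfrac32,\,\rho=0$ and $b=-1,\,\lambda=1,\,\rho=0$, with the correct cancellations $(1;0)_{2n+\nu+1/2}/(\nu+\tfrac32)_{2n}=\Gamma(\nu+\tfrac32)$ and $(1;0)_{\nu+2n}/(\nu+1)_{2n}=\Gamma(\nu+1)$.
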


\begin{theorem}
Recurrence relations satisfied by the generalized four parameter spherical
Bessel and Bessel-Clifford functions are given by 
\begin{align*}
\frac{\partial}{\partial z}[z^{\nu+\frac{3}{2}}\frac{\partial}{\partial z}%
[z^{-\nu}g_{\nu}^{(b,c-1)}(z;\rho)]] & =-bz^{\frac{1}{2}}(c-\frac{3}{2}%
)g_{\nu+1}^{(b,c)}(z;\rho), \\
\frac{\partial}{\partial z}[(\sqrt{z})^{\nu+2}\frac{\partial}{\partial z}%
(C_{\nu}^{(b,\lambda-1)}(z;\rho))] & =\frac{-b(\lambda-1)}{2}(\sqrt {z}%
)^{\nu}C_{\nu+1}^{(b,\lambda)}(z;\rho).
\end{align*}
\end{theorem}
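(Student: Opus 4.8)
The plan is to obtain both recurrences directly from Theorem~3.2, which asserts that $\frac{\partial}{\partial z}[z^{\nu+1}\frac{\partial}{\partial z}(z^{-\nu}G_{\nu}^{(b,c-1)}(z;\rho))]=-b(c-1)G_{\nu+1}^{(b,c)}(z;\rho)$, by substituting the defining relations $g_{\nu}^{(b,c)}(z;\rho)=\sqrt{\pi/(2z)}\,G_{\nu+\frac12}^{(b,c-\frac12)}(z;\rho)$ and $C_{\nu}^{(b,\lambda)}(z;\rho)=z^{-\nu/2}G_{\nu}^{(b,\lambda)}(2\sqrt z;\rho)$ and reducing, in each case, the left-hand operator to exactly the operator that appears in Theorem~3.2. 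Alternatively, both identities can be proved by differentiating the explicit series for $g_{\nu}^{(b,c)}$ and $C_{\nu}^{(b,\lambda)}$ term by term, just as in the proof of Theorem~3.1; I would present the first route since it is shorter.

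For the spherical Bessel identity I would set $\mu=\nu+\frac12$ and write $z^{-\nu}g_{\nu}^{(b,c-1)}(z;\rho)=\sqrt{\pi/2}\,z^{-\mu}G_{\mu}^{(b,c-\frac32)}(z;\rho)$, using $c-1-\frac12=c-\frac32$ and $\nu+\frac12=\mu$. Since $\nu+\frac32=\mu+1$, the left-hand side of the claimed identity becomes $\sqrt{\pi/2}\,\frac{\partial}{\partial z}[z^{\mu+1}\frac{\partial}{\partial z}(z^{-\mu}G_{\mu}^{(b,c-\frac32)}(z;\rho))]$, which is precisely the operator of Theorem~3.2 with $\nu\mapsto\mu$ and $c\mapsto c-\frac12$; it therefore equals $\sqrt{\pi/2}\,(-b)(c-\frac32)G_{\mu+1}^{(b,c-\frac12)}(z;\rho)=\sqrt{\pi/2}\,(-b)(c-\frac32)G_{\nu+\frac32}^{(b,c-\frac12)}(z;\rho)$. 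Converting back via $G_{\nu+\frac32}^{(b,c-\frac12)}(z;\rho)=\sqrt{2z/\pi}\,g_{\nu+1}^{(b,c)}(z;\rho)$ collapses $\sqrt{\pi/2}\cdot\sqrt{2z/\pi}$ to $\sqrt z$ and produces exactly $-bz^{\frac12}(c-\frac32)g_{\nu+1}^{(b,c)}(z;\rho)$.

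For the Bessel-Clifford identity I would change variables to $w=2\sqrt z$, so that $\sqrt z=w/2$ and $\frac{\partial}{\partial z}=\frac{2}{w}\frac{\partial}{\partial w}$, and write $C_{\nu}^{(b,\lambda-1)}(z;\rho)=2^{\nu}w^{-\nu}G_{\nu}^{(b,\lambda-1)}(w;\rho)$. Pushing the two nested $z$-derivatives through the chain rule, the constant $2^{\nu}$, the two factors $2/w$, and the weight $(\sqrt z)^{\nu+2}=2^{-\nu-2}w^{\nu+2}$ combine so that the left-hand side equals $\frac{1}{w}\frac{\partial}{\partial w}[w^{\nu+1}\frac{\partial}{\partial w}(w^{-\nu}G_{\nu}^{(b,\lambda-1)}(w;\rho))]$. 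Applying Theorem~3.2 in the variable $w$ with $c\mapsto\lambda$ turns this into $\frac{1}{w}(-b)(\lambda-1)G_{\nu+1}^{(b,\lambda)}(w;\rho)$, and then $G_{\nu+1}^{(b,\lambda)}(w;\rho)=(w/2)^{\nu+1}C_{\nu+1}^{(b,\lambda)}(z;\rho)$ together with $w^{\nu}=2^{\nu}(\sqrt z)^{\nu}$ yields $\frac{-b(\lambda-1)}{2}(\sqrt z)^{\nu}C_{\nu+1}^{(b,\lambda)}(z;\rho)$, as claimed.

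The only genuine obstacle is the exponent bookkeeping: in the spherical case one must match the half-integer shifts in both the order $\nu$ and the parameter $c$, and in the Bessel-Clifford case one must carefully track the $1/\sqrt z$ factors introduced by $\frac{\partial}{\partial z}=\frac{2}{w}\frac{\partial}{\partial w}$, so that the two successive differentiations reassemble into the operator $\frac{\partial}{\partial w}[w^{\nu+1}\frac{\partial}{\partial w}(w^{-\nu}\,\cdot\,)]$ of Theorem~3.2. Once this arithmetic is carried out, both statements follow immediately, and the corresponding results for the usual spherical Bessel function (\,$b=1,\ c=\frac32,\ \rho=0$\,) and the usual Bessel-Clifford function (\,$b=-1,\ \lambda=1,\ \rho=0$\,) are then read off as in the earlier sections.
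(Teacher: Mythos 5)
Your proposal is correct and follows the route the paper itself indicates: the paper's proof of this theorem is just the remark that the formulas ``can be found as in Theorem 3.2,'' and your argument is precisely the detailed reduction to Theorem 3.2 via the defining relations $g_{\nu}^{(b,c)}(z;\rho)=\sqrt{\pi/(2z)}\,G_{\nu+\frac12}^{(b,c-\frac12)}(z;\rho)$ and $C_{\nu}^{(b,\lambda)}(z;\rho)=z^{-\nu/2}G_{\nu}^{(b,\lambda)}(2\sqrt z;\rho)$. I checked the bookkeeping in both cases (the shift $\mu=\nu+\tfrac12$, $c\mapsto c-\tfrac12$ in the spherical case, and the substitution $w=2\sqrt z$ with $\partial_z=\tfrac{2}{w}\partial_w$ in the Bessel--Clifford case) and it reproduces exactly the stated right-hand sides.
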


\begin{proof}
Recurrence formulas can be found as in Theorem 3.2 in Section 3.
\end{proof}

\begin{theorem}
Recurrence relations satisfied by the generalized four parameter spherical
Bessel and Bessel-Clifford functions are given by%
\begin{align*}
\frac{\partial}{\partial z}[z^{-\nu+\frac{1}{2}}\frac{\partial}{\partial z}%
[z^{\nu+1}g_{\nu}^{(b,c-1)}(z;\rho)]] & =z^{\frac{1}{2}}(c-\frac{3}{2}%
)g_{\nu-1}^{(b,c)}(z;\rho), \\
\frac{\partial}{\partial z}[(\sqrt{z})^{-\nu+2}\frac{\partial}{\partial z}[(%
\sqrt{z})^{2\nu}C_{\nu}^{(b,\lambda-1)}(z;\rho)]] & =(\sqrt{z})^{\nu -2}%
\frac{(\lambda-1)}{2}C_{\nu-1}^{(b,\lambda)}(z;\rho).
\end{align*}
\end{theorem}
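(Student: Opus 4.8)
The plan is to obtain both identities from the recurrence $\frac{\partial}{\partial z}\bigl[z^{-\nu+1}\frac{\partial}{\partial z}(z^{\nu}G_{\nu}^{(b,c-1)}(z;\rho))\bigr]=(c-1)G_{\nu-1}^{(b,c)}(z;\rho)$ of Theorem~3.1, by shifting the parameters and, in the Bessel--Clifford case, changing the independent variable; this is the same device used for Theorems~3.1, 3.2 and for the preceding theorem, so no new idea is required.

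For the spherical Bessel function I would start from the definition $g_{\nu}^{(b,c)}(z;\rho)=\sqrt{\pi/(2z)}\,G_{\nu+\frac12}^{(b,c-\frac12)}(z;\rho)$, which rearranges to $G_{\nu+\frac12}^{(b,c-\frac32)}(z;\rho)=\sqrt{2z/\pi}\,g_{\nu}^{(b,c-1)}(z;\rho)$ and $G_{\nu-\frac12}^{(b,c-\frac12)}(z;\rho)=\sqrt{2z/\pi}\,g_{\nu-1}^{(b,c)}(z;\rho)$. Replacing $\nu$ by $\nu+\tfrac12$ and $c$ by $c-\tfrac12$ in (3.1) gives $\frac{\partial}{\partial z}\bigl[z^{-\nu+\frac12}\frac{\partial}{\partial z}(z^{\nu+\frac12}G_{\nu+\frac12}^{(b,c-\frac32)}(z;\rho))\bigr]=(c-\tfrac32)G_{\nu-\frac12}^{(b,c-\frac12)}(z;\rho)$. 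Substituting the two conversion formulas turns the inner factor into $z^{\nu+\frac12}\sqrt{2z/\pi}\,g_{\nu}^{(b,c-1)}=\sqrt{2/\pi}\,z^{\nu+1}g_{\nu}^{(b,c-1)}$ and the right-hand side into $(c-\tfrac32)\sqrt{2z/\pi}\,g_{\nu-1}^{(b,c)}$; the constant $\sqrt{2/\pi}$ passes through both derivatives and cancels, and the surviving $\sqrt z$ on the right is exactly the $z^{-1/2}$ built into $g$, leaving the first asserted identity.

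For the Bessel--Clifford function I would use $C_{\nu}^{(b,\lambda)}(z;\rho)=z^{-\nu/2}G_{\nu}^{(b,\lambda)}(2\sqrt z;\rho)$, apply (3.1) with $c$ replaced by $\lambda$ in an auxiliary variable $w$, and then set $w=2\sqrt z$. The chain rule gives $\partial/\partial w=\sqrt z\,(\partial/\partial z)$, and one has $w^{\nu}G_{\nu}^{(b,\lambda-1)}(w;\rho)=2^{\nu}(\sqrt z)^{2\nu}C_{\nu}^{(b,\lambda-1)}(z;\rho)$ together with $G_{\nu-1}^{(b,\lambda)}(w;\rho)=(\sqrt z)^{\nu-1}C_{\nu-1}^{(b,\lambda)}(z;\rho)$. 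Pushing $\partial/\partial w$ through the two nested differentiations contributes two factors $\sqrt z$, while $w^{\nu}=2^{\nu}(\sqrt z)^{\nu}$ and $w^{-\nu+1}=2^{-\nu+1}(\sqrt z)^{-\nu+1}$ recombine with one of those extra $\sqrt z$ into $2(\sqrt z)^{-\nu+2}$, the $(\sqrt z)^{2\nu}$ staying inside. This yields $2\sqrt z\,\frac{\partial}{\partial z}\bigl[(\sqrt z)^{-\nu+2}\frac{\partial}{\partial z}((\sqrt z)^{2\nu}C_{\nu}^{(b,\lambda-1)}(z;\rho))\bigr]=(\lambda-1)(\sqrt z)^{\nu-1}C_{\nu-1}^{(b,\lambda)}(z;\rho)$, and dividing by $2\sqrt z$ produces the second identity.

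The only genuine difficulty is the bookkeeping: one must track the half-integer powers of $z$ and the stray factors of $2$ and $\sqrt{2/\pi}$ consistently at each of the two nested differentiations, and in the Bessel--Clifford case the substitution $\partial_w=\sqrt z\,\partial_z$ has to be applied at every nesting level (inside the already-transformed bracket as well as at the outermost step), not just once. There is no analytic obstacle: on $(0,\infty)$ and in the parameter ranges of the statement the defining series of $g_{\nu}^{(b,c)}$ and $C_{\nu}^{(b,\lambda)}$ converge and Theorem~3.1 applies, so the term-by-term differentiations are all legitimate.
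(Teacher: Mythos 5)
Your proposal is correct and follows exactly the route the paper intends: the paper's own proof of this theorem is just the remark that it follows from Theorem~3.1, and your parameter shifts $\nu\to\nu+\tfrac12$, $c\to c-\tfrac12$ together with the conversion $G_{\nu+\frac12}^{(b,c-\frac32)}=\sqrt{2z/\pi}\,g_{\nu}^{(b,c-1)}$, respectively the substitution $w=2\sqrt z$ with $\partial_w=\sqrt z\,\partial_z$ and $G_{\nu}^{(b,\lambda)}(w;\rho)=(\sqrt z)^{\nu}C_{\nu}^{(b,\lambda)}(z;\rho)$, reproduce both identities with the constants $(c-\tfrac32)$, $\tfrac{\lambda-1}{2}$ and the powers $z^{1/2}$, $(\sqrt z)^{\nu-2}$ coming out correctly. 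The only blemish is the phrase about the surviving $\sqrt z$ being "the $z^{-1/2}$ built into $g$" — it is simply the explicit $z^{1/2}$ factor appearing on the right-hand side of the stated identity — but the computation itself is sound.
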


\begin{proof}
Recurrence relations can be seen from Theorem 3.1 in Section 3.
\end{proof}

\begin{theorem}
Derivative formulas satisfied by the generalized four parameter spherical
Bessel and Bessel-Clifford functions are given by 
\begin{align*}
\frac{\partial}{\partial\rho}[g_{\nu}^{(b,c)}(z;\rho)] & =\frac{-1}{c-\frac{3%
}{2}}g_{\nu}^{(b,c-1)}(z;\rho), \\
\frac{\partial}{\partial\rho}[C_{\nu}^{(b,\lambda)}(z;\rho)] & =\frac {-1}{%
\lambda-1}C_{\nu}^{(b,\lambda-1)}(z;\rho).
\end{align*}
\end{theorem}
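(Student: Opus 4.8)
The plan is to deduce both identities directly from Theorem~3.3, which already supplies the $\rho$-derivative of the unified four parameter Bessel function. Recall the defining relations $g_{\nu}^{(b,c)}(z;\rho)=\sqrt{\pi/(2z)}\,G_{\nu+\frac12}^{(b,c-\frac12)}(z;\rho)$ and $C_{\nu}^{(b,\lambda)}(z;\rho)=z^{-\nu/2}\,G_{\nu}^{(b,\lambda)}(2\sqrt z;\rho)$. In both cases the variable $\rho$ enters only through the factor $G$; the prefactors $\sqrt{\pi/(2z)}$ and $z^{-\nu/2}$, as well as the rescaling of the first argument to $2\sqrt z$ in the Bessel--Clifford case, are independent of $\rho$, so differentiation in $\rho$ passes through them unchanged.

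For the spherical Bessel function I would write $\dfrac{\partial}{\partial\rho}g_{\nu}^{(b,c)}(z;\rho)=\sqrt{\pi/(2z)}\,\dfrac{\partial}{\partial\rho}G_{\nu+\frac12}^{(b,c-\frac12)}(z;\rho)$ and then invoke Theorem~3.3 with $\nu$ replaced by $\nu+\tfrac12$ and the Pochhammer parameter $c$ replaced by $c-\tfrac12$ (which forces the restriction $c\neq\tfrac32$, to be recorded with the statement). This produces $\dfrac{\partial}{\partial\rho}G_{\nu+\frac12}^{(b,c-\frac12)}(z;\rho)=-\dfrac{1}{c-3/2}\,G_{\nu+\frac12}^{(b,c-\frac32)}(z;\rho)$. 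Multiplying through by $\sqrt{\pi/(2z)}$ and observing, straight from the definition of $g$, that $\sqrt{\pi/(2z)}\,G_{\nu+\frac12}^{(b,c-\frac32)}(z;\rho)=g_{\nu}^{(b,c-1)}(z;\rho)$ gives the first formula. The Bessel--Clifford case is handled the same way: $\dfrac{\partial}{\partial\rho}C_{\nu}^{(b,\lambda)}(z;\rho)=z^{-\nu/2}\dfrac{\partial}{\partial\rho}G_{\nu}^{(b,\lambda)}(2\sqrt z;\rho)$, Theorem~3.3 (applied with first argument $2\sqrt z$ and parameter $\lambda$, so $\lambda\neq1$) gives $-\dfrac{1}{\lambda-1}G_{\nu}^{(b,\lambda-1)}(2\sqrt z;\rho)$, and multiplying by $z^{-\nu/2}$ and using the definition of $C$ yields the second formula.

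As a self-contained alternative one can differentiate term by term the explicit series for $g_{\nu}^{(b,c)}$ and $C_{\nu}^{(b,\lambda)}$ recorded at the beginning of Section~6. The only $\rho$-dependent factor is the generalized Pochhammer symbol, and from the integral representation (1.6) one has $\dfrac{\partial}{\partial\rho}(\lambda;\rho)_{\mu}=-\dfrac{1}{\Gamma(\lambda)}\int_{0}^{\infty}t^{\lambda+\mu-2}e^{-t-\rho/t}\,dt=-\dfrac{1}{\lambda-1}(\lambda-1;\rho)_{\mu}$, using $\Gamma(\lambda)=(\lambda-1)\Gamma(\lambda-1)$. Applying this with $(\lambda,\mu)=(c-\tfrac12,\,2k+\nu+\tfrac12)$ in the series for $g$, and with $(\lambda,\mu)=(\lambda,\,2k+\nu)$ in the series for $C$, shifts the Pochhammer parameter by one and reproduces, up to the stated constant, precisely the series for $g_{\nu}^{(b,c-1)}$ and $C_{\nu}^{(b,\lambda-1)}$ respectively.

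The only technical point is the legitimacy of interchanging $\partial_{\rho}$ with the integral in (1.6) (equivalently, with the defining series), which is exactly the step already carried out in the proof of Theorem~3.3: it is justified by the local uniform convergence of $\int_{0}^{\infty}t^{\lambda+\mu-2}e^{-t-\rho/t}\,dt$ for $\rho$ in compact subsets of $\{\func{Re}(\rho)>0\}$ under the hypotheses $\func{Re}(\nu)>-\tfrac32,~\func{Re}(c)>\tfrac12$ (respectively $\func{Re}(\nu)>-1,~\func{Re}(\lambda)>0$). No new estimates are required beyond those of Theorem~3.3, so I do not anticipate any real obstacle; it suffices to note the exclusions $c\neq\tfrac32$ and $\lambda\neq1$ alongside the statement.
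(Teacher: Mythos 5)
Your proposal is correct and follows the same route as the paper, whose proof consists precisely of the remark that the formulas follow directly from Theorem 3.3; you simply make explicit the substitutions $\nu\to\nu+\tfrac12$, $c\to c-\tfrac12$ (respectively $z\to 2\sqrt{z}$, $c\to\lambda$) and the constancy in $\rho$ of the prefactors, together with the exclusions $c\neq\tfrac32$, $\lambda\neq 1$. The alternative series/integral argument you sketch is a sound but unnecessary addition; the computation $\frac{\partial}{\partial\rho}(\lambda;\rho)_{\mu}=-\frac{1}{\lambda-1}(\lambda-1;\rho)_{\mu}$ is exactly what underlies Theorem 3.3 itself.
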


\begin{proof}
Derivatives can directly seen from Theorem 3.3 in Section 3.
\end{proof}

\begin{theorem}
Recurrence relations satisfied by the generalized four parameter spherical
Bessel and Bessel-Clifford functions are given by 
\begin{gather*}
z^{2}[\frac{\partial^{2}}{\partial z^{2}}g_{\nu-1}^{(b,c-1)}(z;\rho )+b\frac{%
\partial^{2}}{\partial z^{2}}g_{\nu+1}^{(b,c-1)}(z;\rho)]+z[(\frac {5}{2}%
-\nu)\frac{\partial}{\partial z}g_{\nu-1}^{(b,c-1)}(z;\rho)+b(\nu +\frac{7}{2%
})\frac{\partial}{\partial z}g_{\nu+1}^{(b,c-1)}(z;\rho)] \\
=-\frac{(1-\nu)}{2}g_{\nu-1}^{(b,c-1)}(z;\rho)-b\frac{(2+\nu)}{2}g_{\nu
+1}^{(b,c-1)}(z;\rho), \\
-b(3\nu+5)z\frac{\partial}{\partial z}C_{\nu+1}^{(b,\lambda-1)}(z;\rho
)-b(\nu+1)^{2}C_{\nu+1}^{(b,\lambda-1)}(z;\rho)-2bz^{2}\frac{\partial^{2}}{%
\partial z^{2}}C_{\nu+1}^{(b,\lambda-1)}(z;\rho) \\
=(\nu+1)\frac{\partial}{\partial z}C_{\nu-1}^{(b,\lambda-1)}(z;\rho )+2z%
\frac{\partial^{2}}{\partial z^{2}}C_{\nu-1}^{(b,\lambda-1)}(z;\rho).
\end{gather*}
\end{theorem}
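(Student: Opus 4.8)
The plan is to deduce both identities from the second-order recurrence for $G_{\nu}^{(b,c)}$ established in Theorem 3.4, in exactly the manner in which the other recurrences of this section were reduced to their $G$-counterparts. Throughout I use the defining relations $g_{\mu}^{(b,d)}(z;\rho)=\sqrt{\pi/(2z)}\,G_{\mu+1/2}^{(b,d-1/2)}(z;\rho)$ and $C_{\mu}^{(b,d)}(z;\rho)=z^{-\mu/2}G_{\mu}^{(b,d)}(2\sqrt{z};\rho)$, and I suppress the arguments of the functions occurring in the target formulas.

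For the spherical Bessel identity, the first step is to replace $\nu$ by $\nu+\tfrac12$ and $c$ by $c-\tfrac12$ in Theorem 3.4. The two occurrences of $G$ then become $G_{\nu-1/2}^{(b,c-3/2)}$ and $G_{\nu+3/2}^{(b,c-3/2)}$, which by the definition above equal $\sqrt{2z/\pi}\,g_{\nu-1}^{(b,c-1)}$ and $\sqrt{2z/\pi}\,g_{\nu+1}^{(b,c-1)}$ respectively, while the scalar factors $2-\nu$ and $-b(\nu+2)$ become $\tfrac32-\nu$ and $-b(\nu+\tfrac52)$. After cancelling the constant $\sqrt{2/\pi}$, the next step is to expand $\partial_{z}$ and $\partial_{z}^{2}$ applied to $z^{1/2}g_{\nu\mp1}^{(b,c-1)}$ by the product rule, collect the terms according to the powers $z^{3/2}$, $z^{1/2}$, $z^{-1/2}$ multiplying $g''$, $g'$ and $g$, multiply the whole relation by $z^{1/2}$, and transpose the undifferentiated terms. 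The coefficient arithmetic — $\tfrac12(\tfrac32-\nu)-\tfrac14=\tfrac{1-\nu}{2}$, $(\tfrac32-\nu)+1=\tfrac52-\nu$, $\tfrac b2(\nu+\tfrac52)-\tfrac b4=\tfrac{b(\nu+2)}{2}$, $b+b(\nu+\tfrac52)=b(\nu+\tfrac72)$ — then reproduces exactly the stated identity for $g_{\nu-1}^{(b,c-1)}$ and $g_{\nu+1}^{(b,c-1)}$.

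For the Bessel-Clifford identity the first step is instead to keep $\nu$ fixed, set $c=\lambda$ in Theorem 3.4, and perform the change of variable $w=2\sqrt{z}$, under which $\partial_{w}=\sqrt{z}\,\partial_{z}$ and $\partial_{w}^{2}=\tfrac12\partial_{z}+z\partial_{z}^{2}$, together with $G_{\nu\mp1}^{(b,\lambda-1)}(2\sqrt{z};\rho)=z^{(\nu\mp1)/2}C_{\nu\mp1}^{(b,\lambda-1)}(z;\rho)$. Expanding $\partial_{w}$ and $\partial_{w}^{2}$ of $z^{(\nu-1)/2}C_{\nu-1}^{(b,\lambda-1)}$ and of $z^{(\nu+1)/2}C_{\nu+1}^{(b,\lambda-1)}$ and collecting like terms, I expect the coefficient of the undifferentiated $C_{\nu-1}^{(b,\lambda-1)}$ to vanish identically via $(2-\nu)\tfrac{\nu-1}{2}+\tfrac{(\nu-1)(\nu-2)}{2}=0$, every surviving term to carry a common factor $z^{\nu/2}$, and — after dividing it out and rearranging — the constants to organize into $3\nu+5$ (coming from $b(2\nu+3)+b(\nu+2)$) and $(\nu+1)^{2}$ (coming from $\tfrac{b(\nu+1)}{2}\cdot(2\nu+2)$), which is the claimed relation.

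The routine ingredients are the two product-rule expansions; the only point requiring care is the bookkeeping of the numerical coefficients together with the $z$-powers produced by the differentiations, where one must check that in each case a single power of $z$ factors out of the entire equation and that the arithmetic lands on the exact constants $\tfrac52-\nu,\ \nu+\tfrac72,\ \tfrac{1-\nu}{2},\ \tfrac{2+\nu}{2}$ (spherical case) and $3\nu+5,\ (\nu+1)^{2}$ (Bessel-Clifford case). No structural difficulty arises: once the substitutions are in place the whole argument reduces to elementary differentiation and polynomial identities.
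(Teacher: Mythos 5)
Your proposal is correct and follows essentially the same route as the paper: the paper's own proof consists of the remark that the theorem follows from Theorem 3.4, and you carry out exactly that reduction, substituting $\nu\to\nu+\tfrac12$, $c\to c-\tfrac12$ (spherical case) and $z\to 2\sqrt{z}$ with $c=\lambda$ (Bessel--Clifford case) into the $G$-recurrence. Your coefficient bookkeeping checks out, including the vanishing of the undifferentiated $C_{\nu-1}^{(b,\lambda-1)}$ term and the constants $\tfrac52-\nu$, $\nu+\tfrac72$, $\tfrac{1-\nu}{2}$, $\tfrac{2+\nu}{2}$, $3\nu+5$, $(\nu+1)^{2}$.
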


\begin{proof}
Recurrence formulas satisfied by generalized four parameter spherical Bessel
and Bessel-Clifford functions can be proved by Theorem 3.4 in Section 3.
\end{proof}

In Theorem 6.11, taking $b=-1,~\lambda =2$ and $\rho =0$ ~$C_{\nu
-1}^{(b,\lambda -1)}(z;\rho )$ is reduced to $C_{\nu -1}(z)$ and under the
same substitutions $C_{\nu +1}^{(b,\lambda -1)}(z;\rho )$ is reduced to $%
C_{\nu +1}(z)$.~For $b=1,~c=\frac{5}{2}$ and $\rho =0,~g_{\nu
-1}^{(b,c-1)}(z;\rho )~$is reduced to $j_{\nu -1}(z)$ and under the same
substitutions $g_{\nu +1}^{(b,c-1)}(z;\rho )~$\ is reduced to $j_{\nu
+1}(z). $ Hence, the following Corollary is obtained:

\begin{corollary}
Recurrence relations satisfied by the usual spherical Bessel and
Bessel-Clifford functions are given by 
\begin{gather*}
z^{2}[\frac{d^{2}}{dz^{2}}j_{\nu-1}(z)+\frac{d^{2}}{dz^{2}}j_{\nu +1}(z)]+z[(%
\frac{5}{2}-\nu)\frac{d}{dz}j_{\nu-1}(z)+(\nu+\frac{7}{2})\frac {d}{dz}%
j_{\nu+1}(z)] \\
=-\frac{(1-\nu)}{2}j_{\nu-1}(z)-\frac{(2+\nu)}{2}j_{\nu+1}(z), \\
(3\nu+5)z\frac{d}{dz}C_{\nu+1}(z)+(\nu+1)^{2}C_{\nu+1}(z)+2z^{2}\frac{d^{2}}{%
dz^{2}}C_{\nu+1}(z) \\
=(\nu+1)\frac{d}{dz}C_{\nu-1}(z)+2z\frac{d^{2}}{dz^{2}}C_{\nu-1}(z).
\end{gather*}
\end{corollary}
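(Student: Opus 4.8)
The plan is to obtain this Corollary directly from Theorem~6.11 by specialising its two relations; no new analytic input is required beyond the reductions of the generalised functions to their classical counterparts already recorded in this section. The key observations are that the recurrences of Theorem~6.11 involve $g_{\nu\mp1}^{(b,c-1)}$ and $C_{\nu\mp1}^{(b,\lambda-1)}$, so a \emph{shifted} choice of $c$ and $\lambda$ is needed, and that once $\rho$ is fixed the functions no longer depend on $\rho$, so every $\partial/\partial z$ collapses to an ordinary $d/dz$.

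First I would treat the spherical Bessel relation. Put $b=1$, $c=\tfrac52$ and $\rho=0$ in the first gather of Theorem~6.11. Then $c-1=\tfrac32$, so the functions occurring there are $g_{\nu-1}^{(1,3/2)}(z;0)$ and $g_{\nu+1}^{(1,3/2)}(z;0)$; since $g_{\mu}^{(b,c)}(z;\rho)$ reduces to $j_{\mu}(z)$ precisely when $b=1$, $c=\tfrac32$, $\rho=0$, we get $g_{\nu\mp1}^{(1,3/2)}(z;0)=j_{\nu\mp1}(z)$. Replacing each surviving factor $b$ by $1$ and each $\partial/\partial z$ by $d/dz$ turns the relation into $z^{2}[\tfrac{d^{2}}{dz^{2}}j_{\nu-1}(z)+\tfrac{d^{2}}{dz^{2}}j_{\nu+1}(z)]+z[(\tfrac52-\nu)\tfrac{d}{dz}j_{\nu-1}(z)+(\nu+\tfrac72)\tfrac{d}{dz}j_{\nu+1}(z)]=-\tfrac{(1-\nu)}{2}j_{\nu-1}(z)-\tfrac{(2+\nu)}{2}j_{\nu+1}(z)$, which is the first claimed identity.

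Next I would treat the Bessel--Clifford relation. Put $b=-1$, $\lambda=2$ and $\rho=0$ in the second gather of Theorem~6.11. Then $\lambda-1=1$, so the functions there are $C_{\nu-1}^{(-1,1)}(z;0)$ and $C_{\nu+1}^{(-1,1)}(z;0)$; since $C_{\mu}^{(b,\lambda)}(z;\rho)$ reduces to $C_{\mu}(z)$ exactly when $b=-1$, $\lambda=1$, $\rho=0$, these equal $C_{\nu\mp1}(z)$, and again the partial derivatives become total ones. Substituting $b=-1$ makes the coefficients $-b(3\nu+5)$, $-b(\nu+1)^{2}$ and $-2b$ become $3\nu+5$, $(\nu+1)^{2}$ and $2$, while the right-hand side $(\nu+1)\tfrac{\partial}{\partial z}C_{\nu-1}^{(b,\lambda-1)}+2z\tfrac{\partial^{2}}{\partial z^{2}}C_{\nu-1}^{(b,\lambda-1)}$ is unchanged by the sign of $b$. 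This yields $(3\nu+5)z\tfrac{d}{dz}C_{\nu+1}(z)+(\nu+1)^{2}C_{\nu+1}(z)+2z^{2}\tfrac{d^{2}}{dz^{2}}C_{\nu+1}(z)=(\nu+1)\tfrac{d}{dz}C_{\nu-1}(z)+2z\tfrac{d^{2}}{dz^{2}}C_{\nu-1}(z)$, the second claimed identity.

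The computation is entirely routine; the only point that demands care is the bookkeeping of the parameter shifts. One must check that the chosen substitutions are admissible for the chain of results underlying Theorem~6.11 — in particular that the factors $c-\tfrac32$ and $\lambda-1$ appearing in Theorems~6.8--6.10 do not vanish, which holds since $c-\tfrac32=1$ and $\lambda-1=1$ with the above choices — and that with $\rho=0$ the $\rho$-dependence disappears so that the $z$-partials may legitimately be written as ordinary derivatives. Once these points are noted, matching coefficients term by term completes the proof.
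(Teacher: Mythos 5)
Your argument is correct: the substitutions $b=1$, $c=\tfrac52$, $\rho=0$ (so $c-1=\tfrac32$) and $b=-1$, $\lambda=2$, $\rho=0$ (so $\lambda-1=1$) do reduce the two relations of Theorem~6.11 to the stated identities, and this is precisely the reduction the paper itself records in the sentence immediately preceding the Corollary. However, the paper's actual proof does not take this route: inside the proof environment the authors re-derive both identities directly from the classical recurrences listed in Section~1 — for the spherical Bessel case starting from $j_{\nu-1}(z)+j_{\nu+1}(z)=\tfrac{2\nu+1}{z}j_{\nu}(z)$ and $\nu j_{\nu-1}(z)-(\nu+1)j_{\nu+1}(z)=(2\nu+1)\tfrac{d}{dz}j_{\nu}(z)$ together with $\tfrac{d}{dz}[z^{\nu+1}j_{\nu}(z)]=z^{\nu+1}j_{\nu-1}(z)$ and $\tfrac{d}{dz}[z^{-\nu}j_{\nu}(z)]=-z^{-\nu}j_{\nu+1}(z)$, and for the Bessel--Clifford case from $\tfrac{d}{dz}C_{\nu}(z)=C_{\nu+1}(z)$ and $zC_{\nu+2}(z)+(\nu+1)C_{\nu+1}(z)=C_{\nu}(z)$ — eliminating terms by differentiation until the two displayed relations appear. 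The difference in substance is this: your specialization is shorter and exhibits the Corollary as a genuine special case of the unified family, but its validity rests entirely on Theorem~6.11, whose proof the paper only sketches by reference to Theorem~3.4/3.5; the paper's classical computation is self-contained at the level of well-known recurrences and therefore serves as an independent verification (a consistency check on the generalized machinery) rather than a consequence of it. Your remarks on the admissibility of the parameter shifts ($c-\tfrac32=1\neq0$, $\lambda-1=1\neq0$) and on the disappearance of the $\rho$-dependence at $\rho=0$ are the right points to flag for the specialization route, and they are consistent with how the paper handles $\rho=0$ elsewhere, so there is no gap — only a different, more machinery-dependent path than the one the authors chose to write out.
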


\begin{proof}
To obtain the recurrence relation satisfied by spherical Bessel function,
the following recurrence relation should be written%
\begin{equation*}
\frac{z}{2\nu+1}[~j_{\nu-1}(z)+j_{\nu+1}(z)]=j_{\nu}(z).
\end{equation*}
Differentiating the above recurrence formula with respect to $z\,~$yields%
\begin{equation*}
\frac{d}{dz}j_{\nu}(z)=\frac{1}{2\nu+1}[~j_{\nu-1}(z)+j_{\nu+1}(z)+z(\frac {d%
}{dz}j_{\nu-1}(z)+\frac{d}{dz}j_{\nu+1}(z))].
\end{equation*}
The second recurrence formula satisfied by spherical Bessel function is
given by%
\begin{equation*}
\frac{d}{dz}j_{\nu}(z)=\frac{1}{2\nu+1}[\nu~j_{\nu-1}(z)-(\nu+1)~j_{\nu
+1}(z)].
\end{equation*}
Comparing the last two equations, one can get%
\begin{equation*}
(\nu-1)~j_{\nu-1}(z)-(\nu+2)~j_{\nu+1}(z)=z(\frac{d}{dz}j_{\nu-1}(z)+\frac {d%
}{dz}j_{\nu+1}(z)).~
\end{equation*}
Inserting $~j_{\nu-1}(z)$ and $~j_{\nu+1}(z)$ 
\begin{align*}
~j_{\nu-1}(z) & =z^{-\nu-1}\frac{d}{dz}(z^{\nu+1}j_{\nu}(z)), \\
~j_{\nu+1}(z) & =z^{\nu}\frac{d}{dz}(z^{-\nu}j_{\nu}(z)),
\end{align*}
and then taking corresponding derivatives gives%
\begin{equation*}
(-2\nu-1)~j_{\nu}(z)+(2\nu+1)z\frac{d}{dz}j_{\nu}(z)=z^{2}(\frac{d}{dz}%
j_{\nu-1}(z)+\frac{d}{dz}j_{\nu+1}(z)).
\end{equation*}
Now, substituting $j_{\nu}(z)$ and $\frac{d}{dz}j_{\nu}(z)$ 
\begin{align*}
j_{\nu}(z) & =\frac{z}{2\nu+1}[~j_{\nu-1}(z)+j_{\nu+1}(z)], \\
\frac{d}{dz}j_{\nu}(z) & =\frac{1}{2\nu+1}[\nu~j_{\nu-1}(z)-(\nu
+1)~j_{\nu+1}(z)]
\end{align*}
in above recurrence formula, one can get%
\begin{equation*}
(\nu-1)z~j_{\nu-1}(z)-z(\nu+2)~j_{\nu+1}(z)=z^{2}(\frac{d}{dz}j_{\nu -1}(z)+%
\frac{d}{dz}j_{\nu+1}(z)).
\end{equation*}
Differentiating with respect to $z$ in the last recurrence formula and then
multiplying with $2$ on both sides gives%
\begin{align*}
& 2z^{2}(\frac{d^{2}}{dz^{2}}j_{\nu-1}(z)+\frac{d^{2}}{dz^{2}}j_{\nu+1}(z))
\\
& =z[(2\nu-6)\frac{~d}{dz}j_{\nu-1}(z)-(2\nu+8)\frac{d}{dz}j_{\nu
+1}(z)]+(2\nu-2)~j_{\nu-1}(z)-(2\nu+4)~j_{\nu+1}(z).
\end{align*}
Finally, using the fact that%
\begin{equation*}
(\nu-1)~j_{\nu-1}(z)-(\nu+2)~j_{\nu+1}(z)=z(\frac{d}{dz}j_{\nu-1}(z)+\frac {d%
}{dz}j_{\nu+1}(z))
\end{equation*}
result is obtained.

To get the recurrence formula satisfied by the usual Bessel-Clifford
function, the following recurrence formula can be written%
\begin{equation*}
zC_{\nu+2}(z)+(\nu+1)C_{\nu+1}(z)=C_{\nu}(z).
\end{equation*}
Taking derivative with respect to $z$ in above recurrence yields%
\begin{equation*}
C_{\nu+2}(z)+z\frac{d}{dz}C_{\nu+2}(z)+(\nu+1)\frac{d}{dz}C_{\nu+1}(z)=\frac{%
d}{dz}C_{\nu}(z).
\end{equation*}
Using the derivative formula%
\begin{equation*}
\frac{d}{dz}C_{\nu}(z)=C_{\nu+1}(z)
\end{equation*}
last recurrence formula can be written as 
\begin{equation*}
(\nu+2)\frac{d^{2}}{dz^{2}}C_{\nu}(z)+z\frac{d^{2}}{dz^{2}}C_{\nu+1}(z)=%
\frac{d^{2}}{dz^{2}}C_{\nu-1}(z).
\end{equation*}
Multiplying with $z$ on both sides of the above equation gives%
\begin{equation*}
(\nu+2)z\frac{d^{2}}{dz^{2}}C_{\nu}(z)+z^{2}\frac{d^{2}}{dz^{2}}C_{\nu
+1}(z)=z\frac{d^{2}}{dz^{2}}C_{\nu-1}(z).
\end{equation*}
Substituting%
\begin{equation*}
\frac{d^{2}}{dz^{2}}C_{\nu}(z)=\frac{d}{dz}C_{\nu+1}(z)
\end{equation*}
gives%
\begin{equation*}
(\nu+2)z\frac{d}{dz}C_{\nu+1}(z)+z^{2}\frac{d^{2}}{dz^{2}}C_{\nu+1}(z)=z%
\frac{d^{2}}{dz^{2}}C_{\nu-1}(z).
\end{equation*}
Multiplying by $2$ on both sides and then adding the term $(\nu+1)\frac{d}{dz%
}C_{\nu-1}(z)$ gives%
\begin{align*}
& z(2\nu+4)\frac{d}{dz}C_{\nu+1}(z)+2z^{2}\frac{d^{2}}{dz^{2}}C_{\nu
+1}(z)+(\nu+1)\frac{d}{dz}C_{\nu-1}(z) \\
& =(\nu+1)\frac{d}{dz}C_{\nu-1}(z)+2z\frac{d^{2}}{dz^{2}}C_{\nu-1}(z).
\end{align*}
Using the fact that%
\begin{equation*}
(\nu+1)z\frac{d}{dz}C_{\nu+1}(z)+(\nu+1)^{2}C_{\nu+1}(z)=(\nu+1)\frac{d}{dz}%
C_{\nu-1}(z)
\end{equation*}
which is directly seen by first two recurrence formulas satisfied by
Bessel-Clifford function in Section 1, whence the result.
\end{proof}

\begin{theorem}
Partial differential equations satisfied by the generalized four parameter
spherical Bessel and Bessel-Clifford functions are given by%
\begin{gather*}
(-4\nu^{2}-4\nu+6)z\frac{\partial^{3}}{\partial z\partial\rho^{2}}g_{\nu
}^{(b,c)}(z;\rho)+(-4\nu^{2}-4\nu+39)z^{2}\frac{\partial^{4}}{\partial
z^{2}\partial\rho^{2}}g_{\nu}^{(b,c)}(z;\rho)+28z^{3}\frac{\partial^{5}}{%
\partial z^{3}\partial\rho^{2}}g_{\nu}^{(b,c)}(z;\rho) \\
+4z^{4}\frac{\partial^{6}}{\partial z^{4}\partial\rho^{2}}%
g_{\nu}^{(b,c)}(z;\rho)+(\nu^{2}+\nu)\frac{\partial^{2}}{\partial\rho^{2}}%
g_{\nu }^{(b,c)}(z;\rho) \\
=-4bz^{2}g_{\nu}^{(b,c)}(z;\rho), \\
(\nu^{3}+4\nu^{2}+5\nu+2)\frac{\partial^{3}}{\partial z\partial\rho^{2}}%
C_{\nu}^{(b,\lambda)}(z;\rho)+(5\nu^{2}+21\nu+22)z\frac{\partial^{4}}{%
\partial z^{2}\partial\rho^{2}}C_{\nu}^{(b,\lambda)}(z;\rho) \\
+(8\nu+22)z^{2}\frac{\partial^{5}}{\partial z^{3}\partial\rho^{2}}C_{\nu
}^{(b,\lambda)}(z;\rho)+4z^{3}\frac{\partial^{6}}{\partial z^{4}\partial
\rho^{2}}C_{\nu}^{(b,\lambda)}(z;\rho) \\
=-bC_{\nu}^{(b,\lambda)}(z;\rho).
\end{gather*}
\end{theorem}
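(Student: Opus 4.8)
The plan is to mimic, step for step, the derivation of the sixth-order equation~(3.5) for $G_{\nu}^{(b,c)}$, replacing the three ingredients used there — the recurrences (3.1), (3.2) and the $\rho$-derivative formula (3.3) — by their spherical Bessel (resp. Bessel--Clifford) analogues, which are precisely the spherical (resp. Clifford) recurrences of Theorems~6.9 and~6.8 together with the spherical (resp. Clifford) derivative formula of Theorem~6.10. So the proof splits into two parallel computations, one for $g_{\nu}^{(b,c)}$ and one for $C_{\nu}^{(b,\lambda)}$, each organised exactly like the chain of identities (3.6)--(3.11) in the proof of Theorem~3.5.

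For the spherical Bessel function I would, as in (3.6), use the spherical recurrence of Theorem~6.8 with $\nu-1$ in place of $\nu$, giving $\frac{\partial}{\partial z}\bigl[z^{\nu+\frac12}\frac{\partial}{\partial z}\bigl(z^{-\nu+1}g_{\nu-1}^{(b,c-1)}(z;\rho)\bigr)\bigr]=-b\,z^{\frac12}(c-\tfrac32)\,g_{\nu}^{(b,c)}(z;\rho)$, and, as in (3.7), the spherical recurrence of Theorem~6.9 with $c-1$ in place of $c$, which expresses $g_{\nu-1}^{(b,c-1)}$ through a second-order $z$-operator applied to $g_{\nu}^{(b,c-2)}$. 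Substituting the latter into the former (the analogue of (3.10)) yields a fourth-order $z$-operator in $g_{\nu}^{(b,c-2)}$ equal to $-b\,z^{\frac12}(c-\tfrac32)g_{\nu}^{(b,c)}$; iterating Theorem~6.10 twice gives $g_{\nu}^{(b,c-2)}(z;\rho)=(c-\tfrac52)(c-\tfrac32)\,\frac{\partial^{2}}{\partial\rho^{2}}g_{\nu}^{(b,c)}(z;\rho)$, and after inserting this the scalar factors $(c-\tfrac52)(c-\tfrac32)$ cancel, leaving $\frac{\partial}{\partial z}\bigl[z^{\nu+\frac12}\frac{\partial}{\partial z}\bigl(z^{-\nu+\frac12}\frac{\partial}{\partial z}\bigl[z^{-\nu+\frac12}\frac{\partial}{\partial z}\bigl(z^{\nu+1}\frac{\partial^{2}}{\partial\rho^{2}}g_{\nu}^{(b,c)}\bigr)\bigr]\bigr)\bigr]=-b\,z^{\frac12}g_{\nu}^{(b,c)}$. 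Carrying out the four nested differentiations by the product rule on the half-integer powers of $z$ and then multiplying by $4z^{3/2}$ produces exactly the stated spherical equation, with the powers of $z$ collecting into the coefficients $4z^{4}$, $28z^{3}$, $-4\nu^{2}-4\nu+39$, $-4\nu^{2}-4\nu+6$, $\nu^{2}+\nu$ and the right-hand side $-4bz^{2}g_{\nu}^{(b,c)}(z;\rho)$.

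For the Bessel--Clifford function I would run the identical scheme with the Clifford parts of Theorems~6.8, 6.9, 6.10, carrying $\lambda$ in place of $c$: from Theorem~6.8 with $\nu-1$ for $\nu$ and Theorem~6.9 with $\lambda-1$ for $\lambda$ one eliminates $C_{\nu-1}^{(b,\lambda-1)}$, then trades $C_{\nu}^{(b,\lambda-2)}$ for $(\lambda-2)(\lambda-1)\frac{\partial^{2}}{\partial\rho^{2}}C_{\nu}^{(b,\lambda)}$ via Theorem~6.10 and cancels the scalars, reaching $\frac{\partial}{\partial z}\bigl[z^{\frac{\nu+1}{2}}\frac{\partial}{\partial z}\bigl(z^{1-\frac{\nu}{2}}\frac{\partial}{\partial z}\bigl[z^{1-\frac{\nu}{2}}\frac{\partial}{\partial z}\bigl(z^{\nu}\frac{\partial^{2}}{\partial\rho^{2}}C_{\nu}^{(b,\lambda)}\bigr)\bigr]\bigr)\bigr]=-\tfrac{b}{4}\,z^{\frac{\nu-1}{2}}C_{\nu}^{(b,\lambda)}$. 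Expanding the derivatives and multiplying by $4z^{(1-\nu)/2}$ then delivers the coefficients $4z^{3}$, $(8\nu+22)z^{2}$, $(5\nu^{2}+21\nu+22)z=(\nu+2)(5\nu+11)z$, $\nu^{3}+4\nu^{2}+5\nu+2=(\nu+1)^{2}(\nu+2)$ and the right-hand side $-b\,C_{\nu}^{(b,\lambda)}(z;\rho)$.

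The only real obstacle is the bookkeeping in this final differentiation step: because the spherical and Clifford recurrences carry half-integer powers of $z$ — unlike the integer powers in (3.1)--(3.3) — the Leibniz expansions of the four nested $z$-derivatives are bulkier than in the proof of (3.5), and one must keep track of the dozen or so resulting monomials, check that all fractional powers of $z$ cancel after the final multiplication, and verify that the quadratic and cubic polynomials in $\nu$ assemble into the stated coefficients. Everything else is a verbatim transcription of the argument already used to prove Theorem~3.5.
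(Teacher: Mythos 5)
Your proposal is correct and is precisely the argument the paper intends by its one-line proof (redoing the chain (3.6)--(3.11) of Theorem~3.5 with the analogues in Theorems~6.8--6.10): the intermediate operator identities you state, e.g. $\frac{\partial}{\partial z}\bigl[z^{\nu+\frac12}\frac{\partial}{\partial z}\bigl(z^{-\nu+\frac12}\frac{\partial}{\partial z}\bigl[z^{-\nu+\frac12}\frac{\partial}{\partial z}\bigl(z^{\nu+1}\frac{\partial^{2}}{\partial\rho^{2}}g_{\nu}^{(b,c)}\bigr)\bigr]\bigr)\bigr]=-b\,z^{\frac12}g_{\nu}^{(b,c)}$ and its Bessel--Clifford counterpart, follow exactly as you describe after the $(c-\tfrac52)(c-\tfrac32)$ (resp. $(\lambda-2)(\lambda-1)$) cancellations, and expanding them does reproduce the stated coefficients $(\nu^{2}+\nu)$, $-4\nu^{2}-4\nu+6$, $-4\nu^{2}-4\nu+39$, $28$, $4$ and $(\nu+1)^{2}(\nu+2)$, $5\nu^{2}+21\nu+22$, $8\nu+22$, $4$ with right-hand sides $-4bz^{2}g_{\nu}^{(b,c)}$ and $-bC_{\nu}^{(b,\lambda)}$. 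No gaps beyond the routine Leibniz bookkeeping you already flag.
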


\begin{proof}
Partial differential equations satisfied by the generalized four parameter
spherical Bessel and Bessel-Clifford functions can be proved by Theorem 3.5
in Section 3.
\end{proof}

\section{Concluding Remarks}

The generalizations of Bessel functions were introduced by some authors in 
\cite{A.K, C, D.M, F.S, L.G, L, H.A.D, M, N.V.O}. In the present article,
defining the unification of the Bessel functions via the generalized
Pochhammer function, some potentially useful formulas are obtained. Some of
these important formulas are new. \ In the below table, some special cases
of the unified four parameter Bessel function are presented:

\subsection{Special Cases of the Unified Four Parameter Bessel Function $G_{%
\protect\nu }^{(b,c)}(z;\protect\rho )$}

\begin{center}
\begin{tabular}{lll}
\hline
No & Values of the parameters & Relation Between $G_{\nu }^{(b,c)}(z;\rho )$
and its special case \\ \hline
I & $b=-1,~c=\frac{1}{2},~\nu \rightarrow \nu +\frac{1}{2},~\rho =0$ & $%
G_{\nu +\frac{1}{2}}^{(-1,\frac{1}{2})}(z)=\frac{\Gamma (\nu +1)(\frac{z}{2}%
)^{\nu +\frac{1}{2}}}{\sqrt{\pi }[\Gamma (\nu +\frac{3}{2})]^{2}}~_{2}F_{3}(%
\frac{\nu +1}{2},\frac{\nu +2}{2};\frac{2\nu +3}{2},\frac{2\nu +3}{4},\frac{%
2\nu +5}{4};\frac{z^{2}}{4})$ \\ 
II & $b=1,~c=\frac{1}{2},~\nu \rightarrow \nu +\frac{1}{2},~\rho =0$ & $%
G_{\nu +\frac{1}{2}}^{(1,\frac{1}{2})}(z)=\frac{\Gamma (\nu +1)(\frac{z}{2}%
)^{\nu +\frac{1}{2}}}{\sqrt{\pi }[\Gamma (\nu +\frac{3}{2})]^{2}}~_{2}F_{3}(%
\frac{\nu +1}{2},\frac{\nu +2}{2};\frac{2\nu +3}{2},\frac{2\nu +3}{4},\frac{%
2\nu +5}{4};\frac{-z^{2}}{4}),$ \\ 
III & $b=-1,~c=1,~~\nu \rightarrow \nu +\frac{1}{2},~\rho =0$ & $G_{\nu +%
\frac{1}{2}}^{(-1,1)}(z)=\frac{(\frac{z}{2})^{\nu +\frac{1}{2}}}{\Gamma (\nu
+\frac{3}{2})}~_{0}F_{1}(-;\nu +\frac{3}{2};\frac{z^{2}}{4}),$ \\ 
IV & $b=1,~c=1,~~\nu \rightarrow \nu +\frac{1}{2},~\rho =0$ & $G_{\nu +\frac{%
1}{2}}^{(1,1)}(z)=\frac{(\frac{z}{2})^{\nu +\frac{1}{2}}}{\Gamma (\nu +\frac{%
3}{2})}~_{0}F_{1}(-;\nu +\frac{3}{2};\frac{-z^{2}}{4}),$ \\ 
V & $b=-1,~c=\frac{3}{2},~\nu \rightarrow \nu -\frac{1}{2},~\rho =0$ & $%
G_{\nu -\frac{1}{2}}^{(-1,\frac{3}{2})}(z)=\frac{z^{\nu -\frac{1}{2}}\Gamma
(\nu +1)}{\sqrt{\pi }[\Gamma (\nu +\frac{1}{2})]^{2}2^{\nu -\frac{3}{2}}}%
~_{2}F_{3}(\frac{\nu +1}{2},\frac{\nu +2}{2};\frac{2\nu +1}{2},\frac{2\nu +1%
}{4},\frac{2\nu +3}{4};\frac{z^{2}}{4}),$ \\ 
VI & $b=1,~c=\frac{3}{2},~\nu \rightarrow \nu -\frac{1}{2},~\rho =0$ & $%
G_{\nu -\frac{1}{2}}^{(1,\frac{3}{2})}(z)=\frac{z^{\nu -\frac{1}{2}}\Gamma
(\nu +1)}{\sqrt{\pi }[\Gamma (\nu +\frac{1}{2})]^{2}2^{\nu -\frac{3}{2}}}%
~_{2}F_{3}(\frac{\nu +1}{2},\frac{\nu +2}{2};\frac{2\nu +1}{2},\frac{2\nu +1%
}{4},\frac{2\nu +3}{4};\frac{-z^{2}}{4}),$ \\ 
VII & $b=-1,~c=1,~\nu \rightarrow \nu -\frac{1}{2},~\rho =0$ & $G_{\nu -%
\frac{1}{2}}^{(-1,1)}(z)=\frac{(\frac{z}{2})^{\nu -\frac{1}{2}}}{\Gamma (\nu
+\frac{1}{2})}~_{0}F_{1}(-;\nu +\frac{1}{2};\frac{z^{2}}{4}),$ \\ 
VIII & $b=1,~c=1,~\nu \rightarrow \nu -\frac{1}{2},~~\rho =0$ & $G_{\nu -%
\frac{1}{2}}^{(1,1)}(z)=\frac{(\frac{z}{2})^{\nu -\frac{1}{2}}}{\Gamma (\nu +%
\frac{1}{2})}~_{0}F_{1}(-;\nu +\frac{1}{2};\frac{-z^{2}}{4}),$ \\ 
IX & $b=-1,~c=1,~\nu =0,~\rho =0$ & $G_{0}^{(-1,1)}(z)=~_{0}F_{1}(-;1;\frac{%
z^{2}}{4}),$ \\ 
X & $b=1,~c=1,~\nu =0,~\rho =0$ & $G_{0}^{(1,1)}(z)=~_{0}F_{1}(-;1;-\frac{%
z^{2}}{4}),$ \\ 
XI & $b=1,~c=1,~\nu =\frac{1}{2},~\rho =0$ & $G_{\frac{1}{2}}^{(1,1)}(z)=%
\sqrt{\frac{2}{\pi z}}\sin z=J_{\frac{1}{2}}(z),$ \\ 
XII & $b=1,~c=1,~\nu =\frac{-1}{2},~\rho =0$ & $G_{-\frac{1}{2}}^{(1,1)}(z)=%
\sqrt{\frac{2}{\pi z}}\cos z=J_{-\frac{1}{2}}(z),$ \\ 
XIII & $b=1,~c=1,~\rho =0$ & $G_{\nu }^{(1,1)}(z)=J_{\nu }(z),$ \\ 
XIV & $b=-1,~c=1,~~\rho =0$ & $G_{\nu }^{(-1,1)}(z)=I_{\nu }(z).$ \\ \hline
\end{tabular}
\end{center}

\bigskip

\newpage

\subsection{Graphics}

In below, the graphics of the generalized two parameter Bessel functions of
the first kind $J_{\nu }^{(c)}(x)~$are drawn for the special cases $v=0$ and 
$c=1,2,3,~\nu =\frac{1}{2}$ and $c=1,2,3,~~\nu =\frac{3}{2}$ and $c=1,2,3$
and the graphics of the generalized two parameter spherical Bessel function $%
g_{\nu }^{(c)}(x)$ are drawn for the special cases $v=0$ and $c=\frac{3}{2},%
\frac{5}{2},\frac{7}{2},~v=\frac{1}{2}~$and $c=\frac{3}{2},\frac{5}{2},\frac{%
7}{2}$ and $v=\frac{3}{2}~$and $c=\frac{3}{2},\frac{5}{2},\frac{7}{2}~$%
respectively.

\vspace{5mm}

\subsection{\protect\bigskip Comparison of Graphs of the Generalized Two
Parameter Bessel Function $J_{\protect\nu}^{(c)}(x)~$with $J_{0}(x),J_{\frac{%
1}{2}}(x) and J_{\frac{3}{2}}(x)$}

\begin{center}
\begin{tabular}{lll}

\fbox{\includegraphics[width=5.05cm,height=7.5981cm]{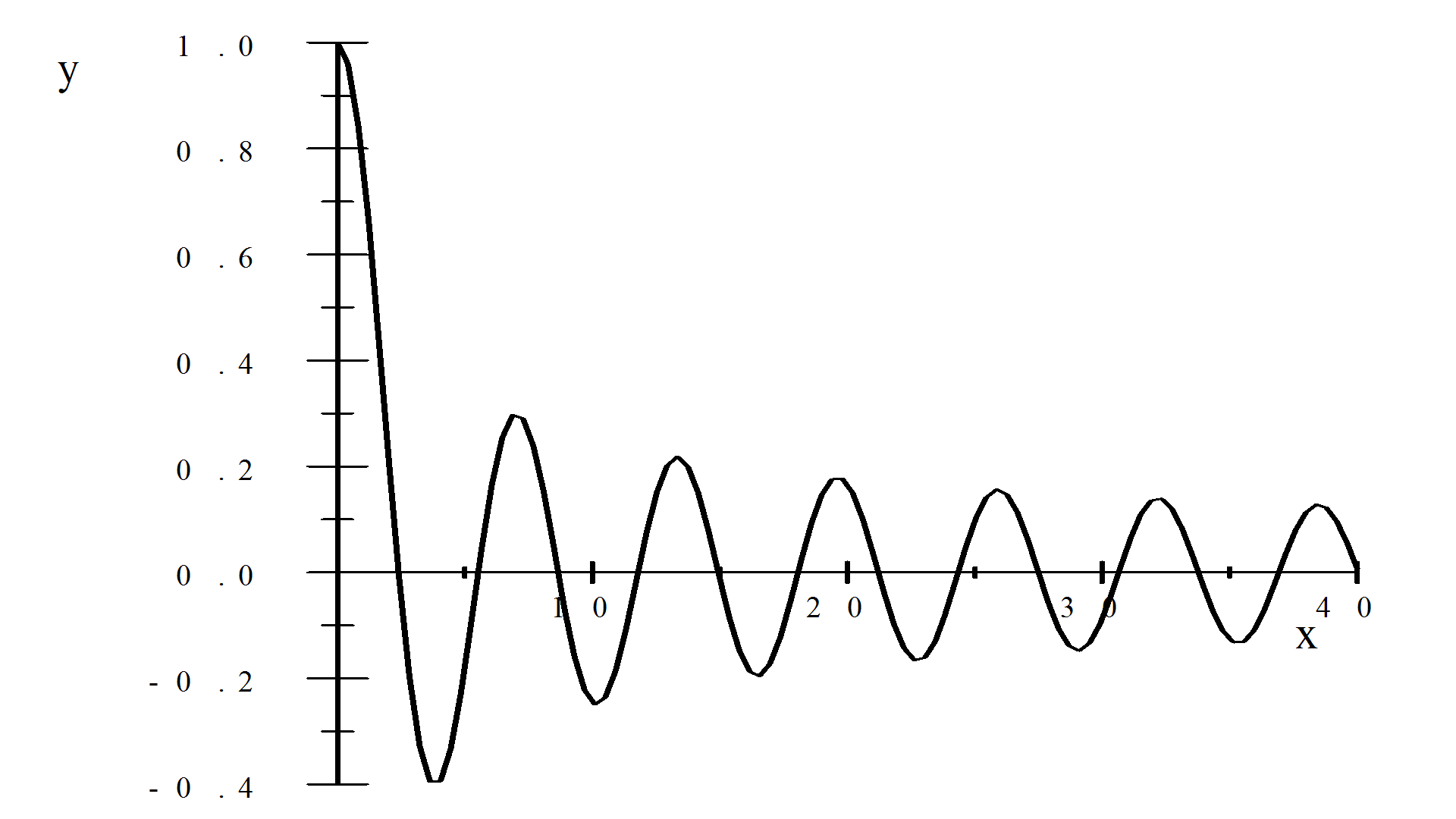}}
&
\fbox{\includegraphics[width= 5.1796cm,height=7.6289cm]{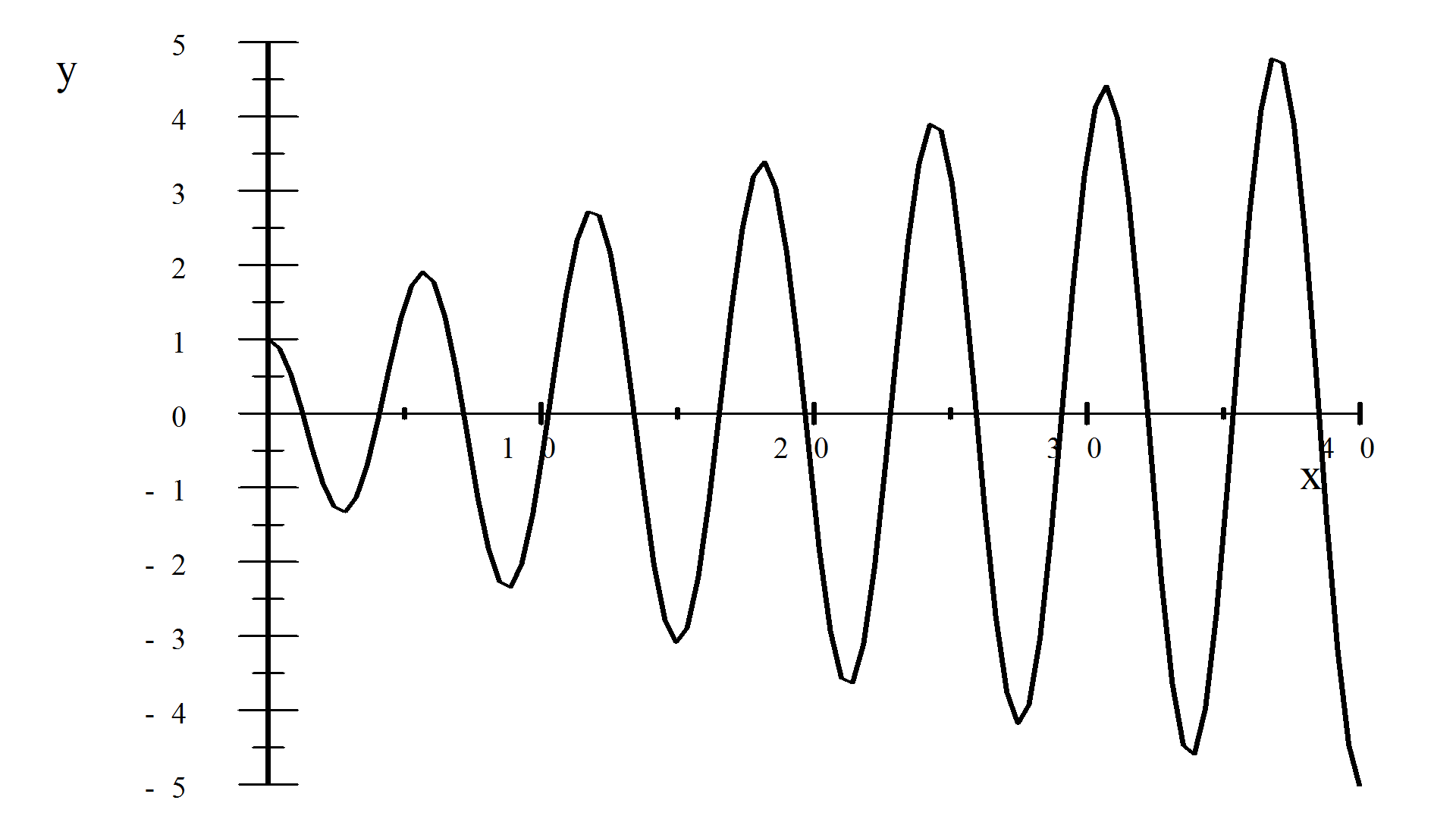}}
&
\fbox{\includegraphics[width=5.1796cm,height=7.6289cm]{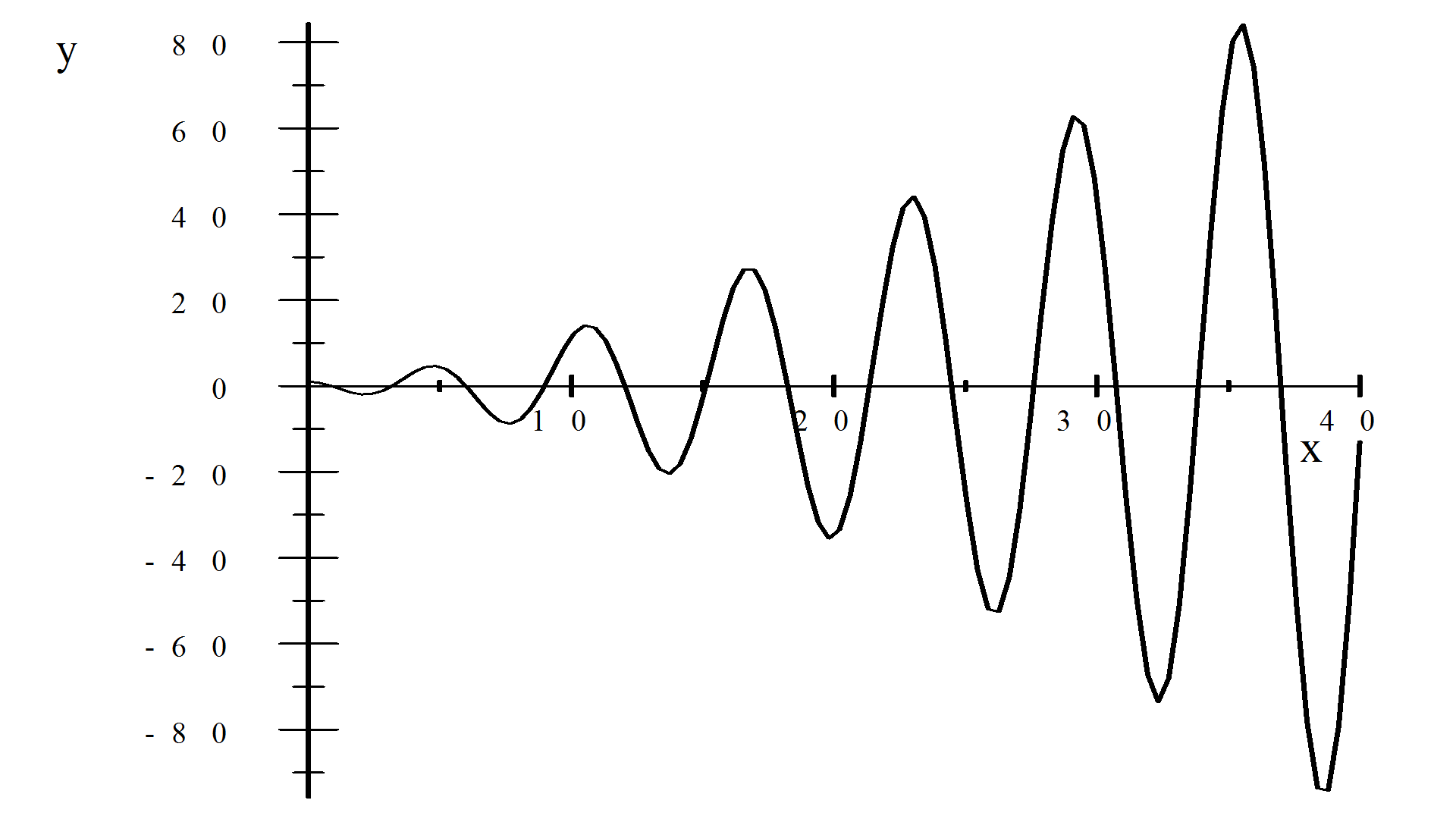}}
\\
FIGURE 1 Graph of $J_{0}(x)$ & FIGURE 2 Graph of $J_{0}^{(2)}(x)$ & FIGURE 3
Graph of $J_{0}^{(3)}(x)$%
\end{tabular}

\bigskip

\begin{tabular}{lll}

\fbox{\includegraphics[width=5.05cm,height=7.5981cm]{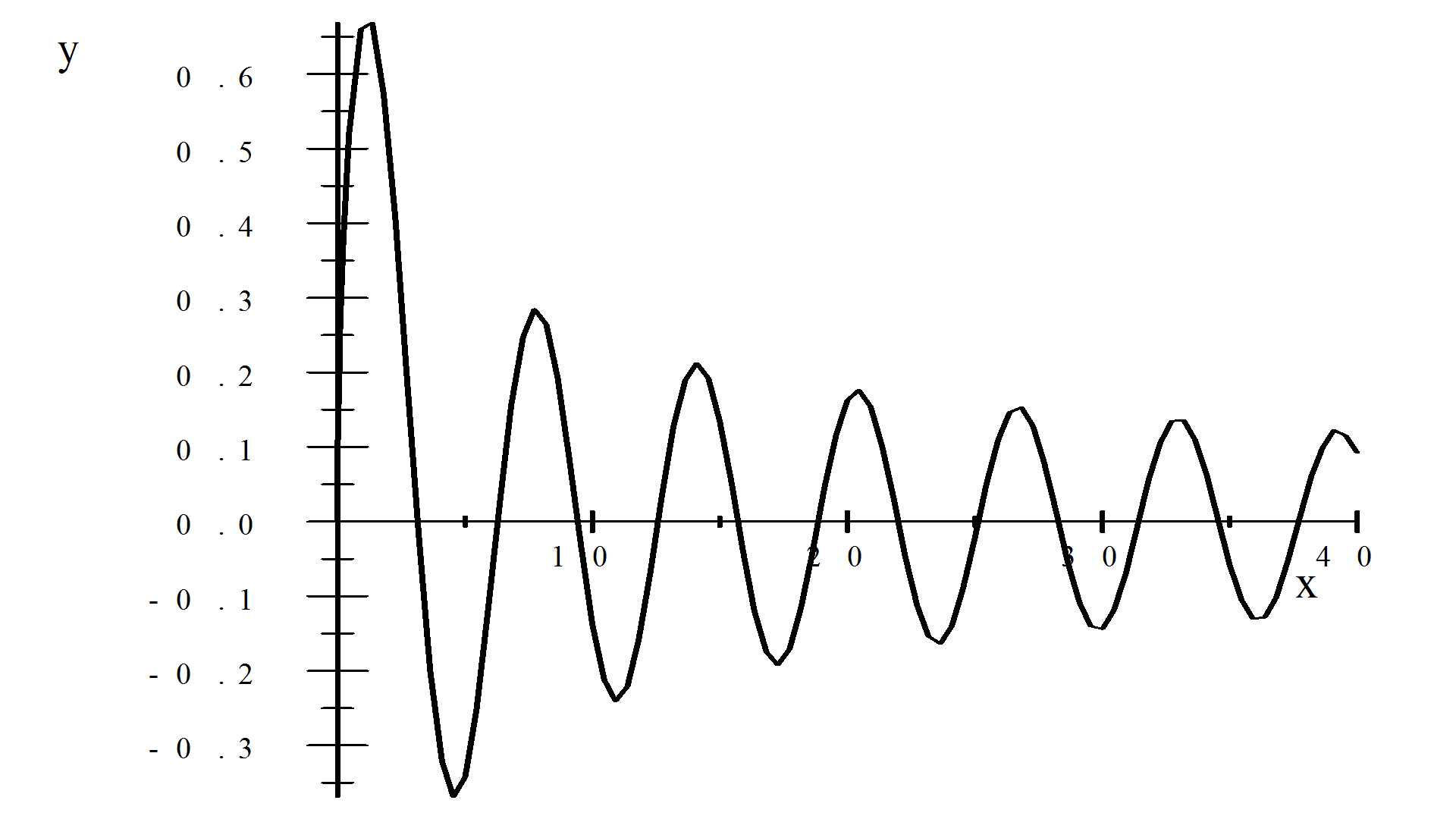}}
&
\fbox{\includegraphics[width= 5.1796cm,height=7.6289cm]{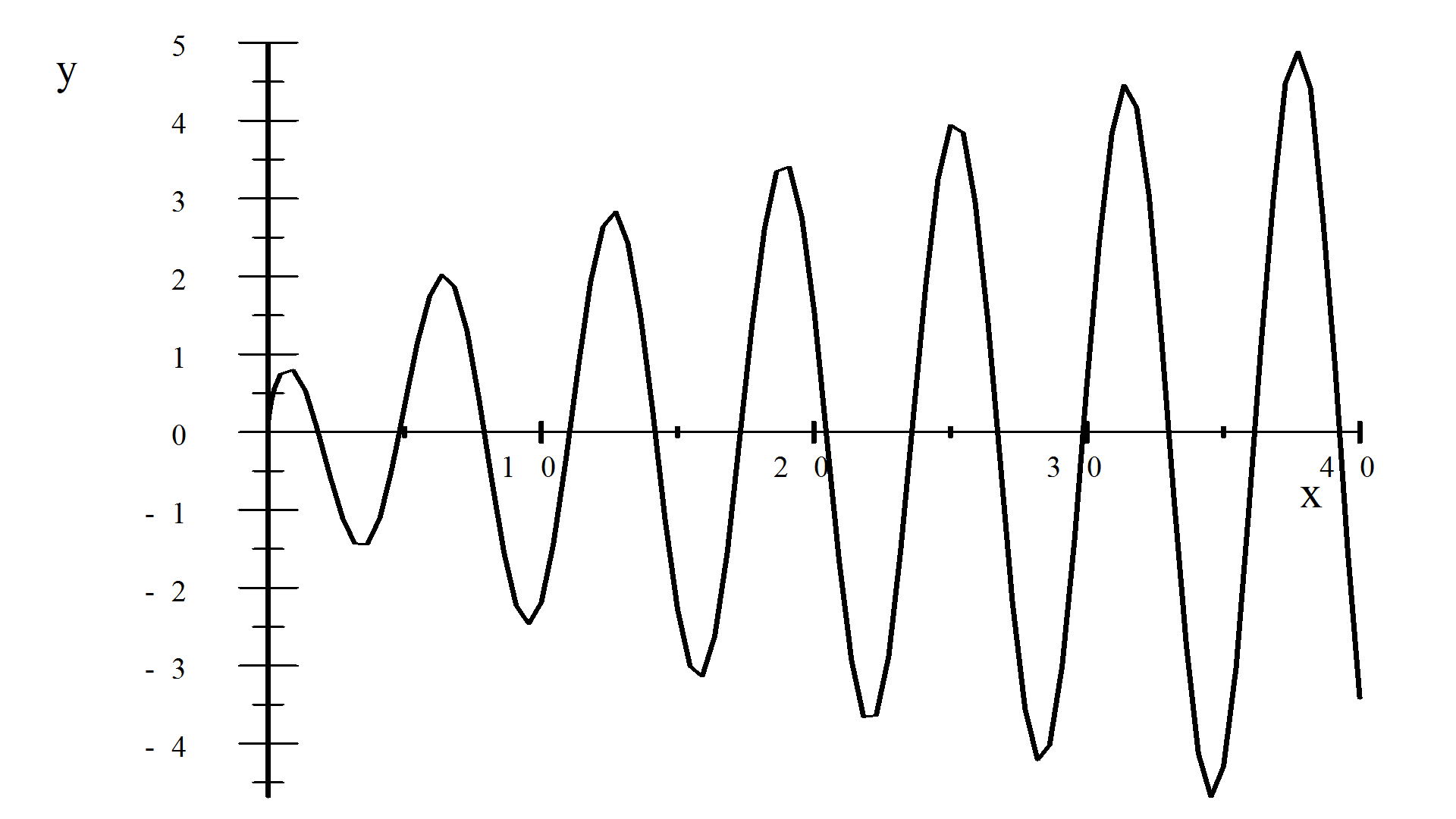}}
&
\fbox{\includegraphics[width=5.1796cm,height=7.6289cm]{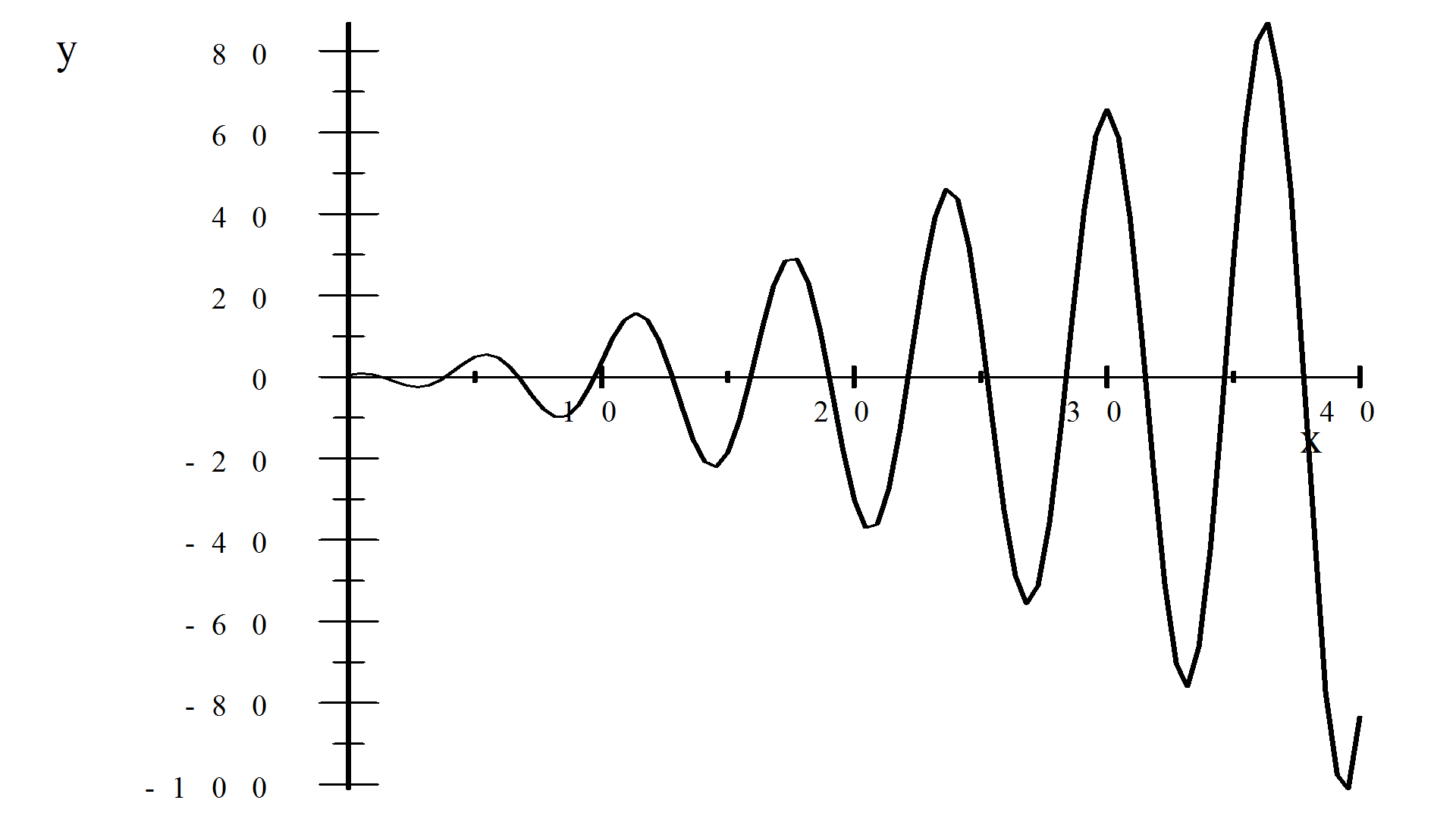}}
\\

FIGURE 4 Graph of $J_{\frac{1}{2}}(x)$ & FIGURE 5 Graph of $J_{\frac{1}{2}%
}^{(2)}(x)$ & FIGURE 6 Graph of $J_{\frac{1}{2}}^{(3)}(x)$%
\end{tabular}

\bigskip

\begin{tabular}{lll}

\fbox{\includegraphics[width=5.05cm,height=7.5981cm]{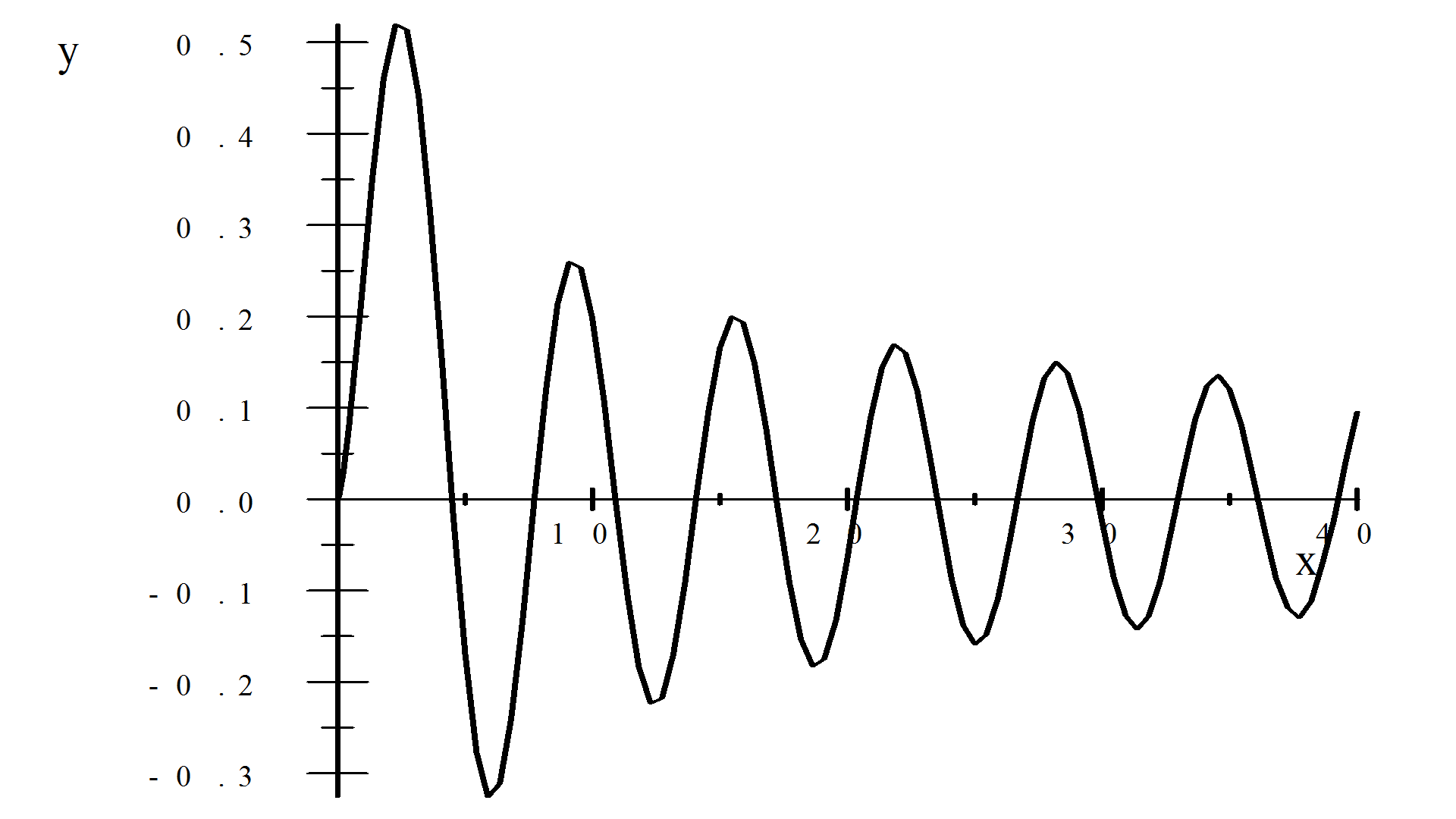}}
&
\fbox{\includegraphics[width= 5.1796cm,height=7.6289cm]{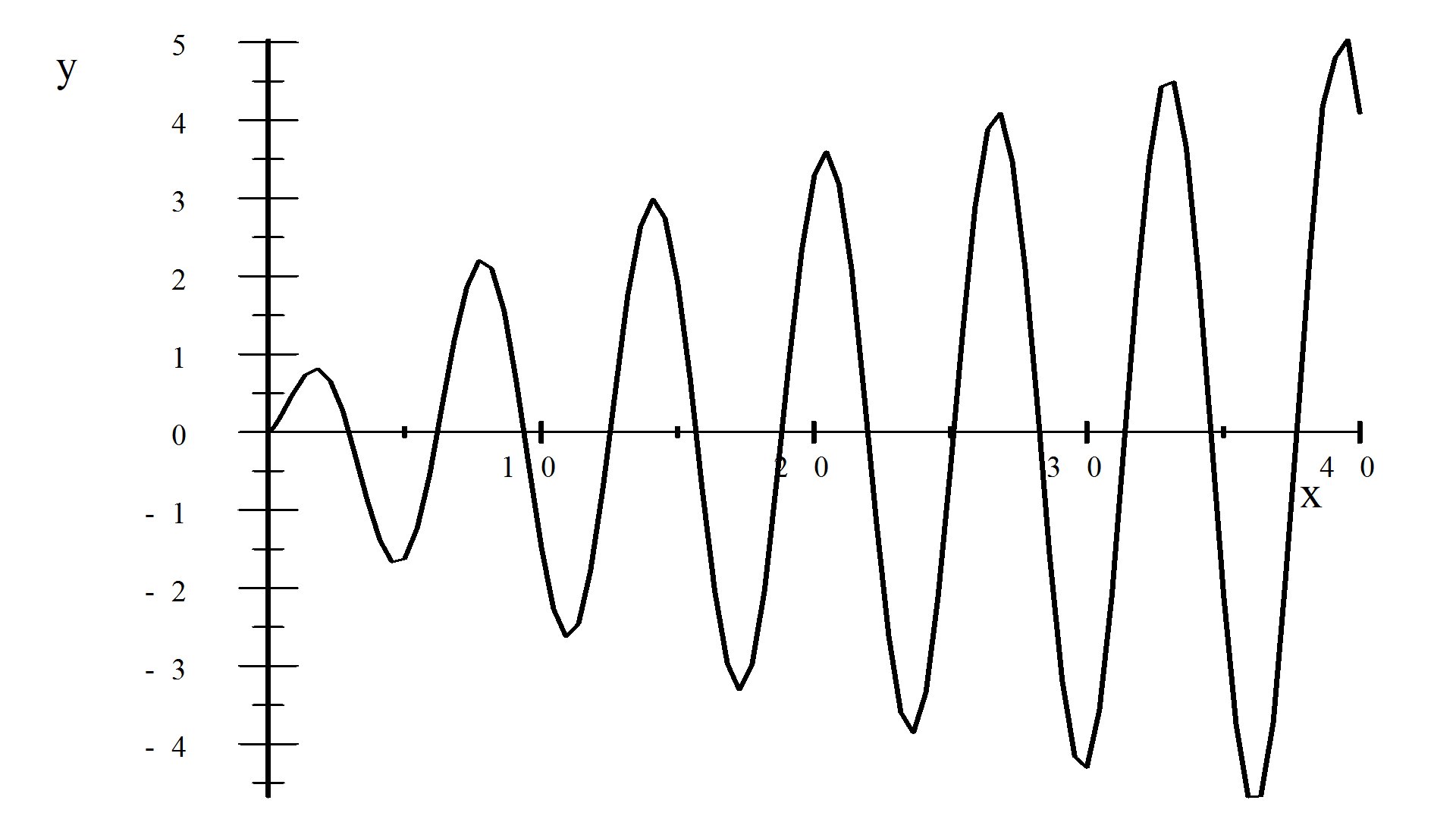}}
&
\fbox{\includegraphics[width=5.1796cm,height=7.6289cm]{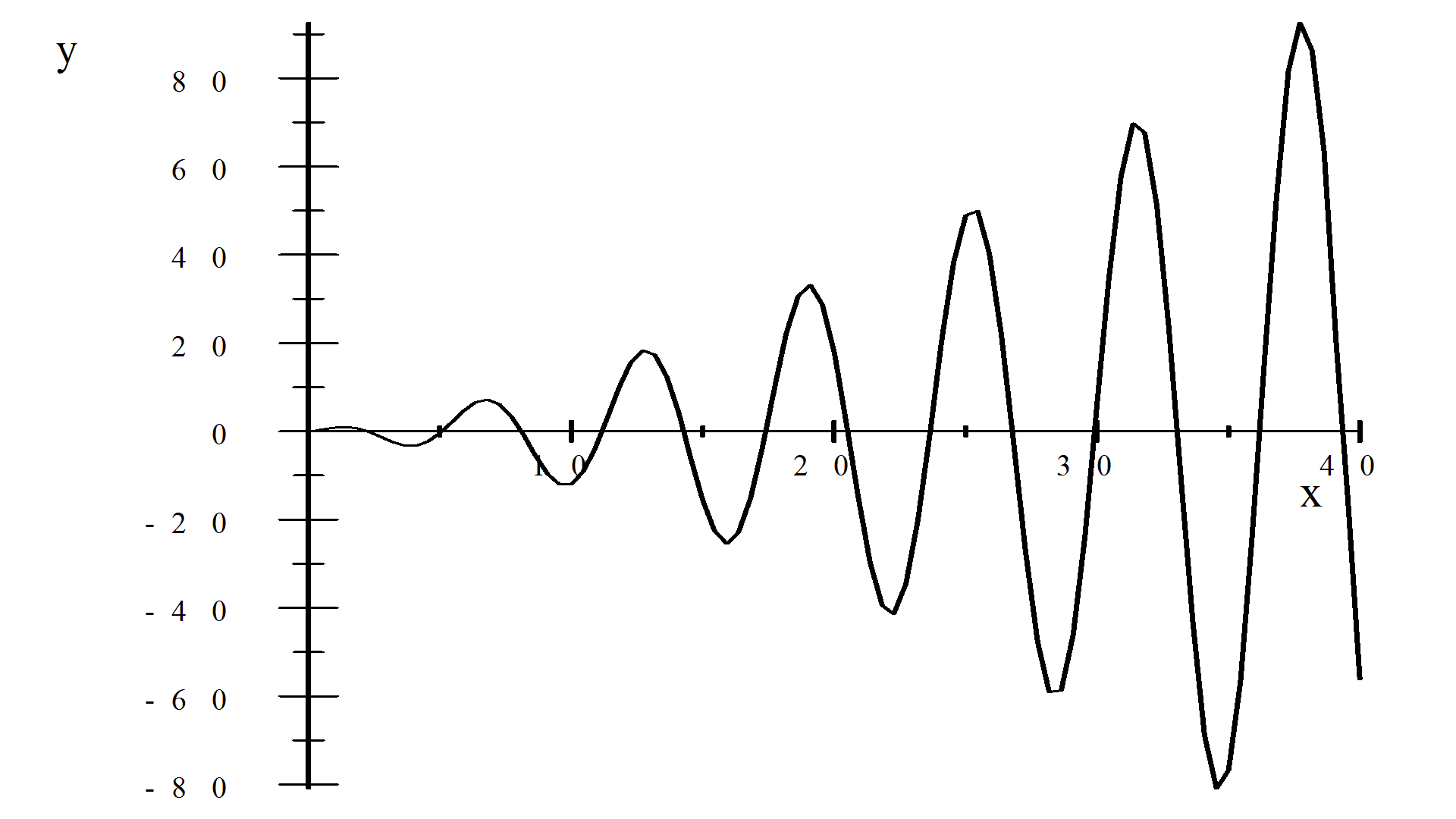}}
\\

FIGURE 7 Graph of $J_{\frac{3}{2}}(x)$ & FIGURE 8 Graph of $J_{\frac{3}{2}%
}^{(2)}(x)$ & FIGURE 9 Graph of $J_{\frac{3}{2}}^{(3)}(x)$%
\end{tabular}
\end{center}

\subsection{\protect\bigskip Comparison of Graphs of the Generalized Two
Parameter Spherical Bessel Function$~g_{\protect\nu}^{(c)}(x)$~with $%
j_{0}(x),~j_{\frac{1}{2}}(x)$ and $j_{\frac{3}{2}}(x)$}

\begin{center}
\begin{tabular}{lll}

\fbox{\includegraphics[width=5.05cm,height=7.5981cm]{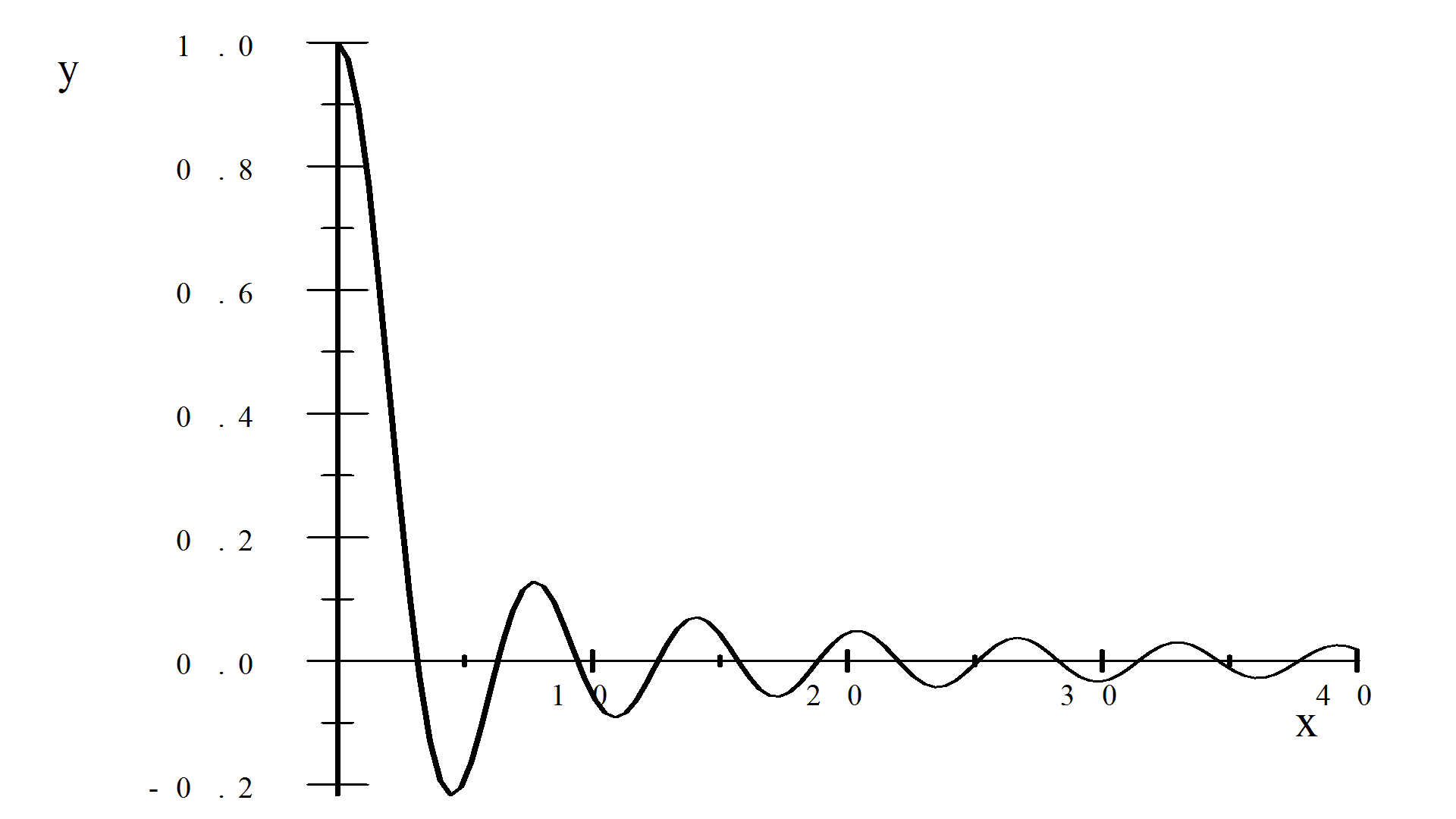}}
&
\fbox{\includegraphics[width= 5.1796cm,height=7.6289cm]{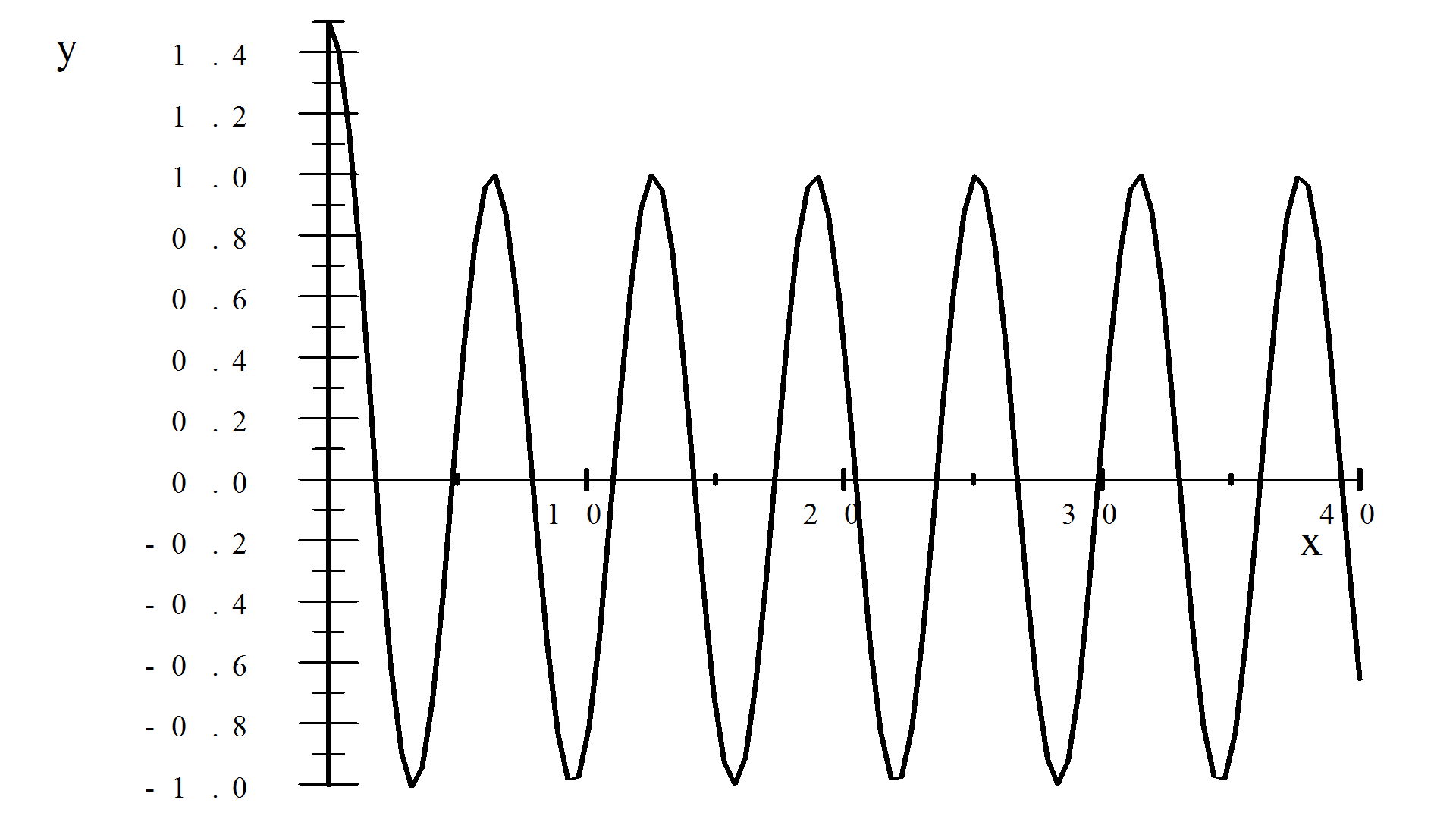}}
&
\fbox{\includegraphics[width=5.1796cm,height=7.6289cm]{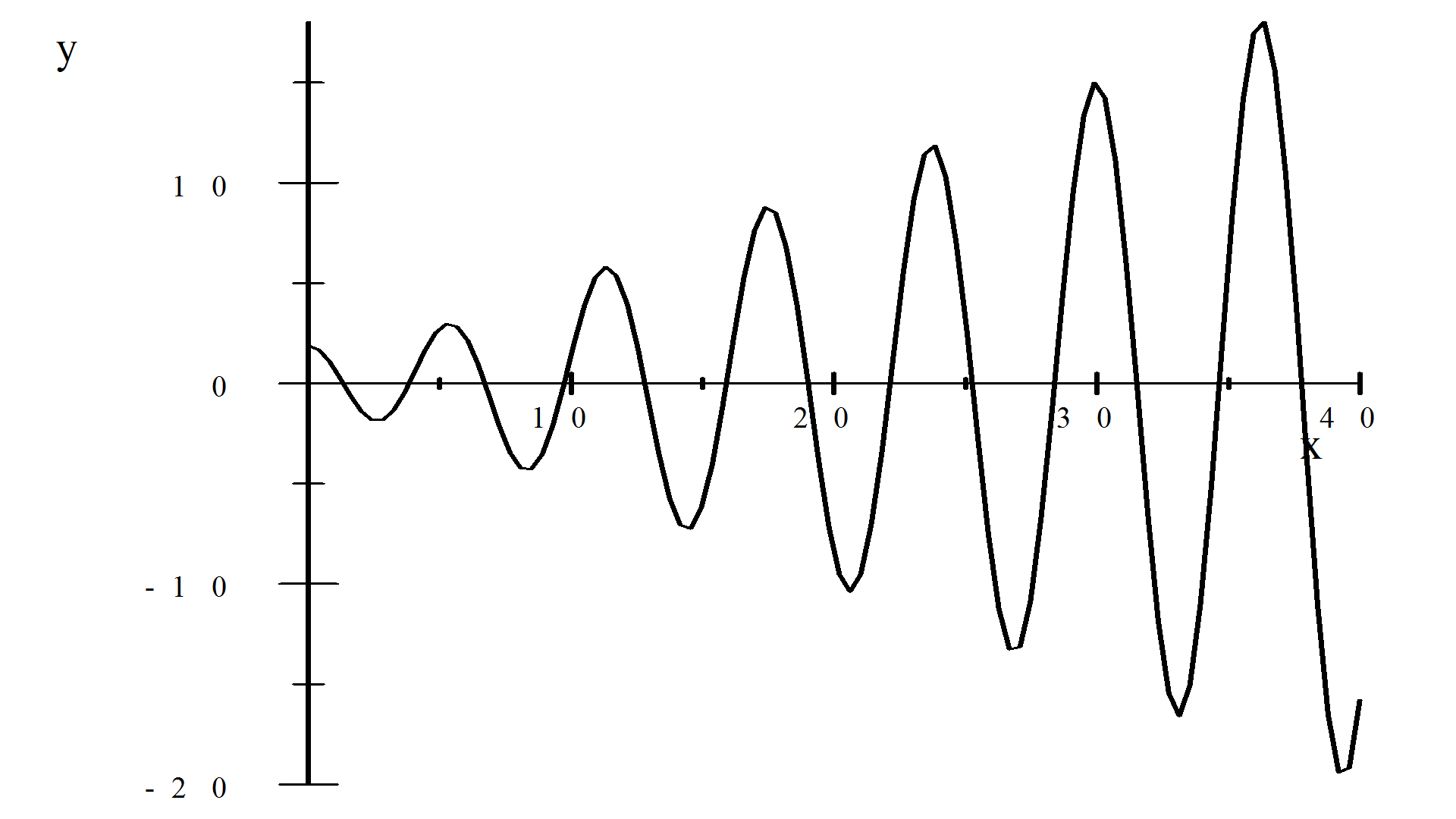}}
\\

FIGURE 10 Graph of~ $j_{0}(x)$ & FIGURE 11 Graph of $g_{0}^{(\frac{5}{2})}(x)
$ & FIGURE 12 Graph of $g_{0}^{(\frac{7}{2})}(x)$%
\end{tabular}

\bigskip

\begin{tabular}{lll}

\fbox{\includegraphics[width=5.05cm,height=7.5981cm]{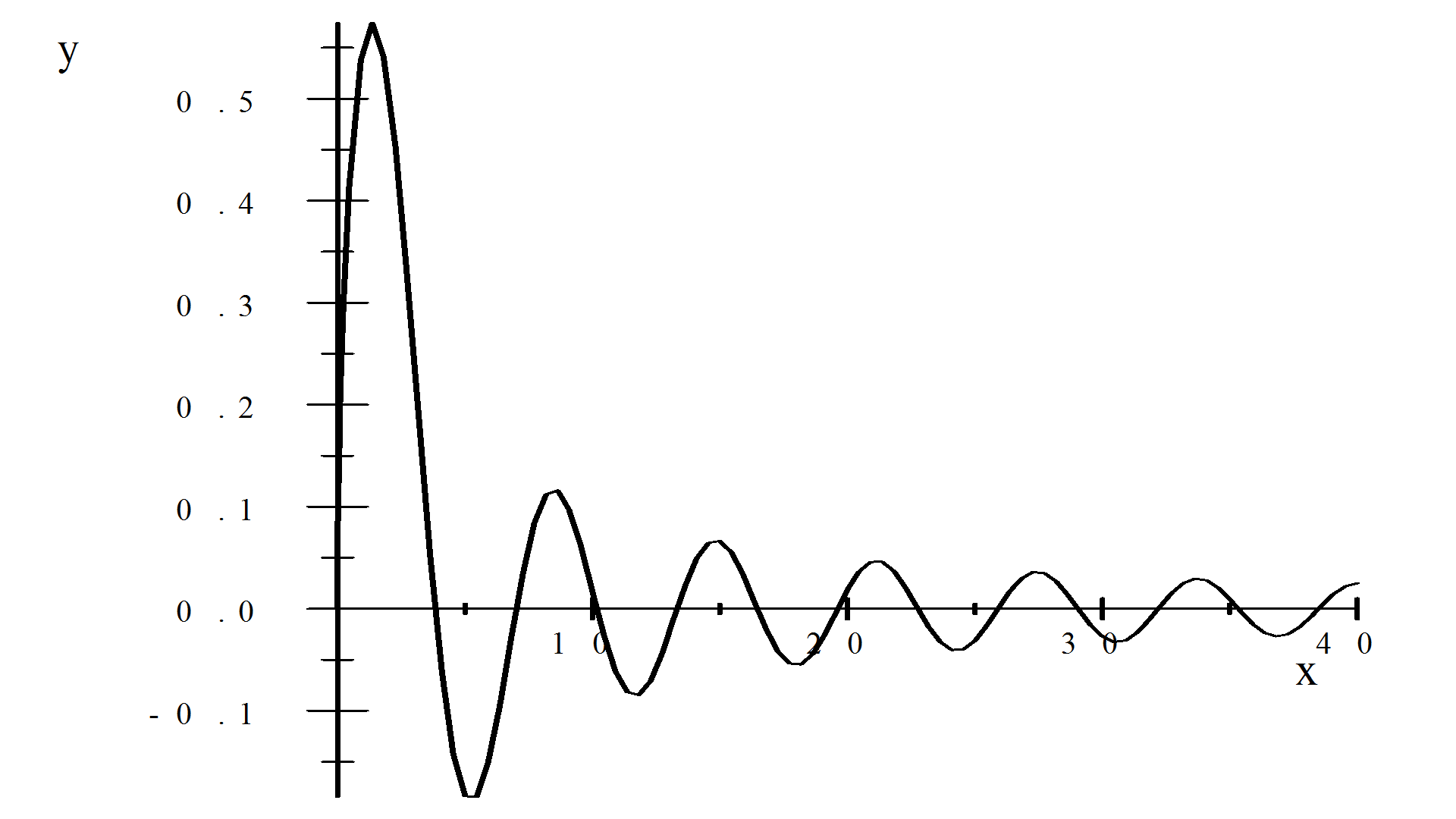}}
&
\fbox{\includegraphics[width= 5.1796cm,height=7.6289cm]{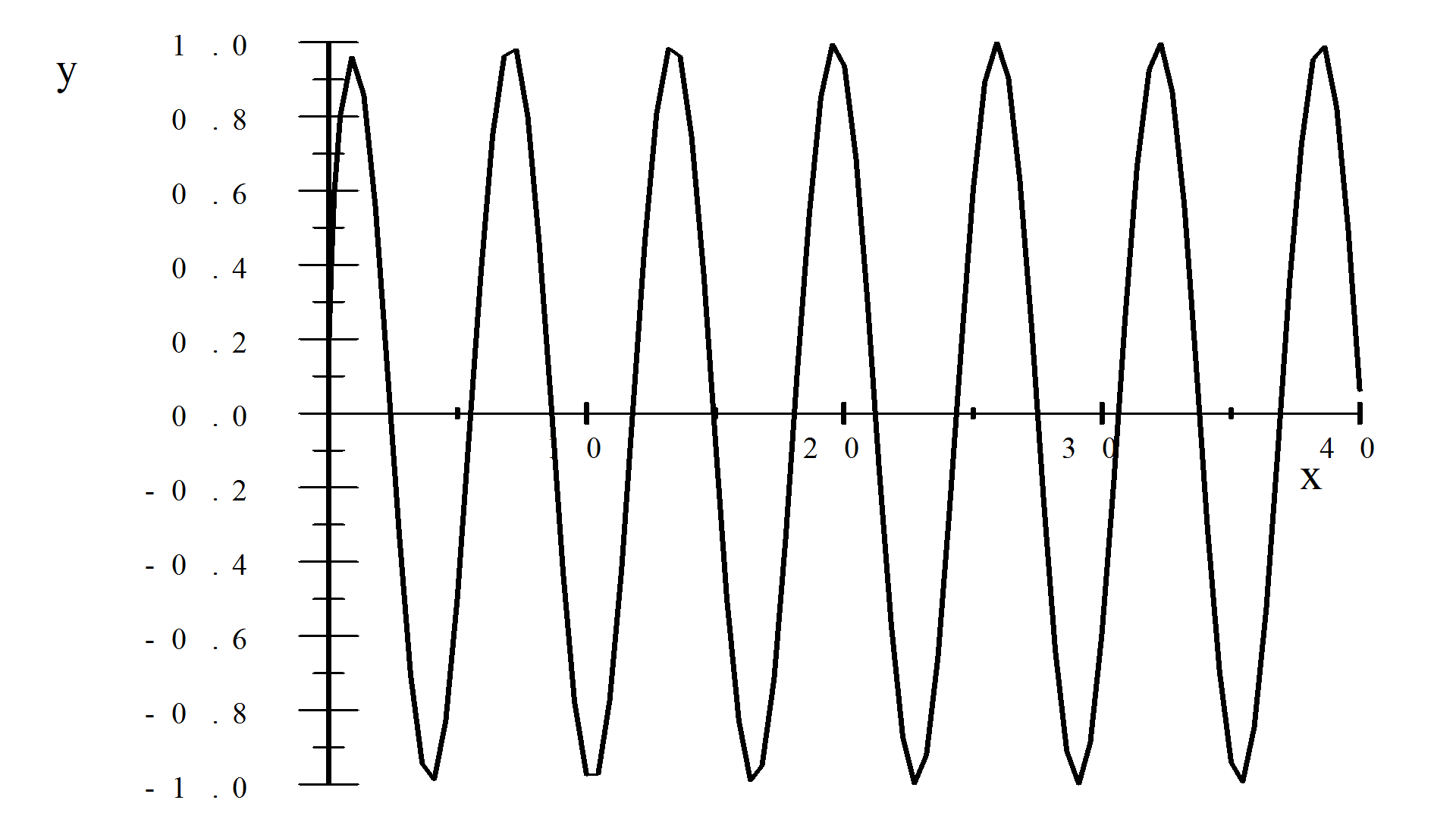}}
&
\fbox{\includegraphics[width=5.1796cm,height=7.6289cm]{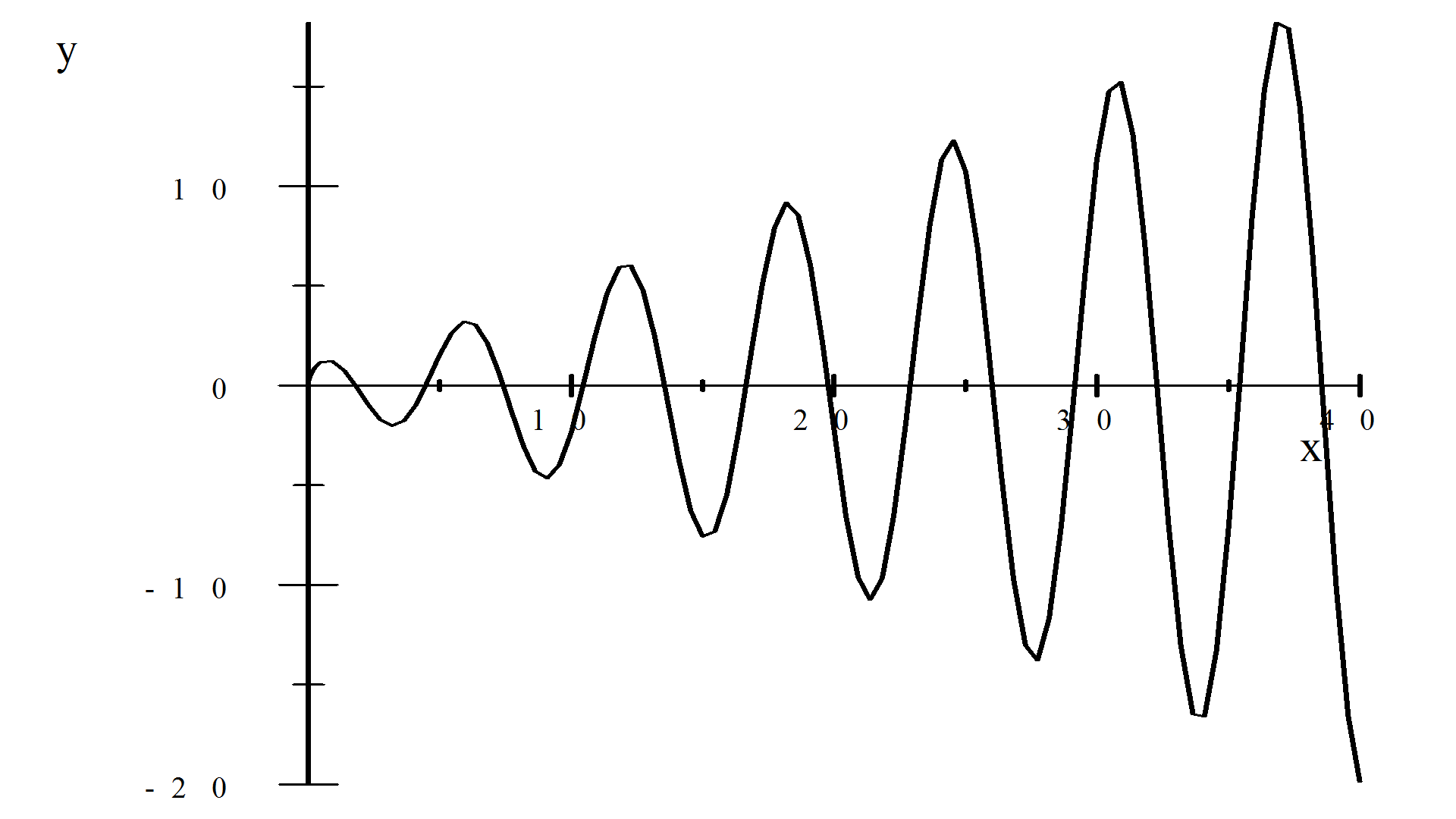}}
\\

FIGURE 13 Graph of $~j_{\frac{1}{2}}(x)$ & FIGURE 14 Graph of $g_{\frac{1}{2}%
}^{(\frac{5}{2})}(x)$ & FIGURE 15 Graph of $g_{\frac{1}{2}}^{(\frac{7}{2}%
)}(x)$%
\end{tabular}

\bigskip

\begin{tabular}{lll}

\fbox{\includegraphics[width=5.05cm,height=7.5981cm]{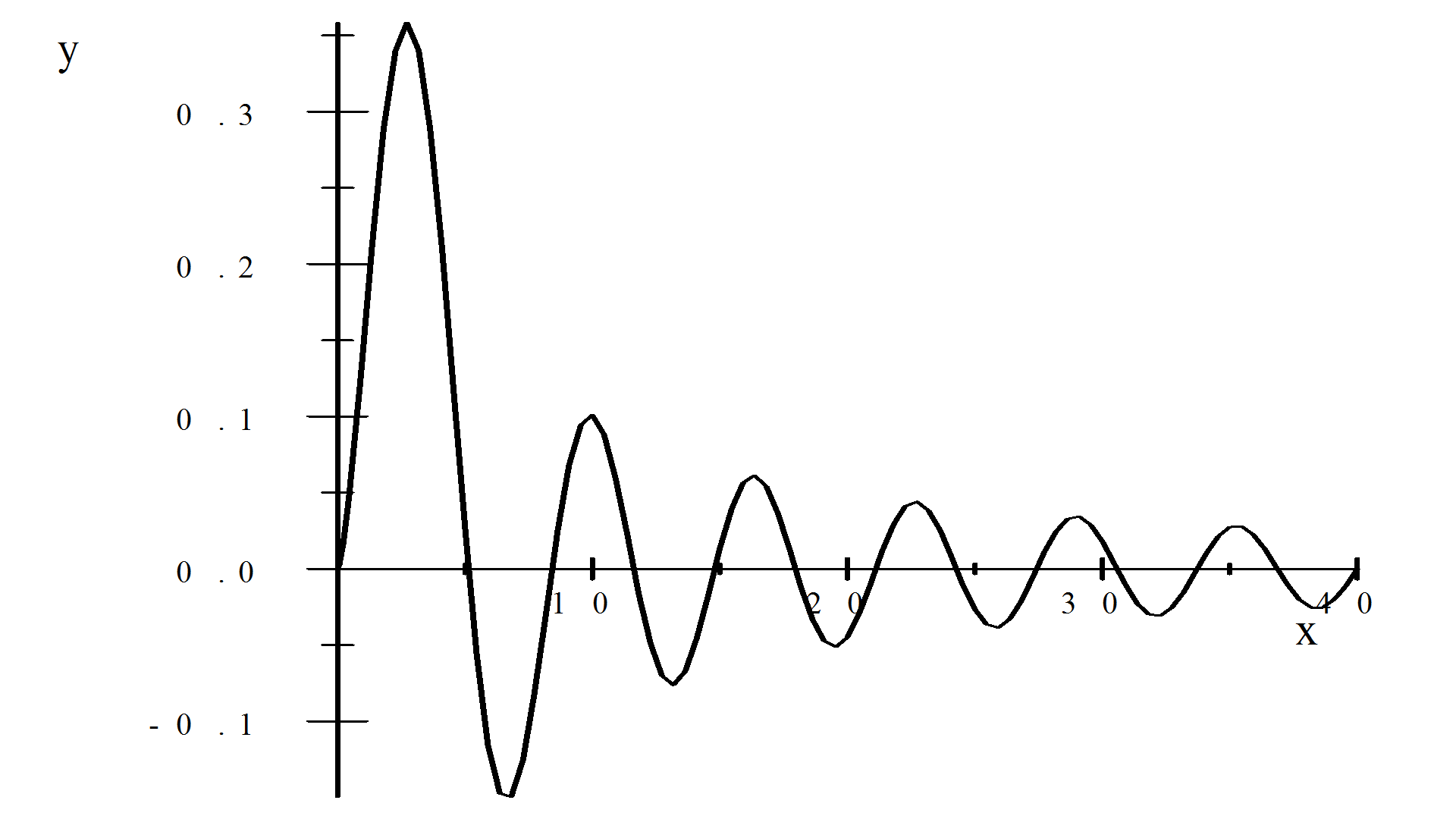}}
&
\fbox{\includegraphics[width= 5.1796cm,height=7.6289cm]{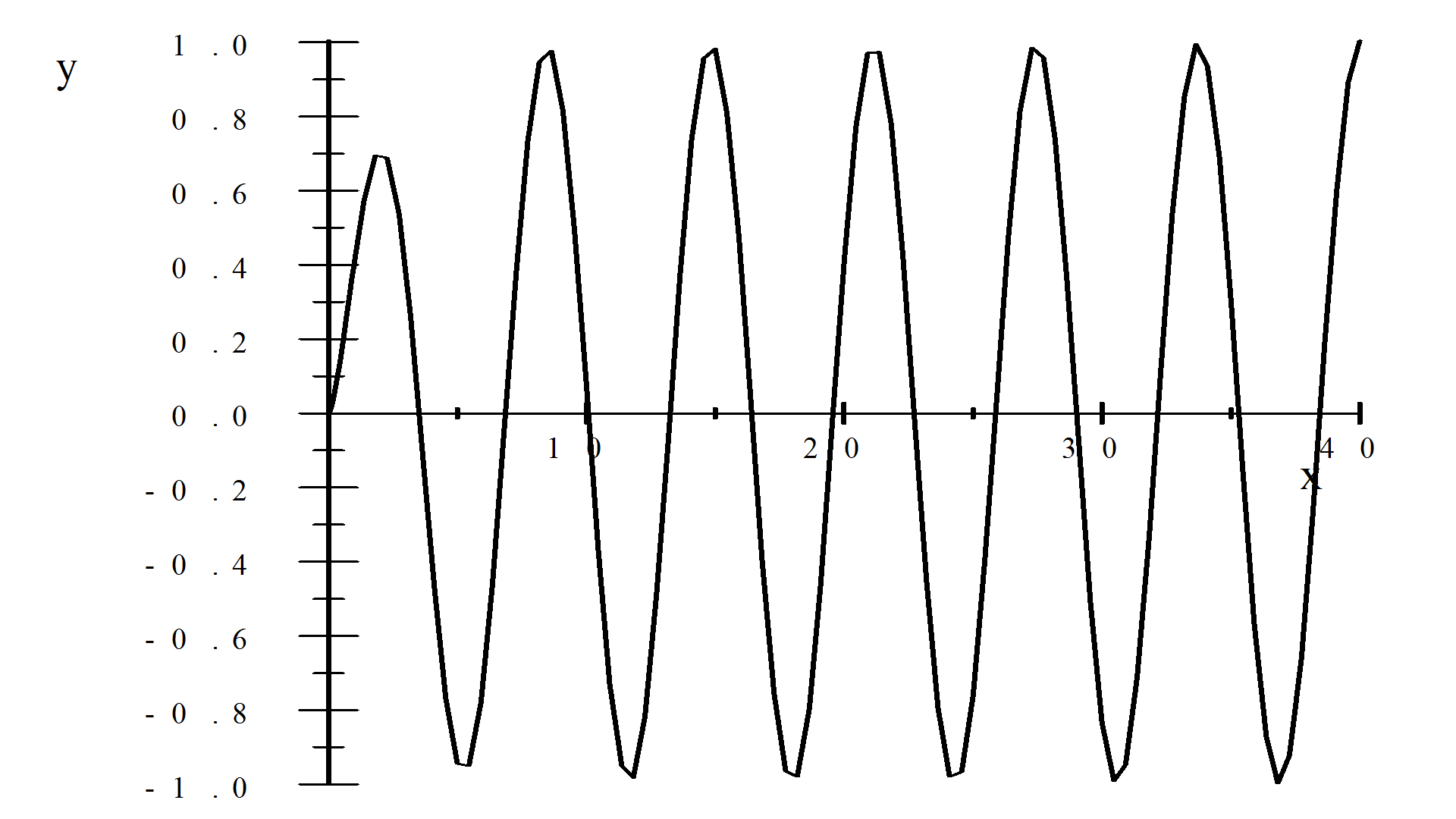}}
&
\fbox{\includegraphics[width=5.1796cm,height=7.6289cm]{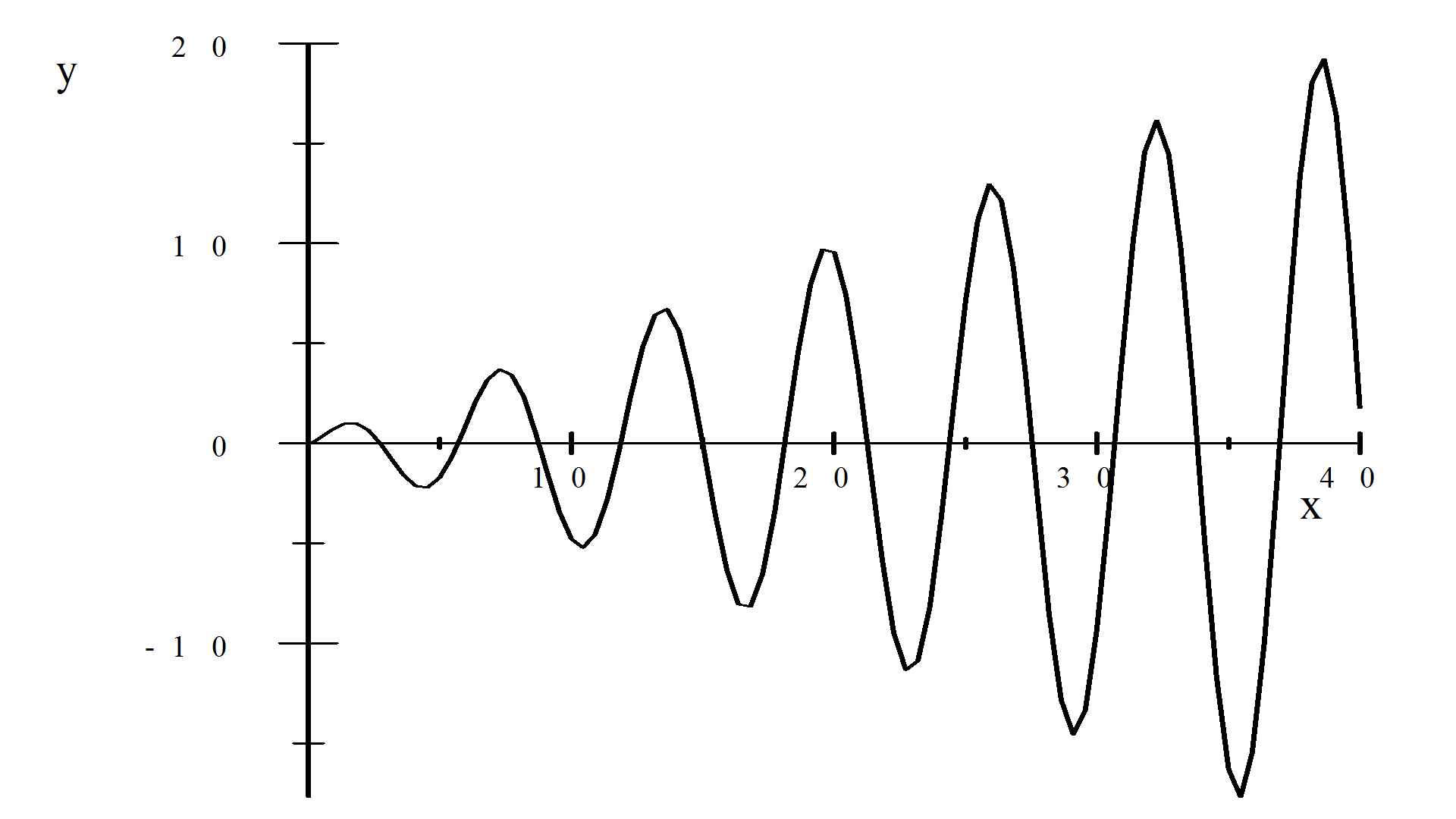}}
\\

FIGURE 16 Graph of $~j_{\frac{3}{2}}(x)$ & FIGURE 17 Graph of $g_{\frac{3}{2}%
}^{(\frac{5}{2})}(x)$ & FIGURE 18 Graph of $g_{\frac{3}{2}}^{(\frac{7}{2}%
)}(x)$%
\end{tabular}

\bigskip

\bigskip \bigskip
\end{center}

\subsection{Application Fields of the Generalized Two Parameter Modified
Bessel Function to Digital Signal Processing}

Digital signal processing affords greater flexibility, higher performance
(in terms of attenuation and selectivity), better time and environment
stability and lower equipment production costs than traditional analog
techniques. A digital filter is simply a discrete-time, discrete-amplitude
convolver. Digital filters are commonly used for audio frequencies for two
reasons. First, digital filters for audio are superior in price and
performance to the analog alternative. Second, audio analog to digital
converters and digital to analog converters can be manufactured with high
accuracy and are available at low cost. To design a digital filter, there
are some known methods such as Fourier series method, frequency sampling
method, window method. Besides, there are other various design methods to
design a filter. For instance, Neural Network \cite{D.A,H.B}, Genetic
Algorithm \cite{B.A}, Particle Swarm Optimization \cite{B.L.G,L.Z.H},
Discrete Cosine Transform \cite{C.T.L}, Chebychev Criterion \cite{G.K.R},
etc. Among of them, Kaiser Window \cite{J.S} has an advantage of trading-off
the transition width against the ripple that is defined by 
\begin{equation*}
w_{n}~=\{%
\begin{tabular}{l}
$\frac{I_{0}(\pi \alpha \sqrt{1-(\frac{2n}{N-1}-1)^{2}})}{I_{0}(\pi \alpha )}%
,~0\leq n\leq N-1$ \\ 
$0~\ \ ~~~~~~\ \ \ ~~~~\ \ ~~\ \ ~\,,otherwise$%
\end{tabular}%
\ \ \ 
\end{equation*}%
where \ $N$ is the length of the sequence,~$I_{0}$ is the zeroth order
modified Bessel function of the first kind and $\alpha $ is an arbitrary
non-negative real number that determines the shape of the window.

The generalized Kaiser-window function is defined by 
\begin{equation*}
w_{n}^{(c)}=\{%
\begin{tabular}{l}
$\frac{I_{0}^{(c)}(\pi \alpha \sqrt{1-(\frac{2n}{N-1}-1)^{2}})}{%
I_{0}^{(c)}(\pi \alpha )},~0\leq n\leq N-1$ \\ 
$0~~\ \ \ \ \ \ \ \ \ \ \ \ \ \ \ \ \ \ \ \ \ \ \ ,~otherwise$%
\end{tabular}%
\ \ \ 
\end{equation*}%
where $\func{Re}(c)>0.$ The case $c=1,$ it is reduced to usual Kaiser-window
function $w_{n}.$ Some properties of Kaiser-window function can be applied
to generalized Kaiser-window function, as well.$~~$

$~~~\ \ \ \ ~~\ \ ~\ $

\bigskip


\begin{thebibliography}{99}
\bibitem{A} Abramowitz M. Regular and irregular Coulomb wave functions
expressed in terms of Bessel-Clifford functions. J Math Physics, 1954, 
\textbf{33} : 111-116.

\bibitem{A2} Abramowitz M. Coulomb wave functions expressed in terms of
Bessel-Clifford and Bessel functions. J Math Physics, 1951, \textbf{29} :
303-308.

\bibitem{A.K} Agarwal AK. A generalization of di-Bessel function of Exton.
Indian J Pure Appl Math, 1984, \textbf{15} : 139-148.

\bibitem{P.S.M} Agarwal P, Jain S, Agarwal S, Nagpal M. On a new class of
integrals involving Bessel functions of the first kind. Communications in
Numerical Analysis, 2014, 1-7.

\bibitem{G.A.R} Andrews GE, Askey R, Roy R. Special Functions, Cambridge
University Press, 1999.\qquad

\bibitem{G.H} Arfken GB, Weber HJ. Mathematical Methods for Physicists.
Sixth Edition, 2005.

\bibitem{D.G.M} Babusci D, Dattoli G, Del Franco M. Lectures on Mathematical
Methods for Physics. ENEA Technical Report, 2010.

\bibitem{D.A} Bhattacharya D, Antoniou A. Real Time Design of FIR Filters of
Feedback Neural Network, 1996, \textbf{3} : 1070-1078.

\bibitem{L.B.J} Boyvel LP, Jones AR. Electromagnetic Scattering and Its
Applications. Applied Science Publishers, London, 1981.

\bibitem{I.P} Ca\c{c}ao I, Ricci PE. Monomiality principle and
eigenfunctions of Differential Operators, International Journal of
Mathematics and Mathematical Sciences. 2011.

\bibitem{M.A} Chaudhry MA, Ahmad M. On improper integrals of products of
logarithmic power and Bessel functions. Bulletin Australian Mathematical
Society, 1992, \textbf{45} : 395-398.

\bibitem{C.Q.S} Chaudhry MA, Qadir A, Srivastava HM, Paris RB. Extended
hypergeometric and confluent hypergeometric functions. Appl Math Comp 2004, 
\textbf{159 }: 589-602.

\bibitem{C.Z} Chaudhry MA, Zubair SM. Generalized incomplete gamma functions
with applications. J Comp Appl Math, 1994, \textbf{55 }: 99-124.

\bibitem{C.S} Chaudhry MA, Zubair SM. Extended incomplete gamma functions
with applications. Journal of Mathematical Analysis and Applications, 2002, 
\textbf{274} : 725-745.

\bibitem{J.P} Choi J, Agarwal P. Certain unified integrals associated with
Bessel functions. Boundary Value Problems, 2013, \textbf{1}, 95.

\bibitem{C} Choi J, Agarwal P, Mathur S, Purohit SD. Certain new integral
formulas involving the generalized Bessel functions. Bull Korean Math Soc,
2014, \textbf{51} : 995-1003.

\bibitem{D.C.K} Colton D, Kress R. Inverse Acoustic and Electromagnetic
Scattering Theory. Second ed., Applied Mathematical Sciences,
Springer-Verlag, Berlin, 1998.

\bibitem{G.D} Dattoli G. Hermite-Bessel and Laguerre-Bessel functions: a
by-product of the monomiality principle. In Proceedings of the Melfi School
on Advanced Topics in Mathematics and Physics, Advanced Special Functions
and Its Applications, Aracne Editrice, Melfi, Italy, 2000, 147-164.

\bibitem{D.M} Dattoli G, Maino G. A unified point of view on the theory of
generalized Bessel functions. Computers Math Applic, 1995, \textbf{30} :
113-125.

\bibitem{A.D} Davis AMJ. Drag modifications for a sphere in a rotational
motion at small non-zero Reynolds and Taylor numbers: wake interference and
possible Coriolis effects. J Fluid Mech, 1992, \textbf{237} : 13-22.

\bibitem{H.E} Exton H. On a generalization of the Bessel-Clifford equation
and an application in quantum mechanics. Riv Mat Univ Parma 1989, \textbf{4 }%
: 41-46.

\bibitem{F.S} Fran\c{c}ois Swarttouw R. The Hahn-Exton q-Bessel Function,
1992.

\bibitem{L.G} Galue L. A generalized Bessel function. Integral Transforms
and Special functions, 2010.

\bibitem{L} Galue L. Evaluation of Some Integrals Involving Generalized
Bessel Functions. Integral Transforms and Special Functions, 2001, \textbf{12%
} : 251-256.

\bibitem{M.G} Glasser ML, Montaldi E. Some integrals involving Bessel
functions. Journal of Mathematical Analysis and Applications, 1994, \textbf{%
183} : 577-590.

\bibitem{A.G.M} Gray A, Mathews GB, MacRobert TM. A Treatise on Bessel
functions and their applications to physics. Second ed., Macmillan, London,
1922.

\bibitem{A.G} Greenhill AG. The Bessel-Clifford function and its
applications. Philosophical Magazine Sixth Series, 1919, 501-528.

\bibitem{N.M.S} Hai NT, Marichev OI, Srivastava HM. A Note on the
Convergence of Certain Families of Multiple Hypergeometric Series. Journal
of Mathematical Analysis and Applications, 1992, \textbf{164} : 104-115.

\bibitem{J.B} Happell J, Brenner H. Low Reynolds Number Hydrodynamics with
Special Applications to Particulate Media. 2nd edition, Noordhoff
International Publishing, Leyden, 1973.

\bibitem{F.N} Hayek N, Perez-Acosta F. Asymptotic expressions for the Dirac
delta function in terms of the Bessel-Clifford functions. Pure Appl Math
Sci, 1993, \textbf{37} : 53-56.

\bibitem{J.D} Jackson JD. Classical Electrodynamics. Third ed., John Wiley
and Sons Inc., New York, 1999.

\bibitem{D.J} Jones DS. Acoustic and Electromagnetic Waves. Oxford Science
Publications, The Clarendon Press Oxford University Press, New York, 1986.

\bibitem{N.K.B} Kapany NS, Burke JJ. Optical Waveguides. Quantum
Electronics-Principles and Applications, Academic Press, New York, 1972.

\bibitem{G.K.R} Karakonstantis G, Roy K. An Optimal Algorithm for Low Power
Multiplierless FIR Filter Design Using Chebychev Criterion. IEEE
International Conference on Acoustics, Speech and Signal Processing
(ICASSP), Honolulu, Hawaii, USA, 2007, 858-861.

\bibitem{H.B} Kaur H, Dhaliwal BS. Design of low pass FIR filter using
Artificial Neural Network. 3 rd International Conference on Computer and
Automation Engineering, Chongqing, China, 2011, 463-466.

\bibitem{E.K} Konopinski EJ. Electromagnetic Fields and Relativistic
Particles. International Series in Pure and Applied Physics, McGraw-Hill
Book Co., New York, 1981.

\bibitem{B.G} Koronev BG. Bessel functions and their applications.
Analytical Methods for Special Functions, Taylor and Francis Ltd.,
London-New York, 2002,\textbf{\ 8}.

\bibitem{H.A.D} Korsch HJ, Klumpp A, Witthaut D. On two-dimensional Bessel
functions. Journal of Physics, 2006, \textbf{39}.

\bibitem{S.G.K} Krivashlykov SG. Quantum-Theoretical \ Formalism \ for
Inhomogeneous Graded-Index Waveguides. Akademie Verlag, Berlin-New York,
1994.

\bibitem{H.L} Lamb H. Hydrodynamics. Sixth ed., Cambridge University Press,
Cambridge, 1932.

\bibitem{B.L.G} Luitel B, Venayagamoorthy GK. Differential Evolution
Particle Swarm Optimization for Digital Filter Design. IEEE Congress on
Evolutionary Computation (CEC), Hong Kong, China, 2008, 3954-3961.

\bibitem{M.O.T} Magnus W, Oberhettinger F, Tricomi FG. Tables of Integral
Transforms. California Institute of Technology Bateman Manuscript Project,
1954.

\bibitem{M} Mahmoud M. Generalized q-Bessel function and its properties.
Advances in Difference Equations, 2013, \textbf{121}.

\bibitem{A.M.S} Messiah A. Quantum mechanics. North-Holland Publishing Co.,
Amsterdam, 1961.

\bibitem{A.R.M} Miller AR. On the Mellin transform of products of Bessel and
generalized hypergeometric functions. Journal of Computational and Applied
Mathematics, 1997,\textbf{\ 85 }: 271-286.

\bibitem{R.M} Miller R. On the Mellin transform of products of Bessel and
generalized hypergeometric functions. Journal of Computational and Applied
Mathematics, 1997, \textbf{85} : 271-286.

\bibitem{E.M.A} \"{O}zergin E, \"{O}zarslan MA, Alt\i n A. Extension of
gamma, beta and hypergeometric functions. Journal of Computational and
Applied Mathematics, 2011, \textbf{235 }: 4601-4610.

\bibitem{B.A} Panwar BS, Chand A. Block Linkage Learning Genetic Algorithm:
An Efficient Evolutionary Computational Technique for the Design of Ternary
Weighted FIR Filters. IEEE World Congress on Computer Science and
Information Engineering, Los Angeles, USA, 2009, 810-814.

\bibitem{M.R} Rahman M. An integral representation and some transformation
properties of q-Bessel functions. Journal of Mathematical Analysis and
Applications, 1987, \textbf{125} : 58-71.

\bibitem{R} Rainville ED. Special Functions, 1960.

\bibitem{L.R} Rayleigh L. The Theory of Sound. Second ed., Dover
Publications, New York, 1945.

\bibitem{J.S} Singh J, Singh C. Design of Low Pass FIR Filter Using General
Regression Neural Network (GRNN). International Journal of Advanced Research
in Computer Science and Software Engineering, 2013, \textbf{3}.

\bibitem{J.C.S} Slater JC. Microwave Transmission. McGraw-Hill Book Co, New
York, 1942.

\bibitem{H.S} Srivastava HM. The Laplace transform of the modified Bessel
function of the second kind. Publications De L'institut mathematique, 1979, 
\textbf{26} : 273-282.

\bibitem{S.P.J} Srivastava HM, Agarwal P, Jain S. Generating functions for
the generalized Gauss hypergeometric functions. Applied Mathematics and
Computation, 2014, \textbf{247} : 348-352.

\bibitem{S.A.O} Srivastava HM, \c{C}etinkaya A, K\i ymaz O. A certain
generalized Pochhammer symbol and its applications to hypergeometric
functions. Applied Mathematics and Computation, 2014,\textbf{\ 226 }:
484-491.

\bibitem{H.S.D} Srivastava HM, Doust MC. A Note on the Convergence of Kampe
de Feriet's Double Hypergeometric Series. Mathematische Nachrichten, 1972, 
\textbf{53} : 151-153.

\bibitem{H.K} Srivastava HM, Karlsson PW. Multiple Gaussian Hypergeometric
Series. John Wiley \& Sons, New York, 1985.

\bibitem{H.M} Srivastava HM, Manocha HL. A Treatise on Generating Functions.
Ellis Horwood Series: Mathematics and Its Applications, UK, 1984.

\bibitem{S.P.C} Srivastava HM, Parmar RK, Chapra P. A class of extended
fractional derivative operators and associated generating relations
involving hypergeometric functions. Axioms, 2012, \textbf{1} : 238-258.

\bibitem{R.S} Srivastava R. Some classes of generating functions associated
with a certain family of extended and generalized hypergeometric functions.
Applied Mathematics and Computation, 2014, \textbf{243 }: 132-137.

\bibitem{R2} Srivastava R. Some generalizations of Pochhammer's symbol and
their associated families of hypergeometric functions and hypergeometric
polynomials. Applied Mathematics and Information Sciences, 2013, \textbf{7}
: 2195-2206.

\bibitem{R.N} Srivastava R, Cho NE. Some extended Pochhammer symbols and
their applications involving generalized hypergeometric polynomials. Applied
Mathematics and Computation, 2014, \textbf{234} : 277-285.

\bibitem{S.M} Stone HA, McConnell HM. Hydrodynamics of quantized shape
transitions of lipid domains. Proc Roy Soc London, 1994.

\bibitem{J.H} Tanzosh J, Stone HA. Motion of a rigid particle in a rotating
viscous flow: An integral equation approach. J Fluid Mech, 1994, \textbf{275}
: 225-256.

\bibitem{M.T} Tezer M. On the numerical evaluation of an oscillating
infinite series. J Comput Appl Math, 1989, \textbf{28} : 383-390.

\bibitem{C.T.L} Tseng CC, Lee SL. Design of Fractional Delay FIR Filter
Using Discrete Cosine Transform. IEEE Asia Pacific Conference on Circuits
and Systems (APCCAS), Macao, China, 2008, 858-886.

\bibitem{N.V.O} Virchenko NO, Haidey VO. On Generalized $m-$Bessel
Functions. Integral Transforms and Special Functions, 1999, \textbf{8} :
275-286.

\bibitem{W} Watson GN. A treatise on the theory of Bessel's functions, 1922.

\bibitem{B.Y} Ya\c{s}ar BY. Generalized Mittag-Leffler Function and Its
Properties. New Trends in Mathematical Sciences, 2015, \textbf{3} : 12-18.

\bibitem{L.Z.H} Zhao L, Zhou L, Huang W. Satisfactory Optimization Design of
FIR Digital Filter Based On Adaptive Particle Swarm Optimization. IEEE
International Conference on Control and Automation, Guangzhou, CHINA, 2007,
1662-1666.
\end{thebibliography}
\end{document}